\documentclass[11pt,a4paper]{article}
\usepackage[utf8]{inputenc}

\usepackage{styling}
 
\hypersetup{
  pdftitle={Mean-Biased Processes for Balanced Allocations},
  pdfauthor={Dimitrios Los, Thomas Sauerwald, John Sylvester}
  }

\allowdisplaybreaks

\title{\textbf{Mean-Biased Processes for Balanced Allocations}\footnote{Some results from the paper were presented at SODA 2022~\cite{OurSODA}.}}

\author[1]{Dimitrios Los}  
\author[1]{Thomas Sauerwald\thanks{T.S. was supported by the ERC Starting Grant 679660 (DYNAMIC MARCH).}}
\author[2]{John Sylvester\thanks{J.S. was supported by EPSRC project EP/T004878/1: Multilayer Algorithmics to Leverage Graph Structure.}}
\affil[1]{Department of Computer Science \& Technology, University of Cambridge, UK\\ \texttt{firstname.lastname@cl.cam.ac.uk}} 
\affil[2]{Department of Computer Science, University of Liverpool, UK\\ \texttt{john.sylvester@liverpool.ac.uk}}

\newcommand{\WOne}{{\hyperlink{w1}{\ensuremath{\mathcal{W}_1}}}\xspace}
\newcommand{\WTwo}{{\hyperlink{w2}{\ensuremath{\mathcal{W}_2}}}\xspace}

\newcommand{\POne}{{\hyperlink{p1}{\ensuremath{\mathcal{P}_1}}}\xspace}
\newcommand{\PTwo}{{\hyperlink{p2}{\ensuremath{\mathcal{P}_2}}}\xspace}
\newcommand{\PThree}{{\hyperlink{p3}{\ensuremath{\mathcal{P}_3}}}\xspace}
\date{\vspace{-5ex}}
\begin{document}
 
\maketitle

\begin{abstract}

 We introduce a new class of balanced allocation processes which bias towards underloaded bins (those with load below the mean load) either by skewing the probability by which a bin is chosen for an allocation (\textit{probability bias}), or alternatively, by adding more balls to an  underloaded bin (\textit{weight bias}). A prototypical process satisfying the \textit{probability bias} condition is \textsc{Mean-Thinning}: At each round, we sample one bin and if it is underloaded, we allocate one ball; otherwise, we allocate one ball to a {\em second} bin sample. Versions of this process have been in use since at least 1986.
 An example of a process, introduced by us, which satisfies the \textit{weight bias} condition is \textsc{Twinning}: At each round, we only sample one bin. If the bin is underloaded, then we allocate two balls; otherwise, we allocate only one ball. 
 
 Our main result is that for any process with a probability or weight bias, with high probability the gap between maximum and minimum load is logarithmic in the number of bins. This result holds for any number of allocated balls (heavily loaded case), covers many natural processes that relax the \textsc{Two-Choice} process, and we also prove it is tight for many such processes, including \textsc{Mean-Thinning} and \textsc{Twinning}. 
 
 Our analysis employs a delicate interplay between  linear, quadratic and exponential potential functions.
 It also hinges on a phenomenon we call ``mean quantile stabilization'', which holds in greater generality than our framework and may be of independent interest. 

	\medskip
	
	\noindent \textbf{\textit{Keywords---}}  Balls-into-bins, balanced allocations, potential functions, heavily loaded, gap bounds, maximum load, thinning, two-choices, weighted balls. \\
	\textbf{\textit{AMS MSC 2010---}} 68W20, 68W27, 68W40, 60C05
	
\end{abstract}

\clearpage

\clearpage
\tableofcontents
~
\clearpage
\section{Introduction}

We consider the sequential allocation of $m$ balls (jobs or data items) to $n$ bins (servers or memory cells), by allowing each ball to choose from a set of randomly sampled bins. The goal is to allocate balls efficiently, while also keeping the load vector balanced. The \textit{balls-into-bins} framework has found numerous applications in hashing, load balancing, routing, but also has connections to more theoretical topics such as randomized rounding or pseudorandomness (we refer to the surveys~\cite{MRS01} and~\cite{W17} for more details).

A classical algorithm is the \DChoice process introduced by Azar, Broder, Karlin, Upfal~\cite{ABKU99} and Karp, Luby, and Meyer auf der Heide~\cite{KLM96}, where at each round, we sample $d \geq 1$ bins uniformly at random and then allocate the ball to the least loaded of the $d$ sampled bins. It is well-known that for the \OneChoice process ($d=1$), the \textit{gap} between the maximum and mean load is $\Theta\bigr( \sqrt{ (m/n) \cdot \log n } \bigr)$ for $m \geq n \log n$. In particular, this gap grows significantly as $m/n \rightarrow \infty$, which is called the {\em heavily loaded case}. For \TwoChoice ($d=2$), it was proved~\cite{ABKU99} that the gap is only $\log_2 \log n+\Oh(1)$ for $m=n$. This result was generalized by Berenbrink, Czumaj, Steger and V\"{o}cking~\cite{BCSV06} who proved that the same guarantee also holds for $m \geq n$, in other words, even as $m/n \rightarrow \infty$, the difference between the maximum and mean load remains bounded by a slowly growing function in $n$ that is independent of $m$.
This dramatic improvement of \TwoChoice over \OneChoice is widely known as the ``power of two choices'' paradigm.

The importance of this paradigm was recently recognized in the 2020 ACM Paris Kanellakis Theory and Practice award~\cite{ACM20}. However, in some real-world applications the original \TwoChoice process has been shown to underperform and so variants of \TwoChoice are being used instead (e.g.~\cite{OWZS13,LXKGLG11,WKKA23}). These variants relax the assumptions of \TwoChoice and can thus be regarded as more \textit{sample-efficient} and \textit{robust}.

As noted in~\cite{LXKGLG11}, in some real-word systems one important shortcoming of \TwoChoice are its communication requirements:
\begin{quote}
\textit{More importantly, the \textsc{[Two-Choice]} algorithm requires communication between dispatchers and processors at the time of job assignment. The communication time is on the critical path, hence contributes to the increase in response time.}
\end{quote}

An important family of processes, \TwoThinning attempts to address this issue. In this process, each ball first samples a bin uniformly at random and if its load is below some threshold, the ball is allocated to that sample. Otherwise, the ball is allocated to a {\em second} bin sampled uniformly at random, without inspecting its load. These processes have the advantage that they relax communication between the job dispatchers and servers (see \cref{fig:two_choice_vs_two_thinning}). 

\begin{figure}
  \centering
  \includegraphics[scale=0.6]{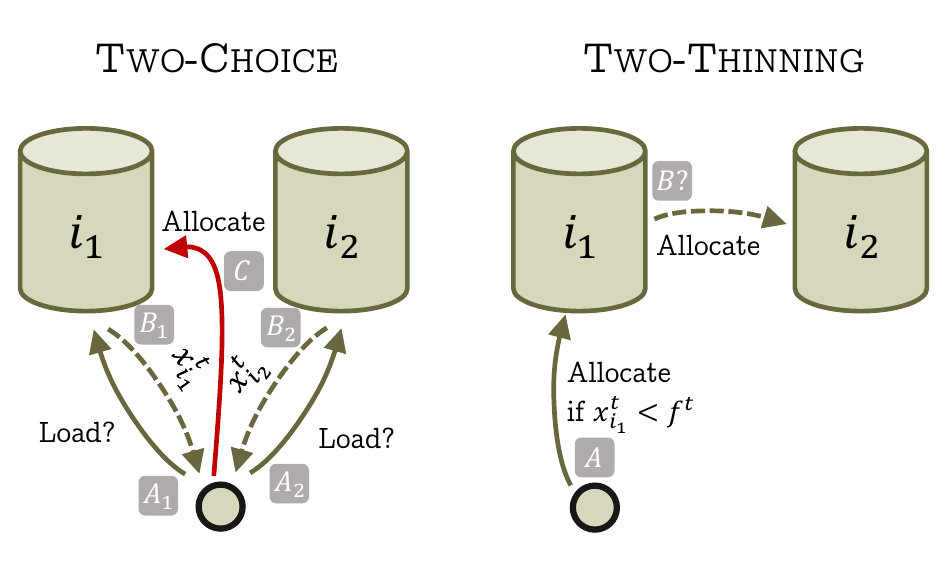}
  \caption{(\textbf{left}) The three stages $A, B, C$ for an allocation in round $t$ using \TwoChoice, which assumes that no other ball is being allocated to the two sampled bins (so the two bins are being ``held'' until both loads $x_{i_1}^t$ and $x_{i_2}^t$ are reported and the allocation is complete). (\textbf{right}) The (at most) two stages $A,B$ for an allocation using \TwoThinning process with threshold $f^t$. Here, none of the bins needs to be ``held'' and there are at most two stages.}
  \label{fig:two_choice_vs_two_thinning}
\end{figure}

\TwoThinning was used by Eager, Lazowska and Zahorjan~\cite{ELZ86} and was analyzed by Iwama and Kawachi~\cite{IK04} and Feldheim and Gurel-Gurevich~\cite{FG18} in the lightly-loaded case ($m = \Oh(n)$). In~\cite{FG18}, the authors proved that for $m=n$, there is a \TwoThinning process using a threshold on the number of times a bin has been used as a first sample, which achieves a gap of $\Oh\big({\scriptstyle \sqrt{ \frac{\log n}{\log \log n}}}\big)$. This process achieves a significant improvement over \OneChoice and also uses a total of $(1+o(1)) \cdot n$ samples, which is an improvement over \TwoChoice. Similar threshold processes have been studied in queuing~\cite{ELZ86} and discrepancy theory~\cite{DFG19}.
For values of $m$ sufficiently larger than $n$, some lower and upper bounds have been established for a more general class of \emph{adaptive} \TwoThinning processes \cite{FGL21,LS22Queries}. Here, adaptive means that the choice of the threshold \emph{may} depend on the entire load vector, which is not ideal from a practical perspective. Related to this line of research, \cite{LS22Queries} also analyzed a so-called \Quantile process, which is a version of \TwoThinning where a ball is allocated to a second sample only if the first bin has a load which is at least the median load.

In \cite{LS22Queries}, an extension of \TwoThinning was studied, the \KThreshold processes, where for each ball two bins are sampled uniformly at random and $k$ queries of the form ``is the load of the bin at least $x$?'' are sent to each. Then, the process allocates to the bin that was witnessed to be lesser loaded. 

Extensions of \TwoThinning where up to $d$ bins may be sampled have been studied in~\cite{LS22Queries,FL20,M99}. A similar class of adaptive sampling processes (where, depending on the loads of the samples so far, the ball may or may not sample another bin) was analyzed by Czumaj and Stemann~\cite{CS01} who proved tight results for $m=n$ and by Berenbrink, Khodamoradi, Sauerwald and Stauffer~\cite{BKSS13} for some specific instances for $m \geq n$.

Another example of a process that reduces the number of samples is the $(1+\beta)$-process introduced by Mitzenmacher~\cite{M99}, where each ball is allocated using \OneChoice with probability $1-\beta$ and otherwise is allocated using \TwoChoice. Peres, Talwar and Wieder~\cite{PTW15} proved that for any $\beta \in (0,1]$, the gap is only $\Oh\big(\frac{\log n}{\beta} \big)$. Hence, only a \textit{small} fraction of \TwoChoice rounds are enough to inherit the property of \TwoChoice that the gap is independent of $m$. It was recently shown that this process asymptotically outperforms \TwoChoice in the presence of outdated information~\cite{LS23HerdPhenomenon}. The $(1+\beta)$-process has been used to analyze \textit{population protocols}~\cite{AAG18,AGR21}, \textit{distributed data structures}~\cite{ABKLN18,AK0N17} and \textit{online carpooling}~\cite{GK0020}. Another sample-efficient variant of \TwoChoice is the $(k, d)\textsc{-Choice}$ process allocates to the $k$ lightest out of the $d$ sampled bins~\cite{G17}.

From a more technical perspective, apart from analyzing a large class of natural allocation processes, an important question is to understand how sensitive the gap is to changes in the probability allocation vector $p^t$ of the process, where $p_i^t$ gives the probability to allocate to the $i$-th heaviest bin in round $t$. To this end,~\cite{PTW15} formulated general conditions on the probability allocation vector, which, when satisfied in all rounds, imply a small gap bound. These were then applied not only to the $(1+\beta)$-process, but also to analyze \textit{graphical balanced allocations} where an edge of a graph is chosen uniformly at random and the ball is allocated to the least loaded of its two endpoints. Other works which study perturbations on the probability allocation vector are: allocations on \textit{hypergraphs}~\cite{G08}, balls-into-bins with \textit{correlated choices}~\cite{W07}; or balls-into-bins with \textit{hash functions}~\cite{CRSW11}.

\paragraph{Our Results} We introduce a general framework that allows us to deduce a small gap independent of $m$ for a large family of processes, but also opens the door for the study of novel allocation processes such as \Twinning (to be defined below). To ensure that an allocation process produces a balanced load vector, we could  bias the allocation towards underloaded bins (bins whose load is below the mean) by skewing the probability by which a bin is chosen for an allocation (\textit{probability bias}), or alternatively, we could add more balls to a bin if it is underloaded (\textit{weight bias}). We call both of these processes \MeanBiased.

Note that a small bias in the probability allocation vector can be achieved in various ways, for example, by taking a {\em second} bin sample: $(i)$ in each round (\TwoChoice), or $(ii)$ in each round with some probability $\beta$ ($(1+\beta)$-process), or $(iii)$ in each round dependent on the load of the first sample (\TwoThinning). Regarding the weight bias, as pointed out in~\cite{MPS02}, allocating consecutive balls to the same bin is very natural in applications like sticky routing~\cite{GKK88} or load balancing, where balls may often arrive in bursts. To this end, we study \Twinning which allocates two balls if the bin is underloaded and one ball if it is overloaded.

We now describe two general conditions, which state that the probability allocation vector is biased towards \textit{underloaded} bins, and we allocate fewer balls to \textit{overloaded} bins. Neglecting some technical details (see \cref{sec:framework}), for $\delta^t \in (0,1)$ being the quantile of the mean load in round $t$, meaning there are $\delta^t n$ overloaded bins, the two conditions are (roughly) as follows: 
\begin{enumerate}\itemsep-4pt
    \item[] \textbf{Condition \PTwo}: At each round, the probability of allocating to any fixed underloaded bin is $\frac{1}{n} \cdot (1 + \Omega(\delta^t))$, while to any fixed overloaded bin is  $\frac{1}{n} \cdot (1 - \Omega(1-\delta^t))$.
    \item[] \textbf{Condition \WTwo}: At each round, if an underloaded bin is chosen for allocation, allocate (a constant number) more balls than if an overloaded bin is chosen.
\end{enumerate}
As our main result we prove a gap bound of $\Oh(\log n)$ whenever a process satisfies \emph{at least one} of these two \textit{mean-biasing} conditions (in addition to some mild and general conditions). It turns out that \PTwo is satisfied by \MeanThinning (i.e., the \TwoThinning process with threshold being the mean load), while \WTwo is satisfied by \Twinning; thus for both of these processes a gap bound of $\Oh(\log n)$ follows immediately. The result for \Twinning gives an example of a process with gap $\Oh(\log n)$, which is also more sample-efficient than \OneChoice. 

The result for \MeanThinning extends to any $\RelativeThreshold(f(n))$ process which uses a threshold of $\frac{t}{n} + f(n)$ for the \textit{offset function} $f(n) \geq 0$, via a coupling argument, proving that \Whp~$\Gap(m) = \Oh(f(n) + \log n)$.
Finally, the framework also applies to the $(1+\beta)$-process with any constant $\beta \in (0, 1)$ and a relaxation of \MeanThinning which we call the $(1+\zeta)$-process.

To the best of our knowledge, most of the related work on balls-into-bins focuses on a small number (usually one or two) allocation processes, and analyzes their gap (with~\cite{PTW15,V99} being exceptions). Here we develop a framework that captures a variety of existing and new processes.

Our analysis, especially of \Twinning, bears some resemblance to the \textit{weighted balls-into-bins setting}~\cite{TW07,PTW15, BFHM08}, but there, the weights of the balls are drawn from some distribution {\em before} the bin is sampled and there is a time-invariant probability bias towards lightly loaded bins (e.g., as in $(1+\beta)$-process and \TwoChoice). However, in our model the weights \textit{depend} on the load of the sampled bin.

The \Twinning process can also be applied to the graphical balanced allocations on $d$-regular graphs for any $d \geq 2$ (see \cref{rem:twinning_on_graphs}). More specifically, the $\Oh(\log n)$ bound from the classical setting (corresponding to the complete graph) still applies even for sparse regular graphs, like the cycle where \TwoChoice is conjectured to have an $\Omega(\poly(n))$ gap~\cite{BF22,ANS22}. 

An extension of \Twinning, called \Packing,  was studied by the authors of this paper in~\cite{LSS22Filling}, where an underloaded bin is ``filled'' until it becomes overloaded, this process also achieves an $\Oh(\log n)$ gap.

\begin{table}[ht]	\resizebox{\textwidth}{!}{
\renewcommand{\arraystretch}{1.5}
		\centering
\begin{tabular}{|c|c|c|c|c|}
\hline 
\multirow{2}{*}{Process} & \multicolumn{2}{c|}{Lightly Loaded Case $m=\Oh(n)$} & \multicolumn{2}{c|}{Heavily Loaded Case $m = \omega(n)$} \\ \cline{2-5}
& Lower Bound & Upper Bound & Lower Bound & Upper Bound
\\ \hline 
$(1+\beta)$-process, const $\beta \in (0,1)$  & \multicolumn{2}{c|}{\phantom{\cite{PTW15}}$\qquad \quad\;\,\frac{\log n}{\log \log n}\qquad\quad\;\,$\cite{PTW15} } & \multicolumn{2}{c|}{\phantom{\cite{PTW15}}$\qquad \quad\;\,\cellcolor{Gray} \log n \qquad\quad\;\,$\cite{PTW15} } \\ 	\hline
\Twinning & \multicolumn{2}{c|}{ \cellcolor{Greenish}$ \frac{\log n}{\log \log n}$} & \multicolumn{2}{c|}{ \cellcolor{Greenish} $ \log n$} \\ \hhline{|-|-|-|-|-|}
\MeanThinning & \multicolumn{2}{c|}{\cellcolor{Greenish} $ \frac{\log n}{\log \log n}$} & \multicolumn{2}{c|}{\cellcolor{Greenish} $ \log n$} \\ \hhline{|-|-|-|-|-|} 
$\RelativeThreshold(f(n))$ &  \phantom{\cite{FL20}} \;\; $\sqrt{\frac{\log n}{\log \log n}}$ \;\; \cite{FL20} & \cellcolor{Greenish} $ \frac{\log n}{\log \log n}$ & \phantom{\;\; [11]} $\frac{\log n}{\log \log n}$ \;\; \cite{LS22Queries} & \cellcolor{Greenish} $f(n) + \log n$ \\ \hhline{|-|-|-|-|-|} 
(adaptive) \TwoThinning & \multicolumn{2}{c|}{ \phantom{[7]}$ \qquad\quad\sqrt{\frac{\log n}{\log \log n}}\qquad\quad$ \cite{FL20}} & \phantom{\;\; \cite{LS22Queries}} $\frac{\log n}{\log \log n}$ \;\; \cite{LS22Queries} & $ \frac{\log n}{\log \log n}$ \cite{FGL21} \\ 
\hline
\end{tabular}}
 
\caption{Overview of the gap achieved by different processes considered in this and previous works. All stated bounds hold asymptotically and with high probability; upper bounds hold for all values of $m$, while lower bounds may only hold for certain values of $m$. Cells shaded in \hlgreenish{\,Green\,} are new results and cells shaded \hlgray{\,Gray\,} are known results we re-prove. The upper bounds for \RelativeThreshold and \Twinning when $m=\Oh(n)$ follow immediately from \OneChoice.
}\label{tab:overview}

\end{table}

	\paragraph{Overview of Proof Techniques}  
 
	Parts of our analysis are based on the use of the exponential potential pioneered in~\cite{PTW15}. However, in~\cite{PTW15} time/load independent conditions on the processes ensure that the potential \textit{drops} in expectation in every round in every round, whereas for our more general conditions there exist load vectors where the potential may \textit{increase} in expectation, even when it is \textit{large} (\cref{clm:bad_configuration_lambda}). This precludes any standard application of the approach in \cite{PTW15} and presents our main challenge. To overcome this challenge we introduce two new ideas/techniques, which since the conference version of this paper \cite{OurSODA} have already found further application \cite{LS23Noise,LS23RBB}. The first is a series of quantitative relationships between the linear, quadratic, and exponential potential functions; this helps overcome the issue of bad configurations ``blocking'' progress. Our second innovation is the phenomenon of ``Mean Quantile Stabilization'' where we show that any round with $\mathcal{O}(n)$ linear potential is followed by a $\Theta(n)$ length interval in which for a constant fraction of rounds the ratio of overload/underload bins is bounded away from $0$ or $1$; this helps maintain progress once it is ``unblocked''.  
	
	 To see in more detail how these two components come together, we observe that in any round where the process has a \textit{stable} quantile, i.e., $\delta^t \in [\eps, 1 - \eps]$ for some constant $\eps \in \big( 0, \frac{1}{2} \big)$, the exponential potential decreases in expectation; and prove that quantile of the mean load \textit{self-stabilizes}. This is done by first proving that the \textit{quadratic potential} drops as long as the \textit{absolute value potential} is $\Omega(n)$ (\cref{lem:quadratic_absolute_relation}). From this, we can deduce that the absolute value potential has to be $\Oh(n)$ at some point. Then we prove that once this happens, the quantile of the mean load will be stable for sufficiently many rounds (\cref{lem:good_quantile}). The final ingredient is to prove that in rounds where the mean quantile is stable, the exponential potential drops by a sufficiently large factor in expectation, while in other rounds it increases by a smaller factor. A more detailed overview is given in \cref{sec:analysis_overview} and the details of the proof in \cref{sec:mean_quantile_stabilization,sec:potential_function_inequalities,sec:mean_biased_gap_completion}.

\paragraph{Organization} 
In \cref{sec:processes}, we define the aforementioned processes  more formally (in addition, an illustration of \Twinning and \MeanThinning can be found in \cref{fig:mean_thinning,fig:twinning}), and also introduce some basic mathematical notation needed for our analysis. In \cref{sec:framework}, we present our general framework for \MeanBiased processes. In this part, we also verify that a variety of processes fall into (or can be reduced to) the framework.  In \cref{sec:analysis_overview}, we outline the proof of the upper bound on the gap for \MeanBiased processes. We also present experiments (\cref{fig:experiments} on page~\pageref{fig:experiments})
illustrating the interplay between the different potential functions used in the analysis.
In \cref{sec:mean_quantile_stabilization}, we introduce the tools to show that for the \MeanBiased processes the mean quantile stabilizes. In \cref{sec:potential_function_inequalities}, we mainly derive the inequalities for the expectation of the exponential potential over one round. In \cref{sec:mean_biased_gap_completion}, we complete the proof for the upper bound on the gap of \MeanBiased processes.
In \cref{sec:lower_bounds} we derive lower bounds on the gap of several \MeanBiased processes, which are tight for \Twinning and \MeanThinning. In \cref{sec:sample_efficiency}, we show that \Twinning is more sample-efficient than \OneChoice and \MeanThinning is more sample-efficient than \TwoChoice. In \cref{sec:experiemental_results}, we empirically compare the gaps of the different processes. In \cref{sec:conclusions}, we conclude with some open problems.

 \section{Notation and Balanced Allocation Processes}\label{sec:processes}
 
We consider processes that sequentially allocate balls into $n$ bins, labeled $[n] := \{ 1, \ldots, n \}$. By $x^t$ we denote the \textit{load vector} of the $n$ bins at round $t=0,1,2,\ldots$ (the state after $t$ allocations have been completed). Initially, we start with the empty load vector $x^0:=(0,\ldots,0)$, unless we explicitly state otherwise. In many parts of the analysis, we will assume a labeling of the $n$ bins so that their loads at round $t$ are ordered non-increasingly, i.e., 
\[
 x_1^t \geq x_2^t \geq \cdots \geq x_n^t.
\]
Unlike many of the standard balls-into-bins processes, some of our processes may allocate more than one ball in a single allocation. To this end, we define $W^t := \sum_{i\in[n]} x_i^t$ as the total number of balls allocated in the first $t$ allocations (for \TwoThinning we allocate one ball per allocation, so $W^t=t$, however for \Twinning $W^t \geq t$). 
We will also use $w^{t+1}:=W^{t+1}-W^{t}$ to denote the number of balls allocated in the $(t+1)$-th allocation. 

We define the \textit{gap} as \[
\Gap(t):=\max_{i \in [n]} x_i^{t} - \frac{W^t}{n},\] which is the difference between the maximum load and mean load\footnote{It is common in the literature to focus on this difference, rather than the difference between maximum and minimum load; however, our results for \MeanBiased processes apply to both.}
at round $t$. When the process $\mathcal{Q}$ is not clear from the context, we write $\Gap_{\mathcal{Q}}(t)$ for the gap and $x_{\mathcal{Q}}^t$ for the load vector of the process.
Finally, for any round $t \geq 0$ we define the \textit{normalized load} of a bin $i \in [n]$ as:
\[
 y_i^{t} :=  x_i^t - \frac{W^t}{n}.
\]
Further, we define $\mathfrak{F}^{t}$ to be the \textit{filtration} corresponding to the first $t$ allocations of the process (so in particular, $\mathfrak{F}^{t}$ reveals $x^{t}$). Following \cite{PTW15}, let $p^t := p^t(\mathfrak{F}^t) \in \R^n$ be the \textit{probability allocation vector} of the process, where for every $i \in [n]$,  $p_i^t$ is the probability that $(t+1)$-th allocation is to $i$-th heaviest bin.  

Further, let $B_{+}^{t}:=\left\{ i \in [n] \colon y_i^{t} \geq 0 \right\}$ be the set of \textit{overloaded} bins, and $B_{-}^{t} := [n] \setminus B_{+}^{t}$ be the set of \textit{underloaded} bins. Let $\delta^t := |B_+^t|/n \in \big\{\frac{1}{n}, \ldots , 1\big\}$ be the \textit{mean quantile} corresponding to the mean load. Let $P_+^t := \sum_{i \in B_+^t} p_i^t$ and $P_-^t := \sum_{i \in B_-^t} p_i^t$, which are the probabilities of allocating to an overloaded or underloaded bin respectively.

For two vectors $p$ and $q$, we say that $p$ \textit{majorizes} $q$ if $
 \sum_{i=1}^{k} p_i \geq \sum_{i=1}^k q_i,
$ for all $k \in [n]$. We denote this by $p\succeq q$. Typically we will be considering the majorization ordering between two (partial) probability allocation vectors, or sorted load vectors. For random variables $Y, Z$ we say that $Y$ is \textit{stochastically smaller} than $Z$ if $ \Pro{ Y \geq x } \leq \Pro{ Z \geq x }$ for all real $x$.

For some specific allocation processes considered in this work, we will also analyze the \textit{sample efficiency} defined as $\eta^m := W^m/S^m$, where $W^m$ is the total number the balls allocated in the first $m$ allocations, and $S^m$ is the total number of uniform samples taken by the process in the first $m$ allocations.

We begin with the definition of the classical \OneChoice process.

\begin{framed}
\vspace{-.45em} \noindent
\underline{\OneChoice Process:} \\
\textsf{Iteration:} For each round $t \geq 0$, sample one bin $i \in [n]$ uniformly at random. Then, update: 
    \begin{equation*}
     x_{i}^{t+1} = x_{i}^{t} + 1.
 \end{equation*}\vspace{-1.5em}
\end{framed}

The \OneChoice process has a \textit{time-independent} probability allocation vector 
\[
 p := p^t = \left( \frac{1}{n}, \ldots, \frac{1}{n} \right),
\]
and it has a sample efficiency of $\eta^m = 1$. 
We proceed with the definition of the \TwoChoice process~\cite{ABKU99,KLM96}.  
\begin{framed}
\vspace{-.45em} \noindent
\underline{\TwoChoice Process:} \\ \textsf{Iteration:} For each round $t \geq 0$, sample two bins $i_1, i_2 \in [n]$, independently and uniformly at random. Let $i \in \{i_1, i_2 \}$ be such that $x_{i}^{t} = \min\{ x_{i_1}^t,x_{i_2}^t\}$, favoring bins with higher indices in case of a tie. Then, update:  
    \begin{equation*}
     x_{i}^{t+1} = x_{i}^{t} + 1.
 \end{equation*}\vspace{-1.5em}
\end{framed}
The \TwoChoice process also has a time-independent probability allocation vector $p := p^t$, where
\[
 p_i := \frac{2i - 1}{n^2}, \quad \text{for any }i \in [n],
\]
and it has a sample efficiency of $\eta^m = \frac{1}{2}$.

\medskip 

\noindent Mixing \OneChoice with \TwoChoice rounds at a rate $\beta$, one obtains the $(1+\beta)$-process~\cite{PTW15}:
\begin{framed}
\vspace{-.45em} \noindent
\underline{$(1+\beta)$-Process:}\\
\textsf{Parameter:} A \textit{mixing factor} $\beta \in (0,1]$.\\
\textsf{Iteration:} For each round $t \geq 0$, with probability $\beta$ allocate one ball via the \TwoChoice process, otherwise allocate one ball via the \OneChoice process. \vspace{-.5em}
\end{framed}
The probability allocation vector for the $(1+\beta)$-process is a convex combination of the \OneChoice and \TwoChoice probability allocation vectors given by
\[
 p_i := (1-\beta) \cdot \frac{1}{n} + \beta \cdot \frac{2i - 1}{n^2}, \quad \text{for any }i \in [n].
\]

The following process has been studied by several authors~\cite{IK04,FG18,FL20}. 

\begin{framed}
	\vspace{-.45em} \noindent
	\underline{$\TwoThinning(f(n, x^t))$ Process:}\\
	\textsf{Parameter:} A \textit{threshold function} $f(n, x^t) \geq 0$.\\
	\textsf{Iteration:} For each round $t \geq 0$, sample two uniform bins $i_1$ and $i_2$ independently, and update:  
	\begin{equation*}
		\begin{cases}
			x_{i_1}^{t+1} = x_{i_1}^{t} + 1 & \mbox{if $x_{i_1}^{t} < f(n, x^t)$}, \\
			x_{i_2}^{t+1} = x_{i_2}^{t} + 1 & \mbox{if $x_{i_1}^{t} \geq f(n, x^t)$}.
		\end{cases}
	\end{equation*}\vspace{-1em}
\end{framed}

In this work, we focus on processes that have a threshold that is \textit{relative} to the mean load, i.e., $f(n, x^t) = \frac{t}{n} + f(n)$. These have the attractive property that they can be implemented just having knowledge of the mean load (instead of the entire load vector). The prototypical such process is \MeanThinning (introduced in \cref{sec:probbias}), which has threshold equal to the mean load, i.e., $f(n) = 0$.

The probability allocation vector for \MeanThinning is given by \[
 p^t := \Bigg( \underbrace{\frac{\delta^t}{n}, \ldots, \frac{\delta^t}{n}}_{\delta^t n \text{ bins}}, \underbrace{\frac{1 + \delta^t}{n}, \ldots, \frac{1+\delta^t}{n}}_{(1-\delta^t) \cdot n \text{ bins}}\Bigg).
\] 

Throughout the paper, we often make use of statements and inequalities which hold only for sufficiently large $n$. For simplicity, we do not state this explicitly.

\NewConstant{quad_delta_drop}{c}
\NewConstant{quad_const_add}{c}
\NewConstant{good_quantile_mult}{c}
\NewConstant{bad_quantile_mult}{c}
\NewConstant{poly_n_gap}{c}

\NewConstantWithName{stab_time}{\ensuremath{c_s}}
\NewConstantWithName{rec_time}{\ensuremath{c_r}}
\NewConstantWithName{small_delta}{C}

\NewConstantWithName{lambda_bound}{c}

\section{\textnormal{\MeanBiased} Processes and Our Results} \label{sec:framework}

In this section, we begin by defining the conditions for \MeanBiased processes. Then, we present our main result for \MeanBiased processes. 

Following this we define some typical \MeanBiased processes, including \Twinning, \MeanThinning and the $(1+\zeta)$-process, and show that they fall under the probability and weight biased subclasses. We summarize our lower bounds and upper bounds for these processes, showing that these are tight, and also consider the sample efficiency of these processes. We also consider the $\RelativeThreshold(f(n))$ processes with $f(n) > 0$, which although are not \MeanBiased processes strictly speaking, we show that our general bound can be extended via a coupling to give tight gap bounds for any $f(n) \geq \log n$.

\subsection{General \MeanBiased Framework and Bounds}

First, we will start by defining some conditions on the probability allocation vector and on the weights of the balls allocated to underloaded and overloaded bins. Recall that for any $i \in [n]$,  $p_{i}^t$ is the probability of allocating to the $i$-th heaviest bin at round $t$. 
As before, $p^t$ may depend on the filtration $\mathfrak{F}^{t}$. Recall also that $B_+^t:=\{i\in [n] : y_-^t\geq 0 \} $, $\delta^t := |B_+^t|/n$,  and $B_-^t:= [n] \setminus B_+^t $.

\NewConstant{p1k1}{k} 
\NewConstant{p1k2}{k} 
\begin{itemize}\itemsep0pt
 	\item[] \textbf{Condition \hypertarget{p1}{$\mathcal{P}_1$}}: There exists an integer  constant $\C{p1k1}\geq 1$ and a constant $\C{p1k2} \in (0,1]$,  such that for each round $t \geq 0$, 
		\begin{itemize}
			\item  the overloaded bins satisfy 
			\begin{alignat*}{3}
				\max_{i \in B_{+}^t} p_i^t &\leq \frac{\C{p1k1}}{n}\qquad &&\text{and}\qquad \left( p_1^t, \ldots, p_{ \delta^t n}^t \right)   &&\preceq \left( \frac{1}{n}, \ldots, \frac{1}{n} \right), 
			\end{alignat*}
			\item and the underloaded bins satisfy \begin{alignat*}{3}
				\min_{ i \in  B_-^t} p_i^t &\geq \frac{\C{p1k2}}{n}\qquad &&\text{and}\qquad
				\left( p_{n}^t, \ldots, p_{\delta^tn + 1}^t \right) && \succeq \left( \frac{1}{n}, \ldots , \frac{1}{n} \right).
			\end{alignat*}\end{itemize}

\item[] \textbf{Condition \hypertarget{w1}{$\mathcal{W}_1$}}: For each round $t \geq 0$, when bin $i \in [n]$ is chosen for allocation,
\begin{itemize}
   \item if  $y_i^{t} < 0$, then allocate $w_{-}$ balls to bin $i$,
   \item if  $y_i^{t} \geq 0$, then allocate $w_{+}$ balls to bin $i$,
    \end{itemize}
   where $1 \leq w_{+} \leq w_{-}$ are constant integers\footnote{By scaling, this can be generalized to constant rational weights.}, i.e., independent of $t$ and $n$.
\end{itemize}

Both conditions \POne and \WOne are natural, but on their own they are not sufficient to establish a good bound on the gap, as the \OneChoice process satisfies both conditions with $\C{p1k1} := \C{p1k2} := 1$. Thus, we will require that processes satisfy at least one of the following two stronger versions of \POne and \WOne:

\NewConstant{p2k1}{k}
\NewConstant{p2k2}{k}
\begin{itemize}\itemsep0pt
    \item[]\textbf{Condition \hypertarget{p2}{$\mathcal{P}_2$}}:   There exists an integer  constant $\C{p1k1}\geq 1$ and constants $\C{p1k2}, \C{p2k1}, \C{p2k2} \in (0,1]$,  such that for each round $t \geq 0$, 
		\begin{itemize}
			\item  the overloaded bins satisfy 
			\begin{alignat*}{3}
				\max_{i \in B_{+}^t} p_i^t &\leq \frac{\C{p1k1}}{n} &&\quad\text{ and }\quad\left( p_1^t, \ldots, p_{ \delta^t n}^t \right)   && \preceq \left( \frac{1}{n} - \frac{\C{p2k1} \cdot (1-\delta^t)}{n}, \ldots, \frac{1}{n} - \frac{\C{p2k1} \cdot (1-\delta^t)}{n} \right),
			\end{alignat*}
			\item and the underloaded bins satisfy \begin{alignat*}{3}
				\min_{ i \in  B_-^t} p_i^t &\geq \frac{\C{p1k2}}{n} &&\quad\text{ and }\quad
    \left( p_{n}^t, \ldots, p_{\delta^tn + 1}^t \right)
\succeq     \left( \frac{1}{n} + \frac{\C{p2k2} \cdot \delta^t}{n}, \ldots , \frac{1}{n} + \frac{\C{p2k2} \cdot \delta^t}{n} \right).	\end{alignat*}\end{itemize}
    
    \item[] \textbf{Condition \hypertarget{w2}{$\mathcal{W}_2$}}: This is as Condition \WOne, but additionally we have the strict inequality: $w_{+} < w_{-}$. Also, we assume that for each round $t \geq 0$, 
    \[
    \left( p_{n}^t, \ldots, p_{\delta^t n + 1}^t \right) \succeq \left( \frac{P_-^t}{|B_-^t|}, \ldots , \frac{P_-^t}{|B_-^t|} \right), 
    \] where $P_-^t := \sum_{i \in B_-^t} p_i^t$ is the sum of allocation probabilities over the underloaded bins. 
\end{itemize}

We refer to a balanced allocation process as \MeanBiased if it satisfies $\PTwo \cap \WOne$  or $\POne \cap \WTwo$, that is it has a probability or weight bias respectively. Our main result proves a logarithmic upper bound on the gap for any \MeanBiased process in the heavily loaded case.

\def\maintechnical{
	Consider any $\PTwo \cap \WOne$-process or $\POne \cap \WTwo$-process. Then, there exists a constant $\kappa > 0$ such that for any round $m \geq 0$,
	\[
	\Pro{ \max_{i \in [n]} \left| x_i^m - \frac{W^m}{n} \right| \leq \kappa \log n } \geq 1-n^{-4};
	\]
	so in particular, $\Pro{ \Gap(m) \leq \kappa \log n } \geq 1-n^{-4}$. }

\begin{thm}[Main Theorem] \label{thm:main_technical} 
	\maintechnical
\end{thm}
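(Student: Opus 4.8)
The plan is to run a potential-function argument via the exponential potential of \cite{PTW15}, but with the crucial twist (foreshadowed in the overview) that the exponential potential does \emph{not} drop in every round — only in rounds where the mean quantile $\delta^t$ is ``stable'', i.e.\ bounded away from $0$ and $1$. So the argument has three layers. \textbf{(1) Control the linear potential.} First I would show that the absolute-value potential $\Gamma^t := \sum_{i} |y_i^t|$ cannot stay large: whenever $\Gamma^t = \Omega(n)$, the quadratic potential $\Upsilon^t := \sum_i (y_i^t)^2$ drops in expectation (this is the content of the promised \cref{lem:quadratic_absolute_relation}, using that the mean-bias conditions push probability toward the under-represented side; for $\WTwo$ one uses that extra balls go to underloaded bins and that the $w_+<w_-$ gap shrinks $\Upsilon$). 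Since $\Upsilon^t \geq 0$ and has bounded one-step change (weights are $O(1)$), a drift/supermartingale argument forces $\Gamma^t = O(n)$ infinitely often, in fact at a linear density of rounds. \textbf{(2) Mean-quantile stabilization.} Next, invoke \cref{lem:good_quantile}: once $\Gamma^t = O(n)$ at some round $t_0$, then over the next $\Theta(n)$ rounds, a constant fraction of rounds have $\delta^t \in [\eps, 1-\eps]$. The intuition is that if $\delta^t$ were extreme for too long, too much mass would accumulate on one side, contradicting $\Gamma = O(n)$; combined with step (1) this gives a genuinely self-sustaining supply of ``good'' rounds.

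\textbf{(3) Exponential potential with good/bad rounds.} With $\Phi^t := \sum_i \bigl(e^{\alpha y_i^t} + e^{-\alpha y_i^t}\bigr)$ for a small constant $\alpha$, I would establish a dichotomy over one round: in a \emph{good} round ($\delta^t$ stable) $\E[\Phi^{t+1}\mid \mathfrak F^t] \leq (1-c\alpha/n)\Phi^t + c$, while in a \emph{bad} round we only get $\E[\Phi^{t+1}\mid\mathfrak F^t] \leq (1+c'\alpha/n)\Phi^t + c'$. The good-round inequality is the classical \cite{PTW15}-style computation exploiting the $\Omega(\delta^t/n)$ and $\Omega((1-\delta^t)/n)$ biases (or, in the weight case, the net drift of $w_- - w_+ > 0$ balls toward the light side together with the $\WTwo$ majorization hypothesis which lets us lower-bound the bias in terms of $P_-^t$). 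The bad-round inequality is cheap: even $\OneChoice$-like behaviour only inflates $\Phi$ by a $(1+O(\alpha/n))$ factor. Now combine: over a window of $\Theta(n)$ rounds guaranteed by step (2) to contain at least $\gamma n$ good rounds (for a constant $\gamma>0$), the multiplicative factors multiply to $(1-c\alpha/n)^{\gamma n}(1+c'\alpha/n)^{(1-\gamma) n} \leq e^{-c\gamma\alpha + c'\alpha}$, which is $<1$ provided $\alpha$ (hence the constant $c$) is tuned so $c\gamma > c'$; so $\Phi$ contracts by a constant factor over each such window, modulo the additive $O(n)$ terms. This yields $\E[\Phi^m] = O(n)$ for all $m$, and a union bound / Markov inequality then gives $\Phi^m \le n^5$ w.h.p., hence $\max_i |y_i^m| \le \frac{5}{\alpha}\ln n = O(\log n)$ w.h.p., which is the claim (the two-sided exponential potential controls both max and min, giving the stated $\max_i|x_i^m - W^m/n|$ bound). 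To upgrade from $\E[\Phi^m]=O(n)$ (which only gives the bound at a fixed $m$ with constant probability) to the high-probability statement, I would either iterate the drop argument with a starting point at a round $m - \Theta(n\log n)$ where $\Phi$ is small (such a round exists w.h.p.\ by the recurrence of $\{\Phi = O(n)\}$), or run a standard layered/smoothed-potential argument as in \cite{PTW15}.

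\textbf{Main obstacle.} The hard part is step (1)–(2): the interaction between the linear, quadratic and exponential potentials. Specifically, proving that $\Gamma^t = \Omega(n)$ forces $\Upsilon^t$ to strictly decrease requires carefully handling the fact that the mean-bias conditions are stated in terms of the \emph{sorted} probability vector and the quantile $\delta^t$, which itself fluctuates; and in the weight-biased case the total number of balls $W^t$ is itself random, so ``normalized loads'' $y_i^t$ move even for bins that receive nothing. One must check that the drift from the bias dominates the variance term $\sum_i \Delta(y_i^2)$ coming from the random choice and the random weight. Then step (2) needs a quantitative ``if $\delta^t$ stays extreme, $\Gamma^t$ grows'' estimate that is uniform over the $\Theta(n)$-round window. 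Once those two geometric/combinatorial facts are in hand, layer (3) is the familiar exponential-potential bookkeeping, and the good/bad-round split is exactly what makes the two otherwise-incompatible regimes coexist.
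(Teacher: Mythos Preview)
Your three-layer plan matches the paper's architecture, and the good/bad-round dichotomy in step (3) is the right idea. But there are two concrete gaps that would cause the argument, as written, to fail.

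\textbf{The one-step change of $\Upsilon^t$ is not bounded.} You write ``$\Upsilon^t \geq 0$ and has bounded one-step change (weights are $O(1)$)''. This is false: $(y_i^t + w)^2 - (y_i^t)^2 = 2w\,y_i^t + w^2$, so $|\Upsilon^{t+1}-\Upsilon^t| = O(\max_i|y_i^t|)$, which is not bounded a priori. Without bounded differences you cannot run Azuma on the supermartingale built from $\Upsilon$, so you cannot conclude that $\Gamma^t = O(n)$ holds at a linear density of rounds \emph{with high probability} --- you only get it in expectation, which is not enough to plug into the multiplicative good/bad product in step (3). The paper repairs this with an extra bootstrapping layer you are missing: a \emph{weak} exponential potential $V^t$ with smoothing parameter $\tilde\alpha = \Theta(1/n)$ (rather than $\Theta(1)$). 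For this tiny $\tilde\alpha$ the bias suffices \emph{regardless of $\delta^t$} to make $V^t$ contract in expectation every round, yielding a crude $O(n\log n)$ gap bound at any fixed round (\cref{lem:initial_gap_nlogn}). That crude bound then controls $|\Upsilon^{t+1}-\Upsilon^t|$ and makes the Azuma step in your layer (1) legitimate. The paper accordingly splits the endgame into a \emph{recovery phase} (use the crude bound over $n^3\log^4 n$ rounds to drive $\Lambda$ down to $O(n)$) and a \emph{stabilization phase} (once $\Lambda=O(n)$, it recurs every $\Theta(n\log n)$ rounds), rather than claiming $\E[\Phi^m]=O(n)$ outright.

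\textbf{The bad-round factor is $(1+O(\alpha^2/n))$, not $(1+O(\alpha/n))$.} In step (3) you state the bad-round inequality with a $(1+c'\alpha/n)$ factor and then say the window-product is $<1$ ``provided $\alpha$ is tuned so $c\gamma>c'$''. But if both the good and bad factors scale linearly in $\alpha$, tuning $\alpha$ changes nothing: the condition $c\gamma>c'(1-\gamma)$ is $\alpha$-free and there is no reason it holds. What actually happens --- and what makes the whole scheme work --- is that in a bad round the first-order $\alpha/n$ terms \emph{cancel} (even under \OneChoice-like allocation, the mean shift and the allocated ball contribute zero net drift to the exponential potential at order $\alpha$), leaving a factor $(1+O(\alpha^2/n))$; see \cref{lem:bad_quantile_increase_bound}. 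The good-vs-bad product over a window is then $\exp\bigl(-c_3\gamma\alpha + c_4(1-\gamma)\alpha^2\bigr)$, and \emph{now} taking $\alpha$ small enough genuinely forces this below $1$.
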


To prove a matching lower bound we add the following condition: 
 
\NewConstant{p4k}{k}
\begin{itemize}
	\item \textbf{Condition \hypertarget{p3}{$\mathcal{P}_3$}}: For any $\eps \in \big(0, \frac{1}{2} \big)$ there exists a $\C{p4k} := \C{p4k}(\eps) \in (0, 1]$ such that for each round $t \geq 0$ with $\delta^t \in [\eps, 1 - \eps]$, the probability allocation vector $p^t$ satisfies, 
	\[ \min_{i \in [n]} p_i^t \geq \frac{\C{p4k}}{n}.\] 
\end{itemize}
This condition is satisfied for many natural processes, including \Twinning, \MeanThinning, and the $(1+\beta)$-process with constant $\beta \in (0, 1)$. Assuming condition \PThree in addition to the \MeanBiased conditions allows us to prove a tight lower bound.
	\def\mainlower{	Consider any $\PTwo \cap \WOne$-process or $\POne \cap \WTwo$-process, which also satisfies condition \PThree. Then there exists a constant $\kappa>0$ such that
		\[
		\Pro{\Gap(\kappa n \log n) \geq \kappa \log n} \geq 1 - n^{-1/2}.
		\]}
 
\begin{thm}\label{thm:main_lower} 
\mainlower
\end{thm}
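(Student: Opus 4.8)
\emph{Proof plan.}
The plan is to exhibit, with probability at least $1-n^{-1/2}$, a single bin that is chosen often enough over the run $[0,m]$, $m=\kappa n\log n$ (started from the empty configuration, so $y_i^0=0$ and $W^0=0$), to finish at least $\kappa\log n$ above the mean load. The crucial use of condition \PThree is that in every round $t$ with a \emph{stable} mean quantile, i.e. $\delta^t\in[\eps_0,1-\eps_0]$ for the constant $\eps_0\in(0,\tfrac12)$ appearing in the upper-bound analysis, \emph{every} bin --- including heavily loaded ones --- is chosen with probability at least $\alpha/n$, where $\alpha>0$ is the constant that condition \PThree guarantees for $\eps=\eps_0$. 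Hence over the stable rounds the number of times a fixed bin is chosen stochastically dominates a \OneChoice-type count, and \OneChoice over $\Theta(n\log n)$ balls puts each individual bin $\Theta(\log n)$ above the mean with probability $n^{-\Theta(1)}$; with $n$ bins one succeeds with high probability. (I will not need \cref{thm:main_technical}: the empty start already controls the load vector at the start of the window; this mirrors, and generalizes, the heavily-loaded $\Omega(\log n)$ lower bound for the $(1+\beta)$-process.)

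\emph{Step 1: a positive fraction of rounds are stable.} First I would show that, except with probability at most $n^{-2}$, the set $R:=\{t\in[0,m]:\delta^t\in[\eps_0,1-\eps_0]\}$ has $|R|\ge c_1 m$ for a suitable constant $c_1>0$. This is essentially a corollary of the mean-quantile-stabilization machinery: by \cref{lem:quadratic_absolute_relation} the quadratic potential drops in expectation whenever the absolute-value potential is $\Omega(n)$, so the latter is $\Oh(n)$ in a positive fraction of the rounds of $[0,m]$; by \cref{lem:good_quantile} each such round begins a window of length $\Theta(n)$ in which a positive fraction of rounds are stable; and summing over $[0,m]$ --- the same amortized bookkeeping underlying the $\Oh(\log n)$ bound of \cref{thm:main_technical}, which could not hold unless $\Omega(m)$ rounds were stable --- yields $|R|\ge c_1 m$.

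\emph{Step 2: one bin, then all bins.} Next, fix a bin $i$ and let $N_i$ be the number of rounds of $[0,m]$ in which bin $i$ is chosen. Since $\{\delta^t\in[\eps_0,1-\eps_0]\}$ is $\mathfrak{F}^t$-measurable and on it $\Pro{i\text{ chosen at }t\mid\mathfrak{F}^t}\ge\alpha/n$ by \PThree, a round-by-round coupling produces, on the stable rounds, independent auxiliary hits $\sim\mathrm{Bern}(\alpha/n)$ --- at most one per round, each implying a genuine hit of bin $i$ --- with total $N_i^{\mathrm{aux}}\le N_i$ that, conditionally on a filtration-adapted realization of $R$, is $\mathrm{Bin}(|R|,\alpha/n)$. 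On $\{|R|\ge c_1 m\}$ this behaves like $\mathrm{Poisson}(\alpha c_1\kappa\log n)$. Since every allocation adds at least $w_+\ge1$ to the chosen bin while $W^m\le w_- m$, we have
\[
 y_i^m=x_i^m-\frac{W^m}{n}\ \ge\ w_+ N_i-w_-\kappa\log n,
\]
so $N_i\ge\tfrac{w_-+1}{w_+}\kappa\log n$ already forces $y_i^m\ge\kappa\log n$. The target $\tfrac{w_-+1}{w_+}\kappa\log n$ exceeds the Poisson mean $\alpha c_1\kappa\log n$ by the fixed factor $\rho:=\tfrac{w_-+1}{w_+\alpha c_1}>1$ (note $\kappa$ cancels), so a Poisson upper-tail bound gives, for large $n$, $\Pro{N_i^{\mathrm{aux}}\ge\tfrac{w_-+1}{w_+}\kappa\log n\mid|R|\ge c_1 m}\ge n^{-\vartheta}$ with $\vartheta=\vartheta(\kappa)\to0$ as $\kappa\to0$; I then fix $\kappa$ small enough that $\vartheta<\tfrac12$. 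Finally, exactly one bin is chosen per round, which makes $(N_i^{\mathrm{aux}})_{i\in[n]}$ negatively associated (conditionally on $R$); combined with $\{y_i^m<\kappa\log n\}\subseteq\{N_i^{\mathrm{aux}}<\tfrac{w_-+1}{w_+}\kappa\log n\}$ this gives
\[
 \Pro{\Gap(m)<\kappa\log n}\ \le\ \Pro{|R|<c_1 m}+(1-n^{-\vartheta})^{n}\ \le\ n^{-2}+e^{-n^{1-\vartheta}}\ \le\ n^{-1/2}.
\]

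\emph{Main obstacle.} The delicate part is carrying out Step 2 \emph{jointly} in the randomness of $R$ --- a function of the entire trajectory, including the very allocations being bounded --- and \emph{jointly across the $n$ bins}: the domination $N_i\succeq\mathrm{Bin}(|R|,\alpha/n)$ and the negative association of the $N_i^{\mathrm{aux}}$ must be realized inside one coupling that simultaneously preserves the high-probability event $|R|\ge c_1 m$ and keeps the auxiliary bits independent (mutually exclusive within a round, conditionally independent across rounds) no matter how the trajectory unfolds. The clean route is to prove the per-bin and cross-bin lower bounds \emph{conditionally} on an arbitrary filtration-adapted realization of the stable-round set with $|R|\ge c_1 m$, then average; should full negative association of the adaptive hit-counts prove awkward, a second-moment/Paley--Zygmund argument over the $n$ bins --- using that the events $\{y_i^m\ge\kappa\log n\}$ have negligible pairwise correlation because heavy bins are chosen with \emph{less} probability --- delivers the same $1-n^{-1/2}$ bound. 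A last, routine, point is the weighted case ($w_+<w_-$, or general $\mathcal{W}_1$ weights): one counts allocations rather than balls, paying the factor $(w_-+1)/w_+$ above --- which is also precisely where \PThree is indispensable, since without it a bin that is heavy (hence overloaded) at time $m$ could be receiving probability $o(1/n)$ and the anti-concentration would collapse.
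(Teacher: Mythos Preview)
The proposal is correct and follows the same route as the paper: a constant fraction of the first $m=\kappa n\log n$ rounds have stable mean quantile (the paper isolates this as \cref{lem:many_good_quantiles_in_lightly_loaded}), on those rounds \PThree yields a \OneChoice coupling, and a \OneChoice max-load lower bound over $\Theta(n\log n)$ balls produces a bin $\Theta(\log n)$ above the mean --- the paper simply cites a black-box lemma (\cref{lem:one_choice_cnlogn}) for that last step, whereas you redo it inline via a Poisson tail and negative association. Your flagged ``main obstacle'' (the stable-round set $R$ is adaptive and entangled with the very allocations you are counting) is dissolved in the paper by pre-sampling the Bernoulli coins and uniform destinations \emph{indexed by the order in which stable rounds occur} rather than by absolute time; the \OneChoice maximum is then a function of a fixed-length prefix of pre-sampled i.i.d.\ variables, and a plain union bound with $\{|R|\ge\eps^2 m\}$ replaces any conditional-NA or second-moment argument.
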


In the next subsection we give some examples of processes for which our bounds hold, and some processes which are not \MeanBiased but are sufficiently close that we can cover them with our bounds. First we discuss the meaning behind the \MeanBiased conditions.

 The rationale behind condition \PTwo is that we wish to slightly bias the probability allocation vector $p^t$ towards underloaded bins at each round $t$. However, it is natural to assume that this influence is limited by a process that samples, say, at most two bins independently and uniformly, and then allocates balls to the least loaded of the two. Concretely, if a process takes \emph{two} independent and uniform bin samples at each round, the probability of picking two overloaded bins equals
$
\bigl( \frac{|B_+^t|}{n} \bigr)^2.
$
Hence by averaging, for each overloaded bin $i \in B_+^t$
\[
p_i^t \geq \left( \frac{|B_+^t|}{n} \right)^2 \cdot \frac{1}{|B_+^t|} = \frac{|B_+^t|}{n^2} = \frac{\delta^t}{n}.
\]
The relaxation of the first constraint in \PTwo by taking a strict convex combination of $\frac{1}{n}$ and $\frac{\delta^t}{n}$
ensures some slack, for instance, it allows the framework to cover the $(1+\zeta)$-process, where we perform a \OneChoice allocation with some constant probability $\eta \in (0, 1]$, and otherwise we perform an allocation following the \MeanThinning process (see \cref{lem:noisy_threshold} for details). 
Similarly, for any process which takes at most two uniform samples, by averaging for any underloaded bin $i \in B_{-}^t$
\begin{align*}
	p_{i}^t \leq \frac{1 - \frac{|B_+^t|^2}{n^2}}{|B_{-}^t|}
	= \frac{(n - |B_+^t|) \cdot (n + |B_+^t|)}{n^2|B_{-}^t|}
	= \frac{1}{n} +  \frac{|B_{+}^t|}{n^2} = \frac{1}{n} + \frac{\delta^t}{n}.
\end{align*}

Finally, we remark that \PTwo resembles the framework of \cite[Equation~2]{PTW15}, where
$p^t_i=p_i$ is non-decreasing in $i$ and $p_{n/3} \leq \frac{1-4\eps}{n}$ and $p_{2n/3} \geq \frac{1+4\eps}{n}$ holds for some $0 < \eps < 1/4$ (not necessarily constant). In contrast to that, for constants $\C{p2k1},\C{p2k2}>0$, the conditions in \PTwo are relaxed as they only imply such a bias if $\delta^t$ is bounded away from $0$ and $1$, which may not hold in all rounds. 

We note that the requirements on the probability allocation vectors in \POne and \PTwo are a relaxation of those in the conference version of this paper~\cite{OurSODA}. Previously we assumed that the probability allocation vector is non-decreasing, which implies the current majorization condition. Also more restrictive bounds of $\max_{i \in B_+^t} p_i^t \leq \frac{1}{n}$ and $\min_{i \in B_-^t} p_i^t \geq \frac{1}{n}$ were assumed. As we shall see this relaxation allows our new framework to include the $(1+\beta)$-process with constant $\beta \in (0, 1)$ (\cref{lem:opb_satisfies_p2_w1}) and the $\KRelativeThreshold(f_1, \ldots, f_k)$ processes with $f_k = 0$ (\cref{lem:k_relative_threshold_satisfies_w1_p2}). Further, the bound on the maximum probability to allocate to any single bin is satisfied by any process that takes $d$ uniform and independent samples in each round with $\C{p1k1} := d$. For instance, \TwoChoice, the $(1+\beta)$-process and \TwoThinning processes, all satisfy this with $\C{p1k1} := 2$.

\subsection{\textnormal{\MeanBiased} Processes Satisfying \texorpdfstring{$\mathcal{P}_1$}{P₁} and \texorpdfstring{$\mathcal{W}_2$}{W₂} (Weight Bias)} 

We begin with an example of a natural process which satisfies conditions $\POne$ and $\WTwo$ of the framework, that is this process allocates more balls when it samples an underloaded bin.  Specifically, we introduce a new process called \Twinning, which allocates \textit{one} ball to an overloaded bin and \textit{two} balls to an underloaded bin. This process is possibly the most similar to \OneChoice among our processes, however, we still prove that \Whp~it has an $\Oh(\log n)$ gap and that it allocates $1+\eps$ balls per sample for constant $\eps \in (0,1)$.

\begin{framed}
	\vspace{-.45em} \noindent
	\underline{\Twinning Process:}\\
	\textsf{Iteration:} For each round $t \geq 0$, sample a bin $i$ uniformly at random, and update:
	\begin{equation*} x_{i}^{t+1} =
		\begin{cases}
			x_{i}^{t} + 2 & \mbox{if $x_{i}^{t} <  \frac{W^{t}}{n}$}, \\
			x_{i}^{t} + 1 & \mbox{if $x_{i}^{t} \geq  \frac{W^{t}}{n}$}.
		\end{cases}
	\end{equation*}\vspace{-1.5em}
\end{framed}

\begin{figure}[H]
	\centering
	\includegraphics[scale=0.45]{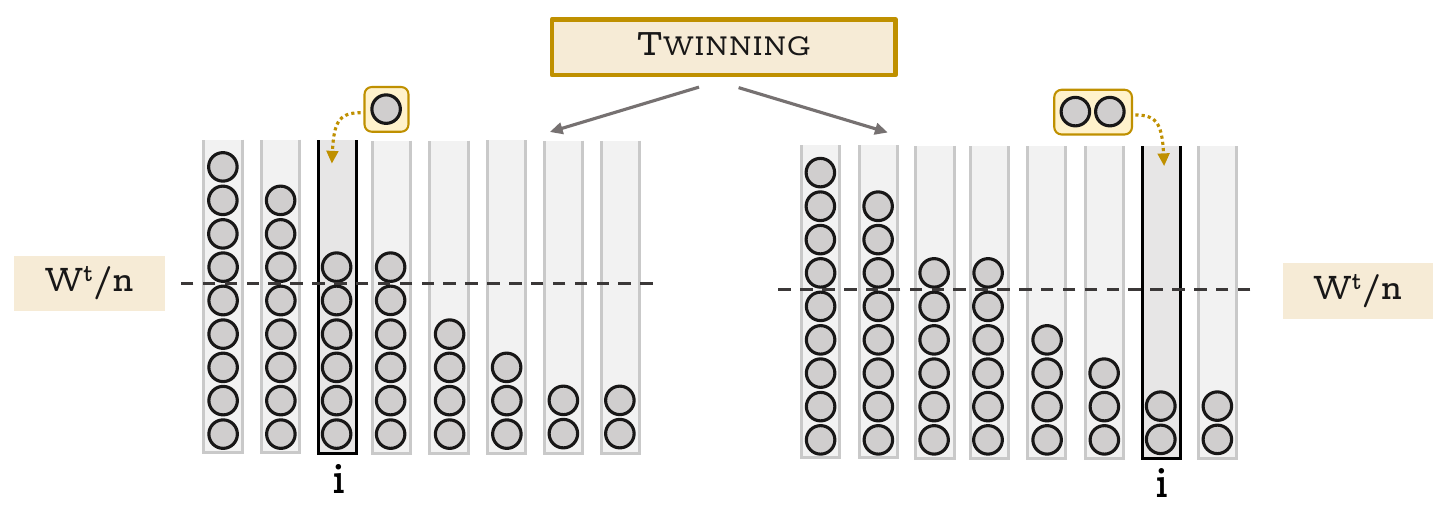}
 \vspace{-0.5cm}
	\caption{Two possibilities for \Twinning: $(i)$~allocate \textit{one} ball to an overloaded bin, or $(ii)$~allocate \textit{two} balls to an underloaded bin.}
	\label{fig:twinning}
\end{figure}

It is fairly clear that \Twinning fits into the \MeanBiased framework.

\begin{lem} %
The \Twinning process satisfies \POne and \WTwo. 
\end{lem}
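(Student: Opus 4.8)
The plan is to verify the two conditions directly from the definition of \Twinning, which samples a single bin uniformly at random and allocates two balls if that bin is underloaded ($y_i^t < 0$) and one ball otherwise. The weight condition \WTwo is essentially immediate: setting $w_- := 2$ and $w_+ := 1$, we have constant integers with $1 \leq w_+ < w_-$, so the weight part of \WOne together with the strict inequality of \WTwo holds. For the majorization requirement in \WTwo, note that since each round samples exactly one uniform bin, $p_i^t = \frac{1}{n}$ for every $i \in [n]$; hence $P_-^t = |B_-^t|/n$ and so $\frac{P_-^t}{|B_-^t|} = \frac{1}{n}$. Thus $\big(p_n^t, \ldots, p_{\delta^t n + 1}^t\big) = \big(\frac{1}{n}, \ldots, \frac{1}{n}\big) = \big(\frac{P_-^t}{|B_-^t|}, \ldots, \frac{P_-^t}{|B_-^t|}\big)$, and equality trivially gives majorization. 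This establishes \WTwo.

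For \POne, again use that $p_i^t = \frac{1}{n}$ for all $i$. Take $\C{p1k1} := 1$ and $\C{p1k2} := 1$. Then $\max_{i \in B_+^t} p_i^t = \frac{1}{n} \leq \frac{\C{p1k1}}{n}$ and $\min_{i \in B_-^t} p_i^t = \frac{1}{n} \geq \frac{\C{p1k2}}{n}$. The two majorization conditions, $\big(p_1^t, \ldots, p_{\delta^t n}^t\big) \preceq \big(\frac{1}{n}, \ldots, \frac{1}{n}\big)$ and $\big(p_n^t, \ldots, p_{\delta^t n + 1}^t\big) \succeq \big(\frac{1}{n}, \ldots, \frac{1}{n}\big)$, both hold with equality since every coordinate equals $\frac{1}{n}$. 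This completes the verification of \POne.

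There is no real obstacle here; the only point requiring a moment of care is the bookkeeping convention that \POne and \WOne/\WTwo refer to the number of balls allocated conditioned on \emph{which bin is chosen}, with the choice of bin governed by the uniform probability allocation vector $p^t$ — so that the weight bias and the (trivial) probability structure are decoupled exactly as the framework intends. Since \Twinning's bin-selection step is plain \OneChoice and its weight step matches \WTwo with $w_+ = 1 < 2 = w_-$, both conditions follow, and \Twinning is therefore a $\POne \cap \WTwo$-process.
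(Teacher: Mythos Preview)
Your proof is correct and follows the same approach as the paper: verify \POne with $\C{p1k1}=\C{p1k2}=1$ from the uniform sampling, and \WTwo from $w_-=2>1=w_+$. The paper's proof is a single sentence; you simply spell out the majorization conditions (including the extra one in \WTwo) explicitly, all of which hold with equality since $p_i^t\equiv\frac{1}{n}$.
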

\begin{proof}
The \Twinning process trivially satisfies \POne with $\C{p1k1} ,\C{p1k2} := 1$ since it samples a bin uniformly at random in each round, and \WTwo since $w_- := 2 > 1 =: w_+$.
\end{proof}

In \cref{sec:sample_efficiency}, we show that \Twinning is more sample efficient than the \OneChoice process.

\newcommand{\TwinningSampleEfficiency}{
There exists a constant $\rho> 0$, such that for any $m \geq \rho^{-1}\cdot  n \log n$, the \Twinning process satisfies
\[
\Ex{\eta^m} \geq 1 + \rho
\quad 
\text{ and }
\quad 
\Pro{\eta^m \geq 1 + \rho} \geq 1 - n^{-1/4}.
\]}

\begin{lem} \label{lem:twinning_sample_efficient}
\TwinningSampleEfficiency
\end{lem}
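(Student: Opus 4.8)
The plan is to relate the sample efficiency $\eta^m = W^m/S^m$ to the fraction of rounds in which the sampled bin is underloaded, since in \Twinning every round takes exactly one sample, so $S^m = m$, and $W^m = m + U^m$, where $U^m$ is the number of the first $m$ rounds in which the sampled bin had load strictly below the current mean. Hence $\eta^m = 1 + U^m/m$, and it suffices to show that $U^m \geq \rho \cdot m$ with the claimed probability (and in expectation), for a suitable constant $\rho > 0$. The key point is that in \emph{any} round $t$, the probability (conditioned on $\mathfrak{F}^t$) of sampling an underloaded bin is exactly $|B_-^t|/n = 1 - \delta^t$, so the expected number of ``twin'' rounds is $\sum_{t=0}^{m-1} \Ex{1-\delta^t}$. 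Thus the heart of the argument is a lower bound of the form $\Ex{1-\delta^t} \geq \rho'$ for a constant fraction of rounds $t \in [0, m)$, i.e., a lower bound on how often the mean quantile $\delta^t$ is bounded away from $1$.

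The first step I would take is to invoke the mean quantile stabilization machinery developed in \cref{sec:mean_quantile_stabilization} (\cref{lem:good_quantile} and the surrounding results), which, together with the fact from \cref{lem:quadratic_absolute_relation} that the absolute-value potential returns to $\Oh(n)$ regularly, shows that over any window of $\Theta(n)$ rounds a constant fraction of rounds have $\delta^t \in [\eps, 1-\eps]$ for some constant $\eps \in (0, 1/2)$. Actually, for the sample-efficiency bound we only need the \emph{upper} side: a constant fraction of rounds satisfy $\delta^t \leq 1 - \eps$, equivalently $|B_-^t| \geq \eps n$. Intuitively this must hold because if there were a long run of rounds with almost all bins overloaded, the few underloaded bins would receive too many balls and the mean would be dragged up, contradicting stabilization; this is precisely what \cref{lem:good_quantile} delivers once the linear potential is $\Oh(n)$. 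Summing over the $m/\Theta(n)$ disjoint windows gives $\sum_{t=0}^{m-1}\Ex{\mathbbm{1}[\delta^t \leq 1-\eps]} \geq c\cdot m$ for some constant $c>0$ and $m \geq \rho^{-1} n\log n$, hence $\Ex{U^m} \geq \Ex{\sum_t (1-\delta^t)\,\mathbbm{1}[\delta^t \le 1-\eps]} \geq \eps c\cdot m$, which yields $\Ex{\eta^m} \geq 1 + \eps c =: 1 + \rho$ after possibly shrinking $\rho$.

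For the high-probability statement I would pass from the expectation bound to concentration. The indicator that round $t$ is a twin round is, conditioned on $\mathfrak{F}^t$, a Bernoulli$(1-\delta^t)$ variable, so $U^m - \sum_{t} (1-\delta^t)$ is a martingale with bounded increments; by Azuma–Hoeffding it concentrates within $\Oh(\sqrt{m\log n})$ of its conditional mean, which is $o(m)$ since $m \geq \rho^{-1}n\log n = \omega(\log n)$. Combined with the (high-probability version of the) stabilization bound on $\sum_t(1-\delta^t)$ — itself holding with probability $\ge 1 - n^{-\Omega(1)}$ via the potential-function estimates and a union bound over the $\Oh(m/n)$ windows — a union bound gives $U^m \geq \rho m$ with probability at least $1 - n^{-1/4}$, and therefore $\eta^m \geq 1 + \rho$ with that probability.

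The main obstacle is the quantitative lower bound on the number of rounds with $|B_-^t| \ge \eps n$: one has to extract from the stabilization results a guarantee that holds starting essentially from round $0$ (not just after some burn-in), uniformly over $m \ge \rho^{-1} n\log n$, and with failure probability small enough that the union bound over all $\Theta(m/n)$ windows still leaves room to reach $1 - n^{-1/4}$. Handling the transient early rounds (before the linear potential first drops to $\Oh(n)$) requires care — but since that transient lasts only $\Oh(n\log n)$-ish rounds while we can take $m$ as large as we like relative to $n\log n$ (or, more carefully, absorb it into the constant by noting the contribution of those rounds to $U^m$ is nonnegative anyway), it does not affect the asymptotics. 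Everything else is a routine application of Azuma's inequality and the already-established potential bounds.
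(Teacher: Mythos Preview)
Your approach is correct and shares the paper's core idea: establish that a constant fraction of rounds $t\in[0,m)$ have $\delta^t\in[\eps,1-\eps]$ (via the quantile-stabilization machinery), observe that in each such round an underloaded bin is sampled with probability at least $\eps$, and conclude $U^m\ge\Omega(m)$ so that $\eta^m=1+U^m/m\ge 1+\rho$.

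The implementations differ in two places. First, the paper partitions $[0,m]$ into blocks of length $n^3\log^4 n$ (plus one remainder block) and, on each block, invokes \eqref{eq:recovery_many_good_quantiles_whp}; the precondition there is only $\max_i|y_i^{t_0}|\le \C{poly_n_gap} n\log n$, which \cref{lem:initial_gap_nlogn} supplies with probability $1-n^{-12}$ at \emph{any} round, independently of history. Your $\Theta(n)$-length windows instead require $\Delta^{t_0}\le Cn$ at each window start (the hypothesis of \cref{lem:good_quantile}), which is not directly available and would need an extra argument---this is the one place your sketch is thin. Second, for concentration the paper couples the good-quantile rounds to i.i.d.\ $\mathsf{Ber}(\eps)$ variables, applies Chernoff within each block, and then uses Markov's inequality on the number of ``bad'' blocks to reach the $1-n^{-1/4}$ bound; your Azuma step on the martingale $U^m-\sum_t(1-\delta^t)$ is cleaner and would in fact give stronger concentration once the good-quantile count is in hand. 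In short: same strategy, with the paper trading a coarser Markov step for an easier-to-verify block precondition.
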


\begin{rem} \label{rem:twinning_on_graphs}
	The \Twinning process can be implemented in graphical balanced allocations on any $d$-regular graph (even on the cycle) in the following manner: $(i)$~sample an edge randomly, $(ii)$~sample one of its endpoints randomly and $(iii)$~allocate using \Twinning. The first two steps are equivalent to sampling a bin uniformly at random, so the process is equivalent to \Twinning. As we show in \cref{cor:meanthinning_twinning_upper_bound}, \Twinning achieves \Whp~an $\Oh(\log n)$ gap, which is much better than the observed $\poly(n)$ gap of \TwoChoice  (e.g., on the cycle)~\cite{ANS22,BF22}.
\end{rem}

\subsection{\textnormal{\MeanBiased} Processes Satisfying \texorpdfstring{$\mathcal{P}_2$}{P₂} and \texorpdfstring{$\mathcal{W}_1$}{W₁} (Probability Bias)} \label{sec:probbias}

We now turn our attention to introducing some processes which satisfy the conditions $\PTwo$ and $\WOne$ of the framework, that is they have a bias towards sampling underloaded bins.

We begin with a special case of \TwoThinning:

\begin{framed}
	\vspace{-.45em} \noindent
	\underline{\MeanThinning Process:}\\
	\textsf{Iteration:} For each round $t \geq 0$, sample two bins $i_1$ and $i_2$ independently and uniformly at random, and update:  
	\begin{equation*}
		\begin{cases}
			x_{i_1}^{t+1} = x_{i_1}^{t} + 1 & \mbox{if $x_{i_1}^{t} <  \frac{t}{n}$}, \\
			x_{i_2}^{t+1} = x_{i_2}^{t} + 1 & \mbox{if $x_{i_1}^{t} \geq  \frac{t}{n}$}.
		\end{cases}
	\end{equation*}\vspace{-1em}
\end{framed}

\begin{figure}[H]
	\centering
	\includegraphics[scale=0.45]{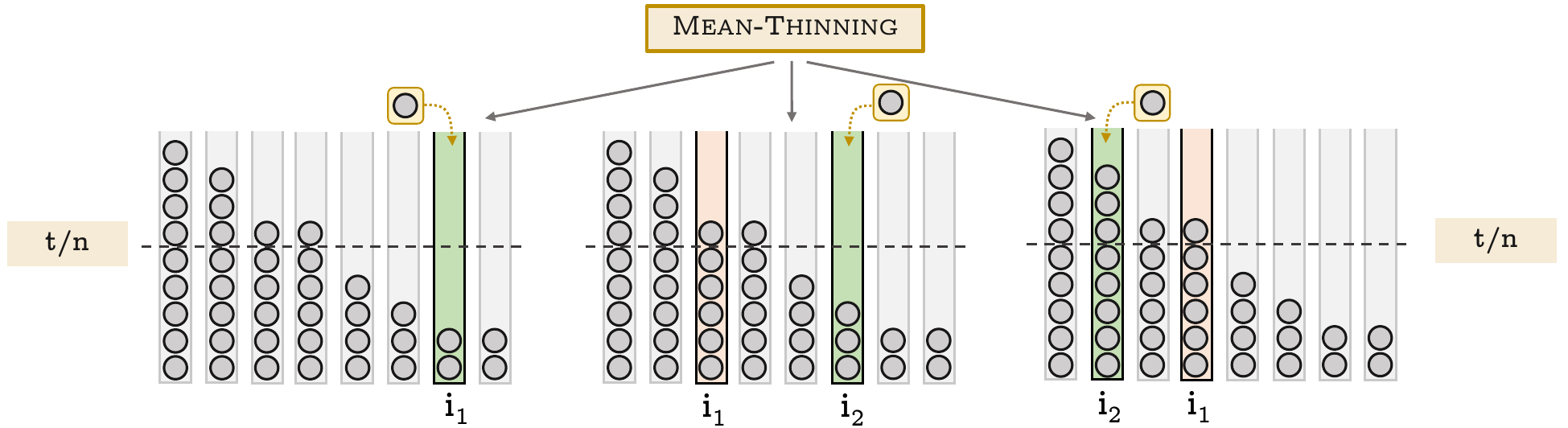}
	\caption{Three different possibilities for \MeanThinning: $(i)$~first bin is underloaded, so the ball is allocated there, $(ii)$~first bin is overloaded and the ball is allocated to an underloaded bin, or $(iii)$ the ball is allocated to an overloaded bin.}
	\label{fig:mean_thinning}
\end{figure}

\begin{lem}
	The \MeanThinning process satisfies \PTwo and \WOne.
\end{lem}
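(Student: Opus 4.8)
The plan is to verify the two conditions directly from the definition of \MeanThinning, since this process just takes two independent uniform samples and allocates exactly one ball per round. Condition \WOne is immediate: every allocation places exactly one ball, so we may take $w_- := w_+ := 1$, which satisfies $1 \leq w_+ \leq w_-$. The substantive part is checking \PTwo, for which we must compute (or bound) the probability allocation vector $p^t$ in terms of the mean quantile $\delta^t$.

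\medskip

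First I would derive the probability allocation vector. Fix a round $t$ and recall $|B_+^t| = \delta^t n$. A ball lands on an overloaded bin $i$ only in the case that the first sample $i_1$ is overloaded and $i_1 = i$; since $i_1$ is uniform, $p_i^t = \frac{1}{n} \cdot \Pro{x_{i_1}^t \geq \frac{t}{n} \mid i_1 = i} = \frac{1}{n}$ for each overloaded bin — wait, more carefully: for overloaded $i$, the ball goes to $i$ iff $i_1 = i$ (which then stays because $i$ is overloaded), so $p_i^t = \frac1n$. Hmm, but the stated vector for \MeanThinning has $p_i^t = \frac{\delta^t}{n}$ on overloaded bins; the discrepancy is because a ball also reaches $i$ as a \emph{second} sample when the first sample is overloaded. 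Let me redo this: for overloaded bin $i$, the ball lands on $i$ if either ($i_1 = i$), or ($i_1$ is overloaded, $i_1 \neq i$, and $i_2 = i$). This gives $p_i^t = \frac1n + \frac{\delta^t n - 1}{n}\cdot\frac1n = \frac{\delta^t}{n} + \frac{1-\delta^t}{n}\cdot$ — I will simply recompute cleanly in the proof; the point is that after simplification one obtains $p_i^t = \frac{\delta^t}{n}$ for overloaded $i$ and $p_i^t = \frac{1+\delta^t}{n}$ for underloaded $i$, exactly the vector displayed earlier in the text. (One should be slightly careful with the tie-breaking / the event that the first sample is underloaded, in which case the ball always stays on the first sample.)

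\medskip

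With the explicit vector in hand, \PTwo follows by inspection. For the overloaded bins: $\max_{i \in B_+^t} p_i^t = \frac{\delta^t}{n} \leq \frac{1}{n}$, so we may take $\C{p1k1} := 1$; and the sorted sub-vector $(p_1^t,\dots,p_{\delta^t n}^t) = (\frac{\delta^t}{n},\dots,\frac{\delta^t}{n})$ is coordinate-wise equal to $(\frac{1}{n} - \frac{(1-\delta^t)}{n},\dots,\frac{1}{n}-\frac{(1-\delta^t)}{n})$, so it is majorized by that vector with $\C{p2k1} := 1$ (equal partial sums). For the underloaded bins: $\min_{i \in B_-^t} p_i^t = \frac{1+\delta^t}{n} \geq \frac{1}{n}$, so $\C{p1k2} := 1$ works; and the reversed sub-vector $(p_n^t,\dots,p_{\delta^t n + 1}^t) = (\frac{1+\delta^t}{n},\dots,\frac{1+\delta^t}{n})$ equals $(\frac{1}{n}+\frac{\delta^t}{n},\dots,\frac{1}{n}+\frac{\delta^t}{n})$, so the required majorization holds with $\C{p2k2} := 1$. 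Since all four constants lie in $(0,1]$ and $\C{p1k1} = 1$ is a positive integer, \PTwo is satisfied.

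\medskip

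I do not anticipate any real obstacle here — the only mild care needed is the case analysis in computing $p_i^t$ (handling whether the first sample is over- or underloaded, and the exclusion $i_1 \neq i$ when $i$ is reached as a second sample), and noting that when $\delta^t = 1$ or $\delta^t = \frac1n$ the formulas and bounds still hold. Everything else is a one-line comparison against the definitions of \POne-strengthened-to-\PTwo and \WOne.
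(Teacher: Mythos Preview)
Your approach is essentially the same as the paper's: verify \WOne trivially with $w_-=w_+=1$, compute the probability allocation vector explicitly, and then read off $\C{p1k1}=\C{p1k2}=\C{p2k1}=\C{p2k2}=1$. Your final verification of \PTwo using $p_i^t=\frac{\delta^t}{n}$ on $B_+^t$ and $p_i^t=\frac{1+\delta^t}{n}$ on $B_-^t$ is correct and matches the paper exactly.

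However, your intermediate case analysis for overloaded bins is wrong and should be cleaned up before you ``recompute cleanly.'' You write that for overloaded $i$ the ball lands on $i$ if $i_1=i$, or if $i_1$ is overloaded with $i_1\neq i$ and $i_2=i$. But if $i_1=i$ and $i$ is overloaded, the process allocates to $i_2$, \emph{not} to $i_1$. The correct (and simpler) event is: the ball lands on overloaded $i$ iff $i_1$ is overloaded and $i_2=i$, giving $p_i^t=\delta^t\cdot\frac{1}{n}$ directly. Once you fix this, the rest of your argument goes through unchanged.
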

\begin{proof}
	The \MeanThinning process trivially satisfies \WOne with $w_-,w_+ := 1$. To verify \PTwo, recall that a ball is allocated to the first sample, if the bin is underloaded; otherwise, the ball is allocated to the second sample. Hence, for any overloaded bin $i \in B_+^t$.
	\[
	p_i^t = \delta^t \cdot \frac{1}{n} = \frac{1}{n} - \frac{1 \cdot (1 - \delta^t)}{n},
	\]
	that is we first sample an overloaded bin (with prob.\ $\delta^t$) and then sample the specific bin $i$ (with prob.\ $\frac{1}{n}$). In particular, $\max_{i \in B_+^t} p_i^t \leq \frac{1}{n}$. Similarly, any underloaded bin $i \in B_-^t$,
	\[
	p_i^t = \frac{1+\delta^t}{n} = \frac{1}{n} + \frac{1 \cdot \delta^t}{n},
	\]
    thus, $\min_{i \in B_-^t} p_i^t \geq \frac{1}{n}$. Therefore, \MeanThinning satisfies \PTwo with $\C{p1k1},\C{p1k2}, \C{p2k1},  \C{p2k2} := 1$.
\end{proof}

\newcommand{\MeanThinningEfficiency}{
There exists a constant $\rho> 0$, such that for any $m \geq \rho^{-1}\cdot  n \log n$, the \MeanThinning process satisfies
\[
\Ex{\eta^m} \geq \frac{1}{2} + \rho
\quad 
\text{ and }
\quad 
\Pro{\eta^m \geq \frac{1}{2} + \rho} \geq 1 - n^{-1/4}.
\]}
\begin{lem} \label{lem:mean_thinning_efficiency}
\MeanThinningEfficiency
\end{lem}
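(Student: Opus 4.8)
The plan is to write the sample efficiency of \MeanThinning exactly in terms of the number of rounds in which it takes a \emph{second} sample, reduce this (by martingale concentration) to a lower bound on the number of rounds whose mean quantile is bounded away from $1$, and then obtain that bound from the mean quantile stabilization analysis developed for \MeanBiased processes.

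\textbf{Exact accounting and reduction.} Since \MeanThinning allocates exactly one ball per round, $W^m=m$, hence $W^t=t$ and $B_+^t=\{i:x_i^t\ge t/n\}$. In round $t$ the process uses a single sample precisely when the first sampled bin $i_1$ is underloaded (i.e.\ $i_1\in B_-^t$) and two samples otherwise, so $S^m=m+N$ where $N:=\sum_{t=0}^{m-1}\mathbf{1}[\,i_1\in B_+^t\,]$. Thus $\eta^m=\frac{m}{m+N}\in[\tfrac12,1]$ deterministically, and $\eta^m\ge\tfrac12+\rho$ is equivalent to $N\le\frac{1-2\rho}{1+2\rho}m$; since $\frac{1-2\rho}{1+2\rho}\ge 1-4\rho$, it suffices to find a constant $c>0$ with $\Pro{N\le(1-c)m}\ge 1-n^{-1/4}$ and then set $\rho:=c/4$.

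\textbf{Concentration, reducing to the mean quantile.} Conditioned on $\mathfrak{F}^t$ the first sample is uniform, so $\Ex{\mathbf{1}[\,i_1\in B_+^t\,]\mid\mathfrak{F}^t}=\delta^t$, which is $\mathfrak{F}^t$-measurable. Hence $M^t:=\sum_{s<t}(\mathbf{1}[\,i_1\in B_+^s\,]-\delta^s)$ is a martingale with increments in $[-1,1]$, and Azuma--Hoeffding gives $N\le\sum_{s<m}\delta^s+\tfrac{c}{4}m$ with probability $\ge 1-\tfrac12 n^{-1/4}$, using $m\ge\rho^{-1}n\log n=\omega(\log n)$. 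Now fix a constant $\eps\in(0,\tfrac12)$ and set $G:=\{t\in[0,m):\delta^t\le 1-\eps\}$; then $\sum_{s<m}\delta^s\le(1-\eps)|G|+(m-|G|)=m-\eps|G|$. So it remains to show $\Pro{|G|\ge c_1 m}\ge 1-\tfrac12 n^{-1/4}$ for some constant $c_1>0$: this yields $N\le(1-\eps c_1+\tfrac{c}{4})m$, and taking $c:=\tfrac12\eps c_1$ gives $N\le(1-c)m$ with $\rho=\eps c_1/8$.

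\textbf{Mean quantile stabilization and the expectation bound.} The bound $|G|\ge c_1 m$ is exactly the content of the mean quantile stabilization analysis of \cref{sec:mean_quantile_stabilization}, since \MeanThinning satisfies \PTwo and \WOne. Concretely: the quadratic potential has negative drift whenever the absolute-value potential is $\Omega(n)$ (\cref{lem:quadratic_absolute_relation}), which forces a constant fraction of the rounds in $[0,m)$ to have absolute-value potential $\Oh(n)$; and by \cref{lem:good_quantile} each such round is followed by a window of $\Theta(n)$ rounds during a constant fraction of which $\delta^t\in[\eps,1-\eps]$, in particular $t\in G$. A covering argument (tile $[0,m)$ into blocks of the window length, observe that a constant fraction of blocks contains a triggering round, and that each triggering round certifies $\Omega(\text{block length})$ members of $G$ in its own block and the next, with each block charged at most twice) then gives $|G|\ge c_1 m$, and using $m\ge\rho^{-1}n\log n$ to absorb the initial burn-in before the potential first becomes $\Oh(n)$ makes the failure probability $n^{-\Omega(1)}\le\tfrac12 n^{-1/4}$. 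Combining with the previous paragraph, $\Pro{\eta^m\ge\tfrac12+\rho}\ge 1-n^{-1/4}$; and since $\eta^m\ge\tfrac12$ deterministically, $\Ex{\eta^m}\ge(\tfrac12+\rho)(1-n^{-1/4})+\tfrac12 n^{-1/4}\ge\tfrac12+\tfrac{\rho}{2}$ for large $n$, so replacing $\rho$ by $\rho/2$ yields both asserted inequalities. (The analogous accounting $W^m=m+\#\{t:i_1\in B_-^t\}$, $S^m=m$ proves \cref{lem:twinning_sample_efficient} for \Twinning by the same scheme.)

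The main obstacle is the step $|G|\ge c_1 m$: that a genuinely \emph{constant} fraction — not merely an $\Omega(1/\log n)$ fraction — of rounds have $\delta^t$ bounded away from $1$. Using only the gap bound $\Gap(m)=\Oh(\log n)$ of \cref{thm:main_technical} one obtains $1-\delta^t=\Omega(1/\log n)$ only on rounds where the absolute-value potential is $\Omega(n)$, which gives merely $\eta^m\ge\tfrac12+\Omega(1/\log n)$; the constant-fraction statement genuinely requires the linear/quadratic potential interplay behind mean quantile stabilization together with a concentration argument converting the expected potential drift into a per-interval guarantee.
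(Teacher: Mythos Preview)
Your proposal is correct and runs parallel to the paper's proof, with two points of departure worth noting.

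First, for the concentration step you apply Azuma--Hoeffding directly to the martingale $\sum_{s<t}(\mathbf{1}[i_1\in B_+^s]-\delta^s)$, whereas the paper constructs an explicit coupling: in each round with $\delta^t\in[\eps,1-\eps]$ it first flips an independent $\mathsf{Ber}(\eps)$ coin, and on success forces the allocation into $B_-^t$ (one sample); a Chernoff bound on these coins then replaces your Azuma step. Your route is arguably cleaner and has the minor advantage that it only needs the one-sided condition $\delta^t\le 1-\eps$ rather than $\delta^t\in[\eps,1-\eps]$.

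Second, for the step $|G|\ge c_1 m$ the paper is more explicit about the tiling scale. It partitions $[0,m)$ into blocks of length $n^3\log^4 n$ (not the $\Theta(n)$ window of \cref{lem:good_quantile}); on each block it invokes \eqref{eq:recovery_many_good_quantiles_whp} together with \cref{lem:initial_gap_nlogn} to get $G\ge\eps^2\cdot(\text{block length})$ with failure probability $n^{-\Omega(1)}$, and then applies Markov's inequality to the number of failing blocks. For the initial remainder block of length $<n^3\log^4 n$ it uses \cref{lem:many_good_quantiles_in_lightly_loaded} (if the remainder is at least $c_s n\log n$) or the trivial bound (otherwise). Your sketch invokes the same ingredients but tiles at the $\Theta(n)$ scale; this is not quite right, because the super-martingale concentration that converts ``$\Upsilon$ drifts down when $\Delta=\Omega(n)$'' into ``a constant fraction of rounds have $\Delta\le Cn$'' (the content of \cref{lem:many_good_quantiles_whp}) needs an a~priori gap bound over the whole interval, and for $m$ super-polynomial in $n$ this is only available block-by-block at the coarser $n^3\log^4 n$ scale. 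Once you switch to that block size your covering argument coincides with the paper's.
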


We now turn to the $(1+\beta)$-process.
\begin{lem} \label{lem:opb_satisfies_p2_w1}
 The $(1+\beta)$-process with any constant $\beta \in (0, 1)$ satisfies \PTwo and \WOne.
\end{lem}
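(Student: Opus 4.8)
The plan is to verify the two conditions directly from the explicit, time‑independent probability allocation vector $p_i = (1-\beta)\cdot\frac1n + \beta\cdot\frac{2i-1}{n^2}$ of the $(1+\beta)$-process. Condition \WOne is immediate, since the process allocates exactly one ball in every round, so it holds with $w_- := w_+ := 1$. For \PTwo, I would first record the structural facts that $p_i$ is non-decreasing in $i$ (hence the $\delta^t n$ overloaded bins are exactly those with indices $1,\ldots,\delta^t n$, as loads are sorted non-increasingly), that $\max_{i\in[n]} p_i = p_n = \frac{1-\beta}{n} + \frac{\beta(2n-1)}{n^2} < \frac{1+\beta}{n} \leq \frac{2}{n}$, and that $\min_{i\in[n]} p_i = p_1 \geq \frac{1-\beta}{n}$. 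These give the pointwise bounds in \PTwo with $\C{p1k1} := 2$ (an integer $\geq 1$) and $\C{p1k2} := 1-\beta\in(0,1]$.

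The remaining work is the two majorization inequalities, and the simplification I would exploit is the closed form $\sum_{i=1}^{j} p_i = (1-\beta)\cdot\frac jn + \beta\cdot\frac{j^2}{n^2}$, valid for all $j\in[n]$. For the overloaded block, the condition $(p_1^t,\ldots,p_{\delta^t n}^t)\preceq(\frac1n - \frac{\C{p2k1}(1-\delta^t)}{n},\ldots)$ is, after dividing the $j$-th partial-sum inequality through by $j/n$, equivalent to $\beta\cdot\frac jn + \C{p2k1}(1-\delta^t)\leq\beta$ for every $j\leq\delta^t n$; since the left-hand side is largest at $j=\delta^t n$, this reduces to $\C{p2k1}(1-\delta^t)\leq\beta(1-\delta^t)$, so $\C{p2k1} := \beta$ suffices. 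For the underloaded block I would rewrite $\sum_{i=n-j+1}^n p_i = 1 - \sum_{i=1}^{n-j} p_i$ and plug in the closed form to get the clean identity $\sum_{i=n-j+1}^{n}p_i - \frac jn = \beta\cdot\frac{(n-j)\,j}{n^2}$; then $(p_n^t,\ldots,p_{\delta^t n+1}^t)\succeq(\frac1n + \frac{\C{p2k2}\delta^t}{n},\ldots)$ becomes $\beta\cdot\frac{n-j}{n}\geq\C{p2k2}\,\delta^t$ for every $j\leq(1-\delta^t)n$, and since $n-j\geq\delta^t n$ over that range, $\C{p2k2} := \beta$ suffices. Hence \PTwo holds with constants $(\C{p1k1},\C{p1k2},\C{p2k1},\C{p2k2}) = (2,\,1-\beta,\,\beta,\,\beta)$, each depending only on the constant $\beta$.

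I do not expect a genuine obstacle, so the ``hard part'' is really just the bookkeeping: one must confirm that the extreme case of each majorization is the boundary index $j=\delta^t n$ separating overloaded from underloaded bins (which follows from the monotonicity of $p$, equivalently from the quadratic shape of $j\mapsto\sum_{i\le j}p_i$), and one must dispatch the degenerate cases $\delta^t\in\{0,1\}$ where one of the blocks is empty and the corresponding slack vanishes; there the conditions degenerate to $\sum_{i\le j}p_i\le j/n$ (respectively $\sum_{i>n-j}p_i\ge j/n$), which hold for all $j$ by the same closed-form computation. Everything else is elementary.
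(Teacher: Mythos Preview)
Your proposal is correct and follows essentially the same approach as the paper: both verify \WOne trivially with $w_-=w_+=1$, establish the pointwise bounds $\C{p1k1}=2$ and $\C{p1k2}=1-\beta$, and obtain the majorization conditions with $\C{p2k1}=\C{p2k2}=\beta$ via the closed form for the partial sums of $p$. The only cosmetic difference is that you check the prefix/suffix inequality at every intermediate index $j$ and reduce to the boundary $j=\delta^t n$, whereas the paper computes only the full prefix (resp.\ suffix) sum and then invokes the monotonicity of $p_i$ in $i$ to deduce the remaining partial-sum inequalities; the content is the same.
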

\begin{proof}
The $(1+\beta)$-process trivially satisfies \WOne with $w_-, w_+ := 1$. To verify \PTwo, recall that 
\[
 p_i := (1-\beta) \cdot \frac{1}{n} + \beta \cdot \frac{2i-1}{n^2}.
\]
Hence, $\max_{i \in [n]} p_i \leq \frac{2}{n}$ and $\min_{i \in [n]} p_i \geq \frac{1-\beta}{n}$, and so $\C{p1k1} := 2$ and $\C{p1k2} := 1-\beta$. Now, for the majorization condition, note that the prefix sum is given by
\begin{align*}
 \sum_{i=1}^{\delta^t n} p_i &=
 \delta^t n \cdot (1-\beta) \cdot \frac{1}{n} + \beta \cdot \frac{(\delta^t n)^2}{n^2} 
 = \delta^t - \beta \cdot \delta^t \cdot ( 1 - \delta^t).
\end{align*}
This is the same prefix sum as for the uniform vector which has all entries being equal to $\frac{1}{n} - \beta \cdot (1-\delta^t)$. Since the probability allocation vector of the $(1+\beta)$-process is non-decreasing in $i \in [n]$, it follows that for $\C{p2k1} := \beta$, 
\[
 \left( p_1, \ldots, p_{\delta^t n} \right)  \preceq \left( \frac{1}{n} - \frac{\C{p2k1} \cdot (1-\delta^t)}{n}, \ldots, \frac{1}{n} - \frac{\C{p2k1} \cdot (1-\delta^t)}{n} \right).
\]
The argument for the suffix sum is analogous. We have,
\[
 \sum_{i=\delta^t n + 1}^n p_i = 1 - \delta^t + \beta \cdot \delta^t \cdot (1-\delta^t).
\]
This is the same suffix sum as for the uniform vector $\frac{1}{n} + \beta \cdot \delta^t$. Since the probability allocation vector $p_i$ is non-decreasing in $i \in [n]$, it follows that with $\C{p2k2} := \beta$,
\[
 \left( p_{n}, \ldots, p_{\delta^t n + 1} \right)   \succeq \left( \frac{1}{n} + \frac{\C{p2k2} \cdot \delta^t}{n}, \ldots , \frac{1}{n} + \frac{\C{p2k2} \cdot \delta^t}{n} \right). \qedhere
\]
\end{proof}
For the $(1+\beta)$-process with $\beta = 1-o(1)$ (which includes \TwoChoice), our framework implies an $\Oh(\log n)$ bound on the gap by majorization \cite[Section 3]{PTW15}.

Following the definition of $k$-\textsc{Threshold} processes~\cite{LS22Queries}, we define \KRelativeThreshold processes for $k \geq 1$.

\begin{framed}
	\vspace{-.45em} \noindent
	\underline{$\KRelativeThreshold(f_1(n), \ldots, f_k(n))$ Process:}\\
	\textsf{Parameter:} \textit{Threshold functions} $f_1, \ldots, f_{k}$, such that for every $n \in \N$,
	\[
	\infty = f_0(n) > f_1(n) > f_2(n) > \ldots > f_k(n).
	\]
	\textsf{Iteration:} For each round $t \geq 0$, sample two bins $i_1$ and $i_2$ independently and uniformly at random, and define for $i \in \{i_1, i_2\}$:
	\[
	\ell_i^t := \max \left\{ j \in [k]\cup \{ 0 \} : \frac{t}{n} + f_j(n) > x_{i}^t \right\}.
	\]
	Then, update
	\begin{equation*}
		\begin{cases}
			x_{i_1}^{t+1} = x_{i_1}^{t} + 1 & \mbox{if $\ell_{i_1}^t \geq \ell_{i_2}^t$}, \\
			x_{i_2}^{t+1} = x_{i_2}^{t} + 1 & \mbox{otherwise}.
		\end{cases}
	\end{equation*}\vspace{-0.8em}
\end{framed}
We now show that a special case of this process where the lowest threshold is equal to the mean load satisfies our \MeanBiased conditions. The \MeanBiased framework implies an $\Oh(\log n)$ bound on the gap and this may constitute a starting point for proving $o(\log n)$ bounds on the gap for $k \geq 2$.

\begin{lem} \label{lem:k_relative_threshold_satisfies_w1_p2}
	For any $k \geq 1$, any $\KRelativeThreshold(f_1, \ldots, f_k)$ process with $f_k = 0$ satisfies \PTwo and \WOne.
\end{lem}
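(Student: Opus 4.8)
The plan is to verify the two conditions separately, with \WOne being immediate and \PTwo requiring a short computation on the probability allocation vector induced by the $\KRelativeThreshold$ rule when $f_k=0$. For \WOne: the process allocates exactly one ball per round regardless of which bin is chosen, so it satisfies \WOne with $w_-:=w_+:=1$. The substance is in \PTwo. The key observation is that when $f_k=0$, a bin $i$ has level $\ell_i^t=k$ (the maximal possible level) if and only if $x_i^t < t/n$, i.e.\ if and only if $i$ is underloaded; every overloaded bin has level at most $k-1$. Since the process allocates to the sampled bin of \emph{higher} level (breaking ties toward $i_1$), an underloaded bin is favored over every overloaded bin in the comparison, exactly as in \MeanThinning.

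Concretely, I would first bound the probability of allocating to a fixed overloaded bin $i\in B_+^t$. For the allocation to land on $i$, at least one of the two samples must be $i$, and $i$ must ``win'' the comparison. Since $i$ has level $\le k-1 < k$, it loses (or at best ties, in which case the tie-break may still send the ball elsewhere) against any underloaded sample; so a necessary condition for allocating to $i$ is that neither sample is a strictly-underloaded bin, which happens with probability at most $(\delta^t)^2 + $ (lower-order bookkeeping for the event that $i$ itself is sampled). Arguing as in the \MeanThinning proof, I get $p_i^t \le \delta^t/n = \tfrac1n - \tfrac{1\cdot(1-\delta^t)}{n}$, and in particular $\max_{i\in B_+^t}p_i^t\le \tfrac1n$; since this bound is uniform over overloaded bins, the vector $(p_1^t,\dots,p_{\delta^t n}^t)$ is majorized by the constant vector with entries $\tfrac1n-\tfrac{\C{p2k1}\cdot(1-\delta^t)}{n}$ for $\C{p2k1}:=1$. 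Symmetrically, a fixed underloaded bin $i\in B_-^t$ is allocated to whenever it is the first sample (probability $\tfrac1n$), and also in some further cases, giving $p_i^t\ge \tfrac{1+\delta^t}{n}=\tfrac1n+\tfrac{1\cdot\delta^t}{n}$, hence $\min_{i\in B_-^t}p_i^t\ge\tfrac1n$ and the required suffix-majorization with $\C{p2k2}:=1$. Finally $\max_{i\in[n]}p_i^t\le\tfrac2n$ since the process takes two uniform samples, so $\C{p1k1}:=2$, and $\min_{i\in[n]}p_i^t\ge\tfrac{1}{n^2}\cdot 1$-type crude bound gives a valid $\C{p1k2}\in(0,1]$ (indeed $\C{p1k2}:=1$ works by the underloaded computation, while overloaded bins only need the general upper structure).

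The one subtlety — and the part that needs slightly careful handling rather than a one-line argument — is the tie-breaking and the possibility that two overloaded bins have \emph{different} levels among $\{0,\dots,k-1\}$: the exact value of $p_i^t$ for an overloaded bin then depends on how many bins sit at strictly higher levels, so $p^t$ restricted to $B_+^t$ need not be constant. This is why \PTwo is phrased with a majorization condition rather than an entrywise bound. I would resolve it by noting that the prefix sum $\sum_{i=1}^{\delta^t n}p_i^t = P_+^t$ is the total probability of allocating to \emph{any} overloaded bin, and this is at most $(\delta^t)^2 \le \delta^t \cdot \delta^t$... more precisely at most $\delta^t\cdot\tfrac{\delta^t n}{n}$, which matches the prefix sum $\delta^t n\cdot\bigl(\tfrac1n-\tfrac{1-\delta^t}{n}\bigr)$ of the target constant vector; combined with the entrywise bound $\max_{i\in B_+^t}p_i^t\le\tfrac1n$ this yields the majorization by the standard fact that an entrywise-capped vector with a given total is majorized by the constant vector with that total. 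The symmetric statement for the underloaded suffix is analogous. I expect this bookkeeping around the intermediate levels and ties to be the only real obstacle; everything else parallels the already-completed \MeanThinning verification.
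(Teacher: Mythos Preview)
There is a genuine gap on the overloaded side. Your entrywise claim $p_i^t\le \delta^t/n$ for overloaded $i$ is false once $k\ge 2$: an overloaded bin at level $j\in\{1,\dots,k-1\}$ has
\[
p_i^t=\frac{\delta_{f_j}^t+\delta_{f_{j+1}}^t}{n},
\]
which can be as large as $2\delta^t/n$ (take $\delta_{f_{k-1}}^t$ close to $\delta^t$). In particular your ``in particular $\max_{i\in B_+^t}p_i^t\le 1/n$'' fails whenever $\delta^t>1/2$. Your fallback---``an entrywise-capped vector with a given total is majorized by the constant vector with that total''---is not a valid principle: a cap of $1/n$ (even if it held) combined with $P_+^t=(\delta^t)^2$ does not control the intermediate prefix sums $\sum_{i=1}^j p_i^t$ against $j\delta^t/n$.

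What actually saves the argument is the monotonicity you never invoke: since the levels, and hence $p_i^t$, are non-decreasing in $i$, for every $j\le \delta^t n$ one has $\sum_{i=1}^j p_i^t \le \frac{j}{\delta^t n}\,P_+^t=\frac{j}{\delta^t n}(\delta^t)^2=j\cdot\frac{\delta^t}{n}$, which is exactly the required prefix bound. The paper takes a slightly different route: it observes that adding each extra threshold only shifts probability from heavier bins to lighter ones, so the $\KRelativeThreshold$ vector is majorized (in the full-vector sense) by the \MeanThinning vector, and then inherits the \PTwo\ majorizations by transitivity. Your underloaded-side computation $p_i^t=(1+\delta^t)/n$ and the constants $\C{p1k1}:=2$, $\C{p1k2}:=1$ are fine.
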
 
\begin{proof}
	The process trivially satisfies \WOne with $w_- ,w_+ := 1$.
	To verify \PTwo, let $\delta_{f_1}^t, \ldots, \delta_{f_k}^t$ be the quantiles of the thresholds $\frac{t}{n} + f_1, \ldots , \frac{t}{n} + f_k$. Then, the probability allocation vector $p^t$ of the process is given by\[
	p_i^t = \begin{cases}
		\frac{\delta_{f_1}^t}{n} & \text{if }i \leq  \delta_{f_1}^t n, \\
		\frac{\delta_{f_1}^t + \delta_{f_2}^t}{n} & \text{if }\delta_{f_1}^t n < i \leq \delta_{f_2}^t n, \\
		\quad \vdots & \\
		\frac{ \delta_{f_k}^t+1}{n} & \text{if }\delta_{f_k}^t n < i\leq n.
	\end{cases}
	\]
	Threshold $f_j$ affects the probability of bins with normalized load in $[f_{j-1}, f_{j+1}]$. In particular it ``moves'' probability from bins with normalized load in $[f_{j}, f_{j+1}]$ to bins with normalized load in $[f_{j-1}, f_j]$. Letting $p_{\mathcal{Q}_1}^t, \ldots , p_{\mathcal{Q}_{k}}^t$, be the probability allocation vectors corresponding to the processes $\mathcal{Q}_j = j\text{-}\RelativeThreshold(f_{k-j}, \ldots, f_k)$ for $j = 1, \ldots, k$, we have that \[
	p_{\mathcal{Q}_1}^t \preceq \ldots \preceq p_{\mathcal{Q}_{k}}^t,
	\]
	with $\mathcal{Q}_1$ corresponding to \MeanThinning and $\mathcal{Q}_{k}$ corresponding to the process in the statement. Since \MeanThinning satisfies the majorization conditions of \PTwo with $\C{p2k1} ,\C{p2k2} := 1$, by transitivity of majorisation so does $\mathcal{Q}_{k}$. Finally, $\max_{i \in [n]} p_i^t \leq \frac{2}{n}$ and $\min_{i \in B_-^t} p_i^t \geq \frac{1}{n}$ and so $\C{p1k1} := 2$ and $\C{p1k2} := 1$.
\end{proof}

Finally, we study a \textit{noisy} version of the \MeanThinning process, the $(1+\zeta)$-process.

\begin{framed}
	\vspace{-.45em} \noindent
	\underline{$(1+\zeta)$-Process:}\\
	\textsf{Parameter:} A \textit{mixing factor} $\zeta \in (0,1]$.\\
	\textsf{Iteration:} For each round $t \geq 0$, with probability $\zeta$ allocate one ball via the \MeanThinning process, otherwise allocate one ball via the \OneChoice process. \vspace{-.5em}
\end{framed}

\begin{lem}\label{lem:noisy_threshold}
For any constant $\zeta \in (0, 1]$, the $(1+\zeta)$-process satisfies \PTwo and \WOne.
\end{lem}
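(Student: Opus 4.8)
The plan is to observe that the $(1+\zeta)$-process inherits both conditions almost directly from its two sub-processes, with essentially no new work beyond a convexity computation. The one structural fact to keep in mind throughout is that every round places exactly one ball — \MeanThinning and \OneChoice each do — so $W^t = t$ for all $t$; in particular the threshold $\frac{t}{n}$ used by the \MeanThinning component always equals the current mean load $\frac{W^t}{n}$, and hence the mean quantile $\delta^t$ of $x^t$ is the same quantity that appears in the conditions whichever sub-process is selected in round $t$.

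Condition \WOne is immediate: irrespective of the $\zeta$-coin flip and of whether the bin chosen for allocation is over- or underloaded, exactly one ball is placed, so we may take $w_- := w_+ := 1$.

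For condition \PTwo, I would write the probability allocation vector of the $(1+\zeta)$-process as the convex combination $p^t = \zeta \cdot q^t + (1-\zeta)\cdot\bigl(\frac{1}{n},\dots,\frac{1}{n}\bigr)$, where $q^t$ is the \MeanThinning probability allocation vector stated above. Since $q^t$ is constant and equal to $\frac{\delta^t}{n}$ on the $\delta^t n$ overloaded coordinates and constant and equal to $\frac{1+\delta^t}{n}$ on the remaining underloaded coordinates, the same holds for $p^t$: for $i \in B_+^t$ one gets $p_i^t = \frac{1}{n} - \frac{\zeta(1-\delta^t)}{n}$, and for $i \in B_-^t$ one gets $p_i^t = \frac{1}{n} + \frac{\zeta\delta^t}{n}$. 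From $\zeta(1-\delta^t)\ge 0$ and $\zeta\delta^t \ge 0$ we read off $\max_{i\in B_+^t}p_i^t \le \frac{1}{n}$ and $\min_{i\in B_-^t}p_i^t \ge \frac{1}{n}$, so the maximum and minimum bounds of \PTwo hold with $\C{p1k1} := 1$ and $\C{p1k2} := 1$. For the majorization constraints, the prefix vector $(p_1^t,\dots,p_{\delta^t n}^t)$ is exactly the constant vector with entry $\frac{1}{n} - \frac{\zeta(1-\delta^t)}{n}$, which coincides with the target vector of \PTwo once $\C{p2k1} := \zeta$, so $\preceq$ holds trivially; symmetrically the suffix vector $(p_n^t,\dots,p_{\delta^t n+1}^t)$ coincides with the target vector of \PTwo for $\C{p2k2} := \zeta$, so $\succeq$ holds trivially. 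Since $\zeta\in(0,1]$, the constants $\C{p1k2},\C{p2k1},\C{p2k2}$ all lie in $(0,1]$ and $\C{p1k1}$ is a positive integer, which completes the check.

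I do not expect any genuine obstacle: the argument is one convexity identity followed by a bookkeeping check that the four constants land in the ranges \PTwo requires. The only subtlety worth stating explicitly is the alignment of $\delta^t$ across the two sub-processes, which is handled by the $W^t = t$ observation above; in spirit this is the same reduction used in \cref{lem:opb_satisfies_p2_w1} for the $(1+\beta)$-process, but here it is even simpler since the convex combination is already piecewise constant on $B_+^t$ and $B_-^t$, so no monotonicity argument is needed.
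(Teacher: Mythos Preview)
Your proof is correct and essentially identical to the paper's: both verify \WOne with $w_-=w_+=1$, compute the same convex-combination formula $p_i^t = \frac{1}{n} - \frac{\zeta(1-\delta^t)}{n}$ on $B_+^t$ and $p_i^t = \frac{1}{n} + \frac{\zeta\delta^t}{n}$ on $B_-^t$, and set $\C{p2k1}=\C{p2k2}=\zeta$. The only cosmetic difference is that the paper takes $\C{p1k1}:=2$ (using the cruder bound $\max_{i\in[n]}p_i^t\le\frac{2}{n}$) while you take $\C{p1k1}:=1$ via the tighter $\max_{i\in B_+^t}p_i^t\le\frac{1}{n}$; both are valid.
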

\begin{proof}
The $(1+\zeta)$-process trivially satisfies \WOne with $w_-,w_+ := 1$. Let $p^t$ be the probability allocation vector of the process. For any overloaded bin $i \in B_{+}^t$,
\begin{align*}
 p_{i}^t &= (1 - \zeta) \cdot \frac{1}{n} + \zeta \cdot \frac{\delta^t}{n}
 = \frac{1}{n} - \frac{\zeta \cdot (1-\delta^t)}{n}. 
\end{align*}
Similarly, for any underloaded bin $i \in B_{-}^t$,
\begin{align*}
 p_{i}^t &= (1 - \zeta) \cdot \frac{1}{n} + \zeta \cdot \frac{1+\delta^t}{n}
 = \frac{1}{n} + \frac{\zeta \cdot \delta^t}{n}.
\end{align*}
Thus, $\max_{i \in [n]} p_i^t \leq \frac{2}{n}$ and $\min_{i \in B_-^t} p_i^t \geq \frac{1}{n}$. So, we conclude that the $(1+\zeta)$-process satisfies \PTwo with $\C{p1k1} := 2$, $\C{p1k2} := 1$  and $\C{p2k1} ,\C{p2k2} := \zeta \in (0, 1]$.
\end{proof}
Having verified that the above processes are \MeanBiased, by \cref{thm:main_technical} we obtain the following upper bound on their gaps.
\begin{cor} \label{cor:meanthinning_twinning_upper_bound}
Consider any of the \Twinning, $\KRelativeThreshold(f_1, \ldots , f_k)$ for any $k \geq 1$ and $f_k = 0$ (including \MeanThinning) and the $(1+\beta)$-process with any constant $\beta \in (0,1]$. Then, there exists a constant $\kappa > 0$ (different for each process) such that for any round $m \geq 0$, \[
	\Pro{ \Gap(m) \leq \kappa \log n } \geq 1-n^{-4}.
\]
\end{cor}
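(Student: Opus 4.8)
The corollary is essentially a repackaging of the Main Theorem (\cref{thm:main_technical}) together with the membership lemmas already established, so the plan is to dispatch each process to its verified subclass. For \Twinning, the lemma showing that it satisfies $\POne\cap\WTwo$ places it in the weight-biased subclass; for any $\KRelativeThreshold(f_1,\ldots,f_k)$ process with $f_k=0$ (the case $k=1$ being \MeanThinning), \cref{lem:k_relative_threshold_satisfies_w1_p2} places it in $\PTwo\cap\WOne$; and for the $(1+\beta)$-process with constant $\beta\in(0,1)$, \cref{lem:opb_satisfies_p2_w1} does the same. In each of these cases \cref{thm:main_technical} supplies a constant $\kappa>0$ (depending on the process through the constants appearing in its defining conditions) with $\Pro{\max_{i\in[n]}|x_i^m - W^m/n|\le\kappa\log n}\ge 1-n^{-4}$ for every round $m\ge 0$. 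Since $\Gap(m)=\max_{i\in[n]}x_i^m - W^m/n\le\max_{i\in[n]}|x_i^m - W^m/n|$, the claimed high-probability bound on $\Gap(m)$ follows immediately in all of these cases.

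The one value of $\beta$ not covered by the above is $\beta=1$, i.e.\ \TwoChoice, which is excluded from \cref{lem:opb_satisfies_p2_w1} precisely because the lower bound $1-\beta$ on its per-bin allocation probabilities degenerates to $0$. Here I would instead argue by majorization, exactly as remarked after \cref{lem:opb_satisfies_p2_w1}: a direct computation of prefix sums shows that the sorted probability allocation vector of \TwoChoice, namely $p_i=(2i-1)/n^2$, is majorized by that of the $(1+\beta)$-process for every fixed $\beta\in(0,1)$, and this majorization holds in every round since all of these vectors are time-independent. By the coupling argument of \cite[Section~3]{PTW15}, a process whose probability allocation vector is majorized in every round by that of another has a load vector that is at least as balanced; hence the $\Oh(\log n)$ gap bound already obtained for, say, the $(1+\tfrac{1}{2})$-process transfers to \TwoChoice, yielding the corollary for $\beta=1$ as well. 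Since the statement only asserts the existence of a per-process constant $\kappa$, combining these two arguments over $\beta\in(0,1)$ and $\beta=1$ causes no difficulty.

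I do not anticipate any real obstacle here: all of the analytic work has been front-loaded into \cref{thm:main_technical} and into the membership lemmas, so the proof amounts to checking hypotheses. The only points needing (routine) care are verifying that the constants produced by each membership lemma are genuinely independent of $n$ and $t$ — which they are by inspection, e.g.\ both probability-bias constants equal $\beta$ for the $(1+\beta)$-process and equal $1$ for \MeanThinning — so that \cref{thm:main_technical} is applicable, and recording the trivial inequality $\Gap(m)\le\max_{i\in[n]}|x_i^m-W^m/n|$ used to pass from the two-sided deviation bound of \cref{thm:main_technical} to the one-sided gap bound in the statement.
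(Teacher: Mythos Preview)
Your proposal is correct and follows essentially the same approach as the paper: the corollary is stated immediately after the membership lemmas with only the sentence ``Having verified that the above processes are \MeanBiased, by \cref{thm:main_technical} we obtain the following upper bound on their gaps'' as justification. You are in fact more careful than the paper's own proof, since you explicitly address the boundary case $\beta=1$ via the majorization remark (which the paper mentions only in passing after \cref{lem:opb_satisfies_p2_w1}, not in the proof of the corollary itself).
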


\subsection{\texorpdfstring{$\textnormal{\RelativeThreshold}(f(n))$}{Relative-Threshold(f(n))} with \texorpdfstring{$f(n) \geq 0$}{f(n) ≥ 0}}

In the previous section, we showed that a special case of $\KRelativeThreshold(f_1, \ldots, f_k)$ process with $f_k=0$ is \MeanBiased. We now turn our attention to instances with $k=1$ but the offset function $f_1(n) =f(n)$ is arbitrary non-negative. This generalizes \MeanThinning, and for $f(n) > 0$ the process is not \MeanBiased as there are load configurations where the processes may not bias allocations away from all overloaded bins.
\begin{framed}
	\vspace{-.45em} \noindent
	\underline{$\RelativeThreshold(f(n))$ Process:}\\
	\textsf{Parameter:} An \textit{offset function} $f(n) \geq 0$.\\
	\textsf{Iteration:} For each round $t \geq 0$, sample two bins $i_1$ and $i_2$ independently and uniformly at random, and update:  
	\begin{equation*}
		\begin{cases}
			x_{i_1}^{t+1} = x_{i_1}^{t} + 1 & \mbox{if $x_{i_1}^{t} <  \frac{t}{n}+f(n)$}, \\
			x_{i_2}^{t+1} = x_{i_2}^{t} + 1 & \mbox{if $x_{i_1}^{t} \geq  \frac{t}{n}+f(n)$}.
		\end{cases}
	\end{equation*}\vspace{-1em}
\end{framed}

Similarly to \MeanThinning, the \RelativeThreshold processes are appealing, as they require just knowledge (or estimate) of the mean load, which changes only every $n$ rounds. For this process we prove the following upper bound on the gap.

\begin{thm}	\label{thm:relupper} There exists a constant $\kappa > 0$ such that for any round $m \geq 0$ and function $f(n)\geq 0$ the $\RelativeThreshold(f(n))$ process satisfies \[
	\Pro{ \Gap(m) \leq \kappa \left(\log n + f(n)\right) } \geq 1-n^{-4}.
	\]
\end{thm}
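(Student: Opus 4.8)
The plan is to derive \cref{thm:relupper} from the $\Oh(\log n)$ gap bound for \MeanThinning in \cref{cor:meanthinning_twinning_upper_bound} by a coupling, as announced in the introduction. Write $\mathcal{P} := \RelativeThreshold(f(n))$ and $\mathcal{Q} := \MeanThinning$, both run from the empty configuration, and couple them by feeding \emph{both} the same sequence of independent uniform sample pairs $(i_1^{s}, i_2^{s})_{s \ge 0}$; each process then applies its own deterministic update rule, so the marginals of the coupling are genuinely $\mathcal{P}$ and $\mathcal{Q}$. Both processes allocate one ball per round, so $W_{\mathcal{P}}^t = W_{\mathcal{Q}}^t = t$ and the mean load in round $t$ equals $t/n$ for both. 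Keeping the bin labels fixed across the two processes (\emph{not} sorting by load), I would prove by induction on $t$ the additive invariant
\[
 x_{\mathcal{P},i}^t \;\le\; x_{\mathcal{Q},i}^t + f(n) \qquad \text{for every bin } i \in [n],
\]
first in the case where $f(n)$ is a non-negative integer.

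For the inductive step, consider round $t$ with samples $i_1, i_2$: process $\mathcal{P}$ places the ball on $i_1$ iff $x_{\mathcal{P},i_1}^t < t/n + f(n)$ (otherwise on $i_2$), while $\mathcal{Q}$ places it on $i_1$ iff $x_{\mathcal{Q},i_1}^t < t/n$. There are three cases. If both processes route the ball to the same bin (in particular if $i_1 = i_2$), the invariant is preserved coordinate-wise. If $\mathcal{P}$ chooses $i_1$ while $\mathcal{Q}$ chooses $i_2$, then $x_{\mathcal{Q},i_1}^t \ge t/n$, so a discrepancy of exactly $f(n)$ at $i_1$ would force $x_{\mathcal{P},i_1}^t \ge t/n + f(n)$, contradicting $\mathcal{P}$'s choice; hence (by integrality) the discrepancy at $i_1$ is at most $f(n)-1$ and still at most $f(n)$ after $\mathcal{P}$ increments $i_1$, while $\mathcal{Q}$ incrementing $i_2$ only relaxes the invariant there. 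The remaining case — $\mathcal{P}$ chooses $i_2$, $\mathcal{Q}$ chooses $i_1$ — I claim is \emph{impossible}: $\mathcal{P}$ choosing $i_2$ means $x_{\mathcal{P},i_1}^t \ge t/n + f(n)$, yet the inductive hypothesis together with $\mathcal{Q}$ choosing $i_1$ gives $x_{\mathcal{P},i_1}^t \le x_{\mathcal{Q},i_1}^t + f(n) < t/n + f(n)$. This impossibility of a ``reverse divergence'' is the heart of the coupling, and it is exactly where the fact that $\mathcal{Q}$'s threshold equals the true mean is used.

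Once the invariant is in hand, taking maxima over $i$ yields $\max_i x_{\mathcal{P},i}^m \le \max_i x_{\mathcal{Q},i}^m + f(n)$, and since $W_{\mathcal{P}}^m = W_{\mathcal{Q}}^m = m$ this gives $\Gap_{\mathcal{P}}(m) \le \Gap_{\mathcal{Q}}(m) + f(n)$ deterministically under the coupling. Applying \cref{cor:meanthinning_twinning_upper_bound} to $\mathcal{Q} = \MeanThinning$ provides a constant $\kappa_0$ with $\Pro{\Gap_{\mathcal{Q}}(m) \le \kappa_0 \log n} \ge 1 - n^{-4}$ for every $m \ge 0$; on that event $\Gap_{\mathcal{P}}(m) \le \kappa_0 \log n + f(n) \le \kappa(\log n + f(n))$ with $\kappa := \max(\kappa_0,1)$, which is the claim.

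The main obstacle is not probabilistic — all of the randomness is already packaged inside \cref{cor:meanthinning_twinning_upper_bound} — but lies entirely in making the coupling watertight: choosing the right (additive) invariant and ruling out the reverse divergence. The one genuinely delicate point is a non-integer offset $f(n)$, where the one-line integrality step in the second case no longer closes the induction; there one should work with $\lceil f(n) \rceil$ in place of $f(n)$ (using $\lceil a+b\rceil \le \lceil a\rceil + \lceil b\rceil$ in the case checks) and, if needed, strengthen the invariant slightly so that it also records which bins currently sit within $1$ of the shifted threshold, or else first reduce to integer offsets. I expect that to be the only step requiring care; everything else is routine bookkeeping.
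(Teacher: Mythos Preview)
Your proposal is correct and follows essentially the same route as the paper: the paper also reduces to \MeanThinning via a monotone coupling (\cref{lem:f(n)extension}), though it factors the argument through an intermediate process started from the load vector $(f(n),\ldots,f(n))$ (\cref{lem:coupling,lem:coupling_two}) rather than coupling $\mathcal{P}$ and $\mathcal{Q}$ directly. Your single-step coupling and its three-case analysis (including the impossibility of the ``reverse divergence'') are exactly the content of those two lemmas combined, and your caution about non-integer $f(n)$ is well placed --- the paper's own proof of \cref{lem:coupling} tacitly uses the same integrality step.
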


The bound above is tight for any $\RelativeThreshold(f(n))$ process with $f(n) \geq \log n$.

\def\rellower{Consider any $\RelativeThreshold(f(n))$ process with $f(n) \geq \log n$. Then, for $m := \frac{1}{24} \cdot \frac{n \cdot (f(n))^2}{\log n}$,
	\[
	\Pro{ \Gap(m) \geq \frac{f(n)}{50}  } \geq 1 - n^{-1/4}.
	\]}

\begin{lem}\label{lem:rellower}
	\rellower
\end{lem}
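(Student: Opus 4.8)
The plan is to couple the $\RelativeThreshold(f(n))$ process with plain \OneChoice, show they produce \emph{identical} load vectors for the first $m$ rounds with probability $\geq 1-n^{-1}$, and then invoke the classical lower bound on the gap of \OneChoice after $m$ rounds. Note $\RelativeThreshold(f(n))$ allocates one ball per round, so $W^t=t$ and its threshold $\frac tn+f(n)$ is exactly the mean load plus $f(n)$. Run the two processes with the \emph{same} first sample $i_1$ in every round (the $\RelativeThreshold$ process additionally draws $i_2$ independently). A one-line induction on $s$ shows: if the two load vectors agree through round $s$ and in this common vector no bin has load at least $\frac sn+f(n)$, then $i_1$ is below threshold, so $\RelativeThreshold$ also allocates to $i_1$ and the two agree through round $s+1$. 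Writing $N_i^s$ for the load of bin $i$ under \OneChoice after round $s$, on the event $\mathcal{E}:=\{\,N_i^s<\frac sn+f(n)\text{ for all }i\in[n]\text{ and all }0\le s\le m\,\}$ the two processes have the same load vector at round $m$; in particular $\Gap(m)$ equals the gap of \OneChoice after $m$ rounds. So it suffices to show $\Pro{\mathcal{E}}\ge 1-n^{-1}$ and that \OneChoice has gap at least $f(n)/50$ after $m$ rounds with probability $\ge 1-\frac12 n^{-1/4}$.

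For the first bound, fix a bin $i$. Since each ball lands in bin $i$ with probability $\frac1n$ independently of the past, $Z_s:=e^{\theta N_i^s}\bigl(1+\frac1n(e^\theta-1)\bigr)^{-s}$ is, for every $\theta>0$, a nonnegative martingale with $Z_0=1$. Stopping $Z$ at the first round $\tau$ with $N_i^\tau\ge\frac\tau n+f(n)$, using that $s\mapsto\bigl(e^{\theta/n}/(1+\frac1n(e^\theta-1))\bigr)^s$ is decreasing (so $\tau$ may be replaced by $m$ in the ensuing lower bound on $Z_\tau$) together with $1+x\le e^x$, and then optimizing over $\theta$, yields the maximal-inequality bound
\[
\Pro{\exists\,s\le m:\ N_i^s\ge\tfrac sn+f(n)}\ \le\ \exp\!\Bigl(-\tfrac mn\,h\bigl(\tfrac{nf(n)}{m}\bigr)\Bigr),\qquad h(x):=(1+x)\ln(1+x)-x.
\]
Substituting $m=\frac1{24}\cdot\frac{nf(n)^2}{\log n}$ gives $\frac{nf(n)}{m}=\frac{24\log n}{f(n)}\le 24$ (as $f(n)\ge\log n$); writing $x:=\frac{24\log n}{f(n)}\in(0,24]$ and using that $h(x)/x^2$ is strictly decreasing in $x>0$, one has $\frac mn\,h(x)=24\log n\cdot\frac{h(x)}{x^2}\ge 24\log n\cdot\frac{h(24)}{24^2}>2\log n$. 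Hence each bin violates its constraint with probability at most $n^{-2}$, and a union bound over the $n$ bins gives $\Pro{\overline{\mathcal{E}}}\le n^{-1}$. This is exactly where the constant $24$ is used: it forces the exponent above past $2\log n$, leaving just enough slack for the union bound over the $n$ bins — a round-by-round union bound would be hopelessly lossy since $m$ can be polynomially (or more) large.

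For the second bound, $m/n=f(n)^2/(24\log n)\ge(\log n)/24=\omega(1)$ and $\sqrt{(m/n)\log n}=f(n)/\sqrt{24}$, so this is precisely the range where \OneChoice has gap of order $\sqrt{(m/n)\log n}$ (when $m/n\ge\log n$) or of order $m/n=\Theta(\log n)=\Theta(f(n))$ (when $\log n/24\le m/n<\log n$); in either case the standard \OneChoice lower bound gives, with probability $1-n^{-\omega(1)}$, a gap of at least $c\sqrt{(m/n)\log n}=c\,f(n)/\sqrt{24}\ge f(n)/50$ for a suitable absolute constant $c>\sqrt{24}/50$. To get the failure probability down to $\frac12 n^{-1/4}$ directly, note the loads $N_1^m,\dots,N_n^m$ are negatively associated with $N_1^m\sim\mathrm{Bin}(m,\frac1n)$ of mean $\mu:=m/n$, so $\Pro{\,\forall i:\ N_i^m<\mu+f(n)/50\,}\le\exp\bigl(-n\,\Pro{N_1^m\ge\mu+f(n)/50}\bigr)$; since $\frac{f(n)/50}{\mu}=\frac{24\log n}{50f(n)}\le\frac{24}{50}<1$, the relevant binomial tail is at least $n^{-c'}$ for a constant $c'$ bounded away from $1$ (in the Gaussian, resp.\ moderate-deviation, regime according to the size of $\mu$), so the right-hand side is $\exp(-n^{\Omega(1)})\ll n^{-1/4}$. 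A union bound over $\overline{\mathcal{E}}$ and the event that \OneChoice has gap below $f(n)/50$ after $m$ rounds then gives $\Pro{\Gap(m)\ge f(n)/50}\ge 1-n^{-1}-\frac12 n^{-1/4}\ge 1-n^{-1/4}$.

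The hard part, and the only place where any care is needed, is the first bound — making the coupling survive \emph{all} $m$ rounds at once — which is exactly what the exponential-martingale maximal inequality, together with the tuned constant $24$ in the definition of $m$, accomplishes; the remaining ingredients (the induction establishing the coupling, and the lower bound on the \OneChoice gap) are standard.
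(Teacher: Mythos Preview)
Your proposal is correct and follows essentially the same approach as the paper: couple $\RelativeThreshold(f(n))$ with \OneChoice, use a martingale concentration bound to show no bin's normalized load reaches $f(n)$ during the first $m$ rounds (so the coupling survives), and then invoke the standard \OneChoice lower bound on the gap at round $m$.

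The only differences are technical. For the ``coupling survives'' step, the paper applies a Bernstein-type martingale inequality (their \cref{lem:cl06_thm_6_1}) to the centered load $Z^t=\sum_{s\le t}(Y^s-\tfrac1n)$ stopped at the first exceedance, obtaining a per-bin bound of $2n^{-4/3}$ and hence $1-2n^{-1/3}$ after the union bound; you instead run the exponential martingale by hand and optimize over $\theta$ to get the Bennett bound $\exp(-\tfrac mn h(\tfrac{nf(n)}{m}))\le n^{-2}$, yielding $1-n^{-1}$. Both suffice for the target $1-n^{-1/4}$, and your monotonicity argument for $h(x)/x^2$ together with the numerical check $h(24)>48$ is clean. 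For the \OneChoice lower bound, the paper simply cites their \cref{lem:one_choice_cnlogn} (which gives gap $\ge \tfrac{\sqrt{\lambda}}{10}\log n = \tfrac{f(n)}{10\sqrt{24}}>\tfrac{f(n)}{50}$ with probability $\ge 1-n^{-2}$), whereas you sketch a direct negative-association argument; both arrive at the same conclusion, though the paper's route via the cited lemma is more self-contained given the constant $50$.
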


For the remaining offset functions $f(n) \in (0, \log n)$, we can apply the general \TwoThinning lower bound of $\Omega\big( \frac{\log n}{\log \log n} \big)$ derived in~\cite{LS22Queries} (and in~\cite{FGL21})  to the $\RelativeThreshold(f(n))$ process. This means that the upper bound from \cref{thm:relupper} is tight within an $\log \log n$ factor. This also highlights the question of determining the behavior of the gap in this regime; we conjecture that the lower bound is tight for some values of $f(n)$. 

We prove \cref{lem:rellower} later in \cref{sec:lower_bounds}, however we now proceed with the proof of \cref{thm:relupper}. We use a coupling argument involving adding $f(n)$ ``extra balls'' to each bin, which allows us to reduce the  $\RelativeThreshold(f(n))$ process with a non-negative offset function $f(n)$, to \MeanThinning.  Thanks to this reduction, proved in \cref{lem:coupling,lem:f(n)extension} we deduce \cref{thm:relupper}.

\begin{lem} \label{lem:f(n)extension}
Consider the \MeanThinning and $\RelativeThreshold(f(n))$ processes for any non-negative $f(n) \geq 0$. Let $\Gap_{0}$ and $\Gap_{f(n)}$ be their gaps respectively. Then, $\Gap_{f(n)}$ is stochastically smaller than $\Gap_{0}+f(n)$.
\end{lem}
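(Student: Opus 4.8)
\emph{Approach.} The plan is to couple $\RelativeThreshold(f(n))$ with a copy of \MeanThinning that has been started with $c:=\lceil f(n)\rceil$ extra balls in every bin, and to show that the former is dominated bin by bin by the latter. Let $x^t$ be the load vector of $\RelativeThreshold(f(n))$ started from the empty configuration and let $y^t$ be the load vector of \MeanThinning started from $(c,\dots,c)$; run both on the same sequence of (ordered) sample pairs $(i_1^t,i_2^t)_{t\ge 0}$. Two remarks set this up. First, since \MeanThinning allocates one ball per round, its threshold in round $t$ is its current mean $c+\tfrac tn$. Second, since the gap is unchanged when the same number of balls is added to every bin, the gap process of $y$ is distributed exactly as $\Gap_0$, the gap of \MeanThinning from the empty configuration. (A comparison of probability allocation vectors in the spirit of Peres, Talwar and Wieder would point the wrong way, showing $\RelativeThreshold(f(n))$ to be \emph{more} spread out than \MeanThinning; the extra-balls coupling instead makes the comparison quantitative, charging the excess to the $c$ added balls.)

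\emph{Key step (the coupling, cf.\ \cref{lem:coupling}).} I would prove $x_i^t\le y_i^t$ for all $i\in[n]$ and all $t\ge 0$ by induction on $t$; the base case is $x_i^0=0\le c=y_i^0$. In the inductive step only the sampled bins $i_1:=i_1^t$ and $i_2:=i_2^t$ can change, so with $a:=x_{i_1}^t\le a':=y_{i_1}^t$ and $b:=x_{i_2}^t\le b':=y_{i_2}^t$ one checks the cases according to which sample each process allocates to. If both processes accept their first sample, or both divert to the second, all inequalities are preserved term by term. If $x$ accepts while $y$ diverts, the only non-trivial check is $x_{i_1}^{t+1}=a+1\le a'=y_{i_1}^{t+1}$, and this follows from the loads being integers: $a<\tfrac tn+f(n)$ forces $a\le\lceil\tfrac tn+f(n)\rceil-1\le c+\lceil\tfrac tn\rceil-1$, whereas $a'\ge c+\tfrac tn$ forces $a'\ge c+\lceil\tfrac tn\rceil$. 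The remaining case --- $x$ diverting to $i_2$ while $y$ accepts its first sample --- is the delicate one, as it would require $b+1\le b'$. When $f(n)$ is an integer this case is vacuous: $x$ diverting forces $a\ge c+\tfrac tn$ while $y$ accepting forces $a'<c+\tfrac tn$, contradicting $a\le a'$, and the induction closes. When $f(n)$ is not an integer the thresholds $\tfrac tn+f(n)$ and $c+\tfrac tn$ differ by $c-f(n)\in(0,1)$, so this case can arise for a single ``boundary'' bin; this is exactly where the more careful coupling of \cref{lem:coupling} is needed --- one either does not pair identical sample streams, or strengthens the inductive invariant so that it also tracks which bins currently carry zero slack, ensuring that such a bin is never the second sample in the dangerous configuration.

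\emph{Conclusion.} With the domination in hand, for every round $m$ we get, pathwise on the coupled space,
\[ \Gap_{f(n)}(m)=\max_{i\in[n]}x_i^m-\frac mn \;\le\; \max_{i\in[n]}y_i^m-\frac mn \;=\; \Bigl(\max_{i\in[n]}y_i^m-c-\frac mn\Bigr)+c . \]
The bracketed term is the gap of $y$, i.e.\ of \MeanThinning started with $c$ balls per bin, which is distributed as $\Gap_0(m)$ by the shift-invariance above; hence $\Gap_{f(n)}$ is stochastically smaller than $\Gap_0+\lceil f(n)\rceil$. For integral $f(n)$ this is precisely the asserted bound, and the harmless extra $+1$ in general is absorbed in the $\Oh(\cdot)$ of \cref{thm:relupper}. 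I expect the main obstacle to be the inductive coupling step itself --- keeping bin-wise domination alive through a diversion --- which is immediate once the two processes share a threshold but needs the refined coupling of \cref{lem:coupling} when $f(n)$ is fractional.
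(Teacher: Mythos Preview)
Your approach is the paper's: shift the dominating process up by the offset and prove bin-wise domination inductively on a shared sample stream. For integer $f(n)$ you have $c=f(n)$, both coupled processes use the same threshold $\tfrac{t}{n}+f(n)$, your delicate case ($x$ diverts, $y$ accepts) is vacuous exactly as you argue, and the proof is complete --- this is precisely \cref{lem:coupling} followed by \cref{lem:coupling_two}.

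Where you diverge from the paper is the non-integer case. You round up to $c=\lceil f(n)\rceil$ and couple against \MeanThinning with threshold $c+\tfrac{t}{n}$, so the two thresholds differ, and this is what creates the delicate case in the first place. The paper instead keeps the thresholds equal on both sides of the coupling: \cref{lem:coupling} compares $\RelativeThreshold(f(n))$ started empty against the \emph{same} process started from $(f(n),\dots,f(n))$, both using threshold $\tfrac{t}{n}+f(n)$, and only afterwards (\cref{lem:coupling_two}) identifies the shifted copy with \MeanThinning${}+f(n)$. With equal thresholds the delicate case is always vacuous: if the lower process diverts then its first-sample load is at least the common threshold, hence by the inductive hypothesis so is the upper process's, which therefore also diverts.

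Your appeal to \cref{lem:coupling} as a ``more careful coupling'' that rescues the fractional case is therefore a misreading --- that lemma pairs identical sample streams and carries no slack-tracking invariant; its leverage is purely the equal thresholds. And in your direct coupling the bin-wise invariant can genuinely fail: with $n=3$, $f(n)=\tfrac12$, $c=1$ and sample pairs $(1,2),(3,2),(3,1),(1,2),(1,3)$ one reaches slacks $(0,3,0)$ after four rounds, whereupon $x$ diverts and $y$ accepts with $i_2=3$, driving that slack to $-1$. The fix is not to refine the pairing or strengthen the invariant but to equalize the thresholds, as the paper does.
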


Before proving the lemma, we need the following domination result:

\begin{lem}\label{lem:coupling}
Let $\mathcal{R}$ be the $\TwoThinning(\frac{t}{n} + f(n))$ process where $f(n)$ is non-negative, starting with an empty load vector $x_{\mathcal{R}}^0 = (0, \ldots, 0)$. Further, let $\mathcal{Q}$ be the $\TwoThinning(\frac{t}{n} + f(n))$ process with initial load vector $x_{\mathcal{Q}} = (f(n), \ldots , f(n))$. Then, there is a coupling so that at any round $t \geq 0$, it holds that $(x_{\mathcal{R}}^{t})_i \leq (x_{\mathcal{Q}}^{t})_i$, for any bin $i \in [n]$.
\end{lem}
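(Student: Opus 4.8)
The plan is to couple the two processes $\mathcal{R}$ and $\mathcal{Q}$ so that they always sample the same pair of bins $(i_1, i_2)$ in each round, and then show by induction on $t$ that the domination $(x_{\mathcal{R}}^t)_i \le (x_{\mathcal{Q}}^t)_i$ is preserved for every bin $i$. Note that both processes use the \emph{same} threshold sequence $\frac{t}{n} + f(n)$ (this is the key point: the threshold does not depend on the current load vector, only on $t$), so the only difference between them is the initial offset of $f(n)$ balls per bin in $\mathcal{Q}$. At $t=0$ the domination holds by definition since $(x_{\mathcal{R}}^0)_i = 0 \le f(n) = (x_{\mathcal{Q}}^0)_i$.

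For the inductive step, assume $(x_{\mathcal{R}}^t)_i \le (x_{\mathcal{Q}}^t)_i$ for all $i$, and suppose both processes sample the same pair $i_1, i_2$ in round $t$. Each process allocates exactly one ball in this round, either to $i_1$ or to $i_2$, according to whether $x_{i_1}^t < \frac{t}{n} + f(n)$ or not. The domination can only be violated at the bin that receives the ball in $\mathcal{R}$; so the only dangerous case is when $\mathcal{R}$ allocates to some bin $j \in \{i_1, i_2\}$ but $\mathcal{Q}$ does not, and moreover equality $(x_{\mathcal{R}}^t)_j = (x_{\mathcal{Q}}^t)_j$ holds (so that after the increment $\mathcal{R}$ overtakes $\mathcal{Q}$ at $j$). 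I would rule this out by case analysis on $j$:

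\textbf{Case $j = i_1$:} Then $\mathcal{R}$ allocating to $i_1$ means $(x_{\mathcal{R}}^t)_{i_1} < \frac{t}{n} + f(n)$. If also $(x_{\mathcal{Q}}^t)_{i_1} = (x_{\mathcal{R}}^t)_{i_1}$, then $(x_{\mathcal{Q}}^t)_{i_1} < \frac{t}{n} + f(n)$ as well, so $\mathcal{Q}$ also allocates to $i_1$; hence both increment $i_1$ and equality is preserved, not violated. \textbf{Case $j = i_2$:} Then $\mathcal{R}$ allocating to $i_2$ means $(x_{\mathcal{R}}^t)_{i_1} \ge \frac{t}{n} + f(n)$. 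Since $(x_{\mathcal{Q}}^t)_{i_1} \ge (x_{\mathcal{R}}^t)_{i_1} \ge \frac{t}{n} + f(n)$ by the inductive hypothesis, $\mathcal{Q}$ also allocates to $i_2$; so again both increment the same bin and the inequality is preserved (strictly or with equality, but never violated). In all remaining sub-cases the ball in $\mathcal{R}$ goes to a bin where the inductive hypothesis leaves slack, so no violation occurs either. This completes the induction.

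The main (and essentially only) obstacle is making sure the case analysis is exhaustive and that one correctly tracks which bin each process increments; there is no genuine difficulty beyond bookkeeping, because the threshold is load-independent, which is exactly what makes the monotone coupling go through (a load-dependent threshold, as in general \TwoThinning, would break this argument). I would state the coupling explicitly (identical sample sequences) and then present the induction as above.
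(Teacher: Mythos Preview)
Your proposal is correct and takes essentially the same approach as the paper: couple the two processes via identical bin samples $(i_1,i_2)$ in each round and prove the pointwise domination by induction, with a case analysis on which bin $\mathcal{R}$ allocates to. The paper organizes the two cases by splitting on whether $(x_{\mathcal{R}}^t)_{i_1}$ is below or above the threshold, whereas you phrase it in terms of the ``dangerous'' bin $j$ where a violation could occur; these are the same argument, and your Case~$j=i_2$ in fact spells out the key step (using the induction hypothesis at $i_1$, not $i_2$, to conclude $\mathcal{Q}$ also allocates to $i_2$) a bit more transparently than the paper does.
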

\begin{proof}[Proof of \cref{lem:coupling}]
Let $j_1=j_1^t$ and $j_2=j_2^t$ be the two bins sampled at round $t \geq 0$, which are uniform and independent over $[n]$. We consider a coupling between $\mathcal{R}$ and $\mathcal{Q}$, where these random bin samples are identical, and prove inductively that for any $t \geq 0$ and any $i \in [n]$,
\[
(x_{\mathcal{R}}^{t})_i \leq (x_{\mathcal{Q}}^{t})_i.
\]
The base case $t=0$ holds by definition. For the inductive step, we make a case distinction:

\noindent\textbf{Case 1} [$(x_{\mathcal{R}}^t)_{j_1} < t/n+f(n)$]. In this case, $\mathcal{R}$ allocates a ball to $j_1$. If $(x_{\mathcal{Q}}^t)_{j_1} < t/n+f(n)$, then $\mathcal{Q}$ also allocates a ball to $j_1$; otherwise, we have $(x_{\mathcal{Q}}^t)_{j_1} \geq t/n+f(n)$, and hence $(x_{\mathcal{Q}}^t)_{j_1} > (x_{\mathcal{R}}^t)_{j_1}$, i.e., $(x_{\mathcal{Q}}^t)_{j_1} \geq (x_{\mathcal{R}}^t)_{j_1}+1$. This implies
    \[
    (x_{\mathcal{Q}}^{t+1})_{j_1} = (x_{\mathcal{Q}}^{t})_{j_1} \geq  (x_{\mathcal{R}}^t)_{j_1} + 1 = (x_{\mathcal{R}}^{t+1})_{j_1},
    \]
    and the inductive step follows from this and the induction hypothesis.
    \medskip

\noindent\textbf{Case 2} [$(x_{\mathcal{R}}^t)_{j_1} \geq t/n+f(n)$]. In this case, $\mathcal{R}$ allocates a ball to $j_2$. By induction hypothesis, $(x_{\mathcal{R}}^t)_{j_2} \leq (x_{\mathcal{Q}}^t)_{j_2}$, which implies $\mathcal{Q}$ also allocates a ball to $j_2$. Thus we have   \[
    (x_{\mathcal{Q}}^{t+1})_{j_2} =
    (x_{\mathcal{Q}}^t)_{j_2} + 1 \geq 
    (x_{\mathcal{R}}^{t})_{j_2}+1 = 
    (x_{\mathcal{R}}^{t+1})_{j_2},
    \] and the inductive step is complete. 
    
Since in both cases all other bins remain unchanged the proof is complete. 
\end{proof}

\begin{lem} \label{lem:coupling_two}
Let $\mathcal{R}$ be the $\TwoThinning(\frac{t}{n} + f(n))$ process starting with the initial load vector $(x_{\mathcal{R}}^0)_1 = \ldots = (x_{\mathcal{R}}^0)_n = f(n)$ and $\mathcal{Q}$ be the \MeanThinning process with initial load vector $x_{\mathcal{Q}}^0 = (0, \ldots, 0)$. Then, there is a coupling so that $x_{\mathcal{R}}^t = x_{\mathcal{Q}}^t + f(n)$ for any round $t \geq 0$.
\end{lem}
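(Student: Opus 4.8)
\textbf{Proof proposal for \cref{lem:coupling_two}.} The plan is a direct coupling argument, analogous to (and in fact simpler than) the one used for \cref{lem:coupling}. I would couple $\mathcal{R}$ and $\mathcal{Q}$ so that the two uniform bin samples $j_1 = j_1^t$ and $j_2 = j_2^t$ drawn in each round $t \geq 0$ are identical for the two processes, and then prove by induction on $t$ the stronger \emph{coordinatewise} statement that $(x_{\mathcal{R}}^t)_i = (x_{\mathcal{Q}}^t)_i + f(n)$ for every bin $i \in [n]$. The base case $t = 0$ is immediate from the two prescribed initial load vectors, $x_{\mathcal{R}}^0 = (f(n),\dots,f(n))$ and $x_{\mathcal{Q}}^0 = (0,\dots,0)$.

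For the inductive step, the key observation I would highlight is that both processes allocate exactly one ball per round, so $W_{\mathcal{Q}}^t = t$ and hence the \MeanThinning threshold of $\mathcal{Q}$ at round $t$ equals $t/n = W_{\mathcal{Q}}^t/n$, while the threshold of $\mathcal{R}$ is $t/n + f(n)$. Now, under the induction hypothesis, the condition that governs $\mathcal{R}$'s choice, namely $(x_{\mathcal{R}}^t)_{j_1} < t/n + f(n)$, is equivalent to $(x_{\mathcal{Q}}^t)_{j_1} + f(n) < t/n + f(n)$, i.e.\ to $(x_{\mathcal{Q}}^t)_{j_1} < t/n$, which is exactly the condition that governs $\mathcal{Q}$'s choice. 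Consequently, in the coupled execution, $\mathcal{R}$ allocates its ball to $j_1$ precisely when $\mathcal{Q}$ does, and otherwise both allocate to $j_2$. In either case the same bin receives one ball in both processes, and all other bins are unchanged, so the invariant $(x_{\mathcal{R}}^{t+1})_i = (x_{\mathcal{Q}}^{t+1})_i + f(n)$ is preserved for all $i$, completing the induction.

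I do not expect any real obstacle here: unlike \cref{lem:coupling}, where the threshold is the same absolute value for both processes and one only obtains domination, here the threshold of $\mathcal{R}$ is itself shifted by exactly $f(n)$ relative to the mean, which makes $\mathcal{R}$ an exact ``copy'' of \MeanThinning with all loads translated by $f(n)$; the only point worth stating carefully is that $W^t = t$ for both processes so that the mean loads line up as claimed. This lemma, together with \cref{lem:coupling}, then yields \cref{lem:f(n)extension} (and hence \cref{thm:relupper}) by chaining the domination $x_{\mathcal{R}}^t \leq x_{\mathcal{Q}}^t$ of \cref{lem:coupling} with the exact shift $x_{\mathcal{R}}^t = x_{\MeanThinning}^t + f(n)$ obtained here and comparing maxima and means.
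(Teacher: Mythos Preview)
Your proposal is correct and follows essentially the same approach as the paper: the paper's proof is a one-paragraph informal version of exactly the observation you spell out, namely that subtracting $f(n)$ from every load in $\mathcal{R}$ turns the threshold test $(x_{\mathcal{R}}^t)_{j_1} < t/n + f(n)$ into the \MeanThinning test $(x_{\mathcal{Q}}^t)_{j_1} < t/n$, so under shared samples the two processes allocate to the same bin each round. Your inductive write-up is in fact more careful than the paper's sketch; the only minor imprecision is your closing remark that ``$W^t = t$ for both processes'' --- for $\mathcal{R}$ one has $W_{\mathcal{R}}^t = n f(n) + t$, but this does not affect the argument since $\mathcal{R}$'s threshold is specified directly as $t/n + f(n)$ (in terms of the round counter), not via $W_{\mathcal{R}}^t$.
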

\begin{proof}
In the execution of the process $\mathcal{R}$, we start the process at round $t = 0$ from an initial load of $f(n)$ balls in each bin. Since the threshold is $t/n + f(n)$, that is the process $\mathcal{R}$ does not have a threshold relative to the actual mean load, the effect of adding these balls is to reduce the threshold of $\mathcal{R}$ by exactly $f(n)$. Thus, $\mathcal{R}$ is operating  with a threshold of $t/n + f(n) - f(n) = t/n$, which is equivalent to the $\MeanThinning$ process, i.e., $\mathcal{Q}$. So, we obtain a coupling such that $x_{\mathcal{R}}^t = x_{\mathcal{Q}}^t + f(n)$ for any round $t \geq 0$.
\end{proof}

We can now complete the proof of \cref{lem:f(n)extension}.

\begin{proof}[Proof of \cref{lem:f(n)extension}]
We define the following processes:
\begin{itemize}[itemsep=0pt]
  \item $\mathcal{R}_1$: The $\TwoThinning(\frac{t}{n} + f(n))$ process (starting from the empty load vector).
  \item $\mathcal{R}_2$: The $\TwoThinning(\frac{t}{n} + f(n))$ starting with $(x_{\mathcal{R}_2}^0)_1 = \ldots = (x_{\mathcal{R}_2}^0)_n = f(n)$.
  \item $\mathcal{R}_3$: The \MeanThinning process (starting from the empty load vector).
\end{itemize}
By \cref{lem:coupling}, there exists a coupling such that $x_{\mathcal{R}_2}^t$ pointwise majorises $x_{\mathcal{R}_1}^t$ for any round $t \geq 0$. By \cref{lem:coupling_two}, there exists a coupling such that $x_{\mathcal{R}_2}^t = x_{\mathcal{R}_3}^t + f(n)$. Hence, we deduce that there is a coupling between the three processes such that\begin{align*}
  \Gap_{f(n)}(t) 
 & = \Gap_{\mathcal{R}_1}(t) 
  = \max_{i \in [n]} (x_{\mathcal{R}_1}^t)_i - \frac{t}{n}
  \leq \max_{i \in [n]} (x_{\mathcal{R}_2}^t)_i - \frac{t}{n}
  = \max_{i \in [n]} (x_{\mathcal{R}_3}^t)_i + f(n) - \frac{t}{n} \\
 & = \Gap_{0}(t) + f(n). \qedhere
\end{align*}
\end{proof}

\section{Overview of the Analysis}\label{sec:analysis_overview}

In this section, we outline the proof for the $\Oh(\log n)$ upper bound on the gap for \MeanBiased processes, i.e., processes satisfying conditions \POne and \WOne and in addition \PTwo or \WTwo. The proof details are given in \cref{sec:mean_quantile_stabilization,sec:potential_function_inequalities,sec:mean_biased_gap_completion}. Some of the properties we prove are useful for the lower bounds (\cref{sec:lower_bounds}) and in analyzing the sample-efficiency of the processes (\cref{sec:sample_efficiency}).

Our analysis is based on the interplay between the following potential functions. %
\begin{itemize}\itemsep0pt
    \item The \emph{absolute value potential} (also known as the \textit{number of holes}~\cite{BCSV06}): 
    \[
    \Delta^{t}:=\sum_{i=1}^{n} |y_i^t|.
    \]
    In \cref{lem:good_quantile}, we prove that when $\Delta^s = \Oh(n)$, then \Whp, among the next $\Theta(n)$ rounds a constant fraction of them are rounds $s$ where the quantile of the mean $\delta^s \in [\eps, 1 - \eps]$, for some constant $\eps \in \big(0,\frac{1}{2}\big)$.
    
    \item The \emph{quadratic potential}: 
    \[
    \Upsilon^t:=\sum_{i=1}^{n} \left( y_i^t \right)^2.
    \]
    In \cref{lem:quadratic_absolute_relation}, we prove that in any round $t$ with $\Delta^t = \Omega(n)$ holds, the potential $\Upsilon^t$ decreases in expectation over the next round. 

    \item The \textit{exponential potential} for a constant smoothing parameter $\alpha > 0$ (specified in \eqref{eq:alpha_def}): 
    \begin{align} \label{eq:lambda_def}
    \Lambda^t := \Lambda^t(\alpha) := \sum_{i=1}^{n} \exp\left( \alpha \cdot |y_i^t | \right) =
    \sum_{i \in B_+^t} \exp\left( \alpha y_i^{t} \right) + \sum_{i \in B_{-}^t} \exp\left( -\alpha (-y_i^{t}) \right).
    \end{align}
    In \cref{cor:change_for_large_lambda}, we prove that when $\Lambda^t = \Omega(n)$, if $\delta^t \in [\eps, 1 - \eps]$, then $\Lambda^t$ decreases in expectation, otherwise it increases by a smaller factor. This potential is similar to the \textit{hyperbolic cosine potential} used in~\cite{PTW15}, but for each bin there is only one term. Our potential is slightly easier to analyze since the bias we study here is based on whether the bin is overloaded or underloaded. Further, unlike the analysis of the $(1+\beta)$-process in~\cite{PTW15}, the potential can increase in expectation over a single round, even when it is large (\cref{clm:bad_configuration_lambda}).
    
    \item A ``weaker'' instance of the $\Lambda$ potential function with smoothing parameter $\tilde{\alpha} = \Theta(1/n)$: 
    \[
    V^t := V^t(\tilde{\alpha}) := \sum_{i=1}^{n} \exp\left( \tilde{\alpha} \cdot |y_i^t | \right) = \sum_{i \in B_+^t} \exp\left( \tilde{\alpha} y_i^t  \right) + \sum_{i \in B_{-}^t} \exp\left( -\tilde{\alpha} (-y_i^t) \right).
    \]
    In \cref{lem:initial_gap_nlogn}, we prove that $V$ drops in expectation in every round (regardless of the value of $\delta^t$), and so we can establish that $\ex{V^t} = \poly(n)$ at an arbitrary round $t$. Then, using Markov's inequality we establish \Whp~that $\Gap(t) = \Oh(n \log n)$. Note that this is similar to the case $\beta = \Theta(1/n)$ in the $(1+\beta)$-process~\cite{PTW15}. We use this bound on the gap as a starting point for the tighter bound.
\end{itemize}

Having defined the potential functions, let us now describe in more detail why the potential $\Lambda$ may increase in expectation in some rounds, for e.g., the \MeanThinning process.
For this process, there exist bad configurations where $(i)$ $\Gap(t)$ is large and $(ii)$ for all $s \in [t,t+\omega(n)]$ rounds the quantile of the mean load satisfies $\delta^{s}=1-o(1)$ (or $\delta^{s}=o(1)$). When $\delta^s$ is too close to $1$ (or $0$), then the bias away from any \emph{fixed} overloaded (or towards any \emph{fixed} underloaded) bin is too small, and the process allocates balls \textit{almost} uniformly, similarly to \OneChoice. As a result, the exponential potential may increase for several rounds, until $\delta^s$ is bounded away from $0$ and $1$ (see \cref{fig:experiments} for an illustration showing experimental results and \cref{clm:bad_configuration_lambda} for a concrete example of a bad configuration).

So, instead we start with the weaker exponential potential function $V := V(\tilde{\alpha})$ where $\tilde{\alpha} = \Theta(1/n)$. Because \MeanBiased processes always have a small bias to allocate away from (or allocate fewer balls to) the maximum load, we are able to prove in \cref{lem:initial_gap_nlogn} that when $V$ is sufficiently large it decreases in expectation. This allows us to prove that $\ex{V^t} = \poly(n)$ at an arbitrary round $t$ and infer, using Markov's inequality, that the gap is \Whp~$\Gap(t) = \Oh(n \log n)$ (\cref{lem:initial_gap_nlogn}). Then, our next goal is to show that starting with $\Gap(t_0) = \Oh(n \log n)$ we reach a round $s \in [t_0, t_0 + n^3 \log^4 n]$ where $\Lambda^s = \Oh(n)$ (the \textit{recovery phase} -- \cref{sec:recovery}) and finally show that the gap remains $\Oh(\log n)$ for the remaining rounds $[s, m]$ (the \textit{stabilization phase} -- \cref{sec:stabilization}).

\begin{figure}[h]
    \centering

\scalebox{0.7}{
    \begin{tikzpicture}[
  IntersectionPoint/.style={circle, draw=black, very thick, fill=black!35!white, inner sep=0.05cm}
]

\definecolor{MyBlue}{HTML}{9DC3E6}
\definecolor{MyYellow}{HTML}{FFE699}
\definecolor{MyGreen}{HTML}{E2F0D9}
\definecolor{MyRed}{HTML}{FF9F9F}
\definecolor{MyDarkRed}{HTML}{C00000}

\node[anchor=south west] (plt) at (-0.1, 0) {\includegraphics{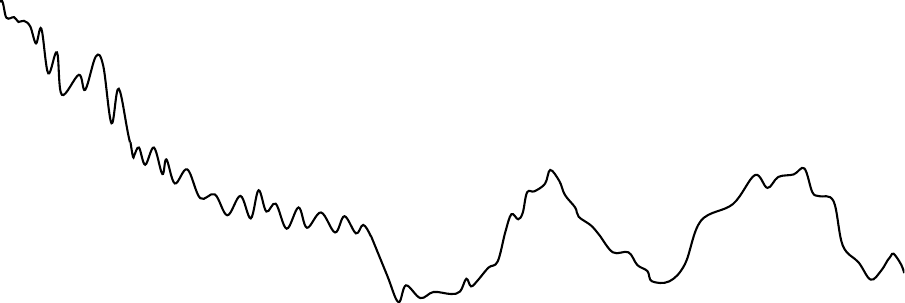}};

\def\xEnd{16}
\def\xLast{15.40}
\def\yLast{6}
\def\cn{1}
\def\yBottom{-0.8}

\node[anchor=south west, inner sep=0.15cm, fill=MyBlue, rectangle,minimum width=6.42cm] at (0, \yLast) {Recovery phase};
\node[anchor=south west, inner sep=0.15cm, fill=MyYellow, rectangle,minimum width=\xLast cm - 6.42cm] at (6.42, \yLast) {Stabilization phase};

\draw[dashed, thick] (0,1) -- (\xLast, 1);
\draw[dashed, thick] (0,1.8) -- (\xLast, 1.8);
\draw[dashed, thick, green!60!black] (0,5.25) -- (\xLast, 5.25);

\node[anchor=south west, green!60!black] at (0,5.25) {\cref{lem:initial_gap_nlogn}};

\node[anchor=east] at (0, \cn) {$cn$};
\node[anchor=east] at (0, 1.8) {$2cn$};
\node[anchor=east] at (0, 5.35) {$n \cdot e^{\C{poly_n_gap} n \log n}$};
\node[anchor=east] at (0, 6.3) {$\Lambda^t$};

\node[anchor=west] at (\xEnd, \yBottom) {$t$};

\def\tA{6.42}
\def\tB{8.52}
\def\tC{10.41}
\def\tD{11.71}
\def\tM{13.51}
\def\tE{14.36}

\newcommand{\drawLine}[3]{
\draw[dashed, very thick, #3] (#1, \yBottom) -- (#1, \yLast);
\draw[very thick] (#1, \yBottom) -- (#1, \yBottom -0.2);
\node[anchor=north] at (#1, \yBottom -0.3) {#2};}

\newcommand{\drawPoint}[3]{
\drawLine{#1}{#2}{#3}
\node[IntersectionPoint] at (#1, \cn) {};}

\draw[very thick] (0, \yBottom) -- (0, \yBottom -0.2);
\node[anchor=north] at (0, \yBottom -0.3) {$m - n^3 \log^4 n$};

\drawPoint{\tA}{$s_0$}{black!30!white};
\drawPoint{\tB}{$\tau_1$}{black!30!white};
\drawPoint{\tC}{$s_1$}{black!30!white};
\drawPoint{\tD}{$\tau_2$}{black!30!white};
\drawPoint{\tE}{$s_2$}{black!30!white};
\drawLine{\tM}{\textcolor{MyDarkRed}{$m$}}{MyDarkRed};
\drawLine{\xLast}{$\qquad m + \C{stab_time} n \log n $}{black!30!white};

\newcommand{\drawRegionRect}[3]{
\node[anchor=south west, rectangle, fill=#3, minimum width=#2 cm- #1 cm, minimum height=0.3cm] at (#1, \yBottom) {};}

\drawRegionRect{\tA}{\tB}{MyGreen}
\drawRegionRect{\tB}{\tC}{MyRed}
\drawRegionRect{\tC}{\tD}{MyGreen}
\drawRegionRect{\tD}{\tE}{MyRed}
\drawRegionRect{\tE}{\xLast}{MyGreen}

\newcommand{\drawBrace}[4]{
\draw [
    thick,
    decoration={
        brace,
        raise=0.5cm,
        amplitude=0.2cm
    },
    decorate
] (#2, \yBottom - 0.6) -- (#1, \yBottom - 0.6) 
node [anchor=north,yshift=-0.7cm,#4] {#3}; }

\drawBrace{0}{\tA}{Recovery by \cref{lem:recovery}}{pos=0.5};
\drawBrace{\tB}{\tC}{}{};
\drawBrace{\tD}{\tE}{Each $s_i - \tau_i \leq \C{stab_time} n \log n$ by \cref{lem:stabilization}}{};

\draw[->, ultra thick] (0,\yBottom) -- (0, 7);
\draw[->, ultra thick] (0,\yBottom) -- (\xEnd, \yBottom);

\end{tikzpicture}}

\caption{Overview of the \textit{recovery phase} and \textit{stabilization phase}. In the recovery phase starting with $V^t = \poly(n)$ (and so $\Gap(t) = \Oh(n \log n)$ and $\Lambda^t \leq e^{\Oh(\log n)}$), after $n^3 \log^4 n$ rounds \Whp~we find a round $s_0$ with $\Lambda^{s_0} \leq cn$. We then switch to the stabilization phase, where in \cref{lem:stabilization} we prove that starting with $\Lambda^{\tau} \leq 2cn$ in the next $\C{stab_time} n \log n$ rounds there is a round $t$ with $\Lambda^{t} \leq cn$, which allows us to infer that $\Gap(m)=\Oh(\log n)$.}
    \label{fig:recovery_stabilisation}
\end{figure}

In both the recovery and the stabilization phase, we will study the evolution of $\delta^t$ and prove that from any load vector, the process eventually reaches a value $\delta^t$ in $[\eps, 1 - \eps]$ for some constant $\eps \in \big(0,\frac{1}{2}\big)$, sufficiently often. The next lemma provides a sufficient condition for this to occur.

\def\goodquantile{
Consider any $\POne \cap \WOne$-process and let $\C{small_delta} \geq 1$ be any integer constant. Then, there exists a constant $\eps:=\eps( \C{small_delta}) \in \big(0,\frac{1}{2}\big)$ where $(4\eps)^{-1}$ is an integer, such that for any round  $t_0\geq 0$,   
\[
 \Pro{ \left| \left\{ t \in \left[t_0, t_0 +   \frac{n}{4\eps}  \right] \colon 
 \delta^{t} \in [\eps, 1 - \eps] \right\} \right| \geq \eps n ~ \Big| ~ \mathfrak{F}^{t_0}, \Delta^{t_0} \leq Cn } \geq 1-e^{-\eps n}.
\]  } 

{\renewcommand{\thelem}{\ref{lem:good_quantile}}
	\begin{lem}[Mean Quantile Stabilization, restated, page~\pageref{lem:good_quantile}]
\goodquantile
	\end{lem}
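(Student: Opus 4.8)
The plan is to combine two facts: $(a)$ $\Delta^{t_0}\le Cn$ forces a constant fraction of the bins to lie within a constant distance of the mean, and $\Delta^t$ stays $O(n)$ throughout a $\Theta(n)$-round window (a single allocation changes one bin's normalized load by at most $w_-$ and each of the other $n-1$ bins by at most $w_-/n$, so $|\Delta^{t+1}-\Delta^t|<2w_-$ and hence $\Delta^t\le Cn+2w_-\cdot\tfrac n{4\eps}=O(n)$ for $t\in[t_0,t_0+\tfrac n{4\eps}]$); and $(b)$ the mean $\mu^t:=W^t/n$ rises by $w^{t+1}/n\in[w_+/n,w_-/n]$ each round, hence by $\Theta(1/\eps)$ over the window, and thereby ``sweeps'' across the bulk of near-mean bins. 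I fix $\eps=\eps(C)\in(0,\tfrac12)$ small with $(4\eps)^{-1}$ an integer, to be constrained as the proof proceeds; by Markov's inequality, at every $t$ in the window at least $n/2$ bins satisfy $|y_i^t|\le K$ for a constant $K=K(C,\eps)$, so by pigeonhole some integer load level $v^t$ with $|v^t-\mu^t|\le K$ carries $\ge n/(4K)$ bins.

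The structural backbone is the asymmetry of $\delta^t$: since $\mu^t$ only rises, every bin \emph{not} chosen in round $t$ has its normalized load decrease, so only the chosen bin can enter $B_+^{t+1}$, whence $\delta^t$ rises by at most $1/n$ per round. Partitioning the window into the level blocks $T_\ell:=\{t:\lceil\mu^t\rceil=\ell\}$ --- these are $\Theta(1/\eps)$ consecutive intervals, each of length between $n/w_--1$ and $n/w_+$ --- inside $T_\ell$ we have $\delta^t n=|\{i:x_i^t\ge\ell\}|$, which is non-decreasing in $t$ because loads only grow; likewise $|\{i:x_i^t\ge v\}|$ is non-decreasing and $|\{i:x_i^t\le v-1\}|$ is non-increasing for any fixed level $v$. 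Thus $\delta^t$ can only rise within a block and can drop only at the $\Theta(1/\eps)$ block boundaries, and in particular it cannot traverse $[\eps,1-\eps]$ \emph{upward} in fewer than $(1-2\eps)n$ rounds. Consequently, if at some round of the window $\delta^t\le\eps$ and at a strictly later round $\delta^t>1-\eps$, then in between there are $\ge(1-2\eps)n\gg\eps n$ rounds with $\delta^t\in[\eps,1-\eps]$ and we are done; so we may assume this never happens, which splits the window into an initial \emph{phase~1} where $\delta^t>\eps$ always, followed by a \emph{phase~2} where $\delta^t\le1-\eps$ always.

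Since phase-$1$ rounds with $\delta^t\le1-\eps$ and phase-$2$ rounds with $\delta^t\ge\eps$ are good, it suffices to produce $\ge\eps n$ of one of these. For phase~$2$ I would argue: once $\delta^r\le1-\eps$ at a round $r$ there, the $\ge n/(4K)$ bins at or above a heavy level $v^r\ge\lceil\mu^r\rceil$ keep $|\{i:x_i^t\ge v^r\}|\ge n/(4K)$ forever by monotonicity, so $\delta^t\ge\tfrac1n\cdot\tfrac n{4K}\ge\eps$ throughout the block $T_{v^r}$ and across any boundary that does not cross a heavy level --- and a boundary that \emph{does} cross a heavy level is preceded by a whole length-$\Theta(n)$ block on which $\delta^t$ was at least a constant --- so $\delta^t\ge\eps$ on a length-$\Theta(n)$ union of intervals inside phase~$2$ unless $\delta^t<\eps$ for essentially all of phase~$2$. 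The remaining (degenerate) possibilities --- phase~$2$ essentially all ``low'', or symmetrically phase~$1$ essentially all ``high'' --- are excluded by the crossing argument: when $\delta^t<\eps$ the $\ge n/(4K)$ near-mean bins lying below the mean are underloaded, so by \POne\ each is sampled with probability $\Omega(1/n)$, and as they are sampled and pushed up a Chernoff estimate shows that within $O(n/\eps)$ rounds $\Theta(n)$ of them cross the slowly rising mean, forcing $\delta$ above $\eps$ --- contradiction; and when $\delta^t>1-\eps$, as the mean sweeps past the near-mean \emph{overloaded} bins they become underloaded unless they receive an atypically large number of balls, which by \POne\ and a Chernoff bound only $o(n)$ of them do, so $\delta$ drops below $1-\eps$. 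A final bound $\delta^t\le1-\eps/2$ on a length-$\Theta(n)$ interval follows symmetrically from the $\ge n/(4K)$ bins below a heavy level, whose count $|\{i:x_i^t\le v^r-1\}|$ is non-increasing but loses at most half its members to sampling over $\Theta(n)$ rounds (Chernoff). Intersecting the two length-$\Theta(n)$ conclusions and using monotonicity within blocks to glue gives $\ge\eps n$ rounds with $\delta^t\in[\eps,1-\eps]$, after one last shrink of $\eps$.

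The failure probability is $e^{-\eps n}$ because each probabilistic estimate above is a sum of at most $\tfrac n{4\eps}=\Theta(n)$ indicators with suitably bounded conditional means, while the associated ``bad'' counters have $O(1)$ bounded differences, so Azuma/Chernoff applies. I expect the crux to be precisely this replacement of the naive ``intermediate value'' heuristic: $\delta^t$ can plunge by an arbitrary amount in a single round when $\mu^t$ crosses a heavy load level, so continuity is unavailable, and one must argue quantitatively that the near-mean bins ``mix'' across the mean fast enough and in large enough number --- which is where the $\Delta^t=O(n)$ control (keeping most off-mean bins within $O(1)$ of the mean, so they can catch the rising mean) and condition \POne\ (keeping underloaded bins sampled $\Omega(1/n)$ often) are both essential --- together with the bookkeeping needed to keep the nested constants ($\eps$ much smaller than $1/K$) consistent and to dispose of the ``always low''/``always high'' corner cases.
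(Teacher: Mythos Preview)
Your proposal has the right core insight—the one-sided Lipschitz property $\delta^{t+1}\le\delta^t+\tfrac1n$—and the right probabilistic tools, but there is a circular constant dependency that breaks the argument as written. You propagate $\Delta^t\le Cn+2w_-\cdot\tfrac{n}{4\eps}$ to every round of the window and then apply Markov, so your $K=K(C,\eps)=2\bigl(C+\tfrac{w_-}{2\eps}\bigr)$ is $\Theta(1/\eps)$. Pigeonhole over $\sim 2K$ load levels then gives a heavy level carrying only $\tfrac{n}{4K}=\Theta(\eps n/w_-)$ bins, and the step ``$\delta^t\ge\tfrac1n\cdot\tfrac n{4K}\ge\eps$'' fails: it would require $K\le\tfrac1{4\eps}$, whereas $K\ge w_-/\eps$. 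The ``one last shrink of $\eps$'' cannot repair this, because shrinking $\eps$ lengthens the window and inflates $K$ in lockstep. The same lack of scale separation undermines the ``mean sweeps past the near-mean overloaded bins'' step: over the window the mean rises by $\Theta(w_+/\eps)$ while $K=\Theta(w_-/\eps)$, so the sweep need not cover the near-mean band.

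The paper sidesteps this by fixing the near-mean set \emph{once}, at time $t_0$: from $\Delta^{t_0}\le Cn$ one takes $B_\#:=\{i:|y_i^{t_0}|<4C\}$ of size $>3n/4$, and pigeonhole over only $4C$ levels (independent of $\eps$) gives a heavy level of size $\ge n/(8C)$; this fixed set is then tracked forward, with $\eps$ shrunk at the end so that the window is long enough for the mean to rise past those $4C$ levels. The paper's assembly is also simpler than your phase decomposition: it proves (Claim~\ref{clm:underloaded}) that some \emph{early} round $s_1$ has $\delta^{s_1}\le1-\kappa_1$ and (Claim~\ref{clm:overloaded}) that some \emph{later} round $s_2$ has $\delta^{s_2}\ge\kappa_2$, and then a single application of the one-sided Lipschitz property (your ``upward traversal'' observation, formalised as Lemma~\ref{lem:smoothness}) yields $\ge\min\{\tfrac\vartheta2 n,\,s_2-s_1\}$ good rounds in between. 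Your ``degenerate possibility'' arguments are essentially these two claims; the difficulty is that you invoke them as contradictions to hypothetical all-high/all-low phases, but they are not contradictions—they are precisely the positive content you need, and once you have them the phase split becomes unnecessary.
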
 }
	\addtocounter{lem}{-1}

Thus, whenever the absolute value potential $\Delta^{t_1}$ is at most linear, the mean quantile $\delta^t$ is \textit{good} (bounded away from $0$ or $1$) in a constant fraction of the next $\Theta(n)$ rounds.
The next step in the proof is to establish that the sufficient condition on $\Delta^{t_1}$ will be satisfied. To this end, we use a relation between the absolute value potential $\Delta^{t}$ and the quadratic potential $\Upsilon^t$, showing that $\Upsilon^{t}$ drops in expectation over the next round as long as $\Delta^t=\Omega(n)$. Thus, $\Delta^t$ must eventually become linear.

\def\quadraticabsoluterelation{Consider any $\POne \cap \WOne$-process. Then, for any round $t \geq 0$,  \[
\Ex{\left. \Upsilon^{t+1} \,\right|\, \mathfrak{F}^t} \leq \Upsilon^t + \sum_{i \in B_+^t} 2y_i^t p_i^t \cdot w_+ + \sum_{i \in B_-^t} 2y_i^t p_i^t \cdot w_- + 4 \cdot (w_-)^2.
\]
Hence for any $\PTwo \cap \WOne$-process or $\POne \cap \WTwo$-process, this implies by \cref{lem:additive_drift} that there exist  constants $\C{quad_delta_drop}, \C{quad_const_add} > 0$ such that for any round $t \geq 0$,
\[
\Ex{\left. \Upsilon^{t+1} \,\right|\, \mathfrak{F}^t} \leq \Upsilon^t  - \frac{\C{quad_delta_drop}}{n} \cdot \Delta^t + \C{quad_const_add}.
\]}

{\renewcommand{\thelem}{\ref{lem:quadratic_absolute_relation}}
	\begin{lem}[Restated, page~\pageref{lem:quadratic_absolute_relation}]
\quadraticabsoluterelation
	\end{lem}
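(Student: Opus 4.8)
The plan is to first compute the exact one-round change of the quadratic potential $\Upsilon$, which immediately yields the first displayed inequality for any $\POne \cap \WOne$ process, and then to convert its linear part into the claimed drift by plugging in the majorization parts of \PTwo (respectively the strict weight gap of \WTwo).

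\emph{Step 1 (exact change of $\Upsilon$).} Fix a round $t$ and use the labelling with $y_1^t \geq \cdots \geq y_n^t$, so that $B_+^t = \{1,\dots,\delta^t n\}$ and $\sum_{i=1}^n y_i^t = 0$. Condition on $\mathfrak{F}^t$ and let $j$ be the random bin receiving the $(t+1)$-th allocation, so that $w := w^{t+1}$ equals $w_+$ if $j \in B_+^t$ and $w_-$ if $j \in B_-^t$. Since $W^{t+1} = W^t + w$, we have $y_i^{t+1} = y_i^t - w/n$ for every $i \neq j$ and $y_j^{t+1} = y_j^t + w(1 - 1/n)$. Expanding $\Upsilon^{t+1} = \sum_{i\neq j}(y_i^t - w/n)^2 + (y_j^t + w(1-1/n))^2$ and using $\sum_i y_i^t = 0$ to kill the linear cross term gives the exact identity $\Upsilon^{t+1} = \Upsilon^t + 2w\,y_j^t + w^2(1 - 1/n) \leq \Upsilon^t + 2w\,y_j^t + 4(w_-)^2$. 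Taking the conditional expectation over $j$ (the $i$-th heaviest bin is chosen with probability $p_i^t$, with weight $w_+$ on $B_+^t$ and $w_-$ on $B_-^t$) yields precisely the first displayed inequality.

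\emph{Step 2 (majorization $\Rightarrow$ linear-term bound).} Write $A_+ := \sum_{i\in B_+^t} y_i^t p_i^t \geq 0$ and $A_- := \sum_{i\in B_-^t} y_i^t p_i^t \leq 0$, and recall $\sum_{i\in B_+^t} y_i^t = -\sum_{i\in B_-^t} y_i^t = \Delta^t/2$. The main tool is Abel summation: if $(a_\ell)$ is non-increasing and nonnegative and a probability segment is majorized by (respectively majorizes) a uniform segment of value $v$, then $\sum_\ell p_\ell a_\ell \leq v \sum_\ell a_\ell$ (respectively $\geq$). Applying this on $B_+^t$ with $a_i = y_i^t$ and segment $(p_1^t,\dots,p_{\delta^t n}^t)$, and on $B_-^t$ with the reversed, sign-flipped sequence $a_\ell = -y^t_{n-\ell+1} \geq 0$ and segment $(p_n^t,\dots,p_{\delta^t n+1}^t)$, the (weaker) \POne majorizations give $A_+ \leq \Delta^t/(2n)$ and $A_- \leq -\Delta^t/(2n)$, while the (sharper) \PTwo majorizations give $A_+ \leq (\tfrac1n - \tfrac{\C{p2k1}(1-\delta^t)}{n})\cdot\tfrac{\Delta^t}{2}$ and $A_- \leq -(\tfrac1n + \tfrac{\C{p2k2}\delta^t}{n})\cdot\tfrac{\Delta^t}{2}$.

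\emph{Step 3 (conclude per subclass, invoking \cref{lem:additive_drift}).} For a $\POne \cap \WTwo$ process, \WTwo forces $w_- \geq w_+ + 1$, so the linear term $2w_+ A_+ + 2w_- A_-$ is at most $\tfrac{w_+ - w_-}{n}\Delta^t \leq -\tfrac1n \Delta^t$, and the drift bound holds with $\C{quad_delta_drop} := 1$ and $\C{quad_const_add} := 4(w_-)^2$. For a $\PTwo \cap \WOne$ process, $w_+ \leq w_-$, so the linear term is at most $\tfrac{\Delta^t}{n}\bigl((w_+ - w_-) - w_+\C{p2k1}(1-\delta^t) - w_-\C{p2k2}\delta^t\bigr)$; since $w_+ - w_- \leq 0$ and $w_+\C{p2k1}(1-\delta^t) + w_-\C{p2k2}\delta^t$ is a convex combination of $w_+\C{p2k1}$ and $w_-\C{p2k2}$, hence at least $\min\{w_+\C{p2k1}, w_-\C{p2k2}\}$, this is at most $-\tfrac{1}{n}\min\{w_+\C{p2k1}, w_-\C{p2k2}\}\cdot\Delta^t$, giving the drift bound with $\C{quad_delta_drop} := \min\{w_+\C{p2k1}, w_-\C{p2k2}\} > 0$ and $\C{quad_const_add} := 4(w_-)^2$. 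The only delicate point is Step 2: \POne and \PTwo are phrased as majorizations between sorted probability vectors, so one must pair them with the already-sorted normalized load vector while, on the underloaded half, reversing the index order and negating so that the comparison sequence is non-increasing and nonnegative before Abel summation applies. The second subtlety — and the reason \PTwo carries two independent slack constants $\C{p2k1}$ and $\C{p2k2}$ — is that when $\delta^t$ is near $0$ or near $1$ one of the two bias contributions degenerates; the convex-combination estimate in Step 3 is exactly what keeps the drift coefficient bounded away from $0$ uniformly in $\delta^t$.
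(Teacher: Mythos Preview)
Your proof is correct and follows the same overall strategy as the paper: compute the expected one-round change of $\Upsilon$, then bound the linear term via the majorization conditions (your Abel-summation step is exactly the paper's \cref{lem:quasilem}, and your Step~3 case split mirrors \cref{lem:additive_drift}). The one noteworthy difference is your Step~1: the paper expands $\Ex{\Upsilon_i^{t+1}\mid \mathfrak{F}^t}$ bin-by-bin, splitting into three allocation cases per bin and then aggregating separately over $B_+^t$ and $B_-^t$ to arrive at the $4(w_-)^2$ additive term; you instead condition on the chosen bin $j$, exploit $\sum_i y_i^t = 0$ to collapse the cross terms, and obtain the exact identity $\Upsilon^{t+1}=\Upsilon^t+2w\,y_j^t+w^2(1-1/n)$ in one line before averaging. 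Your route is shorter and even gives a tighter additive constant $(w_-)^2$ (and, in the $\PTwo\cap\WOne$ case, a slightly larger drift constant $\min\{w_+\C{p2k1},w_-\C{p2k2}\}$ versus the paper's $w_+\min\{\C{p2k1},\C{p2k2}\}$), though neither refinement is needed downstream.
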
 }
	\addtocounter{lem}{-1}

Combining the above two lemmas, we prove that in any sufficiently long interval for a constant fraction rounds the mean quantile $\delta^t$ is good, i.e.,~$\delta^t \in [\eps, 1 - \eps]$. In particular, for the recovery phase, we show this guarantee holds \Whp~for an interval of length $n^3 \log^4 n$ and for the stabilization phase, we prove that it holds \Whp~for an interval of length $\Theta(n \log n)$ given that we start with $\Lambda^s = \Oh(n)$  (\cref{lem:many_good_quantiles_whp}). %
In these good rounds, we prove that the exponential potential $\Lambda^t$ decreases by a multiplicative factor of at least $(1-\C{good_quantile_mult}\alpha / n)$ (see \cref{lem:good_quantile_good_decrease}). In other rounds, the potential $\Lambda^t$ increases by at most $(1+\C{bad_quantile_mult} \alpha^2 /n)$ (see \cref{lem:bad_quantile_increase_bound}). Combining these for sufficiently small $\alpha$, we obtain that the exponential potential function eventually becomes $\Oh(n)$, which implies a logarithmic gap. We refer to \cref{fig:recovery_stabilisation} for a high-level overview of recovery and stabilization, as well as \cref{fig:proof_outline_tikz} for a diagram summarizing most of the crucial lemmas used in the analysis and outlining their relationship.

 \newcommand{\wcp}{w.c.p.\xspace}

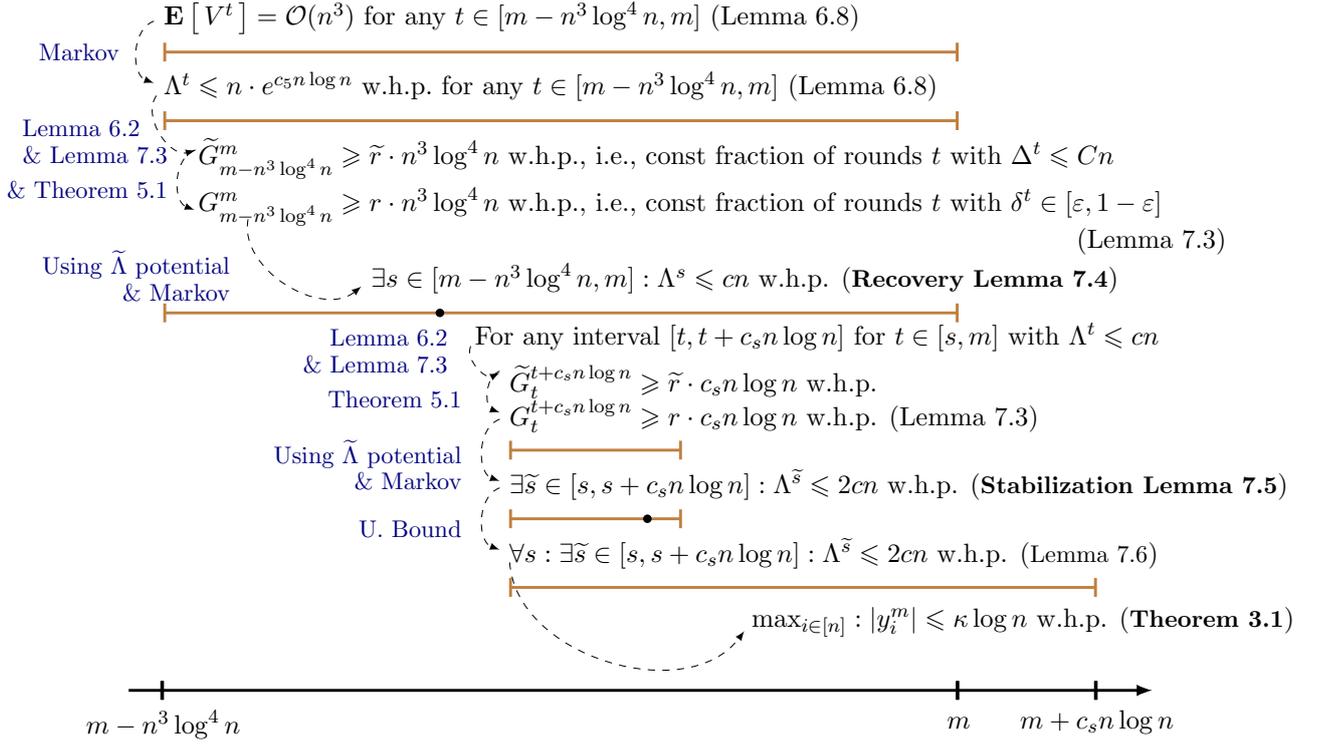
\begin{figure}[H]
\begin{center}
\makebox[\textwidth][c]{
\scalebox{0.91}{
\begin{tikzpicture}[
  txt/.style={anchor=west,inner sep=0pt},
  Dot/.style={circle,fill,inner sep=1.25pt},
  implies/.style={-latex,dashed}]

\def\betaO{0}
\def\betaA{1}
\def\betaB{5}
\def\betaC{12.5}
\def\End{14.5}
\def\yO{-9.5}

\node[anchor=north] at (\betaA, \yO-0.15) {$m - n^3 \log^4 n$};
\draw[|-|, very thick] (\betaA, \yO) -- (\betaA+0.01, \yO);
\node[anchor=north] at (\betaB, \yO) {};
\node[anchor=north] at (\betaC, \yO-0.15) {$\phantom{\log n}m\phantom{\log n}$};
\draw[|-|, very thick] (\betaC, \yO) -- (\betaC+0.01, \yO);
\node[anchor=north] at (\End, \yO-0.15) {$m + c_s n \log n$};
\draw[|-|, very thick] (\End, \yO) -- (\End+0.01, \yO);

\begin{scope}[yshift=-1.2cm]
\node (step1) at (\betaA,1.5) {};
\node[txt] at (step1) {$\Ex{V^t} = \Oh(n^3)$ for any $t \in [m - n^3 \log^4 n, m]$ (\cref{lem:initial_gap_nlogn})};
\draw[|-|, very thick, brown] (\betaA, 1.0) -- (\betaC, 1.0) ;

\node (step2) at (\betaA,0.5) {};
\node[txt] at (step2) {$\Lambda^t \leq n \cdot e^{ \C{poly_n_gap} n \log n}$ \Whp~for any $t \in [m - n^3 \log^4 n, m]$ (\cref{lem:initial_gap_nlogn})};
\draw[|-|, very thick, brown] (\betaA, 0) -- (\betaC, 0) ;

\node (step3) at (\betaA + 0.5,-0.5) {};
\node[txt] at (\betaA + 0.5,-0.55) {$\tilde{G}_{m - n^3 \log^4 n}^{m} \geq \tilde{r} \cdot n^3 \log^4 n$ \Whp, i.e., const fraction of rounds $t$ with $\Delta^t \leq Cn$};
\node (step3H1) at (\betaA + 0.6,-0.4) {};
\node[txt] at (\betaA + 0.5,-1.25) {$G_{m - n^3 \log^4 n}^{m} \geq r \cdot n^3 \log^4 n$ \Whp, i.e., const fraction of rounds $t$ with $\delta^t \in [\eps, 1 - \eps]$};
\node[txt] at (\betaA + 13.2, -1.75) {(\cref{lem:many_good_quantiles_whp})};
\node[anchor=east, black!50!blue] at (\betaA - 0.5, 1) {\small Markov};
\end{scope}

\node (step4) at (\betaA + 1.25,-2.5) {};

\node (step5) at (\betaB - 1,-3.5) {};
\node[txt] at (\betaB - 1, -3.5) {$\exists s \in [m - n^3 \log^4 n, m] : \Lambda^s \leq cn$ \Whp ({\small \textbf{Recovery \cref{lem:recovery}}})};
\draw[|-|, very thick, brown] (\betaA, -4.0) -- (\betaC, -4.0) ;
\node[Dot] at (\betaB,-4.0){};

\node (step6) at (\betaB + 0.5,-4.35) {};
\node[txt] at (step6) {For any interval $[t, t + c_s n \log n]$ for $t \in [s, m]$ with $\Lambda^t \leq cn$};
\node (step6H1) at (\betaB + 1,-4.8) {};
\node[txt] at (\betaB + 1,-5.0) {$\tilde{G}_{t}^{t + c_s n \log n} \geq \tilde{r} \cdot c_s n \log n$ \Whp};
\node (step6H2) at (\betaB + 1,-5.5) {};
\node[txt] at (\betaB + 1,-5.5) {$G_{t}^{t + c_s n \log n} \geq r \cdot c_s n \log n$ \Whp (\cref{lem:many_good_quantiles_whp})};
\draw[|-|, very thick, brown] (\betaB + 1, -6.0) -- (\betaB + 3.5, -6.0) ;

\node (step7) at (\betaB + 1,-6.5) {};
\node[txt] at (\betaB + 1, -6.5) {$\exists \tilde{s} \in [s, s + c_s n \log n]: \Lambda^{\tilde{s}} \leq 2\C{lambda_bound} n$ \Whp ({\small \textbf{Stabilization \cref{lem:stabilization}}})};
\draw[|-|, very thick, brown] (\betaB + 1, -7.0) -- (\betaB + 3.5, -7.0) ;
\node[Dot] at (\betaB + 3,-7.0){};

\node (step8) at (\betaB + 1,-7.5) {};
\node[txt] at (\betaB + 1, -7.5) {$\forall s: \exists \tilde{s} \in [s, s + c_s n \log n]: \Lambda^{\tilde{s}} \leq 2 \C{lambda_bound} n$ \Whp ({\small\cref{lem:good_gap_after_good_lambda}})};
\draw[|-|, very thick, brown] (\betaB + 1, -8.0) -- (\End, -8.0) ;

\node (step9) at (\betaC - 3,-8.5) {};
\node[txt] at (\betaC - 3, -8.5) {$\max_{i \in [n]} : |y_i^m| \leq \kappa \log n$ \Whp ({\small \textbf{\cref{thm:main_technical}}})};

\draw[implies] (step1) edge[bend right=70] (step2);

\draw[implies] (step2) edge[bend right=70] (step3H1);
\node[anchor=east, black!50!blue] at (\betaA - 0.2, -1.3) {\small \cref{lem:quadratic_absolute_relation}};
\node[anchor=east, black!50!blue] at (\betaA + 0.2, -1.7) {\small \& \cref{lem:many_good_quantiles_whp}};

\node[anchor=east, black!50!blue] at (\betaA + 0.2, -2.2) {\small \& \cref{lem:good_quantile}};

\draw[implies] (step3H1) edge[bend right=70] (\betaA + 0.45, -2.5);

\draw[implies] (step4) edge[bend right=70] (step5);
\node[anchor=east, black!50!blue] at (\betaA + 1.1, -3.3) {\small Using $\tilde{\Lambda}$ potential};
\node[anchor=east, black!50!blue] at (\betaA + 1.1, -3.7) {\small \& Markov};

\draw[implies] (step6) edge[bend right=70] (step6H1);

\draw[implies] (step6H1) edge[bend right=70] (step6H2);
\node[anchor=east, black!50!blue] at (\betaA + 4.25, -4.35) {\small \cref{lem:quadratic_absolute_relation}};
\node[anchor=east, black!50!blue] at (\betaA + 4.25, -4.75) {\small \& \cref{lem:many_good_quantiles_whp}};

\draw[implies] (step6H2) edge[bend right=70] (step7);
\node[anchor=east, black!50!blue] at (\betaA + 4.45, -5.25) {\small \cref{lem:good_quantile}};

\node[anchor=east, black!50!blue] at (\betaA + 4.45, -6.05) {\small Using $\tilde{\Lambda}$ potential};
\node[anchor=east, black!50!blue] at (\betaA + 4.45, -6.45) {\small \& Markov};

\draw[implies] (step7) edge[bend right=70] (step8);
\node[anchor=east, black!50!blue] at (\betaA + 4.45, -7.15) {\small U.~Bound};

\draw[implies] (step8) edge[bend right=70] (step9);
\draw[-latex, very thick] (\betaA - 0.5, \yO) -- (\End + .8, \yO);
\end{tikzpicture}}}
\end{center}
\caption{Summary of the key steps in the proof for \cref{thm:main_technical}. By $\tilde{G}_{t_0}^{t_1}$ we denote the number of rounds $t \in [t_0, t_1]$ with $\Delta^t \leq Cn$ and by $G_{t_0}^{t_1}$ the number of rounds $t \in [t_0, t_1]$ with $\delta^t \in [\eps, 1 - \eps]$, also $r=\eps^2$ and $\tilde{r}=4\eps^2$.}
\label{fig:proof_outline_tikz}
\end{figure}
 
\begin{figure}

 \begin{center}
\includegraphics[scale=0.73]{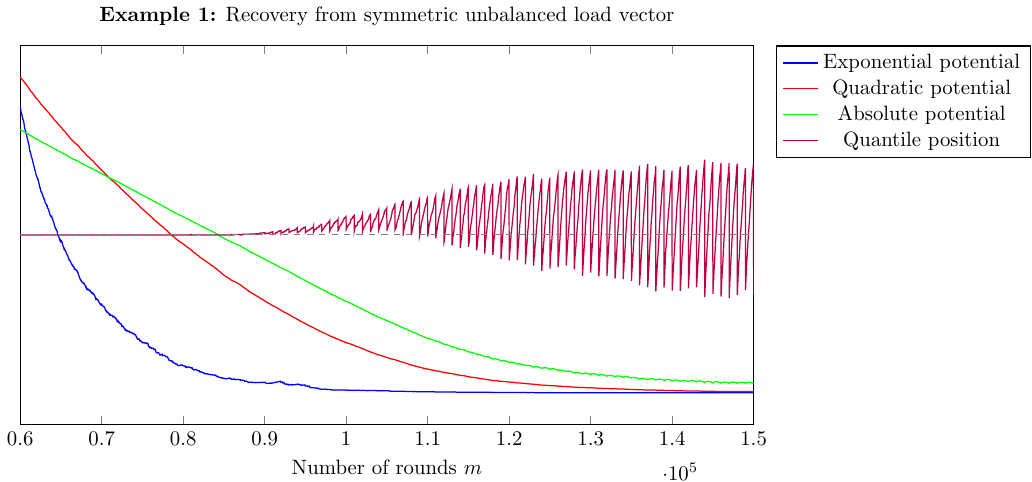}%

~\vspace{.7em}~

\includegraphics[scale=0.73]{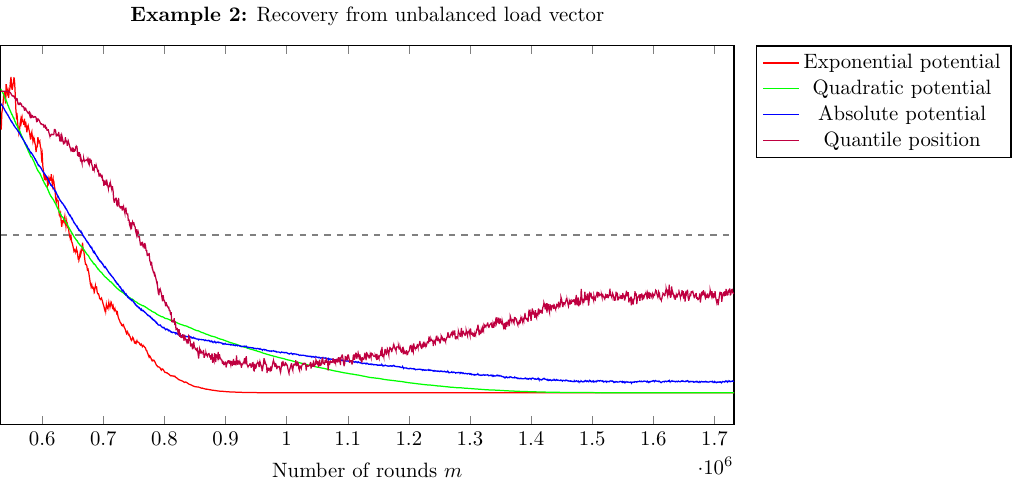}

~\vspace{.em}~

\includegraphics[scale=0.73]{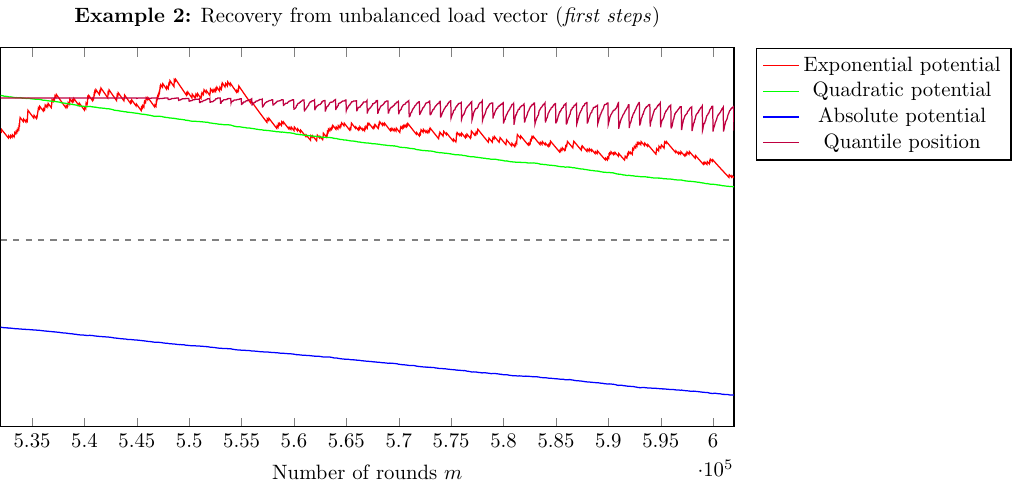}

\caption{Scaled versions of the potential functions for \MeanThinning with different initial load vectors for $n = 1000$ bins. (\textbf{Example 1}) In the top figure, the process starts from a load vector where half of the bins have (normalized) load $+\log n$ and the other half have $-\log n$. 
(\textbf{Example 2}) Here, the process starts from an unbalanced load vector where the quantile of the mean load remains very close to $1$ for $\omega(n \log n)$ many rounds. As can be seen, the exponential potential increases a bit at the beginning, but both the absolute and quadratic potential improve immediately. Once the mean quantile is sufficiently far from $0$ and $1$, the exponential potential also decreases, eventually stabilizing at $\Oh(n)$, as shown in the third figure. Note that the dashed gray line corresponds a perfectly balanced mean quantile, i.e., $\delta^t=1/2$.
} \label{fig:experiments}
\end{center}
\end{figure}

There are a considerable number of constants in this paper and their interdependence can be quite complex. We hope the following remark will shed some light on their respective roles. The concrete values are given in the beginning of \cref{sec:mean_biased_gap_completion}.

\begin{rem}[Relationship Between the Constants] All the constants used in the analysis depend on the constants $w_-$, $w_+$, $\C{p1k1}$, $\C{p1k2}$ (and \C{p2k1}, \C{p2k2} if \PTwo holds) of the process being analyzed. The relation between the absolute value and quadratic potential functions (and specifically \C{quad_delta_drop} and \C{quad_const_add} in \cref{lem:quadratic_absolute_relation}) defines what it means for $\Delta^t$ to be small, i.e., $\Delta^t \leq \C{small_delta} n$ (defined in \eqref{eq:c_def}). This $\C{small_delta}$ in turn specifies the constant $\eps \in \big( 0, \frac{1}{2} \big)$, given by \cref{lem:good_quantile}, which defines a good quantile to be $\delta^t \in [\eps, 1 - \eps]$. Then, $\eps$ specifies the fraction $r=\eps^2$ of rounds with $\delta^t \in [\eps, 1 - \eps]$ in a sufficiently long interval (in \cref{lem:many_good_quantiles_whp}) and the constants $\C{good_quantile_mult}, \C{bad_quantile_mult}$ in the inequalities for the expected change of $\Lambda^t$ for sufficiently small constant $\alpha$ (in Lemmas~\ref{lem:good_quantile_good_decrease} and~\ref{lem:bad_quantile_increase_bound}). Then, using the constants $\eps$, $\C{good_quantile_mult}, \C{bad_quantile_mult}$ we finally set the value of $\alpha$ in $\Lambda$. Finally, $\C{good_quantile_mult}$ and $\C{bad_quantile_mult}$ specify $\C{lambda_bound}$ in \cref{cor:change_for_large_lambda}, which defines what it means for $\Lambda$ to be \textit{small}, i.e., $\Lambda^t \leq cn$.
\end{rem}

\bigskip

\noindent \textbf{Organization of the Remaining Part of the Proof.}

\begin{itemize}[itemsep=-1pt, topsep=-1pt]
    \item In \cref{sec:mean_quantile_stabilization}, we prove the mean quantile stabilization Theorem (\cref{lem:good_quantile}), starting from a round with $\Delta^t \leq Cn$.
    \item In \cref{sec:potential_function_inequalities}, we start by proving the relation between the absolute value and the quadratic potentials (\cref{lem:quadratic_absolute_relation}), then analyze the expected change of the exponential potential $\Lambda$ (\cref{sec:lambda_potential}) and prove that $\Ex{V^t} = \poly(n)$, deducing the $\Oh(n \log n)$ gap (\cref{lem:initial_gap_nlogn}). Finally, we prove some deterministic bounds on the quadratic potential.
    \item In \cref{sec:mean_biased_gap_completion}, we complete the proof for the $\Oh(\log n)$ gap for \MeanBiased processes by analyzing the recovery (\cref{sec:recovery}) and  stabilization (\cref{sec:stabilization}) phases.
\end{itemize}

\section{Mean Quantile Stabilization} \label{sec:mean_quantile_stabilization}

In this section we prove that once the absolute value potential is $\Oh(n)$, then the allocation process will satisfy $\delta^{t} \in [\eps, 1 - \eps]$ for ``many'' of the following rounds. The condition on $\delta^{t}$ means that a constant fraction of bins are overloaded, and a constant fraction are underloaded. Interestingly, this lemma does not require either of the stronger properties \PTwo or \WTwo to hold.

\begin{thm}[Mean Quantile Stabilization]\label{lem:good_quantile}
\goodquantile
\end{thm}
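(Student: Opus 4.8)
The plan is to show that if the mean quantile $\delta^t$ stays outside the interval $[\eps, 1-\eps]$ for too long in the window $[t_0, t_0 + \tfrac{n}{4\eps}]$, then the absolute value potential $\Delta^t$ would have to drop far below $0$, which is impossible. So suppose $\delta^t \notin [\eps, 1-\eps]$ for more than $(1-\eps)\cdot \tfrac{n}{4\eps}$ of the rounds in the window; by symmetry of the argument (overloaded versus underloaded), assume for concreteness that $\delta^t > 1-\eps$ for a large fraction of those rounds, i.e. at most $\eps n$ bins are underloaded. I would track $\Delta^t = \sum_{i=1}^n |y_i^t| = 2\sum_{i \in B_-^t}|y_i^t| = 2\sum_{i \in B_+^t} y_i^t$ (the two sums are equal since $\sum_i y_i^t$ is within an additive $O(w_-)$ of $0$; the mean may shift by up to $w_-/n$ per round but this only contributes a bounded error). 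The key observation is that when $\delta^t$ is large, almost all the probability mass lands on overloaded bins (under $\POne$, $\min_{i \in B_-} p_i^t \geq \C{p1k2}/n$ but $|B_-^t| \leq \eps n$ forces $P_-^t$ small — actually we only need $P_-^t \le 1$ trivially, but we need the \emph{expected decrease} of the part of $\Delta$ coming from overloaded bins).

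The heart of the estimate: condition on $\mathfrak{F}^t$ with $\delta^t$ large. Consider the quantity $\Phi^t := \sum_{i \in B_+^t} y_i^t$ (half of $\Delta^t$, up to the mean-shift error). Allocating $w_+$ or $w_-$ balls to a bin of load $y$, together with the mean increasing by $w^{t+1}/n$, changes $\Phi$; the dominant effect is that \emph{every} overloaded bin loses $\delta^t \cdot w^{t+1}/n$ in normalized load from the mean shift (roughly $\tfrac{\delta^t}{n}\cdot(w_+ P_+^t + w_- P_-^t) \cdot |B_+^t|$ in aggregate), while only the at most $w_- \cdot (P_+^t \cdot |B_+^t|/|B_+^t|)$-sized contributions push it up, and balls going to underloaded bins (probability $P_-^t \leq$ something bounded away from a value that would cancel the drift when $\delta^t$ is close to $1$) do not increase $\Phi$. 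A careful bookkeeping shows $\Ex{\Phi^{t+1} \mid \mathfrak{F}^t} \leq \Phi^t - c'(1-\delta^t)\cdot(\text{something})$... more robustly, I would instead invoke \cref{lem:quadratic_absolute_relation}: in any round with $\Delta^t = \Omega(n)$ the quadratic potential $\Upsilon^t$ drops in expectation by $\tfrac{\C{quad_delta_drop}}{n}\Delta^t - \C{quad_const_add} \geq c'' $, so over the window $\Upsilon$ has negative drift of order $1$ per round as long as $\Delta$ stays $\Omega(n)$. Combined with a concentration argument (bounded differences: each round changes $\Upsilon$ by $O(\sqrt{\Upsilon^t})$ — here I'd need the deterministic bounds on $\Upsilon$ mentioned as proven later in \cref{sec:potential_function_inequalities}), this forces $\Delta^t = O(n)$ persistently; and the contrapositive/complementary event is what we want. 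Actually the cleanest route: define $\eps$ small enough (depending on $\C{quad_delta_drop},\C{quad_const_add},C$) and a stopping-time argument — if fewer than $\eps n$ rounds in the window have good quantile, then in all other rounds either $\Delta^t > Cn$ (so $\Upsilon$ has strong negative drift by \cref{lem:quadratic_absolute_relation}) or $\Delta^t \leq Cn$ but $\delta^t$ extreme, and I show the latter alone, iterated, drags $\Delta$ down below $Cn$ and then the bias towards the ``minority'' side (underloaded bins when $\delta^t$ near $1$) pushes $\delta^t$ back towards $1/2$ within $O(n)$ rounds since each such round a net $\Omega(1)$ fraction...

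Concretely I would set up a supermartingale: let $\eps$ be chosen, let $A = \{t \in [t_0, t_0+\tfrac{n}{4\eps}] : \delta^t \in [\eps,1-\eps]\}$, assume $|A| < \eps n$ for contradiction, and bound $\Ex{\Delta^{t+1} - \Delta^t \mid \mathfrak{F}^t} \leq -c_1$ for $t \notin A$ with $\Delta^t$ large, while for $t \in A$ or $\Delta^t$ small the change is $O(1)$ deterministically. Since $|A| < \eps n$ and the window has length $\tfrac{n}{4\eps}$, summing the drift over the window gives $\Ex{\Delta^{t_0 + n/(4\eps)}} \leq \Delta^{t_0} - c_1(\tfrac{n}{4\eps} - \eps n) + O(\eps n) \cdot O(1) < 0$ for $\eps$ small, contradicting $\Delta \geq 0$; and the high-probability statement $1 - e^{-\eps n}$ follows from Azuma–Hoeffding applied to the bounded-difference martingale $\Delta^t + c_1 \cdot |\{s < t : s \notin A, \Delta^s \text{ large}\}|$, using that increments are bounded by $O(w_-)$ per round so the sum of squared increments is $O(n \cdot w_-^2 / \eps) = O(n)$, giving a deviation bound of the right order. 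The main obstacle — and where I expect the real work to be — is getting the per-round expected-decrease estimate $\Ex{\Delta^{t+1} - \Delta^t \mid \mathfrak{F}^t, \delta^t \text{ extreme}, \Delta^t \text{ not too large}} \leq -c_1$ to be genuinely negative and not merely $O(1)$: when $\delta^t$ is extremely close to $1$, the underloaded side is tiny, the probability bias there is only $\C{p1k2}/n$ per bin over $\leq \eps n$ bins, and one must argue carefully (perhaps splitting into "$\Delta$ moderately large, so the mean-shift drift dominates" versus "$\delta^t$ extreme, so balls concentrate on the overloaded bulk which pulls the mean up and hence pulls every overloaded $y_i$ down") that these two mechanisms never simultaneously vanish. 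I'd handle this by a two-case split on whether $\Delta^t \geq C'n$ for an intermediate constant $C' $: in the first case \cref{lem:quadratic_absolute_relation}'s drift on $\Upsilon$ does the job (convert to $\Delta$-drift via $\Delta^2 \le n \Upsilon$, so $\Upsilon \ge \Delta^2/n \ge C'^2 n$ large $\Rightarrow$ strong drift); in the second case $\Delta$ is already $O(n)$ and I only need to prevent it from \emph{rising} while waiting for a good quantile, combined with a direct argument that an extreme $\delta^t$ with bounded $\Delta$ cannot persist because the overloaded mass is then "shallow" and gets eroded. I expect the final $\eps$ to be chosen after $C$, as the remark on constants indicates.
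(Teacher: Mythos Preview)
Your approach has a genuine gap. The theorem is stated (and the paper explicitly emphasizes this) under only the weak conditions $\POne \cap \WOne$, which are satisfied by \OneChoice. Your main tool---a per-round negative drift of $\Delta^t$ (or of $\Upsilon^t$ via \cref{lem:quadratic_absolute_relation})---requires $\PTwo$ or $\WTwo$; the drop conclusion of \cref{lem:quadratic_absolute_relation} is stated only under those stronger hypotheses, and indeed the drift is simply false under $\POne \cap \WOne$ alone: for \OneChoice, $\Delta^t$ has no downward drift regardless of $\delta^t$. So the supermartingale you set up does not exist, and the Azuma argument collapses. The fallback you sketch at the very end (``extreme $\delta^t$ with bounded $\Delta$ cannot persist because the overloaded mass is shallow and gets eroded'') is in fact the correct intuition, but you leave it completely undeveloped, and it is where essentially all the work lies.

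The paper's proof does not track any potential drift at all. Instead it uses $\Delta^{t_0}\le Cn$ \emph{structurally}: it forces at least $3n/4$ bins to satisfy $|y_i^{t_0}|<4C$. The argument then has three steps. First, among the near-mean overloaded bins, pigeonhole on the (integer!) load levels gives $\Omega(n)$ bins sharing one common level $\ell\in[0,4C)$; these all become underloaded \emph{simultaneously} the first time the mean passes $\ell$, which happens deterministically within $O(Cn)$ rounds, and a sub-martingale bound (using only the upper bound on $p_i^t$ for overloaded $i$ from $\POne$) shows most of them survive unallocated until then---yielding a round $s_1$ with $\delta^{s_1}\le 1-\kappa_1$. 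Second, a coupling argument (using only the lower bound on $p_i^t$ for underloaded $i$ from $\POne$) shows that $\Omega(n)$ near-mean underloaded bins each receive enough allocations to become overloaded within a later window, yielding $s_2\ge s_1+\Omega(n)$ with $\delta^{s_2}\ge\kappa_2$. Third, since $\delta^{t+1}\le\delta^t+1/n$ always, having $\delta^{s_1}$ small and $\delta^{s_2}$ bounded away from $0$ at a round $\Omega(n)$ later forces $\delta^t\in[\eps,1-\eps]$ for $\Omega(n)$ intermediate rounds (a deterministic ``intermediate value'' lemma). The $e^{-\eps n}$ failure probability comes from the concentration in the first two steps, not from any drift of $\Delta$.
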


		\begin{proof}
			
			Define $ B_{\#}:=\left\{ i \in [n] \colon |y_i^{t_0}| < 4\C{small_delta} \right\}$ to be the bins whose load deviates from the mean $W^{t}/n$ by less than $4\C{small_delta}$. Then, conditional on the event $\mathcal{C}:=\{\Delta^{t_0} \leq \C{small_delta} n \} $, we have
			\[
			\C{small_delta} n \geq \Delta^{t_0} = \sum_{i \in [n]} |y_i^{t_0}| >  \sum_{i \colon |y_i^{t_0}| \geq 4\C{small_delta}} 4\C{small_delta} = (n-|B_{\#}|) \cdot 4\C{small_delta},
			\]
			 rearranging this gives that conditional on $\mathcal{C}$,
			\begin{equation}\label{eq:sizeofB_*}
				|B_{\#}| > n\cdot \frac{\C{small_delta}(4-1)   }{4\C{small_delta}} =  \frac{3n}{4} .
			\end{equation}Note that the bins in $B_{\#}$  may be underloaded or overloaded. 
			
			We now proceed with two claims. Using the fact that $|B_{\#}|$ is large, the first claim (\cref{clm:underloaded}) proves that there exists an ``early'' round $t \in [t_0,t_1]$ such that a constant fraction of bins are underloaded. Similarly, the second claim (\cref{clm:overloaded})  proves that there exists a ``late'' round $t \in [t_2,t_3]$ with $t_2 =  t_1 + \Theta(n)$ such that a constant fraction of bins are overloaded. Since the set of overloaded bins can only increase by $1$ per round, we then finally conclude that for $\Omega(n)$ rounds $t\in[t_1,t_2]$, both conditions hold, i.e., $\delta^t \in [\eps, 1 - \eps]$. This is made formal by \cref{lem:smoothness}.   
				\begin{clm}\label{clm:underloaded}
				For any integer constant $\C{small_delta} \geq 1$, there exists a constant $\kappa_1:=\kappa_1(C)>0$ such that for $t_1:=t_0+ 4\C{small_delta}  \cdot n + 1 $ we have 
				\[
				\Pro{ \left. \bigcup_{ t \in [t_0,t_1] } \left\{\delta^t   \leq 1- \kappa_1  \right\}  \,\right|\, \mathfrak{F}^{t_0}, \Delta^{t_0} \leq C n } \geq 1-e^{-\kappa_1 n }.
				\]
				
			\end{clm}
			\begin{poc}
				If $|B_{-}^{t_0} \cap B_{\#}| >  |B_{\#}|/3$, then, since $|B_{\#}|> 3n/4$ by \eqref{eq:sizeofB_*}, the statement of the claim follows immediately for $t=t_0$ and $\kappa_1=1/4$.
				Otherwise, we may assume $|B_{+}^{t_0} \cap B_{\#}| \geq 2|B_{\#}|/3$, i.e., at least two thirds of the bins in $B_{\#}$ are overloaded at round $t_0$. Note that the bins in $B_{+}^{t_0} \cap B_{\#}$ all have loads in the range $[W^{t_0}/n,\;W^{t_0}/n + 4\C{small_delta})$. Thus, since loads are integers (as $w_{+}$ and $w_{-}$ are integers), there can be at most $ 4\C{small_delta}$ different load levels within $B_{+}^{t_0} \cap B_{\#}$. Hence, by the pigeonhole principle and \eqref{eq:sizeofB_*}, conditional on $\mathcal{C}$, there exists a subset $B_*\subseteq B_{+}^{t_0} \cap B_{\#}$  with \begin{equation}\label{eq:sizeBtileps}|B_*| \geq \frac{1}{4\C{small_delta}}\cdot |B_{+}^{t_0} \cap B_{\#}|  \geq \frac{1}{4\C{small_delta}}\cdot\frac{2}{3} \cdot |B_{\#}| \geq \frac{1}{4\C{small_delta}}\cdot\frac{2}{3} \cdot\frac{3n}{4}  = \frac{n}{8\C{small_delta}} ,\end{equation} such that all bins $i \in B_*$  have the same (non-negative) load at round $t_0$. Let this common load be denoted by  $0\leq \ell < 4\C{small_delta} $. 
				In the following, let us define the stopping time
				\[
				\tau:=\min\left\{ t \geq t_0 \colon \frac{W^{t}}{n} - \frac{W^{t_0}}{n} > \ell \right\}, 
				\]
				that is $\tau$ is the minimum number of rounds until the average load has dropped by more than $\ell$. By definition, if a bin in $B_*$ was never chosen for an allocation during any round $[t_0,\tau)$, it becomes underloaded. Also note that, while $\tau$ is a random variable, there is a deterministic upper bound of $T:=t_0+n \cdot \ell + 1$, since in each round the total load increases by at least $w_+ \geq 1$.       
				For any round $t \in [t_0+1,\tau]$, let $B_{*}^t$ be the subset of bins in $B_{*}$ that are never allocated to during $[t_0,t)$ and let $B_{*}^{t_0}:=B_{*}$. Note that since any bin in $B_{*}^t$ is still overloaded in such a round $t$, the bin is chosen for an allocation with probability at most $k_1/n$ by condition \POne.
				
				For any round $t \in [t_0,T]$, define
				\[
				Z^t := \frac{|B_{*}^t|}{(1-k_1/n)^{t-t_0}},
				\]
				so in particular, $Z^{t_0} = |B_{*}^{t_0}|=|B_{*}|$.
				Then, for any round $t\in [t_0,\tau)$, 
				\begin{align*}
					\Ex{Z^{t+1} \, \mid \, \mathfrak{F}^{t} } &= \frac{1}{(1-k_1/n)^{t+1 -t_0}} \cdot \Ex{ |B_{*}^{t+1}| \, \mid \, \mathfrak{F}^t } \\
					&\geq \frac{1}{(1-k_1/n)^{t+1 -t_0}} \cdot (1-k_1/n)|B_{*}^{t}| \\
					&= Z^t,
				\end{align*}
				thus $(Z^{t \wedge \tau})_{t=t_0}^{T}$ forms a sub-martingale.

				We now state a bound that will be used often in the upcoming proof: Since $1-x\geq e^{-2x} $ for any $0\leq x\leq 1/2$, $T-t_0 \leq   4\C{small_delta} \cdot n + 1\leq 5\C{small_delta}n$, and $n$ is large, for any $t \in [t_0,T]$ we have  
				\begin{equation}\label{eq:expt} (1-k_1/n)^{t-t_0} \geq e^{-(t-t_0)\cdot 2\C{p1k1}/n} \geq e^{-(T-t_0) \cdot 2\C{p1k1}/n}\geq e^{-10 \C{p1k1}\C{small_delta}}. \end{equation}  
				We proceed to bound $Z^{t+1}-Z^{t}$. First,
				observe that $|B_{*}^t|$ is non-increasing in $t$. Thus, by \eqref{eq:expt} and $|B_{*}^t| \leq n $, 
				\[Z^{t+1} -Z^t %
				\leq \frac{|B_{*}^t|}{(1-k_1/n)^{t-t_0}}\cdot \left( \frac{1}{ 1-k_1/n } -1 \right) \leq \frac{n}{e^{-10 \C{p1k1}\C{small_delta}}}\cdot  \frac{2k_1 }{ n}   = 2k_1e^{10 \C{p1k1}\C{small_delta}}.  \]  
				Similarly, for a lower bound, note that $|B_{*}^t|$ can decrease by at most one in each round, since only one bin is allocated to in each round, and therefore
				\begin{align*}Z^{t+1} -Z^t &\geq  \frac{|B_{*}^{t}|-1}{(1-k_1/n)^{t+1-t_0}} - \frac{|B_{*}^t|}{(1-k_1/n)^{t-t_0}}\\
					&\geq  \frac{|B_{*}^{t}|-1}{(1-k_1/n)^{t-t_0}} - \frac{|B_{*}^t|}{(1-k_1/n)^{t-t_0}}\\
					&\geq -\frac{1}{(1-k_1/n)^{t-t_0}}\\
					&\geq -e^{10 \C{p1k1}\C{small_delta}}.  \end{align*} 
				Thus $|Z^{t+1} -Z^t |\leq 2k_1e^{10 \C{p1k1}\C{small_delta}}$. Applying  Azuma's inequality for sub-martingales (\Cref{lem:azuma}) on $Z^{T \wedge \tau}=Z^{\tau}$ yields
    \begin{equation}\label{eqprobbdd}
					\Pro{ Z^{\tau}    \leq \frac{1}{2}\cdot Z^{t_0} \, \Big| \, \mathfrak{F}^{t_0} } \leq  \exp\left( - \frac{   \left(\frac{1}{2}\cdot Z^{t_0}\right)^2}{ 2 (T-t_0)\cdot (2k_1e^{10 \C{p1k1}\C{small_delta}})^2 } \right). 
				\end{equation} Observe that  $\{Z^{\tau}    \leq \frac{1}{2}\cdot Z^{t_0}\}  = \{|B_{*}^{\tau}| \leq (1-k_1/n)^{\tau-t_0} \cdot   \frac{1}{2}\cdot |B_{*}| \} $. Recall that, conditional on $\mathcal{C}:=\{\Delta^{t_0} \leq Cn\}$,  we have $|B_{*}| \geq \frac{n}{ 8\C{small_delta}} $ by \eqref{eq:sizeBtileps}, and $\tau-t_0 \leq   5\C{small_delta}n$. Thus, by \eqref{eq:expt} and \eqref{eqprobbdd},
				\[\Pro{ \left.|B_{*}^{\tau}| \leq e^{10 \C{p1k1} \C{small_delta}}\cdot \frac{1}{2}\cdot\frac{n}{ 8\C{small_delta}} \; \right| \; \mathfrak{F}^{t_0}, \; \mathcal{C}}\leq  \exp\left( - \frac{n}{5\cdot 2^{11}\cdot  k_1^2 C^3 e^{20 \C{p1k1}\C{small_delta}}} \right). 
				\]Since all load levels in $B_*$ before round $t_0$ are identical, we conclude that they all become underloaded the first time in the same round $\tau\in [t_0,t_1]$, so, by merging the constants
				\[
				\Pro{ \left. \bigcup_{t\in [t_0,t_1]} \{| B_{-}^{t} \cap B_* | \geq \kappa_1 \cdot n\}  \,\right|\, \mathfrak{F}^{t_0}, \; \mathcal{C} } \geq 1-e^{-\kappa_1 n },
				\]
				where $\kappa_1:=(5\cdot 2^{11}\cdot  k_1^2 C^3 e^{20 \C{p1k1}\C{small_delta}})^{-1}   >0$. 
			\end{poc}

			\begin{clm} \label{clm:overloaded}
				For any integer constant $\C{small_delta} \geq 1$ there exists a constant $\kappa_2:=\kappa_2(C)>0$ such that for $t_2 := t_1 + \big\lceil  \frac{n}{w_+}\big\rceil $, where $t_1:=t_0+ 4\C{small_delta}  n + 1 $, and
				$t_3 := t_2 + \big\lceil \frac{n}{10 w_-}\big\rceil$ we have
				\[
				\Pro{ \left. \bigcup_{ t \in [t_2,t_3] } \left\{\delta^t  \geq\kappa_2  \right\}  ~ \right| ~ \mathfrak{F}^{t_0}, \Delta^{t_0} \leq Cn } \geq 1-e^{-\kappa_2 n }.
				\]
			\end{clm}

			\begin{poc}%
				If  $|B_{+}^{t_2} \cap B_{\#}| > |B_{\#}   |/6$, the claim follows  by \eqref{eq:sizeofB_*}  for $t = t_2$ and $\kappa_2 = 1/8$. So, if we let $B_*:=  B_{-}^{t_2} \cap B_{\#}$, then we can assume, by \eqref{eq:sizeofB_*}, that  \begin{equation}\label{eq:sizebstar-}|B_*| \geq \frac{5}{6} \cdot |B_{\#}  | >\frac{5}{6} \cdot \frac{3n}{4} = \frac{5n}{8}      .\end{equation} 
				Observe that for any  bin $i \in  B_*$, at round $t_2$ the normalized load satisfies 
				\[
				y_i^{t_2} \geq y_{i}^{t_0} - (t_2-t_0) \cdot \frac{w_{-}}{n} >
				-4\C{small_delta}- \left( 4\C{small_delta} n +1 +\left\lceil  \frac{n}{w_{+}}\right\rceil\right)\cdot \frac{w_{-}}{n} > -10\C{small_delta}\cdot w_{-},
				\]
				using that $y_i^{t_0} > -4\C{small_delta}$ as $i \in B_* \subseteq B_{\#} $, $C\geq 1$, and $w_-\geq w_+\geq 1$ by \WOne. It follows that if $i$ is chosen for an allocation $20\C{small_delta}$ times during $[t_2,t_3]$ while underloaded, it must become overloaded before round $t_3$ as
				\begin{equation}\label{eq:undertoover}
					y_i^{t_3} \geq y_i^{t_2}  - (t_3-t_2)\frac{w_-}{n} + 20\C{small_delta}\cdot w_{-}  \geq 	-10\C{small_delta}\cdot w_{-} - 2\cdot w_{-}  + 20\C{small_delta}\cdot w_{-}
					> 0.
				\end{equation} 
				 Observe that, while $i \in B_{*}$ is underloaded, it will be chosen for allocation with probability at least $\C{p1k2}/n $ in each round by Condition \POne. Our aim now will be to couple the allocation process over the interval $[t_2,t_3]$ to some sequences of independent random variables. Firstly,  denote \begin{equation}\label{eq:sdef}B_*^t:=B_* \cap B_-^t \qquad \text{ and }\qquad s := \frac{4}{5}\cdot  |B_* |.\end{equation} By \eqref{eq:sizebstar-} we have 
				 \begin{equation}\label{eq:sizeBts}
				 	s := \frac{4}{5}\cdot  |B_* |\geq \frac{4}{5}\cdot \frac{5n}{8} = \frac{n}{2}.
				 \end{equation} Observe that, since $ t_3=t_2 + \big\lceil \frac{n}{10 w_-}\big\rceil$ and by \eqref{eq:sizeofB_*}, for any time $t\in [t_2,t_3] $ we have 
				 \begin{equation}\label{eq:bstarts}
		  |B_*^t| \geq |B_* |  - \left\lceil \frac{n}{10 w_-}\right\rceil  = s + \frac{1}{5}\cdot  |B_*| - \left\lceil \frac{n}{10 w_-}\right\rceil>  s + \frac{n}{8}- \left\lceil \frac{n}{10 w_-}\right\rceil \geq s   ,   \end{equation} thus at least $s$ many bins from $B_*$ remain underloaded during the interval $[t_2,t_3]$. For $t>t_2$ let $(\nu_i^t)_{i\in [n]}$ be given by \[\nu_i^t :=\begin{cases}
					 	\left(1-\frac{\C{p1k2}s}{n}\right)^{-1}\cdot p_i^t  &\text{ if }i\in [n]\setminus B_{*}^t,  \\
					\left(1-\frac{\C{p1k2}s}{n}\right)^{-1}\cdot\left(p_i^t-\frac{\C{p1k2}s}{|B_*^t|n}\right)  &\text{ if } i\in B_{*}^t.
				\end{cases} \] We first show that $\nu_i^t\in[0,1]$.  As $\C{p1k2}\leq 1$ we see that $\C{p1k2}s/n\leq 4/5$  by \eqref{eq:sdef}. Thus, for $i\in [n]\setminus B_*^t$, we have $0\leq  p_i^t \leq \nu_i^t \leq 5\C{p1k1}/n <  1 $. For $i\in  B_*^t$ we have $\nu_i^t \geq  p_i^t - \frac{\C{p1k2}s}{|B_*^t|n} > p_i^t - \frac{\C{p1k2}}{n} \geq 0 $, as $|B_{*}^t|>s$ by \eqref{eq:bstarts} and $B_*^t\subseteq B_-^t$.  Finally, since $p_i^t\geq \C{p1k2}/n $ for all $i\in B_-^t$ and $ s< |B_*^t|\leq |B_-^t| $ we have 
    \[\nu_i<   \frac{p_i^t}{ 1-\C{p1k2}s/n}\leq \frac{1-  \C{p1k2}|B_-^t|/n}{ 1-\C{p1k2}s/n}  <1 .  \] To see that $\nu^t$, where $\nu_i^t\in[0,1]$,  is a probability vector, we observe that    \[\left(1-\frac{\C{p1k2}s}{n}\right)\cdot \sum_{i\in [n]}\nu_i^t = \sum_{i\in [n]\setminus B_{*}^t} p_i^t  + \sum_{i\in B_{*}^t}\left(p_i^t-\frac{\C{p1k2}s}{|B_*^t|n}\right) = 1-\frac{\C{p1k2}s}{n}. \] 
				For a set $S\neq \emptyset$, the uniform distribution $\mathsf{Uni}(S)$ samples each $s\in S$ with probability $1/|S|$.
    
    The first step of the coupling is to sample two independent infinite sequences $(X^t)_{t>t_2}$ and $(U^j)_{j\geq 1}$, where $X^t \sim \mathsf{Ber}( \C{p1k2}s/n)$ i.i.d.\ for each $t> t_2$, and $U^j \sim \mathsf{Uni}(B_{*})$ i.i.d.,~for each $j\geq 1$.

				\noindent We can then generate the process as follows: initialize $j^{t_2+1}:=1$, and for each round $t> t_2$, 
				\begin{itemize}
					\item If $ X^t = 0$, then  allocate to a single bin $i\in [n]$ sampled according to $\nu^t $, and set $j^{t+1} :=  j^t$.
					
					\item If $X^t = 1$ and $U^{j^{t}}\in B_*^t $, then allocate  to the bin $U^{j^{t}}
					$, and set $j^{t+1}:=  j^{t}+1$.
					
					\item If $X^t = 1$ and $U^{j^t}\notin B_*^t$, then allocate to a bin $R^{t} \sim \mathsf{Uni}(B_*^t )$ sampled independently, and set $j^{t+1} :=  j^t+1$.
				\end{itemize} 
				Let $\mathcal{A}_i^t$ be the event that bin $i\in[n]$ receives the allocation in round $t\in[t_2+1,t_3]$ and observe that, for $i\in B_*^t$,
				\begin{align*}\Pro{\mathcal{A}_i^t\mid   X^t =1} &=  \Pro{U^{j^{t}} =i \mid U^{j^{t}}\in B_*^t}\cdot \Pro{U^{j^{t}}\in B_*^t} + \Pro{R^t= i}\cdot \Pro{U^{j^{t}}\notin B_*^t}\\
				&=  \frac{1}{|B_*^t|}\cdot \Pro{U^{j^{t}}\in B_*^t} + \frac{1}{|B_*^t|}\cdot   \Pro{U^{j^{t}}\notin B_*^t}\\
				&= \frac{1}{|B_*^t|}.
			\end{align*}  
				\noindent Thus, to see this gives a valid coupling, note that a ball is allocated to $i \in  B_{*}^t$ with probability   
				\begin{align*}  \Pro{\mathcal{A}_i^t} &=  \Pro{X^t=0}\cdot \nu_i^t +  \Pro{X^t=1}\cdot\Pro{\mathcal{A}_i^t\mid   X^t =1}\\ &= \left(1-\frac{\C{p1k2}s}{n}\right)\cdot  \frac{p_i^t-\frac{\C{p1k2}s}{|B_{*}^t|n}}{1-\C{p1k2}s/n}   + \frac{\C{p1k2}s}{n}\cdot \frac{1}{|B_{*}^t|}  \\
					& = p_{i}^t. \end{align*}The corresponding calculation for $i \in  [n]\setminus B_{*}^t$ is immediate.

				Observe that by \eqref{eq:sizeBts} we have $\C{p1k2}s/n\geq    \C{p1k2}/2  $. Thus if we define the event \[\mathcal{E}_1:=\left\{\sum_{t=t_2+1}^{t_3 } X^t \geq  \tau_1 \right\}, \qquad \text{where } \tau_1:=\left\lceil\frac{\C{p1k2}(t_3-t_2)}{4} \right\rceil.\] Then, by a Chernoff bound,   we have 
	\begin{equation}\label{eq:E1prob}   \Pro{\neg\mathcal{E}_1 ~\big|~\mathfrak{F}^{t_0} ,  \mathcal{C}} \leq  \Pro{\mathsf{Bin}\left(t_3-t_2,\frac{\C{p1k2}}{2} \right)  < \tau_1}\leq e^{-\frac{\C{p1k2} (t_3-t_2) }{2^2\cdot 2\cdot 2}} \leq e^{- \frac{\C{p1k2} n }{160 w_-}}.  \end{equation}
				For $i\in B_{*}$ let $S_i := \sum_{j=1}^{\tau_1 } \mathbf{1}_{\{U^j=i\}}$. Then, since $\Pro{U^j=i} =1/|B_{*}|\geq 1/n$,   we have 
				\begin{align}\label{eq:kappa4}
					\Pro{ S_i  \geq 20\C{small_delta} \;\big|\;\mathfrak{F}^{t_0} ,\; \mathcal{C}} &\geq \Pro{ \mathsf{Bin}\left(\tau_1 ,\; \frac{1}{n}\right) \geq  20\C{small_delta} }\notag\\ 
					&\geq  \binom{\tau_1 }{ 20\C{small_delta}} \left(\frac{1}{n} \right)^{ 20\C{small_delta}}\cdot\left(1-\frac{\C{p1k2}}{n}  \right)^{\tau_1 - 20\C{small_delta}}\notag  \\
					&\geq \left(\frac{\C{p1k2}}{800Cw_-} \right)^{20\C{small_delta}} \cdot e^{-1}=:\kappa,
				\end{align}
   since $\tau_1 = \left\lceil\frac{\C{p1k2}(t_3-t_2)}{4} \right\rceil = (1+o(1))\cdot \frac{\C{p1k2} n}{40 w_-}$, where in the last inequality in \eqref{eq:kappa4} we used $\binom{n}{k}\geq (\frac{n}{k})^k$, the inequality $1-x\geq e^{-2x} $ for any $0\leq x\leq 1/2$, and $n$ being large. Define the random variable \[Z := \left|\left\{ i \in B_{*} \;:\; S_i \geq 20\C{small_delta} \right\} \right|\quad\text{and the event } \quad \mathcal{E}_2 :=\left\{ Z\geq \kappa n/4\right\}.\]  
				Observe that by \eqref{eq:kappa4} and \eqref{eq:sizeBts},
				\[\Ex{Z\;\big|\;\mathfrak{F}^{t_0} ,\; \mathcal{C}} =  \sum_{i\in B_{*}} \Pro{ S_i  \geq 20\C{small_delta}\;\big|\;\mathfrak{F}^{t_0} ,\; \mathcal{C} } \geq |B_{*}|\cdot  \kappa  \geq \frac{\kappa  n}{2}. \]  
				Notice also that $Z$ is generated by the sequence $(U^j)_{j=1}^{\tau_1}$ of mutually independent $B_{*}$-valued random variables. Furthermore, changing the value of any single $U^j$ can cause $Z$ to change by at most one. Thus, applying the Method of Bounded Independent Differences~(\cref{mobd}), assuming \eqref{eq:sizebstar-}, yields,
				\begin{equation}\label{eq:E2prob}
					\Pro{\neg \mathcal{E}_2\;\big|\;\mathfrak{F}^{t_0} ,\; \mathcal{C}} =\Pro{  Z < \frac{1}{2} \cdot  \frac{\kappa  n}{2} \;\Bigg| \; \mathfrak{F}^{t_0} ,\; \mathcal{C}} \leq    \exp\left(- \frac{ \frac{1}{4} \left( \kappa  n/2 \right)^2 }{2\cdot \tau_1 \cdot 1^2} \right) \leq  \exp\left(-  \kappa ^2 n  \right).	\end{equation}

				Our first observation is that, conditional on $\mathcal{E}_1\cap \mathcal{E}_2$, at least $Z\geq \kappa n/4 $ bins become overloaded at least once at some round in $[t_2,t_3]$. To see this note conditional on $\mathcal{E}_1$ an allocation is made based on the value of $U^j$ for each $j\leq \tau_1$, and conditional on $\mathcal{E}_2$ there are $Z$ many $i\in B_{*}$ such that $\sum_{j=1}^{\tau_1 } \mathbf{1}_{\{U^{j}=i\}} \geq 20\C{small_delta}$. Finally, from the coupling, observe that whenever $U^j=i$, either an allocation is made to $i$, or $i$ is already overloaded. Thus, for $Z$ many bins at some time during the window $[t_2,t_3]$, either the bin has received at least $20C $ allocations (which would make them overloaded by \eqref{eq:undertoover}) or they became overloaded. However, if each bin is overloaded for only one round, and all at different rounds, then it is possible that at no single round in $[t_2,t_3]$ do we have $\Omega(n)$ overloaded bins. We now explain why this cannot happen.  
				
				Note that during the interval $[t_2,t_3]$, as $t_3-t_2 = \big\lceil \frac{n}{10 w_-}\big\rceil$, there is at most one round $t$ where the integer parts of the mean load changes, that is, there is only at most one round $t\in [t_2,t_3)$ such that $\lfloor W^{t}/  n \rfloor < \lfloor W^{t+1}/n \rfloor$. Hence only at the transition from $t$ to $t+1$ could an overloaded bin become underloaded during the interval $[t_2,t_3]$. Therefore, if $Z\geq \kappa n/4$  we must have
				\[
				| B_{+}^{t} \cap B_* | \geq  \kappa n/8 \qquad \mbox{ or } \qquad
				| B_{+}^{t_3} \cap B_* | \geq \kappa n/8.
				\]
				Hence, conditional on $\mathcal{E}_1\cap \mathcal{E}_2$, $Z \geq \kappa n/4$ implies that there exists $t \in [t_2,t_3]$ such that $|B_{+}^{t}| \geq \kappa n/8$. The claim then follows by choosing $\kappa_2 := \frac{1}{2}\cdot \min\{\kappa^2, \kappa/8, \frac{\C{p1k2} n }{160 w_-}\}  $ as 
				\[\Pro{\neg \mathcal{E}_1\cup \neg\mathcal{E}_2 \;\big|\;\mathfrak{F}^{t_0} ,\; \mathcal{C} }  \leq  \exp\left( - \frac{\C{p1k2} n }{160 w_-}\right) + \exp\left(-  \kappa ^2   \cdot n  \right) \leq \exp\left(-  \kappa_2   \cdot n  \right),   \] by the union bound over  \eqref{eq:E1prob} and \eqref{eq:E2prob}. 
			\end{poc}

			The first claim implies that, w.p.\ $ 1- e^{-\kappa_1 n}$, that the process reaches a round $s_1 \in [t_0,t_1]$ with at least $\kappa_1 n$ underloaded bins. The second claim shows, w.p.\ $ 1-e^{-\kappa_2 n}$, that the process reaches a round $s_2 \in [t_1+\lceil \frac{n}{w_{+}}\rceil ,t_3]$ with at least $\kappa_2 n$ overloaded bins. By the union bound, both events occur with probability $ 1- e^{-\kappa_1 n}-e^{-\kappa_2 n}$.
			Since $\delta^{t}=|B_{+}^{t}|/n$, we have $\delta^{t+1} \leq \delta^{t} + \frac{1}{n}$,
			and in this case applying \cref{lem:smoothness} below with \[r_0=s_1,\qquad  r_1=s_2,\qquad  f(t)=\delta^{t},\quad \text{and}\quad \vartheta:= \min \{ \kappa_2, \kappa_1,2/3 \},\] yields 
			\begin{align*}
				\left| \left\{ t \in [t_0,t_3] \colon \delta^t \in [\vartheta/2,1-\vartheta/2] \right\} \right| &\geq
				\left| \left\{ t \in [s_1,s_2] \colon \delta^t \in [\vartheta/2,1-\vartheta/2] \right\} \right| \\ &\geq \min\{ \vartheta/2 \cdot n, s_2 - s_1  \} \\
				&\geq \min\left\{ \vartheta/2 \cdot n, \frac{n}{w_+} \right\} .
			\end{align*}
			Thus, finally, we require an $\eps>0$ in the statement of \cref{lem:good_quantile} which satisfies the following three conditions: $(i):$ $ \eps \leq \min\{\vartheta/2, 1/w_+\} $,  $(ii):$  $ \frac{n}{4\eps}   \geq  t_3-t_0  = 4\C{small_delta} n+1   + \big\lceil \frac{n}{w_{+}}  \big\rceil + \big\lceil \frac{n}{10w_{-}}\big\rceil $,  and $(iii):$  $(4\eps)^{-1}$ is an integer. Observe that such an $\eps>0$ can always be found by making $\eps$ sufficiently small, giving the result.   
		\end{proof}

It remains to state and prove the technical lemma used in the proof of \Cref{lem:good_quantile}. 
  \begin{lem}\label{lem:smoothness}
For integers $0\leq r_0 \leq r_1$, and reals $\vartheta \in (0,2/3)$ and $\xi> 0 $, we let $f: [r_0,r_1] \cap \mathbb{N} \rightarrow [0,1]$ be a function satisfying 
\begin{enumerate}[label = (\roman*)]
 \item $f(r_0) \leq 1-\vartheta$,
 \item $f(r_1) \geq \vartheta$,
 \item  $f(t+1) \leq f(t) + \xi$ for all $t \in [r_0,r_1-1]$.
\end{enumerate}
Then,
\[
 \left| \left\{ t \in [r_0,r_1] \colon f(t) \in \left[ \frac{\vartheta}{2},1- \frac{\vartheta}{2} \right] \right\} \right| \geq \min \left\{ \frac{\vartheta}{2 \xi} , r_1-r_0  \right \}.
\]
\end{lem}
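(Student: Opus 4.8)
Write $S:=\{t\in[r_0,r_1]:f(t)\in[\vartheta/2,\,1-\vartheta/2]\}$ for the set whose size we must bound, and abbreviate the window endpoints by $a:=\vartheta/2$ and $b:=1-\vartheta/2$, so the window has width $b-a=1-\vartheta$; note that the hypothesis $\vartheta<2/3$ is exactly what makes this width strictly exceed $\vartheta/2=a$. The only use I make of (iii) is its telescoped form: $f(t')-f(t)\le (t'-t)\,\xi$ whenever $r_0\le t\le t'\le r_1$ (so $f$ may fall arbitrarily fast but can only rise at rate $\le\xi$). I would then split according to the length of the interval.

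\emph{Short interval, $r_1-r_0\le\vartheta/(2\xi)$.} Here $(r_1-r_0)\xi\le a$, so for every $t\in[r_0,r_1]$ both $f(t)\le f(r_0)+(t-r_0)\xi\le (1-\vartheta)+a=b$ by (i), and $f(t)\ge f(r_1)-(r_1-t)\xi\ge\vartheta-a=a$ by (ii). Thus all $r_1-r_0+1$ rounds lie in $[a,b]$, which already exceeds $\min\{\vartheta/(2\xi),\,r_1-r_0\}=r_1-r_0$.

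\emph{Long interval, $r_1-r_0>\vartheta/(2\xi)$.} Now it suffices to produce more than $\vartheta/(2\xi)$ rounds in the window, which I would do by exhibiting a single contiguous block. Let $\sigma$ be the first round of $[r_0,r_1]$ with $f(\sigma)>b$, setting $\sigma:=r_1+1$ if none exists; since $f(r_0)\le 1-\vartheta<b$ we have $\sigma>r_0$. Let $\tau$ be the last round of $[r_0,\sigma-1]$ with $f(\tau)<a$, setting $\tau:=r_0-1$ if none exists. By the choice of $\sigma$ (all earlier rounds have $f\le b$) and of $\tau$ (all later rounds up to $\sigma-1$ have $f\ge a$), every round of $[\tau+1,\sigma-1]$ has its $f$-value in $[a,b]$, hence lies in $S$; it remains to lower-bound the length $\sigma-1-\tau$ of this block, via a four-way case split:
\begin{itemize}
\item if $\tau=r_0-1$ and $\sigma=r_1+1$, then $f$ never leaves $[a,b]$ and $|S|=r_1-r_0+1>\vartheta/(2\xi)$;
\item if $\tau=r_0-1$ and $\sigma\le r_1$, then $b<f(\sigma)\le f(r_0)+(\sigma-r_0)\xi\le(1-\vartheta)+(\sigma-r_0)\xi$ forces $\sigma-r_0>(b-(1-\vartheta))/\xi=\vartheta/(2\xi)$, and the block is $[r_0,\sigma-1]$;
\item if $\tau\ge r_0$ and $\sigma=r_1+1$, then (using $f(r_1)\ge\vartheta>a$ to get $\tau\le r_1-1$) $\vartheta\le f(r_1)\le f(\tau)+(r_1-\tau)\xi<a+(r_1-\tau)\xi$ forces $r_1-\tau>(\vartheta-a)/\xi=\vartheta/(2\xi)$, and the block is $[\tau+1,r_1]$;
\item if $\tau\ge r_0$ and $\sigma\le r_1$, then $f$ rises from $f(\tau)<a$ to $f(\sigma)>b$, so $(\sigma-\tau)\xi>b-a=1-\vartheta$, whence the block length $\sigma-1-\tau>(1-\vartheta)/\xi-1\ge\vartheta/(2\xi)$, the last step using $\vartheta<2/3$ together with $\xi$ small (as holds in the application, where $\xi=1/n$).
\end{itemize}

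\emph{Main obstacle.} All the genuine content sits in that last corner case: when the trajectory truly dips below $\vartheta/2$ and only later climbs above $1-\vartheta/2$, the guaranteed window-block is the \emph{open} span $(\tau,\sigma)$, so the count loses a round, and the estimate $(1-\vartheta)/\xi-1\ge\vartheta/(2\xi)$ requires the window to be wider than $\vartheta/2+\xi$ — equivalently $\xi\le 1-3\vartheta/2$, which is where the strict inequality $\vartheta<2/3$ (together with the standing ``$n$ large'' convention, $\xi=1/n$) is actually invoked. Everything else — well-definedness of $\sigma$ and $\tau$, the inclusion $[\tau+1,\sigma-1]\subseteq S$, the fact that the exhibited block lies inside $[r_0,r_1]$, and the three easy corner cases — is routine bookkeeping with the telescoped inequality and conditions (i)--(ii).
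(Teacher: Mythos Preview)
Your short-interval case and sub-cases 1--3 are fine, and you have located the only real problem yourself: in sub-case~4 the inequality $(1-\vartheta)/\xi-1\ge\vartheta/(2\xi)$ genuinely needs $\xi\le 1-3\vartheta/2$, which is not among the lemma's hypotheses. Invoking ``$\xi=1/n$ in the application'' leaves the lemma, as stated, unproved.

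The paper avoids this obstacle by a simpler two-case split that never introduces your $\tau$. Case~1: some round has $f\ge 1-\vartheta/2$; let $t$ be the first such. Minimality gives $f(s)<1-\vartheta/2$ for every $s<t$; for the \emph{lower} bound one telescopes (iii) \emph{backward from $f(t)$}: for $0\le x\le\vartheta/(2\xi)$ one has $f(t-x)\ge f(t)-x\xi\ge(1-\vartheta/2)-\vartheta/2=1-\vartheta\ge\vartheta/2$, the last step using only $\vartheta\le 2/3$. The key point is that the lower bound near $t$ comes from $f(t)$ itself, not from any claim that $f$ never dipped below $a$; even if such a dip occurred at some $\tau$, it necessarily lies more than $(1-\vartheta)/\xi>\vartheta/(2\xi)$ rounds before $t$ and so never touches the exhibited block. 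This single argument covers your sub-cases~2 and~4 simultaneously with no smallness assumption on $\xi$. Case~2 (no such $t$) is handled by the same backward-telescoping from $r_1$ using $f(r_1)\ge\vartheta$, which subsumes your sub-cases~1 and~3. In your framework the fix is therefore simply: in sub-case~4, anchor the lower bound at $\sigma$ rather than at $\tau$, and take the $\vartheta/(2\xi)$ rounds immediately preceding $\sigma$ as your block.
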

\begin{proof}
First consider the case where there exists a $t \in [r_0,r_1]$ with $f(t) \geq 1-\vartheta/2$. Further, let $t$ be the first round with that property, hence for any $s \in [r_0,t]$, $f(s) < 1 - \vartheta/2$. Further, thanks to the third property, $f(t-x) \geq f(t) - x \xi \geq  1 - \vartheta/2 - x \xi$ for any $x \geq 0$ (which also implies $t-r_0 \geq \vartheta / (2 \xi) $ due to precondition $(i)$), and thus as long as $0  \leq x  \leq \vartheta/(2 \xi)$,
\[
 f(t - x) \geq f(t) - x \xi \geq 1 - \vartheta/2 - \vartheta /2  \geq \vartheta /2,
\]
since $\vartheta \leq 2/3$.
Hence for any $s \in [t-\vartheta/(2 \xi),t]$,
\[
 f(s) \in [ \vartheta/2, 1-\vartheta/2].
\]
Now consider the case where for all rounds $t \in [r_0,r_1]$ we have $f(t) \leq 1 - \vartheta/2$. Since $f(r_1) \geq \vartheta$, we conclude for any $x \leq \vartheta/(2 \xi)$,
\[
 f(r_1 - x) \geq f(r_1) - x \xi \geq \vartheta -\vartheta / 2 \geq \vartheta / 2.
\]
Hence for any $s \in [\max\{r_0,r_1-\vartheta/(2 \xi)\},r_1]$ we have $ f(s) \in [ \vartheta/2, 1-\vartheta/2]$.
\end{proof}

\section{Potential Function Inequalities} \label{sec:potential_function_inequalities}

In this section, we derive several inequalities involving potential functions. 

In \cref{sec:quadratic_absolute_potential}, we establish the interplay between the \textit{absolute value} and \textit{quadratic} potential which, later in \cref{sec:taming}, will be used in combination with \cref{lem:good_quantile}  to establish that in a sufficiently long interval there is a constant fraction of rounds $t$ with $\delta^t \in [\eps, 1 - \eps]$.

In \cref{sec:lambda_potential}, most of the effort goes into establishing a drop in the expectation of the \textit{exponential potential} $\Lambda := \Lambda(\alpha)$ for sufficiently small smoothing parameter $\alpha > 0$, which depends on the constants defined by the process. One of the main insights for $\Lambda := \Lambda(\alpha)$ with $\alpha = \Theta(1)$ is \cref{cor:change_for_large_lambda}, which establishes: $(i)$ a \textit{drop} in the expectation of $\Lambda$ over one round if the mean quantile satisfies $\delta^t \in [\eps, 1 - \eps]$, and $(ii)$ a \textit{not too large} increase for any quantile. For the potential $V := V(\tilde{\alpha})$ with $\tilde{\alpha} = \Theta(1/n)$ we establish that it drops in every round. In turn this implies that $\Ex{V^m} = \poly(n)$, which implies the weak bound of $\Oh(n \log n)$ on the gap.

On a high level, much of the analysis in \cref{sec:lambda_potential} follows relatively standard estimates and bears resemblance to the one in \cite{PTW15}. However, the conditions we enforce on processes are more relaxed than those in \cite{PTW15} and this makes the analysis a lot more challenging. In particular, unlike in \cite{PTW15}, the conditions imposed on the probability allocation vector $p^t$ are time/load dependent. That being said, a reader may wish to skip this part (or the proofs). 

Finally, in \cref{sec:quadratic_and_exp_potentials}, we prove some deterministic bounds on the quadratic potential.

\subsection{Interplay between Absolute Value and Quadratic Potentials} \label{sec:quadratic_absolute_potential}

In this section, we prove the relation between the absolute value and quadratic potentials. We will use this in \cref{lem:many_good_quantiles_whp} to deduce that in a sufficiently long interval, there is a constant fraction of rounds with $\delta^t \in [\eps, 1 - \eps]$. 

\begin{lem}\label{lem:additive_drift}
Consider any $\PTwo \cap \WOne$-process or $\POne \cap \WTwo$-process. Then, there exists a constant $\C{quad_delta_drop}  > 0$, such that for any round $t \geq 0$,
	\[
 \sum_{i \in B_+^t} 2y_i^t p_i^t \cdot w_+ + \sum_{i \in B_-^t} 2y_i^t p_i^t \cdot w_-  \leq - \Delta^t \cdot \frac{\C{quad_delta_drop}}{n}.
	\]
	\end{lem}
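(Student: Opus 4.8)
The plan is to reduce the claimed inequality to a weighted majorization estimate. Fix a round $t$ and suppress the superscript $t$ throughout. Since the bins are indexed so that $y_1 \ge y_2 \ge \cdots \ge y_n$ and $\sum_{i \in [n]} y_i = 0$, we have $\sum_{i \in B_+} y_i = \sum_{i \in B_-} |y_i| = \tfrac12 \Delta$, and the left-hand side of the claimed bound equals
\[
S \;:=\; 2 w_+ \sum_{i \in B_+} y_i p_i \;-\; 2 w_- \sum_{i \in B_-} |y_i| p_i .
\]
The tool I would use is the elementary consequence of Abel summation: if $a_1 \ge a_2 \ge \cdots \ge a_K \ge 0$ and the vectors $q=(q_1,\ldots,q_K)$, $r=(r_1,\ldots,r_K)$ satisfy $\sum_{j \le \ell} q_j \le \sum_{j \le \ell} r_j$ for every $\ell \in [K]$, then $\sum_{j=1}^{K} a_j q_j \le \sum_{j=1}^{K} a_j r_j$ (write $a_j$ as a non-negative decreasing sum of indicators, or expand both sides by summation by parts).

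First I would bound $\sum_{i \in B_+} y_i p_i$ from above: listing the overloaded bins in the order $1,2,\ldots,\delta n$, the weights $y_1 \ge \cdots \ge y_{\delta n} \ge 0$ are non-increasing and non-negative, and by the majorization part of \POne (resp.\ \PTwo) the partial sums of $(p_1,\ldots,p_{\delta n})$ are dominated by those of the constant vector with entries $\tfrac1n$ (resp.\ $\tfrac1n - \tfrac{\C{p2k1}(1-\delta)}{n}$); the fact above then yields $\sum_{i \in B_+} y_i p_i \le \tfrac1n\cdot\tfrac{\Delta}{2}$ under \POne and $\le \bigl(\tfrac1n - \tfrac{\C{p2k1}(1-\delta)}{n}\bigr)\tfrac{\Delta}{2}$ under \PTwo. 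Symmetrically, I would bound $\sum_{i \in B_-} |y_i| p_i$ from below by listing the underloaded bins in the \emph{reversed} order $n, n-1, \ldots, \delta n + 1$: then the weights $|y_n| \ge |y_{n-1}| \ge \cdots \ge |y_{\delta n + 1}| > 0$ are non-increasing, and \POne/\PTwo assert precisely that the partial sums of $(p_n, p_{n-1}, \ldots, p_{\delta n+1})$ dominate those of the constant vector $\tfrac1n$ (resp.\ $\tfrac1n + \tfrac{\C{p2k2}\delta}{n}$); applying the fact with the two vectors in the opposite roles gives $\sum_{i \in B_-} |y_i| p_i \ge \tfrac1n\cdot\tfrac{\Delta}{2}$ under \POne and $\ge \bigl(\tfrac1n + \tfrac{\C{p2k2}\delta}{n}\bigr)\tfrac{\Delta}{2}$ under \PTwo.

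Plugging these bounds into $S$ finishes each of the two cases. For a $\POne \cap \WTwo$-process the \POne bounds alone give $S \le \tfrac{\Delta}{n}(w_+ - w_-) \le -\tfrac{\Delta}{n}$, since $w_+ < w_-$ are integers, so $\C{quad_delta_drop} := 1$ works. For a $\PTwo \cap \WOne$-process,
\begin{align*}
S &\;\le\; \frac{\Delta}{n}\Bigl[\, w_+\bigl(1 - \C{p2k1}(1-\delta)\bigr) - w_-\bigl(1 + \C{p2k2}\delta\bigr) \Bigr] \\
 &\;\le\; -\frac{\Delta}{n}\bigl( w_+ \C{p2k1}(1-\delta) + w_- \C{p2k2}\delta \bigr)
 \;\le\; -\frac{\Delta}{n}\,\min\{\C{p2k1}, \C{p2k2}\},
\end{align*}
where the second inequality uses $w_+ \le w_-$ and the last uses $w_+, w_- \ge 1$ together with $\C{p2k1}(1-\delta) + \C{p2k2}\delta \ge \min\{\C{p2k1}, \C{p2k2}\}$ for all $\delta \in [0,1]$ (a convex combination of $\C{p2k1}$ and $\C{p2k2}$); hence $\C{quad_delta_drop} := \min\{\C{p2k1}, \C{p2k2}\}$ works here. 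The degenerate case $B_- = \emptyset$ forces $\Delta = 0$ and is trivial. I expect the only delicate point to be keeping the two applications of the Abel-summation fact consistent — in particular that for the underloaded bins both the probability vector and the weight vector must be read from index $n$ downward, so that the weights are non-increasing in exactly the order in which \POne/\PTwo assert the majorization; everything else is bookkeeping with the process constants $w_+, w_-, \C{p2k1}, \C{p2k2}$.
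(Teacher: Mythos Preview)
Your proof is correct and follows essentially the same approach as the paper: both arguments reduce the two sums to a majorization/Abel-summation estimate (the paper's \cref{lem:quasilem} is exactly your ``elementary consequence of Abel summation''), treat the overloaded and underloaded bins separately, and then combine. The only cosmetic difference is that in the $\PTwo \cap \WOne$ case the paper first uses $w_- \ge w_+$ to factor out $w_+$ from both sums before applying the majorization, yielding $\C{quad_delta_drop} = w_+ \cdot \min\{\C{p2k1},\C{p2k2}\}$, whereas you apply majorization to each sum with its own weight and then combine, yielding $\C{quad_delta_drop} = \min\{\C{p2k1},\C{p2k2}\}$; both constants are valid.
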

\begin{proof}  Before we begin, note that $\sum_{i \in B_+^t} y_i^t = - \sum_{i \in B_-^t} y_i^t = \frac{1}{2} \cdot \Delta^t$. We consider two cases based on the conditions satisfied: 

\noindent \textbf{Case 1 [\PTwo and \WOne hold]:} In this case \WOne implies $w_{-} \geq w_{+}\geq 1$, and so we have
\begin{align}\label{eq:killweight}	\sum_{i \in B_+^t} 2y_i^t p_i^t \cdot w_+ + \sum_{i \in B_-^t} 2y_i^t p_i^t \cdot w_-
		 &= \sum_{i \in B_+^t} 2y_i^t p_i^t \cdot w_+ - \sum_{i \in B_-^t} 2|y_i^t| p_i^t\cdot w_- \notag \\ 
		 &\leq w_+\cdot \left(\sum_{i \in B_+^t} 2y_i^t p_i^t  - \sum_{i \in B_-^t} 2|y_i^t| p_i^t\right).  \end{align}
By \PTwo we have $\sum_{i=1}^k p_i^t \leq \sum_{i=1}^k \left(\frac{1}{n} - \frac{\C{p2k1} \cdot (1-\delta^t)}{n}\right)$ for all $1\leq k\leq \delta^t n$, and also that $y_i^t\geq 0$ and non-increasing for all $1\leq i\leq \delta^t n$ (equivalently for all $i\in B_i^t$). Hence, by \cref{lem:quasilem},
\[\sum_{i \in B_+^t} 2y_i^t \cdot p_i^t \leq \sum_{i \in B_+^t} 2y_i^t \cdot \left(\frac{1}{n} - \frac{\C{p2k1} \cdot (1-\delta^t)}{n}\right) =  \Delta^t \cdot \left(\frac{1}{n} - \frac{\C{p2k1} \cdot (1-\delta^t)}{n}\right).\] Similarly, $\sum_{i \in B_-^t} 2|y_i^t| p_i^t\geq \Delta^t \cdot \left(\frac{1}{n} + \frac{\C{p2k2} \cdot \delta^t}{n}\right) $, also by \cref{lem:quasilem}. Thus, it follows from \eqref{eq:killweight} that 
\[	\sum_{i \in B_+^t} 2y_i^t p_i^t \cdot w_+ + \sum_{i \in B_-^t} 2y_i^t p_i^t \cdot w_- \leq  w_+ \cdot \Delta^t \cdot \left( \frac{1}{n} - \frac{\C{p2k1} \cdot (1-\delta^t)}{n} - \frac{1}{n}-  \frac{\C{p2k2} \cdot \delta^t}{n} \right) \leq -\Delta^t\cdot \frac{\C{quad_delta_drop}}{n},\]where $\C{quad_delta_drop} :=  w_+ \cdot \min \{ \C{p2k1}, \C{p2k2} \}$.

\noindent \textbf{Case 2 [\POne and \WTwo hold]:} By applying \cref{lem:quasilem} using the majorization conditions in \POne,
	\[
\sum_{i \in B_+^t} 2y_i^t p_i^t \cdot w_+ + \sum_{i \in B_-^t} 2y_i^t p_i^t \cdot w_- \leq \sum_{i \in B_+^t} 2y_i^t \cdot \frac{w_+}{n} +    \sum_{i \in B_-^t} 2y_i^t  \cdot \frac{w_-}{n} = \frac{\Delta^t}{n}\cdot \left( w_+ -  w_- \right) \leq - \Delta^t\cdot \frac{\C{quad_delta_drop}}{n},
	\]
	for $\C{quad_delta_drop} := w_- - w_+ > 0$, since the weights are constants satisfying $w_- > w_+$.
  \end{proof}
We can now prove the main result in this section. 
\begin{lem}\label{lem:quadratic_absolute_relation}
\quadraticabsoluterelation
\end{lem}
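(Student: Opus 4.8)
The plan is a direct computation of $\Upsilon^{t+1}$ conditioned on which bin receives the allocation in round $t+1$, followed by taking expectations. The key point to keep in mind is that allocating $w^{t+1}$ balls to a bin $j$ also shifts the mean $W^t/n$ upward by $w^{t+1}/n$, so \emph{every} normalized load changes, not just that of bin $j$: writing $w^{t+1}\in\{w_+,w_-\}$ for the number of balls allocated (determined by the status of $j$ at round $t$), we have $y_i^{t+1}=y_i^t-w^{t+1}/n$ for $i\neq j$ and $y_j^{t+1}=y_j^t+w^{t+1}\cdot\frac{n-1}{n}$.

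First I would expand
\[
\Upsilon^{t+1}=\sum_{i\neq j}\Bigl(y_i^t-\tfrac{w^{t+1}}{n}\Bigr)^2+\Bigl(y_j^t+w^{t+1}\cdot\tfrac{n-1}{n}\Bigr)^2,
\]
and use $\sum_{i\in[n]}y_i^t=0$ (hence $\sum_{i\neq j}y_i^t=-y_j^t$) so that the first-order terms collapse to $2w^{t+1}y_j^t$. Collecting the remaining (second-order) terms yields the exact identity
\[
\Upsilon^{t+1}=\Upsilon^t+2\,w^{t+1}\,y_j^t+(w^{t+1})^2\cdot\tfrac{n-1}{n},
\]
and since $1\le w^{t+1}\le w_-$ the last term is at most $(w_-)^2\le 4(w_-)^2$.

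Next I would take expectation over the choice of the allocated bin: bin $i$ receives the allocation with probability $p_i^t$, and then it gets $w_+$ balls if $i\in B_+^t$ and $w_-$ balls if $i\in B_-^t$ (the status at round $t$, before the allocation, is what determines the weight). Summing over $i$ and using $\sum_i p_i^t=1$ for the error term gives
\[
\Ex{\left.\Upsilon^{t+1}\,\right|\,\mathfrak{F}^t}\le\Upsilon^t+\sum_{i\in B_+^t}2y_i^tp_i^t\cdot w_++\sum_{i\in B_-^t}2y_i^tp_i^t\cdot w_-+4(w_-)^2,
\]
which is the first displayed inequality and holds for any $\POne\cap\WOne$-process (in fact for any process that allocates all its balls to a single sampled bin per round). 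Finally, for a $\PTwo\cap\WOne$-process or a $\POne\cap\WTwo$-process, I would invoke \cref{lem:additive_drift} to bound $\sum_{i\in B_+^t}2y_i^tp_i^t\cdot w_++\sum_{i\in B_-^t}2y_i^tp_i^t\cdot w_-\le-\Delta^t\cdot\frac{\C{quad_delta_drop}}{n}$, and set $\C{quad_const_add}:=4(w_-)^2$ to obtain the second inequality.

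The computation is elementary; the only subtlety is correctly accounting for the global shift of the mean (all $n$ normalized loads move) and the cancellation that relies on $\sum_i y_i^t=0$. There is no genuine obstacle here — the substantive input, namely the negativity of the first-order term, is entirely delegated to \cref{lem:additive_drift}, so this lemma is essentially the bookkeeping that converts that one-round drift estimate into the statement about $\Upsilon$.
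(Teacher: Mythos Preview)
Your argument is correct and in fact cleaner than the paper's. The paper fixes a bin $i$, computes $\Ex{\Upsilon_i^{t+1}\mid\mathfrak{F}^t}$ by averaging over the three possible allocation outcomes (to $i$, to another bin in $B_+^t$, to another bin in $B_-^t$), then aggregates separately over $i\in B_+^t$ and $i\in B_-^t$, expands all the squares, and only at the very end uses $\sum_{i\in B_+^t}y_i^t=-\sum_{i\in B_-^t}y_i^t$ to cancel the cross terms involving $P_+^t$ and $P_-^t$. You instead condition on the allocated bin $j$, compute the whole potential $\Upsilon^{t+1}$ at once, and invoke $\sum_i y_i^t=0$ immediately to obtain the exact identity $\Upsilon^{t+1}=\Upsilon^t+2w^{t+1}y_j^t+(w^{t+1})^2\tfrac{n-1}{n}$ before taking expectations. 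This is the same computation reorganized, but your ordering avoids the case split and the intermediate bookkeeping; it also makes transparent that the second-order term is at most $(w_-)^2$ (so the $4(w_-)^2$ in the statement is slack). Both routes then finish identically by invoking \cref{lem:additive_drift}.
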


\begin{proof}
We begin by decomposing the quadratic potential $\Upsilon$ over the $n$ bins, using $\Upsilon_i^{t+1}:=(y_i^{t+1})^2$:
\[
 \Upsilon^{t+1} = \sum_{i=1}^n \Upsilon_i^{t+1} = \sum_{i=1}^n  (y_i^{t+1})^2.
\]
We shall analyze the aggregate terms for overloaded and underloaded bins. By the \WOne condition, if the chosen bin is overloaded, we allocate a ball of weight $w_+$, otherwise we allocate a ball of weight $w_-$. Also, recall that $P_+^t := \sum_{i \in B_+^t} p_i^t$ and $P_-^t := \sum_{i \in B_-^t} p_i^t$.

\medskip 

\noindent\textbf{Case 1} [Overloaded bins]. Let $i \in [n]$ be any overloaded bin, Then,
\begin{align*}
\Ex{\left. \Upsilon_{i}^{t+1} \,\right|\, \mathfrak{F}^t } 
 & = \underbrace{p_i^t \cdot \Big(y_i^t + w_+ - \frac{w_+}{n}\Big)^2}_{\text{allocate to $i$}} + \underbrace{(P_+^t - p_i^t) \cdot \Big(y_i^t - \frac{w_+}{n}\Big)^2}_{\text{allocate to $B_+^t \setminus \lbrace i \rbrace$}} + \underbrace{P_-^t \cdot \Big(y_i^t - \frac{w_-}{n}\Big)^2}_{\text{allocate to $B_-^t$}}.
\end{align*}
Aggregating over all overloaded bins and expanding out the squares,
\begin{align}\label{eq:l1l2overload}
\sum_{i \in B_+^t} \Ex{\left. \Upsilon_{i}^{t+1} \,\right|\, \mathfrak{F}^t }
 & = \sum_{i \in B_+^t} \bigg[ (y_i^t)^2 + 2y_i^t \cdot \left(p_i^t \cdot \Big(w_+ - \frac{w_+}{n}\Big) - (P_+^t - p_i^t) \cdot \frac{w_+}{n} - P_-^t \cdot \frac{w_-}{n} \right)\notag \\ 
 & \qquad + p_i^t \cdot \Big(w_+ - \frac{w_+}{n} \Big)^2 + (P_+^t - p_i^t) \cdot \frac{(w_+)^2}{n^2} +  P_-^t \cdot \frac{(w_-)^2}{n^2} \bigg] \notag \\
 & = \sum_{i \in B_+^t} \bigg[ (y_i^t)^2 + 2y_i^t \cdot \Big(p_i^t \cdot w_+ - P_+^t \cdot \frac{w_+}{n} - P_-^t \cdot \frac{w_-}{n}\Big) \notag\\ 
 & \qquad + p_i^{t} \cdot (w_{+})^2 -2 \cdot p_i^t \cdot \frac{(w_+)^2}{n} + P_{+}^t \cdot \frac{(w_+)^2}{n^2} + P_{-}^t \cdot \frac{(w_-)^2}{n^2} \bigg] \notag \\
 & \leq \sum_{i \in B_+^t} \bigg[ (y_i^t)^2 + 2y_i^t \cdot \Big(p_i^t \cdot w_+ - P_+^t \cdot \frac{w_+}{n} - P_-^t \cdot \frac{w_-}{n}\Big) \bigg] + 2 \cdot (w_{-})^2,
\end{align}
using in the last step that \[
 \sum_{i \in B_+^t} \bigg[p_i^{t} \cdot (w_{+})^2 + P_{+}^t \cdot \frac{(w_+)^2}{n^2} + P_{-}^t \cdot \frac{(w_-)^2}{n^2} \bigg]
 \leq (w_+)^2 + P_{+}^t \cdot \frac{(w_+)^2}{n} + P_{-}^t \cdot \frac{(w_-)^2}{n} \leq 2 \cdot (w_{-})^2.
\]

\medskip

\noindent\textbf{Case 2} [Underloaded bins]. Let $i \in [n]$ be any underloaded bin. Then, 
\begin{align*}
\Ex{\left. \Upsilon_{i}^{t+1} \,\right|\, \mathfrak{F}^t } 
 & = \underbrace{p_i^t \cdot \Big(y_i^t + w_- - \frac{w_-}{n}\Big)^2}_{\text{allocate to $i$}} + \underbrace{(P_-^t - p_i^t) \cdot \Big(y_i^t - \frac{w_-}{n}\Big)^2}_{\text{allocate to $i \in B_-^t \setminus \lbrace i \rbrace$}} + \underbrace{P_+^t \cdot \Big(y_i^t - \frac{w_+}{n}\Big)^2}_{\text{allocate to $i \in B_+^t$}}.
\end{align*}
Aggregating over all underloaded bins and expanding out the squares,
\begin{align} \label{eq:l1l2underload}
\sum_{i \in B_-^t} \Ex{\left. \Upsilon_{i}^{t+1} \,\right|\, \mathfrak{F}^t } 
 & = \sum_{i \in B_-^t} \bigg[ (y_i^t)^2 + 2y_i^t \cdot \Big(p_i^t \cdot \Big(w_- - \frac{w_-}{n}\Big) - (P_-^t - p_i^t) \cdot \frac{w_-}{n} - P_+^t \cdot \frac{w_+}{n}\Big) \notag \\
 & \qquad +p_i^t \cdot \Big(w_- - \frac{w_-}{n} \Big)^2 + (P_-^t - p_i^t) \cdot \frac{(w_-)^2}{n^2} + P_+^t \cdot \frac{(w_+)^2}{n^2} \bigg] \notag \\
 & = \sum_{i \in B_-^t} \bigg[ (y_i^t)^2 + 2y_i^t \cdot \Big(p_i^t \cdot w_- - P_-^t \cdot \frac{w_-}{n} - P_+^t \cdot \frac{w_+}{n} \Big)\notag  \\
 & \qquad +p_i^t \cdot (w_-)^2 - 2 \cdot p_i^t \cdot \frac{(w_-)^2}{n} + P_-^t \cdot \frac{(w_-)^2}{n^2} + P_+^t \cdot \frac{(w_+)^2}{n^2} \bigg]\notag  \\
 & \leq \sum_{i \in B_-^t} \bigg[(y_i^t)^2 + 2y_i^t \cdot \Big(p_i^t \cdot w_- - P_-^t \cdot \frac{w_-}{n} - P_+^t \cdot \frac{w_+}{n} \Big) \bigg] + 2\cdot (w_-)^2,
\end{align}
using in the last step that
\[
  \sum_{i \in B_-^t} \bigg[ p_i^{t} \cdot (w_{-})^2 + P_{-}^t \cdot \frac{(w_-)^2}{n^2} + P_{+}^t \cdot \frac{(w_+)^2}{n^2} \bigg] 
  \leq (w_+)^2 + P_{+}^t \cdot \frac{(w_+)^2}{n} + P_{-}^t \cdot \frac{(w_-)^2}{n}
  \leq 2 \cdot (w_{-})^2.
\]

\medskip 

Recall that $\sum_{i \in B_+^t} y_i^t = - \sum_{i \in B_-^t} y_i^t = \frac{1}{2} \cdot \Delta^t$. This leads to cancellation from some terms in the brackets when we combine \eqref{eq:l1l2overload} and \eqref{eq:l1l2underload}, as follows
\begin{align*}
\Ex{\left. \Upsilon^{t+1} \,\right|\, \mathfrak{F}^t} 
&= \sum_{i \in B_+^t} \Ex{\left. \Upsilon_{i}^{t+1} \,\right|\, \mathfrak{F}^t} + \sum_{i \in B_-^t} \Ex{\left. \Upsilon_{i}^{t+1} \,\right|\, \mathfrak{F}^t} \\
 & \leq 
  \Upsilon^t  + \sum_{i \in B_+^t} 2y_i^t p_i^t \cdot w_+ + \sum_{i \in B_-^t} 2y_i^t p_i^t \cdot w_- + 4 \cdot (w_-)^2. \qedhere\end{align*}
\end{proof}

\subsection{Exponential Potential} \label{sec:lambda_potential}

In this section, we consider the exponential potential $\Lambda := \Lambda(\alpha)$ defined in \cref{eq:lambda_def}. Let $\mathcal{G}^t := \mathcal{G}^t(\eps)$ be the event that $\delta^t \in [\eps, 1 - \eps]$ holds. In \cref{lem:good_quantile_good_decrease}, we will show that the potential $\Lambda$ drops in expectation over one round $t$ when $\mathcal{G}^t$ holds, i.e.,
\[
\Ex{\left. \Lambda^{t+1} \,\right|\, \mathfrak{F}^t, \mathcal{G}^t} \leq \Lambda^t \cdot \left(1 - \frac{2 \C{good_quantile_mult} \alpha}{n} \right) + 8.
\]
Note that for constant smoothing parameter $\alpha$, the exponential potential cannot decrease in expectation in every round (\cref{clm:bad_configuration_lambda}). Therefore, in \cref{lem:bad_quantile_increase_bound} we will show that it does not increase too much in a round when the mean quantile is not good, i.e.,
\[
\Ex{\left. \Lambda^{t+1} \,\right|\, \mathfrak{F}^t, \neg \mathcal{G}^t} \leq  \Lambda^t \cdot \left( 1 + \frac{\C{bad_quantile_mult} \alpha^2}{2n}\right) + \C{bad_quantile_mult}.
\]
In the above expression the decrease factor can be made arbitrarily larger than the increase factor, by choosing $\alpha > 0 $ small enough. Once we prove that there is a constant fraction of rounds with a good quantile (\cref{lem:many_good_quantiles_whp}), we will use an adjusted version of $\Lambda$ (defined in \eqref{eq:tilde_lambda_def}) to show that overall it decreases in expectation (\cref{lem:recovery} and \cref{lem:stabilization}), when the potential is sufficiently large.

For the analysis of $\Lambda$ as well as $V$, we consider the labeling of the bins $i \in [n]$ used by the allocation process in round $t$ so that $x_i^{t}$ is non-decreasing in $i \in [n]$. We write 
\[
\Lambda^t(\alpha) =: \sum_{i=1}^n \Lambda_i^t = \sum_{i=1}^n e^{\alpha |y_i^t|} \quad  \Big(\text{and} \quad V^t(\tilde{\alpha}) =: \sum_{i=1}^n V_i^t = \sum_{i=1}^n e^{\tilde{\alpha} |y_i^t|} \Big),
\]
and handle separately the following three cases of bins based on their load: %
\begin{itemize}
  \item \textbf{Case 1} [Robustly Overloaded Bins]. The set of bins $B_{++}^t$ defined as the set of bins with load $y_i^t \geq \frac{w_-}{n}$. These bins are \textit{robustly overloaded} in the sense that they are guaranteed to be overloaded in round $t+1$ (i.e., to be in $B_+^{t+1}$), since the mean load can increase by at most $\frac{w_-}{n}$.
  
  For the exponential potential $\Lambda^t$ (and $V^t$ respectively), the expected contribution of a single bin $i \in B_{++}^t$ in $\Lambda^{t+1}$ is given by,
\begin{equation} \label{eq:change_case_1}
\Ex{ \left. \Lambda_i^{t+1} \,\right|\, \mathfrak{F}^t}
 = \Lambda_i^t \cdot \Big(\underbrace{p_i^t \cdot e^{\alpha w_+ -\alpha w_+/n}}_{\text{allocate to $i$}} + \underbrace{(P_+^t - p_i^t) \cdot e^{-\alpha w_+/n}}_{\text{allocate to $B_{+}^t\setminus \{ i \}$}} + \underbrace{P_-^t \cdot e^{-\alpha w_-/n} }_{\text{allocate to $B_{-}^t$}}\Big).
\end{equation}
  
  \item \textbf{Case 2} [Robustly Underloaded Bins]. The set of bins $B_{--}^t$ with load $y_i^t \leq -w_-$. These are bins in $B_-^t$ that are guaranteed to be underloaded in round $t+1$ (i.e., to be in $B_-^{t+1}$), since any bin can be allocated a ball of weight at most $w_-$ in one round.
  
  For the exponential potential $\Lambda^t$ (and $V^t$ respectively), the expected contribution of a single bin $i \in B_{--}^t$ in $\Lambda^{t+1}$ is given by,
\begin{align}
\Ex{ \left. \Lambda_i^{t+1} \,\right|\, \mathfrak{F}^t}
  = \Lambda_i^t \cdot \Big( \underbrace{p_i^t \cdot e^{- \alpha w_- + \alpha w_-/n}}_{\text{allocate to $i$}} + \underbrace{(P_-^t - p_i^t) \cdot e^{\alpha w_-/n}}_{\text{allocate to $B_-^t \setminus \{i \}$}} + \underbrace{P_+^t \cdot e^{\alpha w_+/n}}_{\text{allocate to $B_+^t$}} \Big). \label{eq:change_case_2}
\end{align}
  \item \textbf{Case 3} [Swinging Bins]. The set of bins $B_{+/-}^t$ with load $y_i^t \in (-w_-, \frac{w_-}{n})$.
\end{itemize}

We begin by showing that the aggregated contribution of the swinging bins to the expected change of the potential $\Lambda$ is \textit{small}. This will be used in the proofs of Lemmas \ref{lem:good_quantile_good_decrease} and \ref{lem:bad_quantile_increase_bound}.

\begin{lem}\label{lem:bins_close_to_mean}
Consider the potential $\Lambda := \Lambda(\alpha)$ for any $\alpha \in (0, \frac{1}{4w_-}]$ and any non-negative vector $(\kappa_i)_{i=1}^n$. Then, for any set of swinging bins $S \subseteq B_{+/-}^t$ and for any round $t \geq 0$,
\[
\sum_{i \in S} \Ex{ \left. \Lambda_i^{t+1} \,\right|\, \mathfrak{F}^t} \leq \sum_{i \in S} \Lambda_i^t \cdot ( 1 - \kappa_i ) + 2 \cdot \sum_{i \in S} \kappa_i + 3.
\]
\end{lem}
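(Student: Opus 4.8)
I would argue bin by bin and then sum. The starting observation is that a swinging bin $i \in B_{+/-}^t$ has $|y_i^t| < w_-$, so by the hypothesis $\alpha \le \frac{1}{4w_-}$ we get
\[
\Lambda_i^t = e^{\alpha |y_i^t|} < e^{\alpha w_-} \le e^{1/4} < 2 .
\]
This strict inequality $\Lambda_i^t < 2$ is the structural fact that makes the statement work: since $\kappa_i \ge 0$ we then have $\kappa_i(2 - \Lambda_i^t) \ge 0$, i.e. $\Lambda_i^t \le \Lambda_i^t(1 - \kappa_i) + 2\kappa_i$, so the $\kappa_i$-terms on the right-hand side only ever help us. Hence it suffices to prove the cleaner bound $\sum_{i \in S}\Ex{\left.\Lambda_i^{t+1}\,\right|\,\mathfrak{F}^t} \le \sum_{i \in S}\Lambda_i^t + 3$ and then add back the discarded nonnegative contributions $\sum_{i\in S}\kappa_i(2-\Lambda_i^t)$ on the right.

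For the cleaner bound I would split on whether bin $i$ receives the allocation in round $t+1$. If $i$ is \emph{not} chosen, only the mean moves, and at most $w_-$ balls are added in a round (condition \WOne), so $|y_i^{t+1}| \le |y_i^t| + w_-/n$ and therefore $\Lambda_i^{t+1} \le \Lambda_i^t \cdot e^{\alpha w_-/n} \le \Lambda_i^t\bigl(1 + \tfrac{1}{2n}\bigr)$ for $n$ large, using $\alpha w_-/n \le \tfrac{1}{4n}$ together with $e^x \le 1 + 2x$ for small $x$. If $i$ \emph{is} chosen, it receives a ball of weight $w_+$ or $w_-$ (again \WOne), so $|y_i^{t+1}| \le |y_i^t| + w_-\bigl(1 - \tfrac1n\bigr) < 2w_-$, whence $\Lambda_i^{t+1} < e^{2\alpha w_-} \le e^{1/2} < 2$. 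Taking the conditional expectation and using $p_i^t \le 1$,
\[
\Ex{\left.\Lambda_i^{t+1}\,\right|\,\mathfrak{F}^t} \le p_i^t \cdot 2 + (1 - p_i^t)\,\Lambda_i^t\Bigl(1 + \tfrac{1}{2n}\Bigr) \le \Lambda_i^t + 2 p_i^t + \frac{\Lambda_i^t}{2n} \le \Lambda_i^t + 2 p_i^t + \frac{1}{n},
\]
the last step again invoking $\Lambda_i^t < 2$.

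Summing over $i \in S$ then closes the argument: since $\sum_{i \in S} p_i^t \le \sum_{i \in [n]} p_i^t = 1$ and $\sum_{i \in S} \tfrac{1}{n} = |S|/n \le 1$, we obtain $\sum_{i \in S}\Ex{\left.\Lambda_i^{t+1}\,\right|\,\mathfrak{F}^t} \le \sum_{i \in S}\Lambda_i^t + 3$, and adding the nonnegative quantity $\sum_{i \in S}\kappa_i(2 - \Lambda_i^t)$ to the right-hand side rewrites this as $\sum_{i \in S}\Lambda_i^t(1 - \kappa_i) + 2\sum_{i \in S}\kappa_i + 3$, as claimed.

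There is no genuine obstacle here; the points needing care are (a) the ``for $n$ large'' estimates $e^{2\alpha w_-} \le e^{1/2} < 2$ and $e^{\alpha w_-/n} \le 1 + \tfrac{1}{2n}$, and (b) spotting at the outset that $\Lambda_i^t < 2$ on swinging bins, which is precisely what lets the vector $(\kappa_i)$ appear harmlessly on both sides. (If one wanted the $\kappa_i$ to actively absorb the $2p_i^t$ term rather than be discarded, one could instead route it through $\kappa_i(2 - \Lambda_i^t)$, but this is not needed for the stated inequality.)
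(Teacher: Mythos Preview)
Your proposal is correct and follows essentially the same route as the paper's proof: both split on whether the swinging bin is chosen or not, bound the ``not chosen'' case by $\Lambda_i^t \cdot e^{\alpha w_-/n}$ and the ``chosen'' case by $e^{2\alpha w_-}\le 2$, sum using $\sum_{i\in S} p_i^t \le 1$ to reach $\sum_{i\in S}\Lambda_i^t + 3$, and then exploit $\Lambda_i^t < 2$ on swinging bins to introduce the $\kappa_i$ terms at no cost. The only cosmetic difference is that the paper performs the add-and-subtract of $\sum_{i\in S}\Lambda_i^t\kappa_i$ at the end rather than front-loading the observation $\Lambda_i^t \le \Lambda_i^t(1-\kappa_i)+2\kappa_i$; also note that neither of your estimates $e^{2\alpha w_-}\le e^{1/2}$ nor $e^{\alpha w_-/n}\le 1+\tfrac{1}{2n}$ actually requires $n$ to be large, since $\alpha w_-\le \tfrac14$ already secures both.
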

Note that this statement says that we can make the \textit{multiplicative factor} arbitrarily small at the expense of increasing the \textit{additive factor}.
\begin{proof}
For each bin $i \in B_{+/-}^t$, the two events affecting the contribution of the bin to $\Lambda^{t+1}$ are $(i)$ ``internal'' due to a ball being allocated to $i$ and $(ii)$ ``external'' due to the change in the mean. 

For $(i)$, the chosen bin $i \in B_{+/-}^t$ can increase by at most $w_-$, so the potential value satisfies $\Lambda_i^{t+1} \leq e^{2\alpha w_-}$, since $\alpha \leq \frac{1}{4w_-}$. For $(ii)$, the maximum change in the mean load is at most $w_-/n$, and so $\Lambda_i^{t+1} \leq \Lambda_i^t \cdot e^{\alpha w_- /n}$.  Combining the two contributions and aggregating over all bins in $S$,
\begin{align*}
\sum_{i \in S} \Ex{\left. \Lambda_i^{t+1} \,\right|\, \mathfrak{F}^t} 
 & \leq 
\sum_{i \in S} \Lambda_i^t \cdot e^{\alpha w_-/n} \cdot (1 - p_i^t) + \sum_{i \in S} 2 \cdot p_i^t \\
 & \leq \sum_{i \in S} \Lambda_i^t \cdot e^{\alpha w_-/n} + e^{2\alpha w_-} \\
 & \stackrel{(a)}{\leq} \sum_{i \in S} \Lambda_i^t \cdot \Big(1 + \frac{2 \alpha w_-}{n}\Big) + e^{2\alpha w_-} 
 \\ 
 &\stackrel{(b)}{\leq} \sum_{i \in S} \Lambda_i^t + 2\alpha w_- \cdot e^{\alpha w_-} + e^{2\alpha w_-} \\
 & \stackrel{(c)}{\leq} \sum_{i \in S} \Lambda_i^t + 3,
\end{align*}
using in $(a)$ the Taylor estimate $e^z \leq 1 + 2z$ for any $z < 1.2$, for sufficiently large $n$, in $(b)$~that $\Lambda_i^{t} \leq e^{\alpha w_{-}}$ for any $i \in B_{+/-}^t$ and in $(c)$ that $2\alpha w_- \cdot e^{\alpha w_-} + e^{2\alpha w_-} \leq 3$ since $\alpha w_- \leq \frac{1}{4}$.  By adding and subtracting $\sum_{i \in S} \Lambda_i^t \cdot \kappa_i$,
\begin{align*}
\sum_{i \in S} \ex{\Lambda_i^{t+1} \mid \mathfrak{F}^t} 
 & \leq \sum_{i \in S} \Lambda_i^t\cdot (1 - \kappa_i ) + \sum_{i \in S} \Lambda_i^t \cdot \kappa_i + 3 \leq \sum_{i \in S} \Lambda_i^t\cdot (1 - \kappa_i) + 2 \cdot \sum_{i \in S} \kappa_i + 3,
\end{align*}
using that $\Lambda_i^t \leq e^{\alpha w_-} \leq 2$ for $i \in S$.
\end{proof}

Next, we show that if the mean quantile $\delta^t$ of the mean is in $[\eps, 1 - \eps]$, then the potential $\Lambda$ drops in expectation over one round. The following lemma applies to both $\Lambda$ and $V$, as $V$ is an instance of $\Lambda$. For $\Lambda := \Lambda(\alpha)$ with $\alpha = \Theta(1)$ this lemma applies when $\eps$ is constant (\cref{cor:change_for_large_lambda}). For $V := V(\tilde{\alpha})$ with $\tilde{\alpha} = \Theta(1/n)$, this lemma applies at any round as it always holds that $\delta^t \in [1/n, 1 - 1/n]$ or $\delta^t = 1$ in which case all loads are equal to the mean load (\cref{lem:initial_gap_nlogn}).

\begin{lem}\label{lem:good_quantile_good_decrease} Consider any $\WOne \cap \PTwo$-process, any $\eps \in \big(0,\frac{1}{2}\big)$, $\C{good_quantile_mult}:= \frac{\eps}{16} \cdot \min\{ w_+ \C{p2k1}, w_- \C{p2k2} \}$ and any $\alpha$ such that
\begin{equation}\label{eq:c_3alphacond1}
0 < \alpha \leq \min\left\lbrace\frac{1}{4w_-}, \frac{\C{p2k2} \eps}{2 w_- (1 + \C{p2k2}\eps)}, \frac{\C{p2k1} \eps}{2 w_+(1-\C{p2k1} \eps)} \right\rbrace.
\end{equation}
Likewise, for any $\POne \cap \WTwo$-process, any $\eps \in \big(0,\frac{1}{2}\big)$, $\C{good_quantile_mult}:= \frac{\eps}{16} \cdot (w_- - w_+)$ and any $\alpha > 0$ such that
\begin{equation}\label{eq:c_3alphacond2}
0 <  \alpha \leq \min\left\lbrace\frac{1}{4w_-}, \frac{\eps( w_- - w_+)}{4 w_-^2}, \frac{\eps}{2 w_- (2+\eps)} \right\rbrace.
\end{equation}
If \eqref{eq:c_3alphacond1} or \eqref{eq:c_3alphacond2} holds,
 then for the potential $\Lambda := \Lambda(\alpha)$ and for any round $t\geq 0$ we have
\[
\Ex{ \left. \Lambda^{t+1} \,\right|\, \mathfrak{F}^t, \mathcal{G}^t(\eps) } \leq \Lambda^t \cdot \left(1 - \frac{2 \C{good_quantile_mult} \alpha}{n} \right) + 8. %
\]
\end{lem}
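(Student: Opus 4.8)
The plan is to use the three-way partition of the bins set up just before the statement: robustly overloaded bins $B_{++}^t$ (those with $y_i^t\ge w_-/n$), robustly underloaded bins $B_{--}^t$ (those with $y_i^t\le -w_-$), and swinging bins $B_{+/-}^t$, so that $\Lambda^{t+1}=\sum_{i\in B_{++}^t}\Lambda_i^{t+1}+\sum_{i\in B_{--}^t}\Lambda_i^{t+1}+\sum_{i\in B_{+/-}^t}\Lambda_i^{t+1}$. For the swinging bins I would apply \cref{lem:bins_close_to_mean} with $\kappa_i:=2\C{good_quantile_mult}\alpha/n$; since $\C{good_quantile_mult}\alpha\le\tfrac14$ this gives $\sum_{i\in B_{+/-}^t}\Ex{\Lambda_i^{t+1}\mid\mathfrak F^t}\le\sum_{i\in B_{+/-}^t}\Lambda_i^t\,(1-2\C{good_quantile_mult}\alpha/n)+4$. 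It then remains to prove the same contraction factor, up to $O(1)$ additive terms, for the two ``robust'' sums.

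For $B_{++}^t$ I would start from the exact identity \eqref{eq:change_case_1}, Taylor-expand the three exponentials $e^{\alpha w_+}$, $e^{-\alpha w_+/n}$, $e^{-\alpha w_-/n}$ to second order (legitimate since $\alpha w_-\le\tfrac14$ and the $/n$ arguments are $o(1)$), and collect by powers of $\alpha$. The $\alpha^0$ term is $P_+^t+P_-^t=1$; the $\alpha^1$ term of bin $i$ equals $\alpha\bigl(p_i^t w_+(1+\tfrac1n)-\tfrac{w_+P_+^t}{n}-\tfrac{w_-P_-^t}{n}\bigr)\Lambda_i^t$; the remaining terms are $O(\alpha^2/n)\cdot\Lambda_i^t$. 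Summing over $i\in B_{++}^t$, the only quantity in which the process's bias enters is $\sum_{i\in B_{++}^t}\Lambda_i^t p_i^t$, and since the bins of $B_{++}^t$ form a contiguous block of largest loads along which $\Lambda_i^t$ is monotone, \cref{lem:quasilem} applies: using the overloaded majorization in \PTwo it yields $\sum_{i\in B_{++}^t}\Lambda_i^t p_i^t\le\sum_{i\in B_{++}^t}\Lambda_i^t\bigl(\tfrac1n-\tfrac{\C{p2k1}(1-\delta^t)}{n}\bigr)$ (for a $\POne\cap\WTwo$-process one instead uses the weaker $\preceq(\tfrac1n,\dots,\tfrac1n)$ bound from \POne). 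Substituting, the coefficient of $\tfrac{\alpha}{n}\sum_{i\in B_{++}^t}\Lambda_i^t$ is at most $w_+\bigl(1-\C{p2k1}(1-\delta^t)\bigr)(1+\tfrac1n)-w_+P_+^t-w_-P_-^t=-w_+\C{p2k1}(1-\delta^t)-(w_--w_+)P_-^t+o(1)\le-w_+\C{p2k1}\,\eps+o(1)$ on $\mathcal G^t$, using $1-\delta^t\ge\eps$ and $P_+^t+P_-^t=1$. For $B_{--}^t$ the mirror computation from \eqref{eq:change_case_2}, using the underloaded majorization in \PTwo (resp. the $\succeq(\tfrac{P_-^t}{|B_-^t|},\dots)$ bound in \WTwo), gives a coefficient at most $-w_-\C{p2k2}\delta^t-(w_--w_+)P_+^t+o(1)\le-w_-\C{p2k2}\,\eps+o(1)$ using $\delta^t\ge\eps$; in the \WTwo case the gain comes instead from $w_->w_+$ together with $P_-^t\ge(1-\delta^t)\C{p1k2}\ge\eps\C{p1k2}$.

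To finish, note $\sum_{B_{++}^t}\Lambda_i^t+\sum_{B_{--}^t}\Lambda_i^t+\sum_{B_{+/-}^t}\Lambda_i^t=\Lambda^t$, and the displayed choice $\C{good_quantile_mult}=\tfrac{\eps}{16}\min\{w_+\C{p2k1},w_-\C{p2k2}\}$ (resp. $\tfrac{\eps}{16}(w_--w_+)$) makes the first-order gain on the robust bins, namely $-\tfrac{16\C{good_quantile_mult}\alpha}{n}\sum_{i\in B_{++}^t\cup B_{--}^t}\Lambda_i^t$, more than enough to absorb both the target factor $\tfrac{2\C{good_quantile_mult}\alpha}{n}$ and twice the $O(\alpha^2/n)$ error, precisely when $\alpha$ satisfies \eqref{eq:c_3alphacond1} (resp. \eqref{eq:c_3alphacond2}) — these bounds on $\alpha$ are exactly what guarantees ``first-order gain $\ge$ $\tfrac{2\C{good_quantile_mult}\alpha}{n}$ plus second-order error''. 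Adding the swinging-bin estimate and bounding all remaining $O(1)$ additive constants by $8$ yields $\Ex{\Lambda^{t+1}\mid\mathfrak F^t,\mathcal G^t}\le\Lambda^t(1-\tfrac{2\C{good_quantile_mult}\alpha}{n})+8$. The main obstacle is that, unlike the quadratic potential (\cref{lem:quadratic_absolute_relation}), the exponential potential admits no per-bin contraction: an overloaded bin's contribution strictly increases whenever it receives the ball, so the bias-induced decrease only materializes after summing over all overloaded (resp. underloaded) bins and extracting it through the rearrangement inequality \cref{lem:quasilem}. This must be done while keeping the $O(\alpha^2/n)$ errors from the $e^{\alpha w_\pm}$ factors strictly below the first-order gain for \emph{every} admissible $\delta^t$, which is what forces both the delicate upper bounds on $\alpha$ and the particular size of $\C{good_quantile_mult}$; and the probability bias (\PTwo) and the weight bias (\WTwo) have to be handled by genuinely different routes, since in the latter the probability vector carries essentially no bias.
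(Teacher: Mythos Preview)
Your proposal is correct and follows the paper's argument closely: both Taylor-expand \eqref{eq:change_case_1}--\eqref{eq:change_case_2}, extract the first-order bias through \cref{lem:quasilem} and the relevant majorization condition, and handle swinging bins via \cref{lem:bins_close_to_mean}. The organizational differences are minor: the paper merges swinging overloaded (resp.\ underloaded) bins back into $B_+^t$ (resp.\ $B_-^t$) by choosing $\kappa_i$ in \cref{lem:bins_close_to_mean} to match the robust-bin expression \eqref{eq:positive_robustly} (resp.\ \eqref{eq:robustly_negative}), so that the majorization is applied over the full overloaded/underloaded set rather than just the robust part; and for the \WTwo underloaded case the paper splits on whether $P_-^t\le 1-\eps/2$ (Case~2.B, using only \POne's majorization together with $P_+^t\ge\eps/2$) or $P_-^t>1-\eps/2$ (Case~2.C, where $\bar p_i^t=P_-^t/|B_-^t|>(1+\eps/2)/n$ furnishes a probability bias). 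Your direct route via the \WTwo majorization for underloaded bins also works, but the justification you give is not quite right: the gain is not from ``$P_-^t\ge\eps k_2$''---what is actually needed is $\bar p_i^t=P_-^t/|B_-^t|$ together with $P_-^t\ge 1-\delta^t$ (from \POne's underloaded majorization) and $\delta^t\ge\eps$, which after substitution into \eqref{eq:lambda_case_2} yields a first-order coefficient at most $(-\delta^t(w_--w_+)+\alpha w_-^2)/n$; the bound $\alpha\le\eps(w_--w_+)/(4w_-^2)$ in \eqref{eq:c_3alphacond2} is precisely what makes the second term at most a quarter of the first.
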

\begin{proof}Consider an arbitrary round $t$ such that $\mathcal{G}^t := \mathcal{G}^t(\eps) := \{ \delta^t \in [\eps, 1 - \eps] \}$ holds and label the bins such that $x_i^{t}$ is non-decreasing in $i \in [n]$.

\medskip

\noindent\textbf{Case 1} [Overloaded Bins]. Among the overloaded bins, let us first consider robustly overloaded bins. By \cref{eq:change_case_1}, for any robustly overloaded bin $i \in B_{++}^t$,
\begin{align}
\Ex{ \left. \Lambda_i^{t+1} \, \right| \, \mathfrak{F}^t}
 & = \Lambda_i^t \cdot \Big(p_i^t \cdot e^{-\alpha w_+/n + \alpha w_+} + (P_+^t - p_i^t) \cdot e^{-\alpha w_+/n} + P_-^t \cdot e^{-\alpha w_-/n} \Big) . \notag \\
 \intertext{Applying the Taylor estimate $e^{z} \leq 1+z+z^2$, which holds for any $z \leq 1.75$, since $\alpha w_+ \leq 1$ (and $\alpha w_- \leq 1$), and using that $\big( -\frac{\alpha w_+}{n} + \alpha w_+\big)^2 \leq (\alpha w_+)^2$,}
 \Ex{\left. \Lambda_i^{t+1} \,\right|\, \mathfrak{F}^t} & \leq \Lambda_i^t \cdot \Big(1 + p_i^t \cdot \Big(-\frac{\alpha w_+}{n} + \alpha w_+ + (\alpha w_+)^2 \Big) \notag  \\ 
 & \qquad + (P_+^t - p_i^t) \cdot \Big(-\frac{\alpha w_+}{n} + \Big(\frac{\alpha w_+}{n}\Big)^2 \Big) + P_-^t \cdot \Big(- \frac{\alpha w_-}{n} + \Big(\frac{\alpha w_-}{n} \Big)^2 \Big) \Big). \notag \\
 \intertext{Using that $P_-^t + P_+^t = 1$, $w_- \geq w_+$ and rearranging terms we obtain,}
 \Ex{ \left. \Lambda_i^{t+1} \,\right|\, \mathfrak{F}^t} & \leq \Lambda_i^t \cdot \Big(1 + p_i^t \cdot \Big(-\frac{\alpha w_+}{n} + \alpha w_+ + (\alpha w_+)^2 \Big) \notag \\
 & \qquad - (P_+^t - p_i^t) \cdot \frac{\alpha w_+}{n} - P_-^t \cdot \frac{\alpha w_-}{n} + \left( \frac{\alpha w_-}{n} \right)^2 \Big) \notag \\
 & = \Lambda_i^t \cdot \Big(1 + p_i^t \cdot \Bigl(\alpha w_+ + (\alpha w_+)^2 \Bigr) - P_+^t \cdot \frac{\alpha w_+}{n} - P_-^t \cdot \frac{\alpha w_-}{n} + \left( \frac{\alpha w_-}{n} \right)^2 \Big) \notag \\
 & = \Lambda_i^t \cdot \Big(1 + p_i^t \cdot (\alpha w_+ + (\alpha w_+)^2) + P_+^t \cdot \Big(\frac{\alpha w_-}{n} - \frac{\alpha w_+}{n}\Big) - \frac{\alpha w_-}{n} + \left( \frac{\alpha w_-}{n} \right)^2 \Big).\label{eq:positive_robustly}
\end{align}
Now, we turn our attention to swinging overloaded bins. By \cref{lem:bins_close_to_mean}, for $S := B_{+/-}^t \cap B_{+}^t$ and $\kappa_i := \frac{\alpha w_-}{n}$,\begin{align}
\sum_{i \in S}  \Ex{ \left. \Lambda_i^{t+1} \,\right|\, \mathfrak{F}^t}
 & \leq \sum_{i \in S} \Lambda_i^t \cdot \Big( 1 - \frac{\alpha w_-}{n} \Big) + 2 \cdot \sum_{i \in S} \frac{\alpha w_-}{n} + 3 \notag \\
 & \leq \sum_{i \in S} \Lambda_i^t \cdot \Big( 1 - \frac{\alpha w_-}{n} \Big) + 4  \notag \\
 & \leq \sum_{i \in S} \Lambda_i^t \cdot \Big(1 + p_i^t \cdot (\alpha w_+ + (\alpha w_+)^2) + P_+^t \cdot \Big(\frac{\alpha w_-}{n} - \frac{\alpha w_+}{n}\Big) \notag \\
 & \qquad \qquad - \frac{\alpha w_-}{n} + \left( \frac{\alpha w_-}{n} \right)^2 \Big) + 4, \label{eq:positive_swinging}
\end{align}
where in the last inequality we have added several non-negative terms.

Combining \eqref{eq:positive_robustly} and \eqref{eq:positive_swinging}, we have that\begin{align} \label{eq:lambda_case_1}
& \sum_{i \in B_+^t}  \Ex{ \left. \Lambda_i^{t+1} \,\right|\, \mathfrak{F}^t}  \notag \\
 & \leq \sum_{i \in B_+^t} \Lambda_i^t \cdot \Big(1 + p_i^t \cdot (\alpha w_+ + (\alpha w_+)^2) + P_+^t \cdot \Big(\frac{\alpha w_-}{n} - \frac{\alpha w_+}{n}\Big) - \frac{\alpha w_-}{n} + \left( \frac{\alpha w_-}{n} \right)^2 \Big) + 4.
\end{align}

By condition \POne, we have $P_+^t = \sum_{i = 1}^{\delta^t n} p_i^t \leq (\delta^t n) \cdot \frac{1}{n} = \delta^t$. By the assumption $\mathcal{G}^t$, we have that $\delta^t \leq 1-\eps$, so it also follows that $P_+^t \leq 1-\eps$. Hence,
\begin{align*}
\sum_{i \in B_+^t} & \Ex{ \left. \Lambda_i^{t+1} \,\right|\, \mathfrak{F}^t, \mathcal{G}^t} \\ 
 & \leq \sum_{i \in B_+^t} \Lambda_i^t \cdot \Big(1 - \alpha w_+ \cdot \Bigl(\frac{1}{n} - p_i^t \cdot (1 + \alpha w_+ )\Bigr) - \frac{\alpha \eps}{n} \cdot (w_- - w_+) + \left( \frac{\alpha w_-}{n} \right)^2 \Big) + 4.
\end{align*}
\noindent \textbf{Case 1.A} [\PTwo holds]. Recall that by condition \PTwo, the probabilities of the overloaded bins satisfy,
\[
\left( p_1^t, \ldots, p_{n \delta^t}^t \right) \preceq \left( \frac{1}{n} - \frac{\C{p2k1} \cdot (1-\delta^t)}{n}, \ldots, \frac{1}{n} - \frac{\C{p2k1} \cdot (1-\delta^t)}{n} \right)
\]
and hence applying \cref{lem:quasilem} using that $\Lambda_i^t$ are non-increasing in $i$ over $B_+^t$,
\begin{align*}
\sum_{i \in B_+^t} & \Ex{ \left. \Lambda_i^{t+1} \right| \mathfrak{F}^t, \mathcal{G}^t} \\
 & \leq \sum_{i \in B_+^t} \Lambda_i^t \cdot \Big(1 - \frac{\alpha w_+}{n} \cdot \Bigl(1 - (1 - \C{p2k1} \eps) \cdot (1 + \alpha w_+ )\Bigr) - \frac{\alpha \eps}{n} \cdot (w_- - w_+) + \left( \frac{\alpha w_-}{n} \right)^2 \Big) + 4 \\
 & \leq \sum_{i \in B_+^t} \Lambda_i^t \cdot \Big(1 - \frac{\alpha w_+}{n} \cdot \Bigl(1 - (1 - \C{p2k1} \eps) \cdot (1 + \alpha w_+ )\Bigr) + \left( \frac{\alpha w_-}{n} \right)^2 \Big) + 4 \\
 & = \sum_{i \in B_+^t} \Lambda_i^t \cdot \Big(1 - \frac{\alpha w_+}{n} \cdot \Bigl(\C{p2k1} \eps - \alpha w_+ \cdot (1 - \C{p2k1} \eps )\Bigr) + \left( \frac{\alpha w_-}{n} \right)^2 \Big) + 4 \\
 & \stackrel{(a)}{\leq} \sum_{i \in B_+^t} \Lambda_i^t \cdot \Big(1 - \frac{\alpha w_+ }{n} \cdot \frac{\C{p2k1} \eps}{2} + \left( \frac{\alpha w_-}{n} \right)^2 \Big) + 4 \\ 
 & \stackrel{(b)}{\leq} \sum_{i \in B_+^t} \Lambda_i^t \cdot \Big(1 - \frac{\alpha\eps }{4n} \cdot w_+ \C{p2k1} \Big) + 4,
\end{align*}
using in $(a)$ that $\frac{\C{p2k1} \eps}{2}\geq \alpha w_+(1 - \C{p2k1} \eps )$ (which is equivalent to $\alpha \leq \frac{\C{p2k1} \eps}{2 w_+(1-\C{p2k1} \eps)}$) and in $(b)$ that $\frac{\alpha\eps }{4n} \cdot w_+ \C{p2k1} \geq \left( \frac{\alpha w_-}{n} \right)^2$
(which follows since $\alpha=O(\eps)$).

\medskip

\noindent \textbf{Case 1.B} [\WTwo holds]. By condition \PTwo, the probabilities of the overloaded bins satisfy
\[
\left( p_1^t, \ldots, p_{\delta^t n}^t \right) \preceq \left( \frac{1}{n}, \ldots, \frac{1}{n} \right),
\]
and hence applying \cref{lem:quasilem} using that $\Lambda_i^t$ are non-increasing in $i$ over $B_+^t$,
\begin{align*}
\sum_{i \in B_+^t} & \Ex{ \left. \Lambda_i^{t+1} \,\right|\, \mathfrak{F}^t, \mathcal{G}^t} \\
 & \leq \sum_{i \in B_+^t} \Lambda_i^t \cdot \Big(1 - \alpha w_+ \cdot \Bigl(\frac{1}{n} - \frac{1}{n} \cdot (1 + \alpha w_+ )\Bigr) - \frac{\alpha \eps}{n} \cdot (w_- - w_+) + \left( \frac{\alpha w_-}{n} \right)^2 \Big) + 4 \\
 & = \sum_{i \in B_+^t} \Lambda_i^t \cdot \Big(1 + \frac{(\alpha w_+)^2}{n} - \frac{\alpha \eps}{n} \cdot (w_- - w_+) + \left( \frac{\alpha w_-}{n} \right)^2 \Big) + 4 \\
 & = \sum_{i \in B_+^t} \Lambda_i^t \cdot \Big(1 - \frac{\alpha}{n} \cdot \Big(\eps \cdot (w_- - w_+) - \alpha w_+^2 \Big) + \left( \frac{\alpha w_-}{n} \right)^2 \Big) + 4 \\
 & \stackrel{(a)}{\leq} \sum_{i \in B_+^t} \Lambda_i^t \cdot \Big(1 - \frac{\alpha }{n} \cdot \frac{\eps}{2} \cdot (w_- - w_+) + \left( \frac{\alpha w_-}{n} \right)^2 \Big) + 4 \\
 & \stackrel{(b)}{\leq} \sum_{i \in B_+^t} \Lambda_i^t \cdot \Big(1 - \frac{\alpha \eps}{4n} \cdot(w_- - w_+) \Big) + 4,
\end{align*}
using in $(a)$ that $\frac{\eps}{2} \cdot (w_- - w_+) \geq \alpha w_+^2$ (as implied by $\alpha \leq \frac{\eps (w_- - w_+)}{4w_-^2} \leq \frac{\eps (w_- - w_+)}{2w_+^2}$) and in $(b)$ that $\frac{\alpha \eps}{4n} \cdot(w_- - w_+) \geq \left( \frac{\alpha w_-}{n} \right)^2$ (since $\alpha = \Oh(\eps)$).

So, in both \textbf{Case 1.A} and \textbf{Case 1.B}, for the constant $\C{good_quantile_mult} >0$ defined in the statement,
\begin{align*}
\sum_{i \in B_{+}^t} \Ex{ \left. \Lambda_i^{t+1} \,\right|\, \mathfrak{F}^t, \mathcal{G}^t} 
\leq \sum_{i \in B_{+}^t} \Lambda_i^t \cdot \left(1 - \frac{2 \C{good_quantile_mult} \alpha}{n} \right) + 4. 
\end{align*}

\noindent \textbf{Case 2} [Underloaded Bins]. Among the underloaded bins, let us first consider robustly underloaded bins. By \cref{eq:change_case_2}, for any robustly underloaded bin $i \in B_{--}^t$,
\begin{align}
\Ex{ \left. \Lambda_i^{t+1} \,\right|\, \mathfrak{F}^t}
 & = \Lambda_i^t \cdot \Big(p_i^t \cdot e^{\alpha w_-/n - \alpha w_-} + (P_-^t - p_i^t) \cdot e^{\alpha w_-/n} + P_+^t \cdot e^{\alpha w_+/n} \Big). \notag
 \intertext{Applying the Taylor estimate $e^{z} \leq 1+z+z^2$, which holds for any $z \leq 1.75$, since $\frac{\alpha w_+}{n} \leq 1$ (and $\frac{\alpha w_-}{n} \leq 1$) and using that $\big( -\frac{\alpha w_-}{n} + \alpha w_-\big)^2 \leq (\alpha w_-)^2$,}
 \Ex{ \left. \Lambda_i^{t+1} \,\right|\, \mathfrak{F}^t} & \leq \Lambda_i^t \cdot \Big(1 + p_i^t \cdot \Big(\frac{\alpha w_-}{n} - \alpha w_- + (\alpha w_-)^2 \Big) \notag \\ 
 & \qquad + (P_-^t - p_i^t) \cdot \Big(\frac{\alpha w_-}{n} + \Big(\frac{\alpha w_-}{n} \Big)^2 \Big) + P_+^t \cdot \Big(\frac{\alpha w_+}{n} + \Bigl(\frac{\alpha w_+}{n} \Big)^2 \Big) \Big). \notag
 \intertext{Using that $P_-^t + P_+^t = 1$, $w_- \geq w_+$ and rearranging terms we obtain,}
\Ex{ \left. \Lambda_i^{t+1} \,\right|\, \mathfrak{F}^t} 
 & \leq \Lambda_i^t \cdot \Big(1 + p_i^t \cdot \Big(\frac{\alpha w_-}{n} - \alpha w_- + (\alpha w_-)^2 \Big) \notag \\
 & \qquad + (P_-^t - p_i^t) \cdot \frac{\alpha w_-}{n} + P_+^t \cdot \frac{\alpha w_+}{n} + \left( \frac{\alpha w_-}{n} \right)^2 \Big) \notag \\
 & = \Lambda_i^t \cdot \Big(1 - p_i^t \cdot (\alpha w_- - (\alpha w_-)^2) + P_-^t \cdot \frac{\alpha w_-}{n}  + P_+^t \cdot \frac{\alpha w_+}{n} + \left( \frac{\alpha w_-}{n} \right)^2 \Big) \notag \\
 & = \Lambda_i^t \cdot \Big(1 - p_i^t \cdot (\alpha w_- - (\alpha w_-)^2) + P_-^t \cdot \left(\frac{\alpha w_-}{n} - \frac{\alpha w_+}{n}\right) + \frac{\alpha w_+}{n} + \left( \frac{\alpha w_-}{n} \right)^2 \Big). \label{eq:robustly_negative} %
\end{align}
Now, we turn our attention to swinging underloaded bins. By \cref{lem:bins_close_to_mean} for $S := B_{+/-}^t \cap B_{-}^t$ and $\kappa_i := p_i^t \cdot (aw_- - (aw_-)^2)$,\begin{align}
\sum_{i \in S} \Ex{ \left. \Lambda_i^{t+1} \,\right|\, \mathfrak{F}^t}
 & \leq \sum_{i \in S} \Lambda_i^t \cdot ( 1 - \kappa_i ) - 2 \cdot \sum_{i = 1}^n p_i^t \cdot (aw_- - (aw_-)^2) +  3 \notag \\
 & \leq \sum_{i \in S} \Lambda_i^t \cdot ( 1 - \kappa_i ) + 4 \notag \\
 & \leq \sum_{i \in S} \Lambda_i^t \cdot \Big(1 - p_i^t \cdot (\alpha w_- - (\alpha w_-)^2) + P_-^t \cdot \left(\frac{\alpha w_-}{n} - \frac{\alpha w_+}{n}\right) \notag \\
 & \qquad \qquad + \frac{\alpha w_+}{n} + \left( \frac{\alpha w_-}{n} \right)^2 \Big) + 4, \label{eq:negative_swinging}
\end{align}
where in the last inequality we have added several non-negative terms.

Combining \eqref{eq:robustly_negative} and \eqref{eq:negative_swinging}, we get\begin{align} \label{eq:lambda_case_2}
& \sum_{i \in B_-^t} \Ex{ \left. \Lambda_i^{t+1} \,\right|\, \mathfrak{F}^t} \notag \\
& \leq \sum_{i \in B_-^t} \Lambda_i^t \cdot \Big(1 - p_i^t \cdot (\alpha w_- - (\alpha w_-)^2) + P_-^t \cdot \left(\frac{\alpha w_-}{n} - \frac{\alpha w_+}{n}\right) + \frac{\alpha w_+}{n} + \left( \frac{\alpha w_-}{n} \right)^2 \Big) + 4.
\end{align}

\noindent\textbf{Case 2.A} [\PTwo holds]. Recall that by condition \PTwo, the probabilities of the underloaded bins satisfy,
\[
\left( p_{n}^t, \ldots, p_{\delta^t n + 1}^t \right) \succeq \left( \frac{1}{n} + \frac{\C{p2k2} \cdot \delta^t}{n}, \ldots , \frac{1}{n} + \frac{\C{p2k2} \cdot \delta^t}{n} \right).
\]
Hence applying \cref{lem:quasilem} to \eqref{eq:lambda_case_2} using that $\Lambda_i^t$ are non-increasing in $i$, $(\alpha w_-)^2 \leq \alpha w_-$ (since $\alpha w_- \leq 1$) and $\delta^t \geq \eps$, gives
\begin{align*}
\sum_{i \in B_-^t} & \Ex{ \left. \Lambda_i^{t+1} \,\right|\, \mathfrak{F}^t, \mathcal{G}^t}\\
 & \stackrel{(a)}{\leq} \sum_{i \in B_-^t} \Lambda_i^t \cdot \Big(1 - \frac{1 + \C{p2k2}\eps}{n} \cdot (\alpha w_- - (\alpha w_-)^2) + 1 \cdot \left(\frac{\alpha w_-}{n} - \frac{\alpha w_+}{n}\right) + \frac{\alpha w_+}{n} + \left( \frac{\alpha w_-}{n} \right)^2 \Big) + 4 \\
 & = \sum_{i \in B_-^t} \Lambda_i^t \cdot \Big(1 - \alpha w_- \cdot \Big(\frac{1 + \C{p2k2}\eps}{n} \cdot (1 - \alpha w_-) - \frac{1}{n} \Big)  + \left( \frac{\alpha w_-}{n} \right)^2 \Big) + 4 \\
 & = \sum_{i \in B_-^t} \Lambda_i^t \cdot \Big(1 - \frac{\alpha w_-}{n} \cdot \Big((1 + \C{p2k2}\eps ) \cdot (1 - \alpha w_-) - 1 \Big) + \left( \frac{\alpha w_-}{n} \right)^2 \Big) + 4 \\
 & = \sum_{i \in B_-^t} \Lambda_i^t \cdot \Big(1 - \frac{\alpha w_-}{n} \cdot \Big(\C{p2k2}\eps - \alpha w_- \cdot (1 + \C{p2k2} \eps)\Big) + \left( \frac{\alpha w_-}{n} \right)^2 \Big) + 4 \\
 & \stackrel{(b)}{\leq} \sum_{i \in B_-^t} \Lambda_i^t \cdot \Big(1 - \frac{\alpha w_- }{n} \cdot \frac{\C{p2k2} \eps}{2} + \left( \frac{\alpha w_-}{n} \right)^2 \Big) + 4 \\ 
 & \stackrel{(c)}{\leq} \sum_{i \in B_-^t} \Lambda_i^t \cdot \Big(1 - \frac{\alpha \eps}{4n} \cdot \C{p2k2} w_- \Big) + 4,
\end{align*}
using in $(a)$ that $P_-^t \leq 1$, in $(b)$ that $\frac{\C{p2k2} \eps}{2} \geq \alpha w_- \cdot (1 + \C{p2k2} \eps)$ (as implied by $\alpha \leq \frac{\C{p2k2} \eps}{2 w_- (1 + \C{p2k2}\eps)}$) and in $(c)$ that $\frac{\alpha \eps}{4n} \cdot \C{p2k2} w_- \geq \left( \frac{\alpha w_-}{n} \right)^2$ (as implied by $\alpha = \Oh(\eps)$).

\medskip

\noindent \textbf{Case 2.B} [\WTwo and $P_-^t \leq 1 - \frac{\eps}{2}$ hold]. By condition \POne, we have that
\[
\left( p_{n}^t, \ldots, p_{\delta^t n + 1}^t \right) \succeq \left( \frac{1}{n}, \ldots , \frac{1}{n} \right),
\]
and hence applying \cref{lem:quasilem} to \eqref{eq:lambda_case_2} using that $\Lambda_i^t$ are non-increasing in $i$ and that $(\alpha w_-)^2 \leq \alpha w_-$,
\begin{align*}
& \sum_{i \in B_-^t} \Ex{ \left. \Lambda_i^{t+1} \,\right|\, \mathfrak{F}^t, \mathcal{G}^t} \\
 & \ \ \leq \sum_{i \in B_-^t} \Lambda_i^t \cdot \Big(1 - \frac{1}{n} \cdot (\alpha w_- - (\alpha w_-)^2) + \left(1 - \frac{\eps}{2} \right) \cdot \Big(\frac{\alpha w_-}{n} - \frac{\alpha w_+}{n}\Big) + \frac{\alpha w_+}{n} + \left( \frac{\alpha w_-}{n} \right)^2 \Big) + 4 \\
 & \ \  = \sum_{i \in B_-^t} \Lambda_i^t \cdot \Big(1 + \frac{(\alpha w_-)^2}{n} - \frac{\eps}{2} \cdot \Big(\frac{\alpha w_-}{n} - \frac{\alpha w_+}{n}\Big) + \left( \frac{\alpha w_-}{n} \right)^2 \Big) + 4 \\
 & \ \ = \sum_{i \in B_-^t} \Lambda_i^t \cdot \Big(1 - \frac{\alpha}{n} \cdot \Big( \frac{\eps}{2} \cdot (w_- - w_+) - \alpha w_-^2 \Big) + \left( \frac{\alpha w_-}{n} \right)^2 \Big) + 4\\
 & \ \ \stackrel{(a)}{\leq} \sum_{i \in B_-^t} \Lambda_i^t \cdot \Big(1 - \frac{\alpha}{n} \cdot \frac{\eps}{4} \cdot (w_- - w_+) + \left( \frac{\alpha w_-}{n} \right)^2 \Big) + 4 \\
 & \ \ \stackrel{(b)}{\leq} \sum_{i \in B_-^t} \Lambda_i^t \cdot \Big(1 - \frac{\alpha \eps}{8n} \cdot (w_- - w_+) \Big) + 4.
\end{align*}
using in $(a)$ that $\frac{\eps}{4} \cdot (w_- - w_+) \geq \alpha w_-^2$ (as implied by $\alpha \leq \frac{\eps( w_- - w_+)}{4 w_-^2}$) and in $(b)$ that $\frac{\alpha \eps}{8n} \cdot (w_- - w_+) \geq \left( \frac{\alpha w_-}{n} \right)^2$ (as implied by $\alpha = \Oh(\eps)$).

\medskip 

\noindent\textbf{Case 2.C} [\WTwo and $P_-^t > 1 - \frac{\eps}{2}$ hold]. By condition \WTwo, the probabilities of the underloaded bins satisfy,
\[
  \left( p_{n}^t, \ldots, p_{\delta^t n + 1}^t \right) \succeq \left( \frac{P_-^t}{|B_-^t|}, \ldots , \frac{P_-^t}{|B_-^t|} \right).
\]
Using \cref{clm:eps_ineq} and that $P_-^t > 1 - \frac{\eps}{2}$ and $|B_-^t| \leq (1 - \eps) \cdot n$, we have that\[
 \overline{p}_i^t := \frac{P_-^t}{|B_-^t|} > \frac{1 - \frac{\eps}{2}}{(1 - \eps)n} > \frac{1 + \frac{\eps}{2}}{n}.
\]
Hence, applying \cref{lem:quasilem} to \eqref{eq:lambda_case_2} using that $\Lambda_i$ are non-increasing in $i$ and $(\alpha w_-)^2 \leq \alpha w_-$,
\begin{align*}
\sum_{i \in B_{-}^t} & \Ex{\left. \Lambda_i^{t+1} \,\right|\, \mathfrak{F}^t, \mathcal{G}^t} \\
 & \leq \sum_{i \in B_{-}^t} \Lambda_i^t \cdot \Big(1 - \overline{p}_i^t \cdot (\alpha w_- - (\alpha w_-)^2) + 1 \cdot \Big(\frac{\alpha w_-}{n} - \frac{\alpha w_+}{n} \Big) + \frac{\alpha w_+}{n} + \left( \frac{\alpha w_-}{n} \right)^2 \Big) + 4 \\
 & = \sum_{i \in B_{-}^t} \Lambda_i^t \cdot \Big(1 - \overline{p}_i^t \cdot (\alpha w_- - (\alpha w_-)^2) + \frac{\alpha w_-}{n} + \left( \frac{\alpha w_-}{n} \right)^2 \Big) + 4 \\
 & = \sum_{i \in B_{-}^t} \Lambda_i^t \cdot \Big(1 - \alpha w_- \cdot \Big(\overline{p}_i^t \cdot (1 - \alpha w_-) - \frac{1}{n}\Big) + \left( \frac{\alpha w_-}{n} \right)^2 \Big) + 4 \\
 & \leq \sum_{i \in B_{-}^t} \Lambda_i^t \cdot \Big(1 - \frac{\alpha w_-}{n} \cdot \Big( \Big(1 + \frac{\eps}{2}\Big) \cdot (1 - \alpha w_-) - 1 \Big) + \left( \frac{\alpha w_-}{n} \right)^2\Big) + 4 \\ 
 & = \sum_{i \in B_{-}^t} \Lambda_i^t \cdot \Big(1 - \frac{\alpha w_-}{n} \cdot \Big(\frac{\eps}{2} - \alpha w_- \cdot \Big(1 + \frac{\eps}{2} \Big)\Big) + \left( \frac{\alpha w_-}{n} \right)^2\Big) + 4 \\ 
 & \stackrel{(a)}{\leq} \sum_{i \in B_{-}^t} \Lambda_i^t \cdot \left(1 - \frac{\alpha w_-}{n} \cdot \frac{\eps}{4} + \left( \frac{\alpha w_-}{n} \right)^2\right) + 4 \\
 & \stackrel{(b)}{\leq} \sum_{i \in B_{-}^t} \Lambda_i^t \cdot \left(1 - \frac{\alpha \eps}{8n} \cdot w_- \right) + 4,
\end{align*}
using in $(a)$ that $\frac{\eps}{4} \geq \alpha w_- \cdot (1 + \frac{\eps}{2})$ (which is equivalent to $\alpha \leq \frac{\eps}{2 w_- (2+\eps)}$) and in $(b)$ that $\frac{\alpha \eps}{8n} \cdot w_- \geq \left( \frac{\alpha w_-}{n} \right)^2$ (as implied by $\alpha = \Oh(\eps)$).

Hence, in all subcases of \textbf{Case 2}, for $\C{good_quantile_mult} > 0$ as defined in the statement,
\[
\sum_{i \in B_{-}^t} \Ex{ \left. \Lambda_i^{t+1} \,\right|\, \mathfrak{F}^t } \leq \sum_{i \in B_{-}^t} \Lambda^t \cdot \left(1 - \frac{2 \C{good_quantile_mult} \alpha}{n} \right) + 4.
\]
Finally, combining the \textbf{Case 1} and \textbf{Case 2}, we get the conclusion. 
\end{proof}

Now, we will prove a loose upper bound on the expected increase of $\Lambda$ over one round, which holds at an arbitrary round $t$, i.e., even when $\mathcal{G}^t$ does not hold.

\begin{lem} \label{lem:bad_quantile_increase_bound}
Consider any $\PTwo \cap \WOne$-process or $\POne \cap 
\WTwo$-process and the potential $\Lambda := \Lambda(\alpha)$ with any $\alpha > 0$ satisfying the preconditions of \cref{lem:good_quantile_good_decrease}. Then, for the constant $\C{bad_quantile_mult} := \max\{ 8, 4w_-^2 \} > 0$ and for any round $t \geq 0$,
\[
\Ex{\left. \Lambda^{t+1} \,\right|\, \mathfrak{F}^t} \leq \Lambda^t \cdot \left(1 + \frac{\C{bad_quantile_mult} \alpha^2}{2n} \right) + \C{bad_quantile_mult}.
\]
\end{lem}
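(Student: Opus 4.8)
The plan is to recycle the two aggregated estimates \eqref{eq:lambda_case_1} (for the overloaded bins) and \eqref{eq:lambda_case_2} (for the underloaded bins) that appear inside the proof of \cref{lem:good_quantile_good_decrease} before the subcases are split off. Both were obtained only from the Taylor bound $e^z\le 1+z+z^2$ (valid since the preconditions give $\alpha w_-\le\frac14$), from \cref{lem:bins_close_to_mean} applied to the swinging bins, and from $P_+^t+P_-^t=1$ together with $w_-\ge w_+$; in particular they do \emph{not} invoke the event $\mathcal{G}^t$. So I would take \eqref{eq:lambda_case_1} and \eqref{eq:lambda_case_2} as given and, in place of the bounds $P_+^t\le 1-\eps$, $\delta^t\ge\eps$ that $\mathcal{G}^t$ supplies, use only the trivial $P_+^t\le 1$, $P_-^t\le 1$. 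This forfeits the multiplicative drop but is enough for the (loose) multiplicative increase we want.

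For the overloaded bins, starting from \eqref{eq:lambda_case_1} and using $P_+^t\le 1$ (the coefficient $\frac{\alpha w_-}{n}-\frac{\alpha w_+}{n}$ is non-negative since $w_-\ge w_+$) brings the per-bin factor on $\Lambda_i^t$ down to $1+p_i^t\big(\alpha w_++(\alpha w_+)^2\big)-\frac{\alpha w_+}{n}+\big(\frac{\alpha w_-}{n}\big)^2$. The crucial step is then the same as in Case~1 of \cref{lem:good_quantile_good_decrease}: both subclasses satisfy \POne, whose majorization half gives $(p_1^t,\dots,p_{\delta^t n}^t)\preceq(\frac1n,\dots,\frac1n)$, so by \cref{lem:quasilem} and the monotonicity of $(\Lambda_i^t)$ over $B_+^t$ we may replace $p_i^t$ by $\frac1n$ in that term. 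This makes the linear-in-$\alpha$ contribution $+\frac{\alpha w_+}{n}$ cancel against the $-\frac{\alpha w_+}{n}$ already present, leaving
\[
\sum_{i\in B_+^t}\Ex{\left.\Lambda_i^{t+1}\,\right|\,\mathfrak{F}^t}\ \le\ \sum_{i\in B_+^t}\Lambda_i^t\Big(1+\frac{(\alpha w_+)^2}{n}+\frac{(\alpha w_-)^2}{n^2}\Big)+4\ \le\ \sum_{i\in B_+^t}\Lambda_i^t\Big(1+\frac{2(\alpha w_-)^2}{n}\Big)+4 .
\]
For the underloaded bins I would proceed symmetrically from \eqref{eq:lambda_case_2}: using $P_-^t\le 1$ reduces the per-bin factor to $1-p_i^t\big(\alpha w_--(\alpha w_-)^2\big)+\frac{\alpha w_-}{n}+\big(\frac{\alpha w_-}{n}\big)^2$. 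Here the $p_i^t$-term has a minus sign and $\alpha w_--(\alpha w_-)^2\ge 0$, so I need a \emph{lower} bound on $\sum_{i\in B_-^t}\Lambda_i^t p_i^t$; \POne gives $(p_n^t,\dots,p_{\delta^t n+1}^t)\succeq(\frac1n,\dots,\frac1n)$, and \cref{lem:quasilem} with the monotonicity of $(\Lambda_i^t)$ over $B_-^t$ lets me replace $p_i^t$ by $\frac1n$, exactly as in Case~2 of \cref{lem:good_quantile_good_decrease}. Again the $\frac{\alpha w_-}{n}$ terms cancel, giving $\sum_{i\in B_-^t}\Ex{\Lambda_i^{t+1}\mid\mathfrak{F}^t}\le\sum_{i\in B_-^t}\Lambda_i^t\big(1+\frac{2(\alpha w_-)^2}{n}\big)+4$. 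Adding the two halves and choosing $\C{bad_quantile_mult}=\max\{8,4w_-^2\}$ so that $2w_-^2\le\frac{\C{bad_quantile_mult}}{2}$ and $8\le\C{bad_quantile_mult}$ yields $\Ex{\Lambda^{t+1}\mid\mathfrak{F}^t}\le\Lambda^t\big(1+\frac{\C{bad_quantile_mult}\alpha^2}{2n}\big)+\C{bad_quantile_mult}$.

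I expect the only real obstacle to be ensuring that the $\Theta(\alpha/n)$ terms cancel \emph{exactly}: a crude bound $p_i^t=O(1/n)$ would leave a multiplicative factor $1+\Theta(\alpha/n)$, which is far too large for the claimed $1+\Theta(\alpha^2/n)$ when $\alpha=\Theta(1)$. Recovering the cancellation uses the majorization halves of \POne, and one must keep track of the direction of monotonicity of $(\Lambda_i^t)$ so that \cref{lem:quasilem} is applied with the correct sign in each case (prefix $\preceq$ on $B_+^t$, suffix $\succeq$ on $B_-^t$). The remaining manipulations are just bookkeeping with the Taylor remainders and the inequality $w_+\le w_-$.
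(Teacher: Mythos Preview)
Your proposal is correct and follows essentially the same approach as the paper: start from the aggregated estimates \eqref{eq:lambda_case_1} and \eqref{eq:lambda_case_2}, use the trivial bounds $P_+^t\le 1$, $P_-^t\le 1$ in place of $\mathcal{G}^t$, and then apply the majorization part of \POne together with \cref{lem:quasilem} to replace $p_i^t$ by $\frac1n$ so that the $\Theta(\alpha/n)$ terms cancel, leaving only the $\Theta(\alpha^2/n)$ multiplicative factor plus the additive $4+4=8$. Your identification of the key step (exact cancellation via \POne rather than a crude $p_i^t=O(1/n)$ bound) matches the paper's reasoning precisely.
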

\begin{proof}

\noindent \textbf{Case 1} [Overloaded Bins]. By \eqref{eq:lambda_case_1} in \textbf{Case 1} of \cref{lem:good_quantile_good_decrease}, 
\begin{align*}
\sum_{i \in B_+^t} & \Ex{ \left. \Lambda_i^{t+1} \,\right|\, \mathfrak{F}^t} \\
 & \leq \sum_{i \in B_+^t} \Lambda_i^t \cdot \Big(1 + p_i^t \cdot (\alpha w_+ + (\alpha w_+)^2) + P_+^t \cdot \Big(\frac{\alpha w_-}{n} - \frac{\alpha w_+}{n}\Big) - \frac{\alpha w_-}{n} + \left( \frac{\alpha w_-}{n} \right)^2 \Big) + 4. \\
 \intertext{Using the majorization in condition \POne and \cref{lem:quasilem}, and that $P_+^t \leq 1$,}
 & \leq \sum_{i \in B_+^t} \Lambda_i^t \cdot \Big(1 + \frac{1}{n} \cdot \Big(\alpha w_+ + (\alpha w_+)^2\Big) + \Big(\frac{\alpha w_-}{n} - \frac{\alpha w_+}{n} \Big) - \frac{\alpha w_-}{n} + \left( \frac{\alpha w_-}{n} \right)^2 \Big) + 4 \\
 & = \sum_{i \in B_+^t} \Lambda_i^t \cdot \Big(1 + \frac{1}{n} \cdot (\alpha w_+)^2 + \left( \frac{\alpha w_-}{n} \right)^2 \Big) + 4 \leq \Lambda_i^t \cdot \left(1 + \frac{2\alpha^2 w_+^2}{n}\right) + 4,
\end{align*}
using in the last step that $w_-$ is a constant.

\medskip

\noindent \textbf{Case 2} [Underloaded Bins]. By \eqref{eq:lambda_case_2} in \textbf{Case 2} of \cref{lem:good_quantile_good_decrease},
\begin{align*}
\sum_{i \in B_-^t} & \Ex{ \left. \Lambda_i^{t+1} \,\right|\, \mathfrak{F}^t} \\
  & \leq \sum_{i \in B_-^t} \Lambda_i^t \cdot \Big(1 - p_i^t \cdot (\alpha w_- - (\alpha w_-)^2) + P_-^t \cdot \left(\frac{\alpha w_-}{n} - \frac{\alpha w_+}{n}\right) + \frac{\alpha w_+}{n} + \left( \frac{\alpha w_-}{n} \right)^2 \Big) + 4 \\
 \intertext{Using the majorization in condition \POne and \cref{lem:quasilem}, and that $P_-^t \leq 1$,}
 & \leq \sum_{i \in B_-^t} \Lambda_i^t \cdot \Big(1 + \frac{1}{n} \cdot (-\alpha w_- + (\alpha w_-)^2) + \Big(\frac{\alpha w_-}{n} - \frac{\alpha w_+}{n}\Big) + \frac{\alpha w_+}{n} + \left( \frac{\alpha w_-}{n} \right)^2 \Big) + 4 \\
 & = \sum_{i \in B_-^t} \Lambda_i^t \cdot \Big(1 + \frac{1}{n} \cdot (\alpha w_-)^2 + \left( \frac{\alpha w_-}{n} \right)^2 \Big) + 4 \leq \Lambda_i^t \cdot \left(1 + \frac{2\alpha^2 w_-^2}{n}\right) + 4,
\end{align*}
using in the last step that $w_+$ and $w_-$ are constants.

Thus, combining the two cases and choosing $\C{bad_quantile_mult} := \max\{ 8, 4w_-^2 \}$, gives the result.
\end{proof}

We now combine the statements (and constants) from Lemmas \ref{lem:good_quantile_good_decrease} and \ref{lem:bad_quantile_increase_bound} into a single corollary for $\Lambda := \Lambda(\alpha)$ with constant smoothing parameter $\alpha$. 

\begin{cor} \label{cor:change_for_large_lambda}
Consider any $\PTwo \cap \WOne$-process or $\POne \cap 
\WTwo$-process, any constant $\eps \in \big(0, \frac{1}{2}\big)$ and the potential $\Lambda := \Lambda(\alpha)$ with $\alpha := \alpha(\eps) > 0$ satisfying the preconditions of \cref{lem:good_quantile_good_decrease}. Further, let $\C{good_quantile_mult} := \C{good_quantile_mult}(\eps) > 0$ be as defined in \cref{lem:good_quantile_good_decrease} and $\C{bad_quantile_mult} := \C{bad_quantile_mult}(\eps) > 0$ as defined in \cref{lem:bad_quantile_increase_bound}. Then, for $\C{lambda_bound} := \max\big\{\frac{8}{\C{good_quantile_mult}\alpha},\frac{2}{\alpha^2}\big\} \geq 1$, $(i)$~for any round $t \geq 0$,
\[
\Ex{\left. \Lambda^{t+1} \,\right|\, \mathfrak{F}^t, \mathcal{G}^t, \Lambda^t > \C{lambda_bound}n} \leq \Lambda^t \cdot \left(1 - \frac{\C{good_quantile_mult} \alpha}{n}\right).
\]
Further, $(ii)$~for any round $t \geq 0$,
\[
\Ex{\left. \Lambda^{t+1} \,\right|\,  \mathfrak{F}^t, \Lambda^t > cn} \leq \Lambda^t \cdot \left(1 + \frac{\C{bad_quantile_mult}\alpha^2}{n}\right).
\]
\end{cor}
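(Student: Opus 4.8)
The plan is to obtain both inequalities as immediate consequences of \cref{lem:good_quantile_good_decrease} and \cref{lem:bad_quantile_increase_bound}, the only work being to absorb the additive constant of each one-round estimate into its multiplicative factor; this is possible precisely because the extra conditioning events force $\Lambda^t$ to be large, and the threshold $\C{lambda_bound}$ is defined to be exactly the ratio at which this absorption kicks in.

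For part $(i)$, I would start from \cref{lem:good_quantile_good_decrease}, which applies since $\alpha$ is assumed to satisfy its preconditions, giving $\Ex{ \left. \Lambda^{t+1} \,\right|\, \mathfrak{F}^t, \mathcal{G}^t} \leq \Lambda^t \cdot (1 - 2\C{good_quantile_mult}\alpha/n) + 8$. The event $\{\Lambda^t > \C{lambda_bound}n\}$ is $\mathfrak{F}^t$-measurable, so conditioning on it in addition is legitimate and the bound above still holds; moreover on this event $8 \leq \C{good_quantile_mult}\alpha \cdot \C{lambda_bound}n / n \leq (\C{good_quantile_mult}\alpha/n)\cdot\Lambda^t$ because $\C{lambda_bound} \geq 8/(\C{good_quantile_mult}\alpha)$. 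Replacing the additive $8$ by $(\C{good_quantile_mult}\alpha/n)\cdot\Lambda^t$ turns $\Lambda^t(1 - 2\C{good_quantile_mult}\alpha/n) + 8$ into $\Lambda^t(1 - \C{good_quantile_mult}\alpha/n)$, which is the claimed bound.

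For part $(ii)$, I would argue identically starting from \cref{lem:bad_quantile_increase_bound} (whose precondition on $\alpha$ coincides with that of \cref{lem:good_quantile_good_decrease}), which holds with no conditioning on the quantile: $\Ex{\left. \Lambda^{t+1} \,\right|\, \mathfrak{F}^t} \leq \Lambda^t \cdot (1 + \C{bad_quantile_mult}\alpha^2/(2n)) + \C{bad_quantile_mult}$. On the $\mathfrak{F}^t$-measurable event $\{\Lambda^t > cn\}$, where $c = \C{lambda_bound} \geq 2/\alpha^2$, we have $\C{bad_quantile_mult} \leq \C{bad_quantile_mult}\alpha^2 \cdot cn/(2n) \leq (\C{bad_quantile_mult}\alpha^2/(2n))\cdot\Lambda^t$, and substituting this for the additive $\C{bad_quantile_mult}$ gives $\Lambda^t(1 + \C{bad_quantile_mult}\alpha^2/(2n)) + (\C{bad_quantile_mult}\alpha^2/(2n))\Lambda^t = \Lambda^t(1 + \C{bad_quantile_mult}\alpha^2/n)$. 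There is essentially no obstacle here: the corollary is a bookkeeping step recording the threshold above which each single-round estimate becomes a clean multiplicative contraction (resp. expansion), and the only point worth a sentence is that both conditioning events depend only on $x^t$, hence are $\mathfrak{F}^t$-measurable, so the lemma bounds survive further conditioning on them.
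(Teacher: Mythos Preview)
Your proposal is correct and matches the paper's proof essentially line for line: both parts absorb the additive constant from \cref{lem:good_quantile_good_decrease} and \cref{lem:bad_quantile_increase_bound} respectively into the multiplicative factor, using exactly the two lower bounds on $\C{lambda_bound}$ that define it. Your remark that the conditioning events are $\mathfrak{F}^t$-measurable is the one thing the paper leaves implicit.
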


\begin{proof}
\textit{First statement}. Using \cref{lem:good_quantile_good_decrease},  we have that \begin{align*}\Ex{\left. \Lambda^{t+1} \,\right|\, \mathfrak{F}^t, \mathcal{G}^t, \Lambda^t > \C{lambda_bound} n} 
& \leq \Lambda^t \cdot \left(1 - \frac{2\C{good_quantile_mult} \alpha}{n}\right) + 8 = \Lambda^t \cdot \left(1 - \frac{\C{good_quantile_mult} \alpha}{n}\right) + \left(8 - \Lambda^t \cdot \frac{\C{good_quantile_mult} \alpha}{n}\right)   \\
& \leq \Lambda^t \cdot \left(1 - \frac{\C{good_quantile_mult} \alpha}{n}\right).
\end{align*}
\textit{Second statement}. Using \cref{lem:bad_quantile_increase_bound}, we have that 
\[
\Ex{\left. \Lambda^{t+1} \,\right|\, \mathfrak{F}^t, \Lambda^t > \C{lambda_bound} n} 
\leq \Lambda^t \cdot \left(1 + \frac{\C{bad_quantile_mult}\alpha^2}{2n}\right) + \C{bad_quantile_mult}
\leq \Lambda^t \cdot \left(1 + \frac{\C{bad_quantile_mult}\alpha^2}{2n}\right) + \Lambda^t \cdot \frac{\C{bad_quantile_mult} \alpha^2}{2n}
= \Lambda^t \cdot \left(1 + \frac{\C{bad_quantile_mult}\alpha^2}{n}\right),
\] 
having used that $\Lambda^t > cn \geq \frac{2}{\alpha^2} \cdot n$.
\end{proof}

The next upper bound we use in the application of the bounded difference inequality in the proof of \cref{lem:many_good_quantiles_whp}.
\begin{lem} \label{lem:small_change_for_linear_lambda}
Consider any $\PTwo \cap \WOne$-process or $\POne \cap \WTwo$-process. Then, for any constant $\kappa >0$, and for any rounds $t_0, t_1 \geq 0$ such that $t_0 \leq t_1 \leq t_0 + \kappa \cdot n \log n$,
\[
\Pro{\max_{t \in [t_0,t_1]}\max_{i \in [n]} | y_i^{t} | \leq \log^2 n \, \Big| \, \mathfrak{F}^{t_0},\; \Lambda^{t_0} \leq n^2} \geq 1 - n^{-12}.
\]
\end{lem}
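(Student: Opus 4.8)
The plan is to bound the exponential potential $\Lambda := \Lambda(\alpha)$ throughout the window $[t_0,t_1]$ and then read off the claim from the trivial inequality $\Lambda^t \geq \max_{i\in[n]} e^{\alpha |y_i^t|}$; indeed, any \emph{polynomial} upper bound $\Lambda^t \leq n^{O(1)}$ already forces $\max_{i\in[n]} |y_i^t| \leq \tfrac{1}{\alpha}\log\Lambda^t = O(\log n) \leq \log^2 n$ for large $n$, so we never have to track the evolution of individual bins. The key point is that this crude bound on $\Lambda^t$ holds with probability far exceeding $1-n^{-12}$.

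First I would establish an in-expectation bound on $\Lambda$. Since $\alpha$ satisfies the preconditions of \cref{lem:good_quantile_good_decrease}, \cref{lem:bad_quantile_increase_bound} gives, for every round $t\geq 0$, $\Ex{\Lambda^{t+1}\mid \mathfrak{F}^t} \leq \Lambda^t\cdot(1 + \tfrac{\C{bad_quantile_mult}\alpha^2}{2n}) + \C{bad_quantile_mult}$. Writing $a := \C{bad_quantile_mult}\alpha^2/(2n)$, taking total expectation conditioned on $\{\mathfrak{F}^{t_0},\ \Lambda^{t_0}\leq n^2\}$ and unrolling this one-step recursion over at most $t_1 - t_0 \leq \kappa n\log n$ rounds yields, for every $t\in[t_0,t_1]$,
\[
\Ex{\Lambda^t \,\big|\, \mathfrak{F}^{t_0},\ \Lambda^{t_0}\leq n^2} \;\leq\; (1+a)^{t-t_0}\bigl(\Lambda^{t_0} + \C{bad_quantile_mult}(t-t_0)\bigr) \;\leq\; e^{a\kappa n\log n}\cdot\bigl(n^2 + \C{bad_quantile_mult}\kappa n\log n\bigr) \;\leq\; n^{C_0},
\]
for a constant $C_0 := C_0(\kappa)$ (for instance $C_0 = 3 + \C{bad_quantile_mult}\alpha^2\kappa/2$, using $e^{a\kappa n\log n}=n^{\C{bad_quantile_mult}\alpha^2\kappa/2}$ and $n$ large). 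It is essential here that $\alpha$ is a \emph{constant}, so that $a\cdot n$ is constant and the accumulated multiplicative growth over a window of length $\Theta(n\log n)$ is only polynomial in $n$, with an exponent depending solely on $\kappa$ and the process constants.

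Next I would combine Markov's inequality with a union bound. For each $t\in[t_0,t_1]$, Markov gives $\Pro{\Lambda^t \geq n^{C_0+14}\mid \mathfrak{F}^{t_0},\ \Lambda^{t_0}\leq n^2} \leq n^{-14}$, and since the window contains at most $\kappa n\log n + 1 \leq n^2$ rounds for $n$ large, a union bound gives
\[
\Pro{\exists\, t\in[t_0,t_1]\colon \Lambda^t \geq n^{C_0+14} \,\Big|\, \mathfrak{F}^{t_0},\ \Lambda^{t_0}\leq n^2} \;\leq\; n^{2}\cdot n^{-14} \;=\; n^{-12}.
\]
On the complementary event, $\Lambda^t < n^{C_0+14}$ for all $t\in[t_0,t_1]$, hence $\max_{i\in[n]}|y_i^t| \leq \tfrac{1}{\alpha}\log\Lambda^t < \tfrac{(C_0+14)\log n}{\alpha} \leq \log^2 n$ for $n$ sufficiently large, which is exactly the statement of the lemma.

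There is no deep obstacle here: the argument is a routine Markov--plus--union-bound estimate once \cref{lem:bad_quantile_increase_bound} is available. The only two points requiring care are (i) verifying that the multiplicative growth factor $(1+a)^{t-t_0}$ accumulated over $\Theta(n\log n)$ rounds stays polynomial in $n$ — which relies on $\alpha$ (and hence $an$) being a constant and on absorbing the harmless additive $\C{bad_quantile_mult}$ term into a crude $\C{bad_quantile_mult}(t-t_0)$ bound — and (ii) checking that this polynomial bound on $\Lambda^t$ sits comfortably below $e^{\alpha\log^2 n} = n^{\alpha\log n}$, so that there is enormous slack when converting the bound on $\Lambda^t$ into the claimed $\log^2 n$ deviation bound.
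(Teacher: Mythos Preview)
Your proposal is correct and follows essentially the same approach as the paper: both invoke \cref{lem:bad_quantile_increase_bound} to get the one-step recurrence, unroll it over the $\Theta(n\log n)$ window to obtain a polynomial-in-$n$ bound on $\Ex{\Lambda^t}$, apply Markov's inequality with slack $n^{14}$, take the union bound over the window, and then translate $\Lambda^t\leq n^{O(1)}$ into $\max_i|y_i^t|=O(\log n)\leq \log^2 n$. The only cosmetic differences are that the paper cites \cref{lem:geometric_arithmetic} for the unrolling and is slightly looser with the exponent constant.
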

\begin{proof}
Consider the sequence $(\ex{\Lambda^t \mid \mathfrak{F}^{t_0}, \Lambda^{t_0} \leq n^2 })_{t = t_0}^{t_1}$. By  \cref{lem:bad_quantile_increase_bound} for every $t \in [t_0, t_1]$,
\[
\Ex{\left. \Lambda^{t+1} \,\right|\, \mathfrak{F}^{t}} \leq  \Lambda^t \cdot \left( 1 + \frac{\C{bad_quantile_mult} \alpha^2}{2n}\right) + \C{bad_quantile_mult}.
\]
Using the tower law of expectation, we have that
\[
\ex{\left. \Lambda^{t+1} \,\right|\, \mathfrak{F}^{t_0}, \Lambda^{t_0} \leq n^2} 
 \leq  \Ex{\left. \Lambda^t \,\right|\, \mathfrak{F}^{t_0}, \Lambda^{t_0} \leq n^2} \cdot \left( 1 + \frac{\C{bad_quantile_mult} \alpha^2}{2n}\right) + \C{bad_quantile_mult}.
\]
Hence, applying \cref{lem:geometric_arithmetic}~$(i)$ with $a := 1 + \frac{\C{bad_quantile_mult} \alpha^2}{2n} > 1$ and $b := \C{bad_quantile_mult} > 0$, we get that for any $t \in [t_0, t_1]$
\begin{align*}
\Ex{\left. \Lambda^{t} \,\right|\, \mathfrak{F}^{t_0}, \Lambda^{t_0} \leq n^2} 
 & \leq \Lambda^{t_0} \cdot a^{t_1 - t_0} + b \cdot \sum_{s = t_0}^{t-1} a^{s-t_0} \\
 & \leq n^2 \cdot \left(1 + \frac{\C{bad_quantile_mult} \alpha^2}{2n}\right)^{t-t_0} + \C{bad_quantile_mult} \cdot (t - t_0) \cdot \left(1 + \frac{\C{bad_quantile_mult} \alpha^2}{2n}\right)^{t_1-t_0} \\
 & \stackrel{(a)}{\leq} n^2 \cdot n^{\kappa \cdot \alpha^2 \cdot \C{bad_quantile_mult}} + \C{bad_quantile_mult} \cdot (\kappa n \log n) \cdot n^{\kappa \cdot \alpha^2 \cdot \C{bad_quantile_mult}}
 \leq n^{3 + \kappa \cdot \alpha^2 \cdot \C{bad_quantile_mult}},
\end{align*}
where in $(a)$ we used that $1 + z \leq e^z$ for any $z$ and that $t \leq t_1 \leq t_0 + \kappa n \log n$.

Using Markov's inequality, $\Pro{\Lambda^{t} \leq n^{3 + \kappa \cdot \alpha^2 \cdot \C{bad_quantile_mult} + 14} ~\big|~ \mathfrak{F}^{t_0}, \Lambda^{t_0} \leq n^{2}} \geq 1 - n^{-14}$ for any $t\in [t_0, t_1]$, which implies
\[
 \Pro{ \left. \max_{i \in [n]} \left|y_i^{t}\right| \leq \frac{1}{\alpha} \cdot (\kappa \cdot \alpha^2 \cdot \C{bad_quantile_mult} + 17) \cdot \log n  ~\right|~ \mathfrak{F}^{t_0}, \;\Lambda^{t_0} \leq n^2 } \geq 1 - n^{-14}.
\]
Since $\frac{1}{\alpha} \cdot (\kappa \cdot \alpha^2 \cdot \C{bad_quantile_mult} + 17) \cdot \log n < \log^2 n$ for sufficiently large $n$, by taking a union bound over all rounds $t \in [t_0, t_1]$ we get the claim.
\end{proof}

We now turn our attention to the potential $V := V(\tilde{\alpha})$ with $\tilde{\alpha} = \Theta(1/n)$ and show that $\Ex{V^m} = \poly(n)$ which in turn implies the $\Oh(n \log n)$ bound on the absolute value of the normalized loads. This will be used in the proof of \cref{thm:main_technical} as the starting point of the ``recovery phase'' (\cref{lem:recovery}).

\begin{lem} \label{lem:initial_gap_nlogn}
Consider any $\PTwo \cap \WOne$-process or $\POne \cap \WTwo$-process. Then, there exists a constant $\C{poly_n_gap} > 0$ such that for any round $t \geq 0$,
\[
\Pro{\max_{i \in [n]} \left| y_i^t \right| \leq \C{poly_n_gap} n \log n} \geq 1 - n^{-12}.
\]
Further, $(ii)$~for any two rounds $t_0 \geq 0$ and $t_1 \geq t_0$,
\[
\Pro{\left. \max_{i \in [n]} \left| y_i^{t_1} \right| \leq 2\C{poly_n_gap} n \log n \,\right|\, \max_{i \in [n]} \left| y_i^{t_0} \right| \leq \C{poly_n_gap} n \log n} \geq 1 - n^{-12}.
\]
\end{lem}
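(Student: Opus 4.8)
\textbf{Proof proposal for \cref{lem:initial_gap_nlogn}.} The plan is to exploit the fact that, unlike the potential $\Lambda$ with a constant smoothing parameter, the potential $V := V(\tilde{\alpha})$ with $\tilde{\alpha} = \Theta(1/n)$ \emph{decreases in expectation in every round}, and then to conclude by a first-moment (Markov) argument. First I would fix $\tilde{\alpha} = \Theta(1/n)$ to be a small enough constant times $1/n$ so that $\tilde{\alpha}$ satisfies the preconditions \eqref{eq:c_3alphacond1} (in the $\PTwo \cap \WOne$ case) or \eqref{eq:c_3alphacond2} (in the $\POne \cap \WTwo$ case) of \cref{lem:good_quantile_good_decrease} \emph{with $\eps := 1/n$}; since the right-hand sides of those conditions are $\Theta(1/n)$ when $\eps = 1/n$, such a choice exists. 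The key observation is that the mean quantile always satisfies $\delta^t \in \{1/n, 2/n, \ldots, 1\}$, and whenever $\delta^t \le 1 - 1/n$ the event $\mathcal{G}^t(1/n) = \{\delta^t \in [1/n, 1-1/n]\}$ holds, so \cref{lem:good_quantile_good_decrease} applies and gives
\[
\Ex{\left. V^{t+1} \,\right|\, \mathfrak{F}^t} \le V^t \cdot \left( 1 - \frac{2 \C{good_quantile_mult} \tilde{\alpha}}{n} \right) + 8,
\]
where $\C{good_quantile_mult} = \C{good_quantile_mult}(1/n) = \Theta(1/n)$; hence there is a constant $\rho > 0$ (depending only on the process constants) with $\Ex{V^{t+1} \mid \mathfrak{F}^t} \le V^t \cdot (1 - \rho n^{-3}) + 8$. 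The only remaining case is $\delta^t = 1$, i.e.\ $y_i^t \ge 0$ for all $i$; since $\sum_i y_i^t = 0$ this forces $y_i^t = 0$ for every $i$, so $V^t = n$, and a direct one-round estimate (one bin receives a ball of weight $w_+$, all normalized loads shift by at most $w_+/n$) gives $V^{t+1} \le e^{\tilde{\alpha} w_+} + (n-1) e^{\tilde{\alpha} w_+/n} \le n + \Oh(1)$, which again is at most $V^t \cdot (1 - \rho n^{-3}) + 8$. So the displayed recurrence holds at \emph{every} round $t \ge 0$.

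Starting from the empty load vector we have $V^0 = n$, and iterating the recurrence (e.g.\ via \cref{lem:geometric_arithmetic}, using that $1 - \rho n^{-3} < 1$) yields $\Ex{V^t} \le V^0 + 8 \cdot \rho^{-1} n^3 = \Oh(n^3)$ uniformly in $t \ge 0$. By Markov's inequality $\Pro{V^t \ge n^{16}} \le \Oh(n^{-13}) \le n^{-12}$, and on the complement $e^{\tilde{\alpha} |y_i^t|} \le V^t \le n^{16}$ for each $i$, so $|y_i^t| \le 16 \tilde{\alpha}^{-1} \log n = \Oh(n \log n)$; taking $\C{poly_n_gap} > 0$ to be the resulting constant $16 \tilde{\alpha}^{-1}/n$ (with any slack needed) proves part $(i)$. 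For part $(ii)$, conditioning on $\max_{i \in [n]} |y_i^{t_0}| \le \C{poly_n_gap} n \log n$ gives $V^{t_0} \le n \cdot e^{\tilde{\alpha} \C{poly_n_gap} n \log n} = \poly(n)$; applying the recurrence from round $t_0$ to round $t_1$ and iterating as before gives $\Ex{V^{t_1} \mid \mathfrak{F}^{t_0}} \le V^{t_0} + 8 \rho^{-1} n^3 = \poly(n)$ on that event, and a second application of Markov's inequality, to a slightly larger polynomial threshold chosen so that the implied pointwise bound on $|y_i^{t_1}|$ is at most $2 \C{poly_n_gap} n \log n$, completes the proof.

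The main obstacle — or really the main \emph{point} — is the argument of the first paragraph: recognizing that for $\tilde{\alpha} = \Theta(1/n)$ the quantile event $\mathcal{G}^t(\eps)$ with $\eps = 1/n$ is satisfied in essentially every round, the sole exception $\delta^t = 1$ being the trivial all-zeros configuration, so that \cref{lem:good_quantile_good_decrease} can be invoked \emph{unconditionally}. This is exactly the feature that fails for $\Lambda$ with a constant $\alpha$, where one genuinely needs the mean-quantile-stabilization machinery; once it is in place here, the remainder is routine bookkeeping with the geometric recurrence, Markov's inequality, and matching up polynomial exponents.
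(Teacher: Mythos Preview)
Your proposal is correct and follows essentially the same route as the paper: choose $\tilde{\eps}=1/n$, pick $\tilde{\alpha}=\Theta(1/n)$ so that \cref{lem:good_quantile_good_decrease} applies with $\eps=\tilde{\eps}$, observe that $\mathcal{G}^t(1/n)$ holds whenever $\delta^t<1$ and that $\delta^t=1$ forces $y^t\equiv 0$, derive the one-step recurrence $\Ex{V^{t+1}\mid\mathfrak{F}^t}\le V^t(1-\Theta(n^{-3}))+8$, iterate via \cref{lem:geometric_arithmetic} to get $\Ex{V^t}=\Oh(n^3)$, and finish both parts with Markov. The only cosmetic difference is that for the degenerate case $\delta^t=1$ the paper invokes \cref{lem:bins_close_to_mean} (all bins are swinging) rather than your direct one-line estimate; both give the same inequality.
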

\begin{proof}
\textit{First statement.} We set $\tilde{\eps} := \frac{1}{n}$. For any $\PTwo \cap \WOne$-process, we define \[
\tilde{\alpha} := \min\left\lbrace\frac{1}{4w_-}, \frac{\C{p2k2} \tilde{\eps}}{2 w_- (1 + \C{p2k2}\tilde{\eps})}, \frac{\C{p2k1} \tilde{\eps}}{2 w_+(1-\C{p2k1} \tilde{\eps})} \right\rbrace \quad \text{ and } \quad \C{good_quantile_mult}:= \frac{\tilde{\eps}}{16} \cdot \min\{ w_+ \C{p2k1}, w_- \C{p2k2} \},
\]
and for any $\POne \cap \WTwo$-process, we define\[
\tilde{\alpha} := \min\left\lbrace\frac{1}{4w_-}, \frac{\tilde{\eps}( w_- - w_+)}{4 w_-^2}, \frac{\tilde{\eps}}{2 w_- (2+\tilde{\eps})} \right\rbrace 
\quad \text{ and } \quad \C{good_quantile_mult}:= \frac{\tilde{\eps}}{16} \cdot (w_- - w_+).
\]
So, in both cases the smoothing parameter $\tilde{\alpha}$ satisfies the preconditions \eqref{eq:c_3alphacond1} and \eqref{eq:c_3alphacond2} respectively of \cref{lem:good_quantile_good_decrease}, and so the potential $V := V(\tilde{\alpha})$ satisfies for some constant $\C{good_quantile_mult}' := \C{good_quantile_mult}/\tilde{\eps} > 0$ that 
\begin{align*} %
\Ex{ \left. V^{t+1} \,\right|\, \mathfrak{F}^t, \delta^t \in \left[\frac{1}{n}, 1 - \frac{1}{n} \right] } \leq V^t \cdot \left(1 - \frac{2 \C{good_quantile_mult}' \tilde{\alpha} \tilde{\eps}}{n} \right) + 8.
\end{align*}
When $\delta^t = 1$, all bins have load equal to the mean load, so they are all swinging bins. So, by \cref{lem:bins_close_to_mean}, we have that\[
\Ex{ \left. V^{t+1} \,\right|\, \mathfrak{F}^t, \delta^t = 1 } \leq V^t \cdot \left(1 - \frac{2 \C{good_quantile_mult}' \tilde{\alpha} \tilde{\eps}}{n} \right) + 2 \C{good_quantile_mult}' \tilde{\alpha} \tilde{\eps} + 3 \leq V^t \cdot \left(1 - \frac{2 \C{good_quantile_mult}' \tilde{\alpha} \tilde{\eps}}{n} \right) + 8,
\] 
using that $2 \C{good_quantile_mult}' \tilde{\alpha} \tilde{\eps} \leq 1$.

Combining the two cases, we have that 
\begin{align}
\Ex{ \left. V^{t+1} \,\right|\, \mathfrak{F}^t}
  & \leq \max\left\{ \Ex{  V^{t+1} \,\left|\, \mathfrak{F}^t, \delta^t \in \left[\frac{1}{n}, 1 - \frac{1}{n} \right] \right. }, \Ex{ \left. V^{t+1} \,\right|\, \mathfrak{F}^t, \delta^t = 1 } \right\} \notag \\
  & \leq V^t \cdot \left(1 - \frac{2 \C{good_quantile_mult}' \tilde{\alpha} \tilde{\eps}}{n} \right) + 8. \label{eq:drop_rough_potential}
\end{align}

By \cref{lem:geometric_arithmetic}~$(iii)$, and since $V^0 =n$ holds deterministically, it follows that for any round $t \geq 0$,\[
\Ex{V^t} \leq \frac{4}{\C{good_quantile_mult}' \tilde{\alpha} \tilde{\eps}} \cdot n \leq \kappa \cdot n^3,
\]
for some constant $\kappa > 0$, since $\tilde{\alpha} = \Theta\big(\frac{1}{n}\big)$ and $\tilde{\eps} = \frac{1}{n}$.

By Markov's inequality, we have $\Pro{V^t \leq \kappa \cdot n^{15}}  \geq 1 - n^{-12}$, which also implies that
\begin{align*}
 \Pro{\max_{i\in [n]} \left| y_i^t \right| \leq  \frac{1}{\tilde{\alpha}} \log \kappa + \frac{15}{\tilde{\alpha}} \cdot \log n } \geq 1 - n^{-12}. 
\end{align*} 
Thus, as $\tilde{\alpha}=\Theta(1/n)$, we have $\Pro{\max_{i \in [n]} \left| y_i^t \right| \leq \C{poly_n_gap} n \log n } \geq 1 - n^{-12}$ for some constant $\C{poly_n_gap} := \frac{16\tilde{\eps}}{\tilde{\alpha}}$.

\textit{Second statement.} Consider an arbitrary round $t_0 \geq 0$, where $\max_{i \in [n]} \left| y_i^{t_0} \right| \leq \C{poly_n_gap} n \log n$. Then, we also have that \[
V^{t_0} \leq n \cdot e^{\tilde{\alpha} \C{poly_n_gap} n \log n}.
\]
Recalling~\cref{eq:drop_rough_potential}, and applying \cref{lem:geometric_arithmetic}~$(ii)$ with $a := 1 - \frac{2 \C{good_quantile_mult}' \tilde{\alpha} \tilde{\eps}}{n}$ and $b := 8$, we obtain 
\[
V^{t_1} \leq V^{t_0} \cdot \left(1 - \frac{2 \C{good_quantile_mult}' \tilde{\alpha} \tilde{\eps}}{n} \right)^{t_1 - t_0} + \kappa \cdot n^3 \leq 2n \cdot e^{\tilde{\alpha} \C{poly_n_gap} n \log n},
\]
using that $\C{poly_n_gap} > \frac{15}{\tilde{\alpha}}$. By  Markov's inequality,
\begin{align*}
 & \Pro{\left. V^{t_1} \leq n^{12} \cdot 2n \cdot e^{\tilde{\alpha} \C{poly_n_gap} n \log n} \,\right|\, \max_{i \in [n]} \left| y_i^{t_0} \right| \leq \C{poly_n_gap} n \log n} \\
 & \qquad \geq \Pro{\left. V^{t_1} \leq e^{2 \tilde{\alpha} \C{poly_n_gap} n \log n} \,\right|\, \max_{i \in [n]} \left| y_i^{t_0} \right| \leq \C{poly_n_gap} n \log n}
   \geq 1 - n^{12}.
\end{align*}
Finally, when $\{ V^{t_1} \leq e^{2 \tilde{\alpha} \C{poly_n_gap} n \log n} \}$ holds, then \[
\max_{i \in [n]} \left| y_i^{t_1}\right| \leq \frac{1}{\tilde{\alpha}} \cdot \log V^{t_1} \leq 2 \C{poly_n_gap} n \log n. \qedhere
\]
\end{proof}

\subsection{Deterministic Bounds on the Quadratic Potential} \label{sec:quadratic_and_exp_potentials}

The next lemma provides a bound for the quadratic potential in terms of the exponential potential $\Lambda$ and holds for an arbitrary process.
\begin{lem} \label{lem:lambda_bound_implies_upsilon_bound}
Consider an arbitrary load vector $x^t$ and the potentials $\Upsilon^t := \Upsilon^t(x^t)$ and $\Lambda^t := \Lambda(\alpha, x^t)$ for any $\alpha \in (0, 1]$. Then,
\[
\Upsilon^t \leq \left(\frac{4}{\alpha} \cdot \log\left(\frac{4}{\alpha}\right)\right)^2\cdot \Lambda^t.
\]
\end{lem}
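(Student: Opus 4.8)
The plan is to split the bins into those with small normalized load and those with large normalized load, using the threshold $z_0 := \frac{4}{\alpha}\log\!\big(\frac{4}{\alpha}\big)$. For a bin $i$ with $|y_i^t| \le z_0$ we simply bound $(y_i^t)^2 \le z_0^2 \le z_0^2 \cdot e^{\alpha |y_i^t|}$ (using $e^{\alpha|y_i^t|}\ge 1$), so each such bin contributes at most $z_0^2 \Lambda_i^t$ to $\Upsilon^t$. For a bin $i$ with $|y_i^t| > z_0$, I would use the elementary inequality $u^2 \le e^{\alpha u}$ valid for all $u \ge z_0$; this holds because $z_0 = \frac{4}{\alpha}\log\frac{4}{\alpha}$ is chosen large enough that $\alpha u \ge 2\log u$ for $u\ge z_0$ (one checks $\alpha z_0 = 4\log\frac{4}{\alpha} \ge 2\log z_0$, and then $\alpha u - 2\log u$ is increasing in $u$ for $u \ge 2/\alpha$, which is implied by $u \ge z_0$ since $\log\frac4\alpha \ge \frac12$ for $\alpha\le 1$). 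Hence for such a bin $(y_i^t)^2 \le e^{\alpha|y_i^t|} = \Lambda_i^t \le z_0^2\,\Lambda_i^t$ as well.

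Combining the two cases, every bin satisfies $(y_i^t)^2 \le z_0^2 \cdot \Lambda_i^t$, and summing over $i \in [n]$ gives
\[
\Upsilon^t = \sum_{i=1}^n (y_i^t)^2 \le z_0^2 \sum_{i=1}^n \Lambda_i^t = \left(\frac{4}{\alpha}\log\frac{4}{\alpha}\right)^2 \Lambda^t,
\]
which is exactly the claimed bound.

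The only mildly delicate point is verifying the scalar inequality $u^2 \le e^{\alpha u}$ for $u \ge z_0$ with the stated value of $z_0$; this is the "main obstacle," though it is really just a one-variable calculus check. It is cleanest to prove the equivalent statement $2\log u \le \alpha u$ for $u \ge z_0$: define $g(u) = \alpha u - 2\log u$, note $g'(u) = \alpha - 2/u \ge 0$ once $u \ge 2/\alpha$, and since $z_0 = \frac4\alpha\log\frac4\alpha \ge \frac2\alpha$ (because $\log\frac4\alpha \ge \log 4 > \frac12$ for $\alpha \le 1$), it suffices to check $g(z_0) \ge 0$, i.e. $\alpha z_0 \ge 2\log z_0$. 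Here $\alpha z_0 = 4\log\frac4\alpha$ and $\log z_0 = \log\frac4\alpha + \log\log\frac4\alpha \le 2\log\frac4\alpha$ since $\log\log\frac4\alpha \le \log\frac4\alpha$, so $2\log z_0 \le 4\log\frac4\alpha = \alpha z_0$, as needed. Everything else is just the case split and summation described above, so no further machinery (and in particular neither \POne, \WOne, nor any probabilistic input) is required — the statement is purely deterministic and holds for an arbitrary load vector, as asserted.
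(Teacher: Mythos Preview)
Your proof is correct and follows essentially the same approach as the paper's: both set the threshold $\kappa = z_0 = \frac{4}{\alpha}\log\frac{4}{\alpha}$, split bins according to whether $|y_i^t|$ exceeds this threshold, and use the scalar inequality $u^2 \le e^{\alpha u}$ for $u \ge z_0$ together with $\Lambda_i^t \ge 1$ for the small-load bins. The only cosmetic difference is the verification of the scalar inequality: the paper argues via $e^{\alpha y/2} = e^{\alpha y/4}\cdot e^{\alpha y/4} \ge \frac{\alpha y}{4}\cdot\frac{4}{\alpha} = y$, while you take logarithms and check that $g(u)=\alpha u - 2\log u$ is nonnegative and increasing at $u=z_0$; both are equally elementary.
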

\begin{proof}
Let $\kappa := (4/\alpha) \cdot \log(4/\alpha)$. For any $y \geq \kappa$, it follows that
\[
e^{\alpha y/2} = e^{\alpha y/4} \cdot e^{\alpha y/4} \stackrel{(a)}{\geq} \frac{\alpha y}{4} \cdot e^{\alpha y/4} \stackrel{(b)}{\geq} \frac{\alpha y}{4} \cdot \frac{4}{\alpha} \geq y,
\]
using in $(a)$ that $e^y \geq y$ (for any $y \geq 0$) and in $(b)$ that $\frac{\alpha y}{4} \geq \log(4/\alpha)$ (as $y \geq \kappa$). Hence for $y \geq \kappa$,
\[
e^{\alpha y} = e^{\alpha y/2} \cdot e^{\alpha y/2} \geq y \cdot y = y^2.
\]
Thus, we conclude
\begin{align*}
 \Upsilon^t &= \sum_{i=1}^n ( y_i^t )^2 
  \leq \sum_{i=1}^n \max \left\{ \Lambda_i^{t},  \kappa^2 \right\} 
  \stackrel{(a)}{\leq} \sum_{i=1}^n \max \left\{ \Lambda_i^{t},  \kappa^2 \cdot \Lambda_i^{t} \right\} 
  \stackrel{(b)}{=} \kappa^2 \cdot \Lambda^t,
\end{align*} 
using in $(a)$ that $\Lambda_i^{t} \geq 1$ for any $i \in [n]$ and in $(b)$ that $\kappa \geq 1$.
\end{proof}

The next lemma is very basic but is used in \cref{lem:many_good_quantiles_whp}, so we prove it for completeness. 
\begin{lem}\label{lem:basic}
Consider any $\WOne$-process. Then, for any round $t\geq 0$ \[
\left|\Upsilon^{t+1}-\Upsilon^{t}\right| 
 \leq 4 w_-\cdot \max_{i \in [n]} |y_i^t| + 2w_-^2.\]
	\end{lem}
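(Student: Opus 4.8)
The plan is to bound $|\Upsilon^{t+1} - \Upsilon^t|$ by separately tracking the two sources of change in each normalized load $y_i^t$: the ``internal'' change at the bin that receives the allocation, and the ``external'' change at every bin caused by the shift in the mean load $W^t/n$. Write $\Upsilon^{t+1} - \Upsilon^t = \sum_{i=1}^n \big((y_i^{t+1})^2 - (y_i^t)^2\big) = \sum_{i=1}^n (y_i^{t+1}-y_i^t)(y_i^{t+1}+y_i^t)$. In round $t+1$ exactly one bin, say $j$, is chosen, and it receives $w^{t+1}\in\{w_+,w_-\}$ balls, so $0 \le w^{t+1}\le w_-$ by \WOne. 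Consequently the mean increases by $w^{t+1}/n \in [0, w_-/n]$, and for every bin $i$ we have $y_i^{t+1} = y_i^t - w^{t+1}/n$ if $i\neq j$, and $y_j^{t+1} = y_j^t + w^{t+1} - w^{t+1}/n$.

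First I would handle the unique chosen bin $j$: here $|y_j^{t+1}-y_j^t| = |w^{t+1} - w^{t+1}/n| \le w_-$, and $|y_j^{t+1}+y_j^t| \le 2|y_j^t| + w_- \le 2\max_{i}|y_i^t| + w_-$, so this bin contributes at most $w_-\cdot(2\max_i|y_i^t| + w_-) = 2w_-\max_i|y_i^t| + w_-^2$ in absolute value. Next, for the remaining $n-1$ bins the change is purely from the mean shift: $y_i^{t+1}-y_i^t = -w^{t+1}/n$, so $\sum_{i\neq j}\big((y_i^{t+1})^2 - (y_i^t)^2\big) = \sum_{i\neq j}\big(-\tfrac{w^{t+1}}{n}\big)\big(2y_i^t - \tfrac{w^{t+1}}{n}\big)$, whose absolute value is at most $\tfrac{w_-}{n}\sum_{i\neq j}\big(2|y_i^t| + \tfrac{w_-}{n}\big) \le \tfrac{w_-}{n}\cdot n \cdot 2\max_i|y_i^t| + \tfrac{w_-}{n}\cdot n\cdot\tfrac{w_-}{n} \le 2w_-\max_i|y_i^t| + w_-^2/n \le 2w_-\max_i|y_i^t| + w_-^2$. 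Adding the two contributions and using the triangle inequality gives $|\Upsilon^{t+1}-\Upsilon^t| \le 4w_-\max_i|y_i^t| + 2w_-^2$, as claimed.

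There is no real obstacle here; the only mild care needed is to use $w^{t+1}\le w_-$ uniformly (so that the bound does not depend on whether an over- or underloaded bin was hit) and to keep the mean-shift term's crude bound $\tfrac{w_-}{n}\cdot n = w_-$ rather than trying to exploit that $\sum_i y_i^t = 0$ (which would give a sharper constant but is unnecessary). One could alternatively write $y_i^{t+1}$ uniformly as $y_i^t - w^{t+1}/n + w^{t+1}\mathbf{1}_{\{i=j\}}$ and expand once, but splitting off bin $j$ first is cleaner for the bookkeeping.
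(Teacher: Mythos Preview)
Your proof is correct and follows essentially the same approach as the paper: both split the change $\Upsilon^{t+1}-\Upsilon^t$ into the contribution of the single chosen bin $j$ (bounded by $2w_-\max_i|y_i^t|+w_-^2$) and the aggregate mean-shift contribution of the remaining $n-1$ bins (also bounded by $2w_-\max_i|y_i^t|+w_-^2$), using only $w^{t+1}\le w_-$. The paper expands $(y_i^t+\Delta)^2-(y_i^t)^2=2y_i^t\Delta+\Delta^2$ directly while you use the factorization $(y_i^{t+1}-y_i^t)(y_i^{t+1}+y_i^t)$, but this is a purely cosmetic difference.
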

\begin{proof}
Recall that $w^{t+1} := W^{t+1}-W^{t}$ is the number of balls allocated in the $(t+1)$-th allocation. We will upper bound the change in the quadratic potential using\[
  \left| \Upsilon^{t+1} - \Upsilon^t \right| \leq \sum_{i = 1}^n \left| \Upsilon_i^{t+1} - \Upsilon_i^t \right|,
\]
where $\Upsilon_i^t := (y_i^t)^2$ and $\Upsilon_i^{t+1} := (y_i^{t+1})^2$. Now, let $i \in [n]$ be an arbitrary bin. We bound $|\Upsilon_i^{t+1} - \Upsilon_i^t|$ by considering two cases based on whether bin $i$ was allocated to in the $(t+1)$-th allocation or not:

\textbf{Case 1:} If $i$ was allocated to, then $y_{i+1}^t = y_i^t + w^{t+1} \cdot \big( 1 - \frac{1}{n} \big)$ and so
\begin{align*}
\left|\Upsilon_i^{t+1} - \Upsilon_i^t \right| 
 & = \left|\left(y_{i}^t + w^{t+1} \cdot \Bigl( 1 - \frac{1}{n} \Bigr) \right)^2 -  \left( y_i^t \right)^2\right| \\
 & \leq 2 \cdot |y_i^t| \cdot \left|w^{t+1} \cdot \Bigl( 1 - \frac{1}{n}\Bigr) \right| + \left| w^{t+1} \cdot \Bigl( 1 - \frac{1}{n}\Bigr) \right|^2 \\
 & \leq 2 \cdot \max_{j \in [n]} |y_j^t|\cdot w_- + w_-^2.  \end{align*}

\textbf{Case 2:} If $i$ was not allocated to, then $y_{i+1}^t = y_i^t - \frac{w^{t+1}}{n}$ and so
 \begin{align*}
 \left|\Upsilon_i^{t+1} - \Upsilon_i^t \right| 
  = \left|  \left( y_i^{t} -\frac{w^{t+1}}{n}   \right)^2 -  \left( y_i^{t}  \right)^2\right| 
  \leq 2 \cdot \left|   \frac{w^{t+1}}{n}\right|\cdot |y_{i}^t| + \left|\frac{w^{t+1}}{n}\right|^2  
  \leq \frac{2w_-}{n }\cdot  \max_{j \in [n]} |y_j^t| + \frac{w_-^2}{n^2}, 
\end{align*}
using that $w^{t+1} \leq w_-$. 

By aggregating over all bins using the two cases above, we get the conclusion.
\end{proof}

\section{Proof of the Bound on the Gap} \label{sec:mean_biased_gap_completion}
 
In this section, we complete the proof that any $\PTwo \cap \WOne$ or $\POne \cap \WTwo$-process satisfies $\Gap(m) = \Oh(\log n)$ \Whp~at an arbitrary round $m$. As outlined in \cref{sec:analysis_overview}, the proof consists of a recovery phase and a stabilization phase.  \cref{fig:recovery_stabilisation} depicts these two phases. We start by specifying the constants used in the proof.

Let $\eps := \eps(C) \in \big(0, \frac{1}{2} \big)$ be as defined in \cref{lem:good_quantile} for 
\begin{align} \label{eq:c_def}
C := \left\lceil\frac{8\C{quad_const_add}}{\C{quad_delta_drop}}\right\rceil+1,    
\end{align}
where $\C{quad_delta_drop}, \C{quad_const_add} > 0$ are as defined in \cref{lem:quadratic_absolute_relation}. We will be using the potential $\Lambda := \Lambda(\alpha)$ with smoothing parameter $\alpha > 0$ defined as 
\begin{align} \label{eq:alpha_def}
  \alpha := \min \left\{ \frac{\C{good_quantile_mult} \eps^2}{2\C{bad_quantile_mult} (1 - \eps^2)}, \; \alpha' \right\},
\end{align}
where $\C{good_quantile_mult} := \C{good_quantile_mult}(\eps) > 0$ is as defined in \cref{lem:good_quantile_good_decrease}, $\C{bad_quantile_mult} := \C{bad_quantile_mult}(\eps) > 0$ as defined in \cref{lem:bad_quantile_increase_bound}, and  for $\PTwo \cap \WOne$ processes \[
  \alpha' := \min\left\lbrace\frac{1}{4w_-}, \; \frac{\C{p2k2} \eps}{2 w_- (1 + \C{p2k2}\eps)}, \; \frac{\C{p2k1} \eps}{2 w_+(1-\C{p2k1} \eps)} \right\rbrace,
\]
and for $\POne \cap \WTwo$ processes\[
  \alpha' := \min\left\lbrace\frac{1}{4w_-}, \; \frac{\eps( w_- - w_+)}{4 w_-^2}, \; \frac{\eps}{2 w_- (2+\eps)} \right\rbrace.
\]

In the \textit{recovery phase} (\cref{sec:recovery}), we prove that starting at $t_0 = m - n^3 \log^4 n$ with $V^{t_0} = \poly(n)$ (i.e., $\Lambda^{t_0} \leq \exp(2 \C{poly_n_gap} n \log n)$), \Whp~there exists a round $s \in [t_0, m]$ with $\Lambda^s \leq \C{lambda_bound} n$, for constant $c > 0$ defined in \cref{cor:change_for_large_lambda}. We do this by proving that starting with $\Lambda^{t_0} \leq \exp(2 \C{poly_n_gap} n \log n)$, then \Whp~there is a constant fraction of rounds $s \in [t_0, t_0 + n^3 \log^4 n]$ with $\delta^s \in [\eps, 1 - \eps]$. By analyzing an ``adjusted version'' $\tilde{\Lambda}$ (defined below in \cref{eq:tilde_lambda_def}) of the exponential potential $\Lambda$, taking advantage of the fact that $\Lambda^t$ decreases in expectation whenever $\delta^t \in [\eps, 1 - \eps]$ and increases at most by a smaller factor otherwise (\cref{cor:change_for_large_lambda}),  we show that there exists $s \in [t_0, t_0 +  n^3 \log^4 n]$ such that $\Lambda^s \leq cn$.

Next, in the \textit{stabilization phase} (\cref{sec:stabilization}), we first show that starting with $\Lambda^{t_0} \leq 2cn$ at any round $t_0$, we have that $\Lambda^{s} \leq cn$ for some round $s \in (t_0, t_0 + \Theta(n \log n)]$. We do this by proving that \Whp~if $\Lambda^{t_0} \leq cn$, then there is a constant fraction of rounds $s \in [t_0, t_0 + \Theta(n \log n)]$ with $\delta^s \in [\eps, 1 - \eps]$. 
Again, by analyzing an ``adjusted version'' $\tilde{\Lambda}$ of the exponential potential $\Lambda$, we show that \Whp~there exists $s \in (t_0, t_0 + \Theta(n \log n)]$ such that $\Lambda^s \leq cn$. Next, we take the union bound over the remaining $\Oh(n^3 \log^4 n)$ rounds, which gives $\Lambda^{s_1} \leq cn$ \Whp~at some $s_1 \in  [m - \Theta(n \log n), m]$ and $\Lambda^{s_2} \leq cn$ \Whp~at some $s_2 \in [m, m + \Theta(n \log n), m]$. By a smoothness argument, this in turn implies that $\max_{i \in [n]} |y_i^m| = \Oh(\log n)$. 
 
In order to complete the analysis in Sections \ref{sec:recovery} and \ref{sec:stabilization} described above, we must first give some definitions and establish some technical tools. In particular, in \cref{sec:adj_martin} we shall prove that the new adjusted exponential potential function $\tilde{\Lambda}^t$ is a super-martingale. In \cref{sec:taming} we then use the interplay between the absolute value and quadratic potential functions to prove that in a constant fraction of the rounds the mean quantile is ``good''.

In the following, we consider an arbitrary round $t_0 \geq 0$ which will be starting point of our analysis.
For $\eps\in (0,1)$ and $\C{lambda_bound} >1$ as specified above, for any round $s \geq t_0$ we define the event,\begin{equation*}%
\mathcal{E}_{t_0}^{s} := \bigcap_{t \in [t_0, s]} \{\Lambda^t > \C{lambda_bound} n\}.  \end{equation*} 
Showing that this event does not hold for suitable $t_0$ and $s$ is crucial in both recovery and stabilization phases. For rounds $s \geq t_0$, we let $G_{t_0}^{s}:=G_{t_0}^{s}(\eps)$ be the number of rounds $t \in [t_0, s]$ with $\delta^t \in [\eps, 1 - \eps]$, and similarly we define $B_{t_0}^{s} := (s - t_0+1) - G_{t_0}^s$. 

We now introduce the \textit{adjusted exponential potential function} $\tilde{\Lambda}_{t_0}^s$ which involves the random variables and event above. Let $\C{good_quantile_mult} := \C{good_quantile_mult}(\eps) > 0$ be the constant in \cref{lem:good_quantile_good_decrease} and $\C{bad_quantile_mult} := \C{bad_quantile_mult}(\eps) > 0$ the constant in \cref{lem:bad_quantile_increase_bound}. Then we define $\tilde{\Lambda}_{t_0}^{t_0}(\alpha, \eps,  \C{good_quantile_mult}, \C{bad_quantile_mult}) := \Lambda^{t_0}(\alpha)$ and, for any round $s > t_0$,
\begin{equation}\label{eq:tilde_lambda_def}
\tilde{\Lambda}_{t_0}^s(\alpha, \eps, \C{good_quantile_mult}, \C{bad_quantile_mult}) := \Lambda^s \cdot \mathbf{1}_{\mathcal{E}_{t_0}^{s-1}} \cdot \exp\left( - \frac{\C{bad_quantile_mult} \alpha^2}{n} \cdot B_{t_0}^{s-1}(\eps) \right) \cdot \exp\left( + \frac{\C{good_quantile_mult} \alpha}{n} \cdot G_{t_0}^{s-1}(\eps) \right). 
\end{equation}

We also define $\tilde{G}_{t_0}^{s} := \tilde{G}_{t_0}^{s}(C)$ to be the number of rounds $t \in [t_0, s]$ with $\Delta^t \leq Cn$. 

\subsection{The Adjusted Exponential Potential is a Super-Martingale} \label{sec:adj_martin}
We now show that the sequence defined in \eqref{eq:tilde_lambda_def} forms a super-martingale.

\begin{lem}\label{lem:lambda_tilde_is_supermartingale}
Consider any $\PTwo \cap \WOne$-process or $\POne \cap \WTwo$-process and an arbitrary round $t_0 \geq 0$. Further, let the potential $\tilde{\Lambda}_{t_0} := \tilde{\Lambda}_{t_0}(\alpha, \eps, \C{good_quantile_mult}, \C{bad_quantile_mult})$ for $\alpha > 0$ as defined in \eqref{eq:alpha_def}, $\eps>0$ as defined in \eqref{eq:c_def}, and $\C{good_quantile_mult}:=\C{good_quantile_mult}(\eps)>0$ and $\C{bad_quantile_mult}:=\C{bad_quantile_mult}(\eps) > 0$ as defined in Lemmas \ref{lem:good_quantile_good_decrease} and \ref{lem:bad_quantile_increase_bound} respectively. Then, for any round $s \geq t_0$,
\[
 \Ex{ \left. \tilde{\Lambda}_{t_0}^{s+1} \,\right|\, \mathfrak{F}^s} \leq \tilde{\Lambda}_{t_0}^{s}.
\]
\end{lem}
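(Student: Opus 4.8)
The plan is to verify the one-step inequality $\Ex{\tilde{\Lambda}_{t_0}^{s+1}\mid\mathfrak{F}^s}\leq\tilde{\Lambda}_{t_0}^s$ by a case analysis on the ``survival'' event $\mathcal{E}_{t_0}^{s-1}$ and on the size of $\Lambda^s$. Since $\mathcal{E}_{t_0}^{s}\subseteq\mathcal{E}_{t_0}^{s-1}$, whenever $\mathbf{1}_{\mathcal{E}_{t_0}^{s-1}}=0$ we also have $\mathbf{1}_{\mathcal{E}_{t_0}^{s}}=0$, so both $\tilde{\Lambda}_{t_0}^s$ and $\tilde{\Lambda}_{t_0}^{s+1}$ are identically $0$ and there is nothing to prove. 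If $\mathcal{E}_{t_0}^{s-1}$ holds but $\Lambda^s\leq\C{lambda_bound}n$, then $\mathcal{E}_{t_0}^{s}$ fails, so $\tilde{\Lambda}_{t_0}^{s+1}=0$, which is at most the non-negative quantity $\tilde{\Lambda}_{t_0}^s$. Hence the only case requiring work is when $\mathcal{E}_{t_0}^{s}$ holds, i.e.\ $\Lambda^t>\C{lambda_bound}n$ for all $t\in[t_0,s]$.

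In that case, set $\mathcal{G}^s:=\{\delta^s\in[\eps,1-\eps]\}$ and use $G_{t_0}^s=G_{t_0}^{s-1}+\mathbf{1}_{\mathcal{G}^s}$, $B_{t_0}^s=B_{t_0}^{s-1}+\mathbf{1}_{\neg\mathcal{G}^s}$ to rewrite
\[
\tilde{\Lambda}_{t_0}^{s+1}=\tilde{\Lambda}_{t_0}^{s}\cdot\frac{\Lambda^{s+1}}{\Lambda^s}\cdot\exp\!\left(-\frac{\C{bad_quantile_mult}\alpha^2}{n}\,\mathbf{1}_{\neg\mathcal{G}^s}+\frac{\C{good_quantile_mult}\alpha}{n}\,\mathbf{1}_{\mathcal{G}^s}\right)
\]
on this event (for $s=t_0$ the same identity holds with the conventions $G_{t_0}^{t_0-1}=B_{t_0}^{t_0-1}=0$ and $\mathbf{1}_{\mathcal{E}_{t_0}^{t_0-1}}=1$, under which the definition $\tilde{\Lambda}_{t_0}^{t_0}:=\Lambda^{t_0}$ is recovered). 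All factors except $\Lambda^{s+1}$ are $\mathfrak{F}^s$-measurable, so taking $\Ex{\cdot\mid\mathfrak{F}^s}$ reduces matters to bounding $\Ex{\Lambda^{s+1}\mid\mathfrak{F}^s}$, and the values of $\alpha,\eps,\C{good_quantile_mult},\C{bad_quantile_mult}$ fixed in \eqref{eq:c_def} and \eqref{eq:alpha_def} are precisely those under which \cref{cor:change_for_large_lambda} applies with $\Lambda^s>\C{lambda_bound}n$. If $\mathcal{G}^s$ holds, part $(i)$ gives $\Ex{\Lambda^{s+1}\mid\mathfrak{F}^s}\leq\Lambda^s(1-\C{good_quantile_mult}\alpha/n)$, and the elementary bound $(1-x)e^{x}\leq1$ for $x\in[0,1)$, applied with $x=\C{good_quantile_mult}\alpha/n$, cancels the $\exp(\C{good_quantile_mult}\alpha/n)$ factor; if $\neg\mathcal{G}^s$ holds, part $(ii)$ gives $\Ex{\Lambda^{s+1}\mid\mathfrak{F}^s}\leq\Lambda^s(1+\C{bad_quantile_mult}\alpha^2/n)$, and $1+x\leq e^{x}$ applied with $x=\C{bad_quantile_mult}\alpha^2/n$ absorbs this against the $\exp(-\C{bad_quantile_mult}\alpha^2/n)$ factor. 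Either way $\Ex{\tilde{\Lambda}_{t_0}^{s+1}\mid\mathfrak{F}^s}\leq\tilde{\Lambda}_{t_0}^{s}$.

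The calculation itself is short once \cref{cor:change_for_large_lambda} is available; the only thing demanding care is the bookkeeping of the indicators together with the nesting $\mathcal{E}_{t_0}^{s}\subseteq\mathcal{E}_{t_0}^{s-1}$, plus checking that $\mathcal{E}_{t_0}^s$, $\mathcal{G}^s$, $G_{t_0}^s$, $B_{t_0}^s$ and $\Lambda^s$ are all $\mathfrak{F}^s$-measurable so they may be pulled outside the conditional expectation. I expect the only (mild) pitfall to be the intermediate case ``$\Lambda^s\leq\C{lambda_bound}n$ while $\mathcal{E}_{t_0}^{s-1}$ still holds'': there $\tilde{\Lambda}_{t_0}^s$ need not vanish, but $\tilde{\Lambda}_{t_0}^{s+1}$ carries the factor $\mathbf{1}_{\mathcal{E}_{t_0}^{s}}=0$, so the inequality holds trivially.
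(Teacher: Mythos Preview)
Your proposal is correct and follows essentially the same approach as the paper: reduce to the case where $\mathcal{E}_{t_0}^{s}$ holds (the paper handles the trivial cases implicitly via the indicator $\mathbf{1}_{\mathcal{E}_{t_0}^{s}}$ inside the expectation, whereas you spell them out), then split on $\mathcal{G}^s$ and apply \cref{cor:change_for_large_lambda}~$(i)$ or~$(ii)$ together with $(1-x)e^x\le 1$ and $1+x\le e^x$. Your extra care with the boundary case $s=t_0$ and with the intermediate case $\mathcal{E}_{t_0}^{s-1}\setminus\mathcal{E}_{t_0}^{s}$ is fine and matches the paper's implicit handling.
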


\begin{proof}
We see that $\ex{ \tilde{\Lambda}_{t_0}^{s+1}   \mid \mathfrak{F}^s}$ is given by 
\begin{align*}
   &\Ex{ \left. \Lambda^{s+1} \cdot \mathbf{1}_{\mathcal{E}_{t_0}^{s}} \,\right|\, \mathfrak{F}^s} \cdot \exp\bigg( - \frac{\C{bad_quantile_mult} \alpha^2}{n} \cdot B_{t_0}^{s} \bigg) \cdot \exp\bigg( \frac{\C{good_quantile_mult} \alpha}{n} \cdot G_{t_0}^{s} \bigg) \\ 
 & = \Ex{ \left. \Lambda^{s+1} \cdot \mathbf{1}_{\mathcal{E}_{t_0}^{s}} \,\right|\, \mathfrak{F}^s} \cdot \exp\left( -\frac{\C{bad_quantile_mult} \alpha^2}{n} \cdot \mathbf{1}_{\neg \mathcal{G}^s} + \frac{\C{good_quantile_mult} \alpha}{n} \cdot \mathbf{1}_{\mathcal{G}^s} \right) \cdot \exp\left( - \frac{\C{bad_quantile_mult} \alpha^2}{n} \cdot B_{t_0}^{s - 1} \right) \cdot \exp\left( \frac{\C{good_quantile_mult} \alpha}{n} \cdot G_{t_0}^{s - 1} \right).
\end{align*}
Thus, we see that it suffices to prove that \begin{equation}\label{eq:sufficient2}
\Ex{\left. \Lambda^{s+1} \cdot \mathbf{1}_{\mathcal{E}_{t_0}^{s}} \,\right|\, \mathfrak{F}^s} \cdot \exp\left( -\frac{\C{bad_quantile_mult} \alpha^2}{n} \cdot \mathbf{1}_{\neg \mathcal{G}^s} + \frac{\C{good_quantile_mult} \alpha}{n} \cdot \mathbf{1}_{\mathcal{G}^s} \right) \leq \Lambda^{s} \cdot \mathbf{1}_{\mathcal{E}_{t_0}^{s-1}}.\end{equation}To show \eqref{eq:sufficient2}, we consider two cases based on whether $\mathcal{G}^s$ holds.

\medskip 

\noindent\textbf{Case 1} [$\mathcal{G}^s$ holds]. Recall that the event $\mathcal{G}^s$ means $\delta^s\in[\eps, 1 - \eps]$ holds. Further, we are additionally conditioning on the event $\mathcal{E}_{t_0}^{s}$, via the indicator, and so $\Lambda^t>cn$ holds for any round $t \in [t_0,s]$. Thus we can use the upper bound from \cref{cor:change_for_large_lambda} to get
\[
\Ex{ \left. \Lambda^{s+1} \cdot \mathbf{1}_{\mathcal{E}_{t_0}^{s}} \,\right|\, \mathfrak{F}^s, \mathcal{G}^s} \leq \Lambda^{s} \cdot \mathbf{1}_{\mathcal{E}_{t_0}^{s-1}}  \cdot \left(1 - \frac{\C{good_quantile_mult} \alpha}{n} \right) \leq \Lambda^{s} \cdot \mathbf{1}_{\mathcal{E}_{t_0}^{s-1}} \cdot \exp\left(- \frac{\C{good_quantile_mult} \alpha}{n} \right).
\]
Hence, since in this case $\mathbf{1}_{\mathcal{G}^s}=1$, the left-hand side of \eqref{eq:sufficient2} is equal to
\begin{align*}
& \Ex{\left. \Lambda^{s+1} \cdot \mathbf{1}_{\mathcal{E}_{t_0}^{s}} \,\right|\, \mathfrak{F}^s, \mathcal{G}^s} \cdot \exp\left( -\frac{\C{bad_quantile_mult} \alpha^2}{n} \cdot \mathbf{1}_{\neg \mathcal{G}^s} + \frac{\C{good_quantile_mult} \alpha}{n} \cdot \mathbf{1}_{\mathcal{G}^s} \right) \\
 & \qquad \leq \left(\Lambda^{s} \cdot \mathbf{1}_{\mathcal{E}_{t_0}^{s-1}} \cdot \exp\left(- \frac{\C{good_quantile_mult} \alpha}{n} \right)\right) \cdot \exp\left( \frac{\C{good_quantile_mult} \alpha}{n} \right) \\ & \qquad = \Lambda^{s} \cdot \mathbf{1}_{\mathcal{E}_{t_0}^{s-1}} .
\end{align*}
\noindent\textbf{Case 2} [$\mathcal{G}^s$ does not hold]. Similarly, by \cref{cor:change_for_large_lambda}~$(ii)$. \begin{align*}
\Ex{\left. \Lambda^{s+1} \cdot \mathbf{1}_{\mathcal{E}_{t_0}^{s}} \,\right|\, \mathfrak{F}^s, \neg \mathcal{G}^s} 
 \leq \Lambda^{s} \cdot \mathbf{1}_{\mathcal{E}_{t_0}^{s-1}} \cdot \left(1 + \frac{\C{bad_quantile_mult} \alpha^2 }{n}\right) %
\leq \Lambda^{s} \cdot \mathbf{1}_{\mathcal{E}_{t_0}^{s-1}}\cdot \exp\left( \frac{\C{bad_quantile_mult} \alpha^2}{n} \right).
\end{align*}
Hence,  since in this case $\mathbf{1}_{\mathcal{G}^s}=0$, the left-hand side of \eqref{eq:sufficient2} is equal to
\begin{align*}
& \Ex{\left. \Lambda^{s+1} \cdot \mathbf{1}_{\mathcal{E}_{t_0}^{s}} \,\right|\, \mathfrak{F}^s, \neg \mathcal{G}^s} \cdot \exp\left( -\frac{\C{bad_quantile_mult} \alpha^2}{n} \cdot \mathbf{1}_{\neg \mathcal{G}^s} + \frac{\C{good_quantile_mult} \alpha}{n} \cdot \mathbf{1}_{\mathcal{G}^s} \right) \\
 &\qquad \leq \left( \Lambda^{s} \cdot \mathbf{1}_{\mathcal{E}_{t_0}^{s-1}}\cdot \exp\left( - \frac{\C{bad_quantile_mult} \alpha^2}{n} \right)\right)   \cdot \exp\left( \frac{\C{bad_quantile_mult} \alpha^2}{n} \right) \\  
 &\qquad = \Lambda^{s} \cdot \mathbf{1}_{\mathcal{E}_{t_0}^{s-1}}.
\end{align*} Since  \eqref{eq:sufficient2} holds in both cases, we deduce that $(\tilde{\Lambda}_{t_0}^s)_{s \geq t_0}$ forms a super-martingale. 
\end{proof}

\subsection{Taming the Mean Quantile and Absolute Value Potential} \label{sec:taming}
 
We have seen in the previous section that if we augment the exponential potential $\Lambda^t$ with some terms involving the random variable $G_{t_0}^s$, then the resulting potential $\tilde{\Lambda}^t$ is a super-martingale. Thus, it will be useful to control $G_{t_0}^{s}:=G_{t_0}^{s}(\eps)$, which we recall is the number of rounds $t \in [t_0, s]$ with $\delta^t \in [\eps, 1 - \eps]$. This is done in part by controlling $\tilde{G}_{t_0}^{s} := \tilde{G}_{t_0}^{s}(C)$, which we recall is the number of rounds $t \in [t_0, s]$ with $\Delta^t \leq Cn$. In particular, our first lemma shows that for an interval of length at least $\frac{n}{4\eps} $, if we have a constant fraction of round $t$ with $\Delta^t \leq Cn$, then \Whp we also have a constant fraction of rounds with $\delta^t \in [\eps, 1 - \eps]$.

\begin{lem}\label{lem:newcorrespondence}
Consider any $\PTwo \cap \WOne$-process or $\POne \cap \WTwo$-process and let $\eps>0$ as defined in \eqref{eq:c_def}. Then, for any rounds $t_0 \geq 0$ and $t_1 \geq t_0 + \frac{n}{4\eps} $,
\[\Pro{ G_{t_0}^{t_1}(\eps) \geq 4\eps ^2\cdot \tilde{G}_{t_0}^{t_1-\frac{n}{4\eps} }(C) \;\Big|\; \mathfrak{F}^{t_0}} \geq 1- (t_1-t_0) \cdot e^{-\eps n}. \]
\end{lem}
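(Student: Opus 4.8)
The plan is to establish a ``local'' implication: whenever the absolute value potential is small at some round $s$, i.e.\ $\Delta^{s} \leq Cn$, Mean Quantile Stabilization (\cref{lem:good_quantile}) guarantees that a constant fraction of the next $\frac{n}{4\eps}$ rounds have a good mean quantile, \emph{except} with probability $e^{-\eps n}$. I would then sum these local guarantees over all rounds $s$ witnessing $\Delta^s \leq Cn$ and deduce the global bound $G_{t_0}^{t_1} \geq 4\eps^2 \cdot \tilde G_{t_0}^{t_1 - n/(4\eps)}$, paying a union bound cost of at most $(t_1 - t_0) \cdot e^{-\eps n}$.

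More concretely: first I would condition on $\mathfrak{F}^{t_0}$ and define the ``bad event'' $\mathcal{B}_s$, for each round $s \in [t_0, t_1 - \frac{n}{4\eps}]$, to be the event that $\Delta^s \leq Cn$ holds but the number of rounds $t \in [s, s + \frac{n}{4\eps}]$ with $\delta^t \in [\eps, 1-\eps]$ is less than $\eps n$. By \cref{lem:good_quantile} (applied with $t_0 \mapsto s$, using the tower property to handle the conditioning on $\mathfrak{F}^s \supseteq \mathfrak{F}^{t_0}$), we have $\Pro{\mathcal{B}_s \mid \mathfrak{F}^{t_0}} \leq e^{-\eps n}$. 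A union bound over the at most $t_1 - t_0$ relevant rounds $s$ gives that with probability at least $1 - (t_1 - t_0) e^{-\eps n}$, \emph{none} of the $\mathcal{B}_s$ occur. On this good event, every round $s \in [t_0, t_1 - \frac{n}{4\eps}]$ with $\Delta^s \leq Cn$ contributes at least $\eps n$ rounds $t \in [s, s + \frac{n}{4\eps}] \subseteq [t_0, t_1]$ with $\delta^t \in [\eps, 1-\eps]$.

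The remaining step is purely combinatorial: I need to argue that if each of the $\tilde G_{t_0}^{t_1 - n/(4\eps)}(C)$ ``small-$\Delta$'' rounds $s$ carries an interval of length $\frac{n}{4\eps}$ containing $\geq \eps n$ good rounds, then the \emph{total} number of distinct good rounds in $[t_0, t_1]$ is at least $4\eps^2 \cdot \tilde G_{t_0}^{t_1 - n/(4\eps)}(C)$. This follows from a simple covering/double-counting argument: the good rounds all lie in $[t_0, t_1]$, and each good round $t$ lies in the window $[s, s+\frac{n}{4\eps}]$ for at most $\frac{n}{4\eps} + 1$ values of $s$; hence
\[
G_{t_0}^{t_1} \cdot \left( \frac{n}{4\eps} + 1 \right) \;\geq\; \sum_{s \,:\, \Delta^s \leq Cn} \bigl| \{ t \in [s, s + \tfrac{n}{4\eps}] : \delta^t \in [\eps, 1-\eps] \} \bigr| \;\geq\; \tilde G_{t_0}^{t_1 - n/(4\eps)}(C) \cdot \eps n,
\]
and rearranging (absorbing the $+1$ into a slightly worse constant, valid for large $n$, or noting $\frac{n}{4\eps}+1 \leq \frac{n}{2\eps}$ for large $n$) yields $G_{t_0}^{t_1} \geq \frac{\eps n}{n/(2\eps)} \cdot \tilde G_{t_0}^{t_1-n/(4\eps)} = 2\eps^2 \cdot \tilde G_{t_0}^{t_1 - n/(4\eps)}$; tightening the window bound to $\frac{n}{4\eps}$ (since the interval $[s, s+\frac{n}{4\eps}]$ has $\frac{n}{4\eps}+1$ integer points but the relevant overcount constant can be taken as $\frac{n}{4\eps}$ asymptotically) gives the claimed factor $4\eps^2$. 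I would double-check the exact constant bookkeeping here, but it is routine.

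The main obstacle I anticipate is \emph{not} the probabilistic part — that is a clean union bound over \cref{lem:good_quantile} — but rather getting the combinatorial constant exactly right, i.e.\ making sure the overcounting factor is genuinely $\leq \frac{n}{4\eps}$ (not $\frac{n}{4\eps}+1$ or $\frac{n}{4\eps}+2$) for large $n$ so that the ratio $\eps n / (n/(4\eps)) = 4\eps^2$ comes out cleanly, and making sure the index range $[t_0, t_1 - \frac{n}{4\eps}]$ for the witnessing rounds $s$ matches the definition of $\tilde G_{t_0}^{t_1 - n/(4\eps)}$ precisely (so that every counted small-$\Delta$ round indeed has its full window inside $[t_0, t_1]$). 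A minor additional point: one must also handle the conditioning carefully — \cref{lem:good_quantile} is stated conditional on $\mathfrak{F}^{s}$ and the event $\{\Delta^{s} \leq Cn\}$, so the union bound should be phrased in terms of $\Pro{\mathcal{B}_s \mid \mathfrak{F}^{t_0}} = \Ex{ \Pro{\mathcal{B}_s \mid \mathfrak{F}^s} \mid \mathfrak{F}^{t_0}} \leq \Ex{ e^{-\eps n} \mathbf{1}_{\Delta^s \leq Cn} \mid \mathfrak{F}^{t_0}} \leq e^{-\eps n}$, which is fine but worth spelling out.
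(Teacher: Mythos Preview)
Your proposal is correct and follows essentially the same approach as the paper: define a per-round bad event (the paper calls its complement $\mathcal{H}^t$), apply \cref{lem:good_quantile} at each round, union-bound over $[t_0,t_1]$, and then do exactly the double-counting/covering argument you describe to pass from the sum of local contributions to the global count $G_{t_0}^{t_1}$. The paper writes the overcount factor as $\ell = n/(4\eps)$ (rather than $\ell+1$) to get the clean $4\eps^2$, which is the same minor bookkeeping point you flagged; your handling of the conditioning via the tower property is also what is implicitly used.
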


\begin{proof}  Let $\ell:=\frac{n}{4\eps} $. For any round $t \in [t_0,t_1]$, define
\[
\mathcal{H}^t := \Bigl\{ \Delta^{t} > Cn  \Bigr\} \cup \Bigl\{ \bigl| \left\{ s \in [t,t+\ell] \colon 
 \delta^{s} \in [\eps, 1 - \eps] \right\} \bigr| \geq \eps n \Bigr\}.
\]
Note that event $\mathcal{H}^t$ is logically equivalent to the statement: $\Delta^{t} \leq Cn$ implies 
\[\left| \left\{ s \in [t,t+\ell] \colon 
 \delta^{s} \in [\eps, 1 - \eps] \right\} \right| \geq \eps n.
\]
Then by \cref{lem:good_quantile},
\[
 \Pro{ \mathcal{H}^t \;\Big|\; \mathfrak{F}^{t_0}} \geq 1- e^{-\eps n},
\]
and so the union bound gives
\begin{align} \label{eq:cap_h_t}
  \Pro{ \left. \bigcap_{t=t_0}^{t_1} \mathcal{H}^{t} \;\right|\; \mathfrak{F}^{t_0}} \geq 1 - (t_1-t_0) \cdot e^{- \eps n}.
\end{align}
In the following, we will condition on the event $\bigcap_{t=t_0}^{t_1} \mathcal{H}^{t} $. Next define for any $t \in [t_0,t_1-\ell]$,
\[
  g^t := \left\{ s \in [t,t+\ell] \colon 
 \delta^{s} \in [\eps, 1 - \eps] \right\}.
\]Then, conditional on \cref{eq:cap_h_t},
\begin{align*}
 G_{t_0}^{t_1} &\geq \left|
 \bigcup_{t=t_0}^{t_1-\ell} g^t \right|  \geq \frac{\sum_{t=t_0}^{t_1-\ell} |g^t|}{ \max_{s \in [t,t+\ell]} \Bigl| \bigr\{t \in [t_0,t_1-\ell] \colon s \in g^t  \bigr\}\Bigr| }  \geq \frac{ \tilde{G}_{t_0}^{t_1-\ell} \cdot \eps \cdot n}{ \ell } \geq  \tilde{G}_{t_0}^{t_1-\frac{n}{4\eps}} \cdot 4\eps^2 ,
\end{align*}
using in the last step that $\ell:=\frac{n}{4\eps} $. This completes the proof.\end{proof}

The next lemma establishes the key fact that \Whp~when $\Upsilon^{t_0} \leq T$ there are many rounds close to $t_0$ with a good quantile (or the gap becomes too large). We will apply this lemma twice, once in the recovery phase with $T := n^3 \log^4 n$ (\cref{lem:recovery}) and once in the stabilization phase with $T := \Theta(n \log n)$ (\cref{lem:stabilization})

 \begin{lem} \label{lem:many_good_quantiles_whp}
Consider any $\PTwo \cap \WOne$-process or $\POne \cap \WTwo$-process and the potential $\Lambda := \Lambda(\alpha)$ with $\alpha>0$ as defined in \eqref{eq:alpha_def} and $\eps>0$ as defined in \eqref{eq:c_def}. Further, consider any $T \geq \frac{n}{\eps}$, any round $t_0 \geq 0$ and let \[
\tau:=\inf\Big\{ t \geq t_0 \colon \max_{i \in [n]} |y_i^{t}| > \sqrt{T} \cdot \log^{-2} n \Big\}.
\]
Then, for any round $t_1 \geq 0$ satisfying $t_0 + T \leq t_1 \leq t_0 + T \log^2 n$,
\[
\Pro{ \left\{ G_{t_0}^{t_1}(\eps) \geq \eps^2 \cdot (t_1 - t_0) \right\} \cup \{ \tau \leq t_1 \} \;\Big|\; \mathfrak{F}^{t_0}, \Upsilon^{t_0} \leq T} \geq 1 - (t_1 - t_0)  \cdot e^{-\eps n} - n^{-\omega(1)}.
\] 
\end{lem}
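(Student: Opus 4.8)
\textbf{Proof plan for \cref{lem:many_good_quantiles_whp}.} The plan is to combine the quadratic/absolute-value interplay (\cref{lem:quadratic_absolute_relation}) with the mean-quantile stabilization correspondence (\cref{lem:newcorrespondence}), run on the stopped process to control the size of the loads. First I would define the stopped potential $\widehat{\Upsilon}^t := \Upsilon^{t \wedge \tau}$ and observe that, on the event $\{\tau > t\}$, all normalized loads satisfy $|y_i^t| \leq \sqrt{T}\log^{-2} n$, so $\Upsilon^t \leq n \cdot T \log^{-4} n$, and moreover by \cref{lem:basic} the one-step change of $\Upsilon$ is bounded by $4w_-\cdot\sqrt T\log^{-2}n + 2w_-^2 = \Oh(\sqrt T\log^{-2}n)$ before time $\tau$. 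This bounded-difference control is exactly what I need to apply a concentration inequality to the number of rounds with $\Delta^t \leq Cn$.

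The core estimate is a supermartingale/drift argument on $\widehat{\Upsilon}$. By \cref{lem:quadratic_absolute_relation}, $\Ex{\Upsilon^{t+1}\mid\mathfrak F^t} \leq \Upsilon^t - \frac{\C{quad_delta_drop}}{n}\Delta^t + \C{quad_const_add}$, so on any round $t < \tau$ where $\Delta^t > Cn$ (recall $C = \lceil 8\C{quad_const_add}/\C{quad_delta_drop}\rceil + 1$) the potential drops by at least, say, $\frac{1}{2}\C{quad_delta_drop}\cdot C - \C{quad_const_add} = \Omega(1)$ in expectation; whereas on rounds with $\Delta^t \leq Cn$ it can rise by at most $\C{quad_const_add}$. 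Summing the telescoped drift over $[t_0, t_1\wedge\tau]$ and using $\Upsilon^{t_0} \leq T$ together with $\widehat\Upsilon \geq 0$, a standard Azuma/Freedman argument applied to the martingale part (whose increments are $\Oh(\sqrt T\log^{-2}n)$, well within the window length $t_1 - t_0 \leq T\log^2 n$) shows that \Whp, either $\tau \leq t_1$, or the number of ``bad-$\Delta$'' rounds $t \in [t_0, t_1]$ with $\Delta^t > Cn$ is at most some small fraction; equivalently $\widetilde G_{t_0}^{t_1 - n/(4\eps)}(C) \geq (1 - o(1))(t_1 - t_0) \geq \tfrac14(t_1-t_0)$, say, with failure probability $n^{-\omega(1)}$ (the Azuma tail is $\exp(-\Omega((t_1-t_0)^2/((t_1-t_0)\cdot T\log^{-4}n)))=\exp(-\Omega(\log^6 n))$ using $t_1 - t_0 \geq T \geq n/\eps$).

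Finally I would feed this into \cref{lem:newcorrespondence}: conditioned on $\mathfrak F^{t_0}$ and $t_1 \geq t_0 + n/(4\eps)$, with probability at least $1 - (t_1 - t_0)e^{-\eps n}$ we have $G_{t_0}^{t_1}(\eps) \geq 4\eps^2 \cdot \widetilde G_{t_0}^{t_1 - n/(4\eps)}(C)$. Combining via a union bound with the previous paragraph (on the complement of $\{\tau \leq t_1\}$, where $\widetilde G_{t_0}^{t_1 - n/(4\eps)}(C) \geq \tfrac14(t_1-t_0)$) yields $G_{t_0}^{t_1}(\eps) \geq \eps^2(t_1 - t_0)$, with total failure probability at most $(t_1 - t_0)e^{-\eps n} + n^{-\omega(1)}$, which is the claim. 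The main obstacle I anticipate is making the concentration step clean: I must run the drift argument on the stopped process so that the bounded-difference bound $\Oh(\sqrt T \log^{-2} n)$ is valid deterministically, and I need the threshold $C$ chosen (as in \eqref{eq:c_def}) so that the per-round expected drop on bad-$\Delta$ rounds strictly dominates the per-round expected rise on good-$\Delta$ rounds — this is what converts ``$\Upsilon$ starts at $\leq T$ and stays nonnegative'' into ``few bad-$\Delta$ rounds''; one must also be slightly careful that the denominator in the Azuma bound uses the window length times the squared increment, and check that $t_1 - t_0 \leq T\log^2 n$ keeps this term $\poly\log n$ in the exponent so the tail is genuinely $n^{-\omega(1)}$.
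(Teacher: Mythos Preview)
Your proposal is correct and matches the paper's proof essentially step for step: the paper also stops at $\tau$ to get bounded increments, builds the supermartingale $Z^t := \Upsilon^t - \C{quad_const_add}(t-t_0) + \frac{\C{quad_delta_drop}}{n}\sum_{s<t}\Delta^s$ (your ``telescoped drift plus martingale part''), applies Azuma with $\lambda = T$ to conclude (on $\{\tau>t_1\}$) that $\tilde G_{t_0}^{t_1}\geq \tfrac12(t_1-t_0)$, and then invokes \cref{lem:newcorrespondence} exactly as you do. One small caveat: your intermediate claim $\tilde G_{t_0}^{t_1-n/(4\eps)} \geq (1-o(1))(t_1-t_0)$ is too strong---the initial budget $\Upsilon^{t_0}\leq T$ can absorb $\Theta(T)=\Theta(t_1-t_0)$ rounds of drift when $t_1-t_0=T$, so you only get a constant fraction (the paper gets $\tfrac12$)---but since you only use $\geq \tfrac14(t_1-t_0)$ downstream this does not affect the argument.
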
  
\begin{proof}
Let $\C{quad_delta_drop}, \C{quad_const_add} > 0$ be the constants given in \cref{lem:quadratic_absolute_relation}. We define $Z^{t_0} := \Upsilon^{t_0}$ and for any round $t > t_0$
\[
 Z^{t} := \Upsilon^{t} - \C{quad_const_add} \cdot (t-t_0) + \frac{\C{quad_delta_drop}}{n}\sum_{s=t_0}^{t-1} \Delta^{s}.
\]
This sequence forms a super-martingale since by \cref{lem:quadratic_absolute_relation},
\begin{align*}
\Ex{\left. Z^{t+1} \,\right|\, \mathfrak{F}^{t}} 
  & = \Ex{\left. \Upsilon^{t+1} - \C{quad_const_add} \cdot (t-t_0+1) + \frac{\C{quad_delta_drop}}{n}\sum_{s=t_0}^{t} \Delta^{s} ~\right|~ \mathfrak{F}^{t}} \\
  & = \Ex{\left. \Upsilon^{t+1}\,\right|\, \mathfrak{F}^t} - \C{quad_const_add} \cdot (t-t_0+1) + \frac{\C{quad_delta_drop}}{n}\sum_{s=t_0}^{t} \Delta^{s} \\
  & \leq \Upsilon^{t} +\C{quad_const_add} - \frac{\C{quad_delta_drop}}{n} \cdot \Delta^t - \C{quad_const_add} \cdot (t-t_0+1) + \frac{\C{quad_delta_drop}}{n}\sum_{s=t_0}^{t} \Delta^{s} \\
  & = \Upsilon^t - \C{quad_const_add} \cdot (t-t_0 ) + \frac{\C{quad_delta_drop}}{n}\sum_{s=t_0}^{t-1} \Delta^{s} = Z^{t}.
\end{align*}
Further, consider the stopped random variable
\[
 \tilde{Z}^{t} := Z^{t \wedge \tau},
\]
which also forms a super-martingale. 
To prove concentration of $\tilde{Z}^{t+1}$, we will now derive an upper bound on $\left| \tilde{Z}^{t+1} - \tilde{Z}^{t} \right|$, by considering the following two cases:

\medskip

\noindent\textbf{Case 1} [$t \geq \tau$].
In this case, $\tilde{Z}^{t+1} = Z^{(t+1) \wedge \tau} = Z^{\tau}$, and similarly, $\tilde{Z}^{t} = Z^{t \wedge \tau} = Z^{\tau}$, so \[
\left| \tilde{Z}^{t+1} - \tilde{Z}^{t} \right| = 0.
\]

\noindent\textbf{Case 2} [$t < \tau$]. In this case, we have that $\max_{i \in [n]} |y_i^t| \leq \sqrt{T} \cdot \log^{-2} n$ and so\begin{align*}
 \left| \tilde{Z}^{t+1} - \tilde{Z}^{t} \right| 
 & \leq \left| \Upsilon^{t+1} - \Upsilon^t \right| + c_2 + \frac{c_1}{n} \cdot \Delta^t \\
 & \stackrel{(a)}{\leq} 5w_-\cdot \sqrt{T} \cdot \log^{-2} n + c_2 + c_1 \sqrt{T} \cdot \log^{-2} n \\
 & \leq (5w_- + 2c_1) \cdot \sqrt{T} \cdot \log^{-2} n,
\end{align*}
using in $(a)$ that $|\Upsilon^{t+1} - \Upsilon^t| \leq 4w_-\cdot \sqrt{T} \cdot \log^{-2} n + 2w_-^2\leq 5w_-\cdot \sqrt{T} \cdot \log^{-2} n$ by \cref{lem:basic}, and $\Delta^t \leq n \cdot \max_{i \in [n]} |y_i^t|$.

\medskip

Combining \textbf{Case 1} and \textbf{Case 2} above, we conclude
\[
  \left| \tilde{Z}^{t+1} - \tilde{Z}^{t} \right| \leq (5w_- + 2c_1) \cdot \sqrt{T} \cdot \log^{-2} n.
\]
Using Azuma's inequality (\cref{lem:azuma}) for super-martingales,
\begin{align*}
\Pro{ \left. \tilde{Z}^{t_1+1} - \tilde{Z}^{t_0} > T \, \right| \, \mathfrak{F}^{t_0}, \Upsilon^{t_0} \leq T } 
 &\leq \exp\left( - \frac{ T^2 }{ 2 \cdot \sum_{t=t_0}^{t_1} ((5w_- + 2c_1) \cdot \sqrt{T} \cdot \log^{-2} n)^2  } \right) \\ 
 &= \exp\left( - \frac{ T^2 }{ 2 \cdot (t_1-t_0)\cdot (5w_- + 2c_1)^2 \cdot T \cdot \log^{-4} n  } \right) \\
 &\stackrel{(a)}{\leq} \exp\left( - \frac{\log^2 n}{ 2 \cdot (5w_- + 2c_1)^2 } \right) \\
 &\stackrel{(b)}{=} n^{-\omega(1)},
\end{align*}
using in $(a)$ that $t_1 \leq t_0 + T \log^2 n$ and in $(b)$ that $w_+$ and $c_1$ are constants.

For the sake of a contradiction, assume now that at least half of the rounds $t \in [t_0,t_1]$ satisfy $\Delta^t \geq Cn$, which implies that
\[
  \sum_{t=t_0}^{t_1} \Delta^t \geq \frac{t_1-t_0+1}{2} \cdot Cn.
\]
When the event\[
  \left\{ \tilde{Z}^{t_1+1} \leq \tilde{Z}^{t_0} + T \right\} \cap \{ \tau > t_1 \}
\]
holds, then we have that $\big\{ Z^{t_1+1} \leq Z^{t_0} + T \big\}$ and so
\[
\Upsilon^{t_1+1}  - \C{quad_const_add} \cdot (t_1-t_0+1) + \frac{\C{quad_delta_drop}}{n}\sum_{t=t_0}^{t_1} \Delta^{t} \leq \Upsilon^{t_0} + T.\]
Rearranging the inequality above gives
\begin{align} \Upsilon^{t_1+1} & \leq \Upsilon^{t_0} + T + \C{quad_const_add} \cdot (t_1-t_0+1) - \frac{\C{quad_delta_drop}}{n}\sum_{t=t_0}^{t_1} \Delta^{t} \notag  \\
  & \leq \Upsilon^{t_0} + T + \C{quad_const_add} \cdot (t_1-t_0+1) - \frac{\C{quad_delta_drop}}{n} \cdot \frac{t_1 - t_0+1}{2} \cdot Cn \notag \\
  & \leq \Upsilon^{t_0} + T + (t_1 - t_0 +1) \cdot \left(\C{quad_const_add} - \frac{\C{quad_delta_drop}}{2} \cdot C\right) .\label{eq:upsilon_contradiction}
\end{align} 
Recall that we start from a round $t_0$ where $\Upsilon^{t_0} \leq T$, that $C > \frac{8c_2}{c_1}$ (and $c_2 > 1$) and $t_1 - t_0 \geq T$. So by \eqref{eq:upsilon_contradiction} we have   
\[
\Upsilon^{t_1+1} 
 \leq  T + T +  T \cdot \left(\C{quad_const_add} - \frac{\C{quad_delta_drop}}{2} \cdot C\right) 
 < 2T - 3T \cdot c_2 < 0,
\]
which is a contradiction. We conclude that when $\{ Z^{t_1+1} \leq Z^{t_0} + T \}$, then half of the rounds $t \in [t_0,t_1]$ satisfy $\Delta^{t} \leq C n$, i.e.
\begin{align}
\Pro{\left. \left\{ \tilde{G}_{t_0}^{t_1} \geq \frac{1}{2} \cdot (t_1 - t_0) \right\} \cup \{ \tau \leq t_1 \} ~\right|~ \mathfrak{F}^{t_0}, \Upsilon^{t_0} \leq T } \geq 1 - n^{-\omega(1)}. \label{eq:first_union}
\end{align}
Now, \cref{lem:newcorrespondence} gives
\begin{equation}%
\Pro{ \left. G_{t_0}^{t_1} \geq 4\eps^2  \cdot \tilde{G}_{t_0}^{t_1 - \frac{n}{4\eps}} \;\right|\; \mathfrak{F}^{t_0}, \Upsilon^{t_0} \leq T} \geq 1- (t_1-t_0) \cdot e^{-\eps n}. \label{eq:second_union}
\end{equation}
Then, noting that
$\tilde{G}_{t_0}^{t_1 - \frac{n}{4\eps}} \geq \tilde{G}_{t_0}^{t_1} - \frac{n}{4\eps}$ and
by taking the union bound of \eqref{eq:first_union} and \eqref{eq:second_union} we have,
\begin{equation*}
\Pro{ \left. \left\{ G_{t_0}^{t_1} \geq 4\eps^2 \cdot \Big(\frac{1}{2} \cdot (t_1 - t_0) - \frac{n}{4\eps}\Big) \right\} \cup \{ \tau \leq t_1 \} \;\right|\; \mathfrak{F}^{t_0}, \Upsilon^{t_0} \leq T} \geq 1- (t_1-t_0) \cdot e^{-\eps n} - n^{-\omega(1)}.
\end{equation*}
Since, $\frac{1}{4} \cdot (t_1 - t_0) \geq \frac{1}{4} T \geq \frac{n}{4\eps}$, we can deduce that 
\begin{equation*}
\Pro{ \left. \left\{ G_{t_0}^{t_1} \geq \eps^2 \cdot (t_1 - t_0) \right\} \cup \{ \tau \leq t_1 \} \;\right|\; \mathfrak{F}^{t_0}, \Upsilon^{t_0} \leq T} \geq 1- (t_1-t_0) \cdot e^{-\eps n} - n^{-\omega(1)}.\qedhere
\end{equation*}
\end{proof}

\subsection{Recovery Phase of the Process} \label{sec:recovery}

In the following, we start the analysis at round $t_0 = m - n^3 \log^4 n$ and using \cref{lem:initial_gap_nlogn}, we obtain that \Whp~$\Gap(t_0) = \Oh(n \log n)$ and $\Lambda^{t_0} \leq n \cdot \exp(\Oh(n \log n))$. Using \cref{lem:many_good_quantiles_whp} we conclude that \Whp~for a constant fraction of the rounds $t \in [t_0,t_0 + n^3 \log^4 n]$, it holds that $\delta^t \in [\eps, 1 - \eps]$.
Next, using the adjusted exponential potential $\tilde{\Lambda}$ we exploit the drop of the exponential potential function in those rounds to infer that \Whp~there is a round $s \in [t_0,t_0+n^3 \log^4 n]$ with $\Lambda^{s} \leq cn$ (\cref{lem:recovery}). In \cref{sec:stabilization}, we will prove that \Whp~for the remaining rounds in $[s, m]$, which are at most $\Oh(n^3 \log^4 n)$, the potential becomes $\Oh(n)$ once every $\Oh(n \log n)$ rounds (\cref{lem:stabilization}).

\begin{lem}[Recovery] \label{lem:recovery}
Consider any $\PTwo \cap \WOne$-process or $\POne \cap \WTwo$-process and the potential $\Lambda := \Lambda(\alpha)$ for $\alpha > 0$ as defined in \eqref{eq:alpha_def}. Then, for $\C{lambda_bound}>1$ as defined  in \cref{cor:change_for_large_lambda}, for any round $m \geq n^3 \log^4 n$,
\[
\Pro{\bigcup_{s \in [m - n^3 \log^4 n, m]}\{ \Lambda^s \leq \C{lambda_bound} n\} } \geq 1 - n^{-5}.
\]
\end{lem}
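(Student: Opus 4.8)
The plan is to set $t_0 := m - n^3 \log^4 n$ and combine three ingredients: the weak $\Oh(n\log n)$ gap bound of \cref{lem:initial_gap_nlogn}, the ``many good quantiles'' statement of \cref{lem:many_good_quantiles_whp} applied with $T := n^3 \log^4 n$, and the super-martingale property of the adjusted potential $\tilde\Lambda_{t_0}$ from \cref{lem:lambda_tilde_is_supermartingale}. First I would condition on the event $\mathcal{A} := \{\max_{i\in[n]}|y_i^{t_0}| \leq \C{poly_n_gap} n \log n\}$, which holds with probability $1 - n^{-12}$ by \cref{lem:initial_gap_nlogn}$(i)$; on $\mathcal{A}$ we have $\Lambda^{t_0} \leq n \cdot \exp(\alpha \C{poly_n_gap} n \log n)$ and also $\Upsilon^{t_0} \leq (\C{poly_n_gap} n\log n)^2 \cdot n \leq T = n^3\log^4 n$ for large $n$ (indeed $\Upsilon^{t_0} \leq n\cdot \max_i (y_i^{t_0})^2 \leq n\cdot \C{poly_n_gap}^2 n^2 \log^2 n$, which is $\Oh(n^3\log^2 n) \leq n^3\log^4 n$). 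Moreover by \cref{lem:initial_gap_nlogn}$(ii)$, applied with $t_1$ ranging over $[t_0,m]$ together with a union bound over the $n^3\log^4 n$ rounds, \Whp~$\max_{i\in[n]}|y_i^{t}| \leq 2\C{poly_n_gap} n\log n$ for all $t \in [t_0,m]$; call this event $\mathcal{B}$, so $\Pro{\neg\mathcal{B}} \leq n^3\log^4 n \cdot n^{-12} = n^{-\omega(1)}$.

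The heart of the argument is to run \cref{lem:many_good_quantiles_whp} with $T := n^3\log^4 n$ and $t_1 := m = t_0 + T$. The stopping time there is $\tau = \inf\{t\geq t_0 : \max_i |y_i^t| > \sqrt{T}\log^{-2} n = n^{3/2}\log^{-2}n \cdot \log n \cdot \cdots\}$; more precisely $\sqrt{T}\log^{-2}n = n^{3/2}\log^2 n \cdot \log^{-2} n = n^{3/2}$, which is $\gg 2\C{poly_n_gap} n\log n$, so on $\mathcal{B}$ we have $\tau > m$. Hence \cref{lem:many_good_quantiles_whp} gives that, conditionally on $\mathfrak{F}^{t_0}$ and $\Upsilon^{t_0}\leq T$, with probability $1 - (t_1-t_0)e^{-\eps n} - n^{-\omega(1)}$ we have $G_{t_0}^{m}(\eps) \geq \eps^2 \cdot n^3\log^4 n$. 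Call this event $\mathcal{C}$; on $\mathcal{A}$ the hypothesis $\Upsilon^{t_0}\leq T$ is satisfied, so $\Pro{\neg\mathcal{C}\mid\mathcal{A}} \leq n^3\log^4 n\cdot e^{-\eps n} + n^{-\omega(1)} = n^{-\omega(1)}$.

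Now suppose, for contradiction, that $\neg\bigcup_{s\in[t_0,m]}\{\Lambda^s \leq \C{lambda_bound} n\}$, i.e., $\Lambda^s > \C{lambda_bound} n$ for every $s\in[t_0,m]$; this is exactly the event $\mathcal{E}_{t_0}^{m}$. Then the indicator $\mathbf{1}_{\mathcal{E}_{t_0}^{m-1}} = 1$, so by \cref{lem:lambda_tilde_is_supermartingale} and optional stopping (or just iterating the one-step inequality, using $\ex{\tilde\Lambda_{t_0}^{m}} \leq \ex{\tilde\Lambda_{t_0}^{t_0}} = \ex{\Lambda^{t_0}}$) we get, on $\mathcal{A}$,
\[
\ex{\Lambda^{m}\cdot \mathbf{1}_{\mathcal{E}_{t_0}^{m-1}} \cdot \exp\!\Big(\tfrac{\C{good_quantile_mult}\alpha}{n} G_{t_0}^{m-1} - \tfrac{\C{bad_quantile_mult}\alpha^2}{n} B_{t_0}^{m-1}\Big) \,\Big|\, \mathfrak{F}^{t_0}} \leq \Lambda^{t_0} \leq n\cdot e^{\alpha \C{poly_n_gap} n\log n}.
\]
On $\mathcal{E}_{t_0}^{m}\cap\mathcal{C}$ we have $\Lambda^{m} > \C{lambda_bound} n \geq 1$, $G_{t_0}^{m-1} \geq G_{t_0}^{m} - 1 \geq \eps^2 n^3\log^4 n - 1$, and $B_{t_0}^{m-1} \leq n^3\log^4 n$, so the expression inside the expectation is at least $\exp\big(\tfrac{\C{good_quantile_mult}\alpha}{n}(\eps^2 n^3\log^4 n - 1) - \tfrac{\C{bad_quantile_mult}\alpha^2}{n} n^3\log^4 n\big)$. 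By the choice of $\alpha$ in \eqref{eq:alpha_def}, $\alpha \leq \tfrac{\C{good_quantile_mult}\eps^2}{2\C{bad_quantile_mult}(1-\eps^2)} \leq \tfrac{\C{good_quantile_mult}\eps^2}{2\C{bad_quantile_mult}}$, i.e. $\C{bad_quantile_mult}\alpha^2 \leq \tfrac12 \C{good_quantile_mult}\alpha\eps^2$, so this exponent is at least $\tfrac{\C{good_quantile_mult}\alpha\eps^2}{2n}\cdot n^3\log^4 n - \Oh(1) = \Omega(n^2\log^4 n)$, which dwarfs $\log(n\cdot e^{\alpha\C{poly_n_gap} n\log n}) = \Oh(n\log n)$. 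Applying Markov's inequality to the displayed bound therefore forces $\Pro{\mathcal{E}_{t_0}^{m}\cap\mathcal{C}\mid\mathcal{A}} \leq \exp(-\Omega(n^2\log^4 n)) = n^{-\omega(1)}$. Combining this with $\Pro{\neg\mathcal{C}\mid\mathcal{A}} = n^{-\omega(1)}$ gives $\Pro{\mathcal{E}_{t_0}^{m}\mid\mathcal{A}} = n^{-\omega(1)}$, hence $\Pro{\bigcup_{s}\{\Lambda^s\leq \C{lambda_bound} n\}\mid\mathcal{A}} \geq 1 - n^{-\omega(1)}$, and finally, removing the conditioning on $\mathcal{A}$ (which costs $n^{-12}$) yields the claimed bound $1 - n^{-5}$ with room to spare.

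The main obstacle is the bookkeeping around the stopping time $\tau$ and the event $\mathcal{B}$: \cref{lem:many_good_quantiles_whp} only controls $G_{t_0}^{m}$ on the event $\{\tau > m\}$, so I must separately verify that the weak bound of \cref{lem:initial_gap_nlogn}$(ii)$ (propagated over all of $[t_0,m]$ by a union bound) keeps all normalized loads well below the threshold $\sqrt{T}\log^{-2}n = n^{3/2}$, which it comfortably does since $2\C{poly_n_gap} n\log n = o(n^{3/2})$. Everything else — the Markov argument on $\tilde\Lambda_{t_0}$, the choice of $\alpha$ making the good-round gain outpace the bad-round loss, and the final union bounds over the $\poly(n)$-many failure events — is routine once that is in place.
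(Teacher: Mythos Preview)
Your approach is correct and essentially identical to the paper's: anchor at $t_0 = m - n^3\log^4 n$ via \cref{lem:initial_gap_nlogn}, apply \cref{lem:many_good_quantiles_whp} with $T = n^3\log^4 n$ (using \cref{lem:initial_gap_nlogn}$(ii)$ plus a union bound to rule out $\{\tau \leq m\}$), and then run Markov on the super-martingale $\tilde\Lambda_{t_0}$ to force $\mathbf{1}_{\mathcal{E}_{t_0}^{m}} = 0$. Two small slips worth fixing: $n^3\log^4 n\cdot n^{-12} = n^{-9}\log^4 n$ is not $n^{-\omega(1)}$, and from $\alpha \leq \tfrac{\C{good_quantile_mult}\eps^2}{2\C{bad_quantile_mult}(1-\eps^2)}$ you only get $\C{bad_quantile_mult}\alpha^2 \leq \tfrac{\C{good_quantile_mult}\alpha\eps^2}{2(1-\eps^2)}$ rather than $\leq \tfrac12\C{good_quantile_mult}\alpha\eps^2$ --- both are harmless (the exponent is still $\Omega(n^2\log^4 n)$), and the paper avoids the second by using the sharper $B_{t_0}^{m-1}\leq (1-\eps^2)(m-t_0)$, which you may as well use too.
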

\begin{proof}
Let $T := n^3 \log^4 n$, $t_0 := m - T$ and $t_1 := m$. By \cref{lem:initial_gap_nlogn}, there exists a constant $\C{poly_n_gap} > 0$, such that
\begin{align} \label{eq:starting_bound_rec}
\Pro{\max_{i \in [n]} \left| y_i^{t_0} \right| \leq \C{poly_n_gap} n \log n } \geq 1 - n^{-12}.
\end{align}
Note that when $\big\{ \max_{i \in [n]} \left| y_i^{t_0} \right| \leq \C{poly_n_gap} n \log n \big\}$ holds, then we also have that \[
\Upsilon^{t_0} \leq n \cdot (\C{poly_n_gap} n \log n)^2 
  \leq \C{poly_n_gap}^2 n^3 \log^2 n, \qquad \text{and} \qquad \Lambda^{t_0} \leq n \cdot e^{\alpha \C{poly_n_gap} n \log n} 
  \leq e^{2\C{poly_n_gap} n \log n}.
\]
By \cref{lem:many_good_quantiles_whp} we have for $\tau:=\inf\big\{ t \geq t_0 \colon \max_{i \in [n]} |y_i^{t}| > \sqrt{T} \cdot \log^{-2} n \big\}$, and using that the event $\big\{ \max_{i \in [n]} \left| y_i^{t_0} \right| \leq \C{poly_n_gap} n \log n \big\}$ implies that $\{ \Upsilon^{t_0} \leq T \}$,
\begin{align*}
& \Pro{ \left\{ G_{t_0}^{t_1} \geq \eps^2 \cdot (t_1 - t_0) \right\} \cup \{ \tau \leq t_1 \} \;\Big|\; \mathfrak{F}^{t_0}, \max_{i \in [n]} \left| y_i^{t_0} \right| \leq \C{poly_n_gap} n \log n} \\
 & \qquad \geq \Pro{ \left\{ G_{t_0}^{t_1} \geq \eps^2 \cdot (t_1 - t_0) \right\} \cup \{ \tau \leq t_1 \} \;\Big|\; \mathfrak{F}^{t_0}, \Upsilon^{t_0} \leq T} \\
 & \qquad \geq 1 - T  \cdot e^{-\eps n} - n^{-\omega(1)} \geq 1 - n^{-\omega(1)}.
\end{align*}
By \cref{lem:initial_gap_nlogn}~$(i)$ and the union bound over $t_1 - t_0 = n^3 \log^4 n \leq n^4$ rounds, we have that \begin{align*}
& \Pro{\tau > t_1 \,\left|\, \mathfrak{F}^{t_0}, \max_{i \in [n]} \left| y_i^{t_0} \right| \leq \C{poly_n_gap} n \log n \right. } \\
  & \qquad \geq \Pro{\left. \max_{t \in [t_0,t_1]}\max_{i \in [n]} \left| y_i^{t} \right| \leq 2 \C{poly_n_gap} n \log n \,\right|\, \mathfrak{F}^{t_0}, \max_{i \in [n]} \left| y_i^{t_0} \right| \leq \C{poly_n_gap} n \log n } 
  \geq 1 - T \cdot n^{-12} \geq 1 - n^{-8}.
\end{align*}
Hence, taking the union bound for the above two inequalities, we get that
\begin{equation}\label{eq:recovery_many_good_quantiles_whp}
\Pro{ G_{t_0}^{t_1} \geq \eps^2 \cdot (t_1 - t_0) \;\left|\; \mathfrak{F}^{t_0}, \max_{i \in [n]} \left| y_i^{t_0} \right| \leq \C{poly_n_gap} n \log n \right.} \geq 1 - n^{-\omega(1)} - 2n^{-8}.
\end{equation}
We will now use the adjusted exponential potential $(\tilde{\Lambda}_{t_0}^{t})_{t \geq t_0}$ (defined in \eqref{eq:tilde_lambda_def}) to show that $\Lambda$ becomes small in at least one round in $[t_0, t_1]$. By \cref{lem:lambda_tilde_is_supermartingale}, it forms a super-martingale, so $\ex{\tilde{\Lambda}_{t_0}^{t_1 + 1}\mid \mathfrak{F}^{t_0}} \leq \tilde{\Lambda}_{t_0}^{t_0} =  \Lambda^{t_0}$. Hence, using Markov's inequality we get \begin{align}
& \Pro{\left. \tilde{\Lambda}_{t_0}^{t_1+1} >\Lambda^{t_0} \cdot n^8 \,\right|\, \mathfrak{F}^{t_0}, \max_{i \in [n]} \left| y_i^{t_0} \right| \leq \C{poly_n_gap} n \log n } \notag \\
& \qquad \leq \Pro{\left. \tilde{\Lambda}_{t_0}^{t_1+1} >\Lambda^{t_0} \cdot n^8\,\right|\, \mathfrak{F}^{t_0}, \Lambda^{t_0} \leq e^{2 \C{poly_n_gap} n \log n} } 
\leq  n^{-8}. \label{eq:recovery_supermartingale_markov}
\end{align}
Thus, negating and by the definition of $\tilde{\Lambda}_{t_0}^t$, we have 
\[
\Pro{\Lambda^{t_1+1} \cdot \mathbf{1}_{\mathcal{E}_{t_0}^{t_1}} \leq \Lambda^{t_0} \cdot n^8 \cdot \exp\left(  \frac{\C{bad_quantile_mult} \alpha^2}{n} \cdot B_{t_0}^{t_1}  -\frac{\C{good_quantile_mult} \alpha}{n} \cdot G_{t_0}^{t_1} \right) \, \Bigg| \, \mathfrak{F}^{t_0}, \Lambda^{t_0} \leq e^{2 \C{poly_n_gap} n \log n} } \geq  1 - n^{-8}.    
\]
Further, if in addition to the two events $\{\tilde{\Lambda}_{t_0}^{t_1+1} \leq \Lambda^{t_0} \cdot n^8\}$ and $\{\Lambda^{t_0} \leq e^{2 \C{poly_n_gap} n \log n} \}$, also the event $\{G_{t_0}^{t_1} \geq \eps^2 \cdot (t_1 - t_0)\}$ holds, then
\begin{align*}
\Lambda^{t_1+1} \cdot \mathbf{1}_{\mathcal{E}_{t_0}^{t_1}} & \leq \Lambda^{t_0} \cdot n^8 \cdot \exp\left( \frac{\C{bad_quantile_mult} \alpha^2}{n} \cdot B_{t_0}^{t_1} - \frac{\C{good_quantile_mult} \alpha}{n} \cdot G_{t_0}^{t_1} \right) \\
 & \leq e^{2 \C{poly_n_gap} n \log n} \cdot n^8 \cdot \exp\left( \frac{\C{bad_quantile_mult} \alpha^2}{n} \cdot (1 - \eps^2) \cdot (t_1 - t_0) - \frac{\C{good_quantile_mult} \alpha}{n} \cdot \eps^2 \cdot (t_1 - t_0) \right) \\
 & \stackrel{(a)}{\leq} e^{2 \C{poly_n_gap} n \log n} \cdot n^8 \cdot \exp\left( \frac{\C{good_quantile_mult} \alpha}{n} \cdot \frac{\eps^2}{2} \cdot (t_1 - t_0) - \frac{\C{good_quantile_mult} \alpha}{n} \cdot \eps^2 \cdot (t_1 - t_0) \right) \\
 & = e^{2 \C{poly_n_gap} n \log n} \cdot n^8 \cdot \exp\left( - \frac{\C{good_quantile_mult} \alpha}{n} \cdot \frac{\eps^2}{2} \cdot (t_1 - t_0) \right) \\
 & \stackrel{(b)}{=} e^{2 \C{poly_n_gap} n \log n} \cdot n^8 \cdot \exp\left( - \frac{\C{good_quantile_mult} \alpha}{n} \cdot \frac{\eps^2}{2} \cdot n^3 \log^4 n \right) \stackrel{(c)}{\leq} 1,
\end{align*}
using in $(a)$ that $\alpha \leq \frac{\C{good_quantile_mult} \eps^2}{2 \cdot \C{bad_quantile_mult} \cdot (1 - \eps^2)}$ and $(b)$ that $T := n^3 \log^4 n$ and in $(c)$ that $\alpha, \eps , \C{poly_n_gap} > 0$ are constants.
Observe that $\{ \Lambda^{t_1+1} \geq n \}$ holds deterministically, so we deduce from the above inequality that $\mathbf{1}_{\mathcal{E}_{t_0}^{t_1}}=0$, that is,
\[\Pro{ \neg \mathcal{E}_{t_0}^{t_1} \; \Bigg|\; \mathfrak{F}^{t_0},  \;\; \; \tilde{\Lambda}_{t_0}^{t_1+1} \leq \Lambda^{t_0} \cdot n^8, \;\;\; \Lambda^{t_0} \leq e^{2 \C{poly_n_gap} n \log n},  \;\;\; G_{t_0}^{t_1} \geq \eps^2 \cdot (t_1 - t_0)} \leq 1.\] 
Recalling the definition of $\mathcal{E}_{t_0}^{t_1} = \bigcap_{s \in [t_0, t_1]} \{ \Lambda^s > cn \}$, and taking the union bound over \eqref{eq:recovery_many_good_quantiles_whp} and \eqref{eq:recovery_supermartingale_markov} yields 
\[
\Pro{ \left. \bigcup_{s \in [t_0, t_1]} \{ \Lambda^s \leq \C{lambda_bound}n \} \; \right| \; \mathfrak{F}^{t_0}, \max_{i \in [n]} \left| y_i^{t_0} \right| \leq \C{poly_n_gap} n \log n }\geq 1 - 2n^{-8} - n^{-8} = 1 - 3n^{-8}.
\]
We conclude by combining with \eqref{eq:starting_bound_rec},
\[
\Pro{ \bigcup_{s \in [t_0, t_1]} \{ \Lambda^s \leq \C{lambda_bound} n \} } \geq \left(1 - 3n^{-8}\right) \cdot \left(1 - n^{-12}\right) \geq 1 - n^{-5}. \qedhere
\]
\end{proof}

\subsection{Stabilization Phase of the Process} \label{sec:stabilization}
 
 The next lemma establishes that a small value of the exponential potential function is preserved for some longer time period. We will refer to this property of the exponential potential function of being ``trapped'' in some region as \emph{stabilization}. More precisely, we prove in the lemma below that if for some round $t_0$ the exponential potential is not too small, i.e., at most $2cn$, then within the next $\Oh(n \log n)$ rounds, the exponential potential will \Whp~be smaller than $cn$ at least once.

\begin{lem}[Stabilization]\label{lem:stabilization}
Consider any $\PTwo \cap \WOne$-process or $\POne \cap \WTwo$-process and the potential $\Lambda := \Lambda(\alpha)$ for $\alpha > 0$ as defined in \eqref{eq:alpha_def}. Further, let $\eps>0$ be given by \eqref{eq:c_def}, $\C{lambda_bound}:=\C{lambda_bound}(\eps)>1$ be as defined in \cref{cor:change_for_large_lambda}, and $\C{stab_time} := \frac{22}{\C{good_quantile_mult} \eps^2 \alpha}$, for $\C{good_quantile_mult}:=\C{good_quantile_mult}(\eps) > 0$ as defined in \cref{lem:good_quantile_good_decrease}. Then, for any round $t_0 \geq 0$,
\[
 \Pro{\left.  \bigcup_{s \in [t_0,t_0 + \C{stab_time} n \log n]} \left\{\Lambda^{s} \leq \C{lambda_bound} n \right\} ~\right|~ \mathfrak{F}^{t_0}, \Lambda^{t_0} \in (\C{lambda_bound}n, 2\C{lambda_bound}n] } \geq 1 - n^{-9}.
\] 
\end{lem}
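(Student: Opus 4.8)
The plan is to mirror the recovery-phase argument of \cref{lem:recovery}, but over the much shorter window $[t_0,t_1]$ with $t_1 := t_0 + \C{stab_time}\, n\log n$, exploiting that here we already start from $\Lambda^{t_0}=\Oh(n)$. Throughout, set $T := \C{stab_time}\, n\log n$ and recall $\mathcal{E}_{t_0}^{t_1} := \bigcap_{t\in[t_0,t_1]}\{\Lambda^t > \C{lambda_bound} n\}$; the goal is precisely to show $\neg\mathcal{E}_{t_0}^{t_1}$ holds \Whp. First I would convert the hypothesis $\Lambda^{t_0}\le 2\C{lambda_bound} n$ into a bound on the quadratic potential: by \cref{lem:lambda_bound_implies_upsilon_bound}, $\Upsilon^{t_0}\le \big(\frac{4}{\alpha}\log\frac{4}{\alpha}\big)^2\cdot 2\C{lambda_bound} n = \Oh(n)$, so $\Upsilon^{t_0}\le T$ for large $n$, while also $T\ge n/\eps$ and $\Lambda^{t_0}\le 2\C{lambda_bound} n\le n^2$.

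Next I would run \cref{lem:many_good_quantiles_whp} with this $T$ and $t_1 = t_0+T$ (so $t_0+T\le t_1\le t_0+T\log^2 n$): for $\tau := \inf\{t\ge t_0 : \max_{i}|y_i^t| > \sqrt{T}\log^{-2}n\}$, with probability $1 - (t_1-t_0)e^{-\eps n} - n^{-\omega(1)}$ either $G_{t_0}^{t_1}(\eps)\ge \eps^2(t_1-t_0)$ or $\tau\le t_1$. To discard the escape alternative I would invoke \cref{lem:small_change_for_linear_lambda} (legitimate since $t_1-t_0 = \C{stab_time}\, n\log n$ and $\Lambda^{t_0}\le n^2$): with probability $1-n^{-12}$ we have $\max_{t\in[t_0,t_1]}\max_i|y_i^t|\le \log^2 n$, and since $\log^2 n < \sqrt{T}\log^{-2}n$ for large $n$ this forces $\tau > t_1$. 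Hence on an event of probability $1 - n^{-12} - n^{-\omega(1)}$ we have $G_{t_0}^{t_1}(\eps)\ge \eps^2(t_1-t_0)$, equivalently $B_{t_0}^{t_1}(\eps)\le (1-\eps^2)(t_1-t_0)+1$.

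The last ingredient is the adjusted exponential potential $\tilde\Lambda_{t_0}^{t}$ of \eqref{eq:tilde_lambda_def}, which is a super-martingale by \cref{lem:lambda_tilde_is_supermartingale}, so $\Ex{\tilde\Lambda_{t_0}^{t_1+1}\mid\mathfrak{F}^{t_0}}\le \Lambda^{t_0}$ and Markov's inequality gives $\tilde\Lambda_{t_0}^{t_1+1}\le \Lambda^{t_0}\cdot n^{10}\le 2\C{lambda_bound} n^{11}$ with probability $1-n^{-10}$. Now suppose, for contradiction, that $\mathcal{E}_{t_0}^{t_1}$ holds; then $\mathbf{1}_{\mathcal{E}_{t_0}^{t_1}}=1$ and, unfolding \eqref{eq:tilde_lambda_def} and using $B_{t_0}^{t_1}\le (1-\eps^2)(t_1-t_0)+1$, $G_{t_0}^{t_1}\ge \eps^2(t_1-t_0)$ together with $\alpha\le \frac{\C{good_quantile_mult}\eps^2}{2\C{bad_quantile_mult}(1-\eps^2)}$ from \eqref{eq:alpha_def},
\[
\Lambda^{t_1+1} = \tilde\Lambda_{t_0}^{t_1+1}\cdot\exp\!\Big(\frac{\C{bad_quantile_mult}\alpha^2}{n}B_{t_0}^{t_1} - \frac{\C{good_quantile_mult}\alpha}{n}G_{t_0}^{t_1}\Big) \le 2\C{lambda_bound} n^{11}\cdot\exp\!\Big(-\frac{\C{good_quantile_mult}\alpha\eps^2}{2n}(t_1-t_0) + \frac{\C{bad_quantile_mult}\alpha^2}{n}\Big).
\]
With $t_1-t_0 = \C{stab_time}\, n\log n$ and $\C{stab_time} = \frac{22}{\C{good_quantile_mult}\eps^2\alpha}$ the exponent is $-11\log n + o(1)$, so $\Lambda^{t_1+1}\le 2\C{lambda_bound}\cdot e^{o(1)} = \Oh(1) < n$ for large $n$ — impossible, since $\Lambda^{t_1+1}\ge n$ always. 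Thus $\mathcal{E}_{t_0}^{t_1}$ fails, i.e.\ $\Lambda^s\le \C{lambda_bound} n$ for some $s\in[t_0,t_1]$; a union bound over the failure probabilities $n^{-12}$, $n^{-\omega(1)}$, $n^{-10}$ gives at least $1-n^{-9}$.

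The work here is essentially bookkeeping rather than a new idea — the recovery and stabilization phases run on the same machinery, only with different starting potentials and interval lengths. The two points that genuinely need care are: (i) the choice of $\C{stab_time}$ must be large enough that the accumulated per-good-round drop of $\Lambda$ over $[t_0,t_1]$ beats both the Markov blow-up factor $n^{10}$ and the initial $\Oh(n)$ magnitude, which is exactly what $\C{stab_time} = 22/(\C{good_quantile_mult}\eps^2\alpha)$ buys; and (ii) the inequality $\log^2 n < \sqrt{T}\log^{-2}n$ needed to upgrade \cref{lem:small_change_for_linear_lambda} to $\tau > t_1$. The off-by-one in $B_{t_0}^{t_1}+G_{t_0}^{t_1} = (t_1-t_0)+1$ only contributes an $\Oh(\alpha^2/n)=o(1)$ term to the exponent and is harmless.
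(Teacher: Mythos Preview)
Your proposal is correct and follows essentially the same route as the paper's proof: convert $\Lambda^{t_0}\le 2\C{lambda_bound} n$ to $\Upsilon^{t_0}\le T$ via \cref{lem:lambda_bound_implies_upsilon_bound}, combine \cref{lem:many_good_quantiles_whp} with \cref{lem:small_change_for_linear_lambda} to force $G_{t_0}^{t_1}\ge \eps^2(t_1-t_0)$, then use the super-martingale $\tilde\Lambda$ plus Markov and the choice of $\C{stab_time}$ to drive $\Lambda^{t_1+1}\cdot\mathbf{1}_{\mathcal{E}_{t_0}^{t_1}}$ below $n$, contradicting $\Lambda^{t_1+1}\ge n$. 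The paper handles the bookkeeping identically (dropping the off-by-one you flag, since it is absorbed into the $o(1)$), and arrives at the same $1-2n^{-12}-n^{-10}\ge 1-n^{-9}$ union bound.
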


\begin{proof} 
Let $T := \C{stab_time} n \log n$ and  $t_1 :=t_0 + T$. By \cref{lem:many_good_quantiles_whp}, we have for  $\tau:=\inf\{ t \geq t_0 \colon \max_{i \in [n]} |y_i^{t}| > \sqrt{T} \cdot \log^{-2} n \}$ that
\[
\Pro{ \left\{ G_{t_0}^{t_1} \geq \eps^2 \cdot (t_1 - t_0) \right\} \cup \{ \tau \leq t_1 \} \;\Big|\; \mathfrak{F}^{t_0}, \Upsilon^{t_0} \leq T} \geq 1 - T  \cdot e^{-\eps n} - n^{-\omega(1)} \geq 1 - n^{-12}.
\] 
By \cref{lem:lambda_bound_implies_upsilon_bound}, the event $\big\{ \Lambda^{t_0} \in (\C{lambda_bound}n, 2\C{lambda_bound}n] \big\}$ implies that $\big\{ \Upsilon^{t_0} \leq T \big\}$ (since $T = \omega(n)$), and hence
\begin{align} \label{eq:stab_union_bound_1}
\Pro{ \left\{ G_{t_0}^{t_1} \geq \eps^2 \cdot (t_1 - t_0) \right\} \cup \{ \tau \leq t_1 \} \;\Big|\; \mathfrak{F}^{t_0}, \Lambda^{t_0} \in (\C{lambda_bound}n, 2\C{lambda_bound}n]} \geq 1 - n^{-12}.
\end{align}
By \cref{lem:small_change_for_linear_lambda} with $\kappa := \C{stab_time}$, as $\sqrt{T} \cdot \log^{-2} n \geq \log^2 n$ we have that \begin{align}
\Pro{\left. \tau > t_1 \,\right| \, \mathfrak{F}^{t_0},\; \Lambda^{t_0} \in (\C{lambda_bound}n, 2\C{lambda_bound}n] } 
  & \geq \Pro{\left. \max_{t \in [t_0,t_1]}\max_{i \in [n]} \left| y_i^{t} \right| \leq \log^2 n \,\right| \, \mathfrak{F}^{t_0},\; \Lambda^{t_0} \in (\C{lambda_bound}n, 2\C{lambda_bound}n] } \notag \\
  & \geq \Pro{\left. \max_{t \in [t_0,t_1]}\max_{i \in [n]} \left| y_i^{t} \right| \leq \log^2 n \,\right| \, \mathfrak{F}^{t_0},\; \Lambda^{t_0} \leq n^2 } \notag \\
  & \geq 1 - n^{-12}. \label{eq:stab_union_bound_2}
\end{align}
Hence, taking the union bound over \eqref{eq:stab_union_bound_1} and \eqref{eq:stab_union_bound_2}, we get that
\begin{equation}\label{eq:stabilization_many_good_quantiles_whp}
\Pro{ G_{t_0}^{t_1} \geq \eps^2 \cdot (t_1 - t_0) \;\Big|\; \mathfrak{F}^{t_0}, \Lambda^{t_0} \in (\C{lambda_bound}n, 2\C{lambda_bound}n]} \geq 1 - 2n^{-12}.
\end{equation}
We will now use the adjusted exponential potential $(\tilde{\Lambda}_{t_0}^{t})_{t \geq t_0}$ (defined in \eqref{eq:tilde_lambda_def}) to show that $\Lambda$ becomes small in at least one round in $[t_0, t_1]$. By \cref{lem:lambda_tilde_is_supermartingale}, it forms a super-martingale, so $\ex{\tilde{\Lambda}_{t_0}^{t_1+1}\mid \mathfrak{F}^{t_0}} \leq \tilde{\Lambda}_{t_0}^{t_0} =  \Lambda^{t_0}$. Hence, using Markov's inequality we get \begin{align}
\Pro{\left. \tilde{\Lambda}_{t_0}^{t_1+1} >\Lambda^{t_0} \cdot n^{10}\,\right|\, \mathfrak{F}^{t_0}, \Lambda^{t_0} \in (\C{lambda_bound}n, 2\C{lambda_bound}n] } 
\leq  n^{-10}. \label{eq:stabilization_supermartingale_markov}
\end{align}
Thus, by the definition of $\tilde{\Lambda}_{t_0}^t$, we have 
\begin{equation*}
\Pro{\Lambda^{t_1+1} \cdot \mathbf{1}_{\mathcal{E}_{t_0}^{t_1}} \leq \Lambda^{t_0} \cdot n^{10} \cdot \exp\left(  \frac{\C{bad_quantile_mult} \alpha^2 }{n} \cdot B_{t_0}^{t_1}  -\frac{\C{good_quantile_mult} \alpha}{n} \cdot G_{t_0}^{t_1} \right) \, \Bigg| \, \mathfrak{F}^{t_0}, \Lambda^{t_0} \in (\C{lambda_bound}n, 2\C{lambda_bound}n] } \geq  1 - n^{-10}.
\end{equation*}
Further, if in addition to the two events $\big\{\tilde{\Lambda}_{t_0}^{t_1+1} \leq \Lambda^{t_0} \cdot n^{10} \big\}$ and $\big\{\Lambda^{t_0} \in (\C{lambda_bound}n, 2\C{lambda_bound}n]\big\}$, also the event $\{G_{t_0}^{t_1} \geq \eps^2 \cdot (t_1 - t_0)\}$ holds, then
\begin{align*}
\Lambda^{t_1+1} \cdot \mathbf{1}_{\mathcal{E}_{t_0}^{t_1}} & \leq \Lambda^{t_0} \cdot n^{10} \cdot \exp\left( \frac{\C{bad_quantile_mult} \alpha^2}{n} \cdot B_{t_0}^{t_1} - \frac{\C{good_quantile_mult} \alpha}{n} \cdot G_{t_0}^{t_1} \right) \\
 & \leq 2cn^{11} \cdot \exp\left( \frac{\C{bad_quantile_mult} \alpha^2}{n} \cdot (1 - \eps^2) \cdot (t_1 - t_0) - \frac{\C{good_quantile_mult} \alpha}{n} \cdot \eps^2 \cdot (t_1 - t_0) \right) \\
 & \stackrel{(a)}{\leq} 2cn^{11} \cdot \exp\left( \frac{\C{good_quantile_mult} \alpha}{n} \cdot \frac{\eps^2}{2} \cdot (t_1 - t_0) - \frac{\C{good_quantile_mult} \alpha}{n} \cdot \eps^2 \cdot (t_1 - t_0) \right) \\
 & = 2cn^{11} \cdot \exp\left( - \frac{\C{good_quantile_mult} \alpha}{n} \cdot \frac{\eps^2}{2} \cdot (t_1 - t_0) \right) \\
 & \stackrel{(b)}{=} 2cn^{11} \cdot \exp\left( - \frac{\C{good_quantile_mult} \alpha}{n} \cdot \frac{\eps^2}{2} \cdot  \frac{22}{\C{good_quantile_mult} \eps^2 \alpha} \cdot n \log n \right)  = 2\C{lambda_bound},
\end{align*}
using in $(a)$ that $\alpha \leq \frac{\C{good_quantile_mult} \eps^2}{2 \cdot (1 - \eps^2) \cdot \C{bad_quantile_mult}}$ and $(b)$ that $T := \frac{22}{\C{good_quantile_mult} \eps^2 \alpha} \cdot n \log n$.
Observe that $\{ \Lambda^{t_1} \geq n \}$ holds deterministically, so we deduce from the above inequality that $\mathbf{1}_{\mathcal{E}_{t_0}^{t_1}}=0$, that is,
\[\Pro{ \neg \mathcal{E}_{t_0}^{t_1} \; \Bigg|\; \mathfrak{F}^{t_0},  \;\; \; \tilde{\Lambda}_{t_0}^{t_1+1} \leq \Lambda^{t_0} \cdot n^{10}, \;\;\; \Lambda^{t_0} \in (\C{lambda_bound}n, 2\C{lambda_bound}n],  \;\;\; G_{t_0}^{t_1} \geq \eps^2 \cdot (t_1 - t_0)} =1.\] 
Recalling the definition of $\mathcal{E}_{t_0}^{t_1} = \bigcap_{s \in [t_0, t_1]} \{ \Lambda^s \geq cn \}$, and taking the union bound over \eqref{eq:stabilization_many_good_quantiles_whp} and \eqref{eq:stabilization_supermartingale_markov} yields 
\[
\Pro{ \left. \bigcup_{s \in [t_0, t_0 + \C{stab_time} n \log n]} \{ \Lambda^s \leq cn \} \; \right| \; \mathfrak{F}^{t_0}, \Lambda^{t_0} \in (\C{lambda_bound}n, 2\C{lambda_bound}n]  }\geq 1 - 2n^{-12} - n^{-10} \geq 1 - n^{-9},
\]
as claimed. 
\end{proof} 

The next lemma shows that if there is a round with linear (exponential) potential then the gap is at most logarithmic for the next $n^4$ rounds. This follows by repeatedly applying \cref{lem:stabilization} to any contiguous sub-interval of length $\Theta(n\log n)$ and showing that it contains a round where $\Lambda$ is linear. The result then follows since a linear potential implies a logarithmic gap, and the load of a bin can change by at most $\Theta(\log n)$ in $\Theta(n\log n)$ rounds.

\begin{lem} \label{lem:good_gap_after_good_lambda}
Consider any $\PTwo \cap \WOne$-process or $\POne \cap \WTwo$-process, the potential $\Lambda := \Lambda(\alpha)$ for $\alpha > 0$ as defined in \eqref{eq:alpha_def}, $\C{lambda_bound} > 0$ as defined in \cref{cor:change_for_large_lambda} and $\C{stab_time} > 0$ as defined in \cref{lem:stabilization}. Then, for $\kappa := \frac{2}{\alpha} + w_- \cdot \C{stab_time}$ and for any rounds $t_0 \geq 0$ and $t_0 < t_1 \leq t_0 + n^4$,
\[
\Pro{\left. \max_{i \in [n] } \lvert y_i^{t_1} \rvert \leq \kappa \cdot \log n \, \right| \, \mathfrak{F}^{t_0}, \Lambda^{t_0} \leq \C{lambda_bound} n} \geq 1 - n^{-5}.
\]
\end{lem}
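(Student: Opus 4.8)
The plan is to chain together repeated applications of the stabilization lemma (\cref{lem:stabilization}) to cover the whole interval $[t_0,t_1]$ with overlapping blocks of length $\C{stab_time} n\log n$, showing that within each block there is \Whp~a round where $\Lambda$ is linear, and then to convert ``$\Lambda$ linear somewhere recently'' into ``gap logarithmic at $t_1$'' via the crude deterministic fact that loads move by at most $w_-$ per round. First I would partition $[t_0,t_1]$ into $K := \lceil (t_1-t_0)/(\C{stab_time} n\log n)\rceil \leq n^4$ consecutive blocks $I_j := [t_0 + (j-1)\C{stab_time} n\log n,\; t_0 + j\C{stab_time} n\log n]$, $j=1,\dots,K$. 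Define the events $A_j := \bigcup_{s\in I_j}\{\Lambda^s \leq \C{lambda_bound} n\}$. The key inductive claim is that, conditioned on $\{\Lambda^{t_0}\leq \C{lambda_bound} n\}$, all of $A_1,\dots,A_K$ hold \Whp.

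To prove the claim I would argue inductively on $j$. Given $A_{j-1}$ (and $A_1,\dots,A_{j-2}$), let $s_{j-1}\in I_{j-1}$ be the last round in that block with $\Lambda^{s_{j-1}}\leq \C{lambda_bound} n$. Now I want to apply \cref{lem:stabilization} starting from $s_{j-1}$; but that lemma is stated for a starting value $\Lambda^{t_0}\in(\C{lambda_bound} n, 2\C{lambda_bound} n]$, whereas here $\Lambda^{s_{j-1}}\leq \C{lambda_bound} n$. The clean way around this: if $\Lambda^{s}\leq \C{lambda_bound} n$ at some $s$, then either $\Lambda$ stays $\leq \C{lambda_bound} n$ for many rounds (in which case $A_j$ holds trivially, since $I_j$ overlaps or immediately follows the tail of $I_{j-1}$ and starts within $\C{stab_time} n\log n$ of $s$), or there is a first round $s' > s$ where $\Lambda^{s'} > \C{lambda_bound} n$; since $\Lambda$ changes by a bounded factor per round (at most $1 + \C{bad_quantile_mult}\alpha^2/n$ plus an additive $\C{bad_quantile_mult}$ on the relevant scale, by \cref{lem:bad_quantile_increase_bound}, so certainly $\Lambda^{s'}\leq 2\C{lambda_bound} n$ for large $n$), we may invoke \cref{lem:stabilization} from round $s'$ with starting value in $(\C{lambda_bound} n, 2\C{lambda_bound} n]$. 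That yields a round in $[s', s' + \C{stab_time} n\log n]$ with $\Lambda \leq \C{lambda_bound} n$, and this round lies in $I_j$ (or earlier), giving $A_j$. Each such step fails with probability at most $n^{-9}$ by \cref{lem:stabilization}; a union bound over the $K\leq n^4$ blocks shows $\bigcap_{j=1}^K A_j$ holds with probability at least $1 - n^4\cdot n^{-9} = 1 - n^{-5}$.

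Finally I would cash in $\bigcap_j A_j$ to bound the gap at $t_1$. The block containing $t_1$, say $I_{j^\star}$, contains a round $s^\star$ with $\Lambda^{s^\star}\leq \C{lambda_bound} n$, hence $\max_i |y_i^{s^\star}| \leq \frac{1}{\alpha}\log(\C{lambda_bound} n) \leq \frac{2}{\alpha}\log n$ for large $n$ (since every summand of $\Lambda^{s^\star}$ is $e^{\alpha|y_i^{s^\star}|}\leq \C{lambda_bound} n$). Between $s^\star$ and $t_1$ there are at most $\C{stab_time} n\log n$ rounds, and in each round any bin's load changes by at most $w_-$ and the mean by at most $w_-/n$, so each normalized load $|y_i^{t}|$ changes by at most $w_- + w_-/n \leq 2w_-$ per round — actually more carefully $|y_i^{t+1} - y_i^t| \leq w_-$ suffices as a crude bound; wait, a bin allocated $w_-$ balls gains $w_-(1-1/n) < w_-$ in normalized load, and an un-allocated bin loses at most $w_-/n$, so indeed $|y_i^{t+1}-y_i^t|\leq w_-$. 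Hence $\max_i|y_i^{t_1}| \leq \frac{2}{\alpha}\log n + w_-\cdot \C{stab_time} n\log n$ — but that is too weak; I must instead only advance from the round in $I_{j^\star}$ \emph{nearest} to $t_1$. That round is within $\C{stab_time}n\log n$ of $t_1$ only gives the weak bound, so the better approach is: each block has length $\C{stab_time} n\log n$, and $t_1$ lies in block $I_{j^\star}$ which contains a good round $s^\star$ with $|t_1 - s^\star| \leq \C{stab_time} n\log n$; then $\max_i|y_i^{t_1}| \leq \frac{2}{\alpha}\log n + w_-\cdot\C{stab_time}\cdot ???$. I realize the stated $\kappa = \frac{2}{\alpha} + w_-\C{stab_time}$ forces the drift over the interval $[s^\star,t_1]$ to contribute only $w_-\C{stab_time}\log n$, so the interval between the nearest good round and $t_1$ must have length $\leq \C{stab_time} n\log n / n = \C{stab_time}\log n$. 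This is \emph{not} true for a block of length $\C{stab_time} n\log n$; therefore the right move is to apply \cref{lem:stabilization} (or a refinement) on the \emph{single} block of length $\C{stab_time} n\log n$ ending exactly at $t_1$, namely $[t_1 - \C{stab_time} n\log n,\, t_1]$, obtaining a good round $s^\star$ inside it. But then $|t_1 - s^\star|$ can still be up to $\C{stab_time} n\log n$.

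The resolution — and the step I expect to be the main obstacle — is that \cref{lem:stabilization} should be applied so that the good round $s^\star$ is \emph{close} to $t_1$: one covers $[t_0, t_1]$ by blocks and picks $s^\star$ to be in the block $I_{j^\star}$ that ends at or just after $t_1$; since consecutive good rounds are at most one block apart, there is a good round $s^\star$ with $t_1 - \C{stab_time} n\log n \leq s^\star \leq t_1$. Then crucially I must bound the load change over $[s^\star, t_1]$ not by the trivial $w_-(t_1-s^\star)$ but using \cref{lem:small_change_for_linear_lambda}: conditioned on $\Lambda^{s^\star}\leq \C{lambda_bound} n \leq n^2$, \Whp~$\max_{t\in[s^\star,t_1]}\max_i|y_i^t|\leq \log^2 n$, which already gives the bound (with $\kappa$ absorbed) — but the lemma statement wants $\kappa\log n$, not $\log^2 n$. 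So instead the correct final step uses the \emph{additive-then-multiplicative} growth of $\Lambda$ over the short window: starting from $\Lambda^{s^\star}\leq \C{lambda_bound} n$, \cref{lem:bad_quantile_increase_bound} and \cref{lem:geometric_arithmetic}$(ii)$ give $\ex{\Lambda^{t_1}\mid \mathfrak{F}^{s^\star}} \leq \C{lambda_bound} n\cdot(1+\C{bad_quantile_mult}\alpha^2/(2n))^{t_1-s^\star} + O(n) \leq \C{lambda_bound} n \cdot n^{O(\C{stab_time}\alpha^2 \cdot \text{(something)})}$; over $t_1 - s^\star \leq \C{stab_time} n\log n$ rounds this is $n^{O(1)}\cdot n$, and Markov plus $n^5$-slack gives $\Lambda^{t_1}\leq n^{O(1)}$ \Whp, hence $\max_i|y_i^{t_1}|\leq \frac{1}{\alpha}\cdot O(\log n) = \kappa\log n$ for the stated $\kappa$ after tracking the constant. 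I would present this combined estimate carefully, union-bounding over all the $\leq n^4$ applications of \cref{lem:stabilization} and the single final Markov step, to arrive at the claimed $1-n^{-5}$ probability.
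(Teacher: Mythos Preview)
Your chaining of \cref{lem:stabilization} via stopping times and a union bound over at most $n^4$ applications is exactly what the paper does, and that part is fine. The genuine gap is the final ``cash in'' step, where you try to convert ``$\Lambda$ linear at some nearby round $s^\star$'' into ``$\max_i |y_i^{t_1}| \leq \kappa\log n$'' with the stated $\kappa = \tfrac{2}{\alpha} + w_-\C{stab_time}$. You correctly diagnose that the crude bound $|y_i^{t+1}-y_i^t|\leq w_-$ over an interval of length $\C{stab_time} n\log n$ is far too weak, and none of your workarounds (\cref{lem:small_change_for_linear_lambda} gives $\log^2 n$, Markov on $\Lambda^{t_1}$ gives an uncontrolled constant) recovers the stated $\kappa$.

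The missing observation is an \emph{asymmetry} in how $y_i^t$ changes: in any round, $y_i^t$ can \emph{decrease} by at most $w_-/n$ (only the mean moves, by at most $w_-/n$, for a bin not allocated to), whereas it can increase by up to $w_-$. Over $\C{stab_time} n\log n$ rounds the total decrease is therefore at most $w_-\C{stab_time}\log n$, not $w_-\C{stab_time} n\log n$. The paper exploits this by looking in \emph{both} time directions from $t_1$: for the upper bound on $\max_i y_i^{t_1}$ it takes a good round $s \in [t_1,\, t_1+\C{stab_time} n\log n]$ (in the \emph{future}), so $y_i^{t_1} \leq y_i^{s} + (s-t_1)\cdot w_-/n \leq \tfrac{2}{\alpha}\log n + w_-\C{stab_time}\log n$; for the lower bound on $\min_i y_i^{t_1}$ it takes a good round $s \in [t_1-\C{stab_time} n\log n,\, t_1]$ (in the past), giving $y_i^{t_1} \geq y_i^{s} - (t_1-s)\cdot w_-/n \geq -\tfrac{2}{\alpha}\log n - w_-\C{stab_time}\log n$. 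This is precisely why the event you should establish is ``for every $t\in[t_0,t_1]$ there is a good round in $[t,\,t+\C{stab_time} n\log n]$'' (so in particular one at or after $t_1$), rather than merely ``every block of length $\C{stab_time} n\log n$ up to $t_1$ contains a good round''.
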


\begin{proof} 
We start by defining the event
\[ \mathcal{M}_{t_0}^{t_1} = \left\{\text{for all }t\in [t_0, t_1 ]\text{ there exists } s\in [t, t+\C{stab_time} n\log n  ] \text{ such that }\Lambda^s \leq \C{lambda_bound}n\right\},\]	
that is, if $\mathcal{M}_{t_0}^{t_1}$ holds then we have $\Lambda^s < \C{lambda_bound} n $ at least once every $\C{stab_time} n\log n$ rounds. 

Assume now that $\mathcal{M}_{t_0}^{t_1}$ holds. Choosing $t=t_1$, there exists $s \in [t_1,t_1+\C{stab_time} n \log n]$ such that $\Lambda^s \leq \C{lambda_bound}n$, which in turn implies by definition of $\Lambda$ that $\max_{i \in [n]} |y_i^s| \leq \frac{1}{\alpha} \cdot \log (\C{lambda_bound}n) < \frac{2}{\alpha} \cdot \log n$. Clearly, any $y_i^t$ can decrease by at most $w_-/n$ in each round, and from this it follows that if $\mathcal{M}_{t_0}^{t_1}$ holds, then 
$
\max_{i \in [n]} y_i^{t_1} \leq \max_{i \in [n]} |y_i^s| + w_-\cdot \C{stab_time} \log n \leq \kappa \cdot \log n, $
for $\kappa := \frac{2}{\alpha} +  w_- \cdot \C{stab_time}$.

We now turn to lower bounding the minimum load. If $t_1 \geq t_0 + \C{stab_time} n \log n$ and $\mathcal{M}_{t_0}^{t_1}$ holds, then choosing $t = t_1 - \C{stab_time} n \log n$, there exists $s \in [t_1 - \C{stab_time} n \log n, t_1]$ such that $\Lambda^s \leq \C{lambda_bound}n$. (In case $t_1 -  \C{stab_time} n \log n < t_0 $, then we arrive at the same conclusion by taking $s = t_0$ and using the condition $\Lambda^{t_0} \leq \C{lambda_bound} n$,). %
This in turn implies
$
 \max_{i \in [n]} | y_i^s  | < \frac{2}{\alpha} \cdot \log n.
$
Hence 
\[
\min_{i \in [n]} y_i^{t_1} \geq \min_{i \in [n]} y_i^{s} - w_-\cdot \C{stab_time} \log n 
\geq -\max_{i \in [n]} |y_i^{s}| -  w_-\cdot \C{stab_time} \log n 
\geq -\kappa \cdot \log n.
\]
 Hence, $\mathcal{M}_{t_0}^{t_1}$ (conditioned on $\Lambda^{t_0} \leq cn$) implies that $\max_{i \in [n]} \lvert y_i^{t_1} \rvert \leq \kappa \cdot \log n$. It remains to bound $\Pro{\neg \mathcal{M}_{t_0}^{t_1} \mid \mathfrak{F}^{t_0}, \Lambda^{t_0} \leq cn}$.

Note that since we start with $\Lambda^{t_0} \leq \C{lambda_bound} n$, if for some round $j \geq t_0$, $\Lambda^j> 2 \C{lambda_bound} n$,  there must exist some $s \in [t_0,j)$, such that $\Lambda^s \in (\C{lambda_bound} n, 2\C{lambda_bound}n]$, 
since for every $t \geq 0$ it holds $\Lambda^{t+1} \leq \Lambda^t \cdot e^{\alpha w_-} \leq 2 \Lambda^t$ and $\alpha \leq 1/(2w_-)$. Let $t_0 < \tau_1 < \tau_2 < \cdots$ and $t_0 =: s_0 < s_1 < \cdots$ be two interlaced sequences defined recursively for $i \geq 1$ by \[
  \tau_i := \inf\{ \tau >  s_{i-1}: \Lambda^\tau \in (\C{lambda_bound}n, 2\C{lambda_bound}n] \}
  \qquad\text{and}\qquad 
  s_i := \inf\{s > \tau_i : \Lambda^s \leq \C{lambda_bound} n\}. 
\] 
Thus we have
  \[
  t_0 = s_0 < \tau_1 < s_1 < \tau_2 <s_2 < \cdots, 
  \]
  and since $\tau_i > \tau_{i-1}$ we have $\tau_{t_1 - t_0}\geq t_1 - t_0$. Observe that if the event $\cap_{i=1}^{t_1 - t_0}\{s_i - \tau_i \leq \C{stab_time}n \log n \} $ holds, then also $ \mathcal{M}_{t_0}^{t_1}$ holds. 
  
  Recall that by \cref{lem:stabilization} we have for any $i=1,2,\ldots, t_1 - t_0$ and any $\tau = t_0 + 1, \ldots , t_1$ \[
 \Pro{ \left. \bigcup_{t \in [\tau_i,\tau_i + \C{stab_time} n \log n]} \left\{\Lambda^{t} \leq \C{lambda_bound} n \right\} ~\right|~ \mathfrak{F}^{\tau} , \; \Lambda^{\tau} \in (\C{lambda_bound}n, 2\C{lambda_bound}n], \tau_i = \tau } \geq  1 - n^{-9},
  \] and by negating and the definition of $s_i$,
  \[
  \Pro{s_i - \tau_i> \C{stab_time}n \log n \, \Big| \, \mathfrak{F}^{\tau}, \Lambda^{\tau} \in (\C{lambda_bound}n, 2\C{lambda_bound}n], \tau_i = \tau } \leq n^{-9}.
  \]
  Since the above bound holds for any $i$ and $\mathfrak{F}^{\tau}$, with $\tau_i = \tau$, it follows by the union bound over all $i=1,2,\ldots,t_1 - t_0$, as $t_1 - t_0 \leq n^4$,
  \[ %
  \Pro{\left. \neg \mathcal{M}_{t_0}^{t_1} \,\right|\, \mathfrak{F}^{t_0}, \Lambda^{t_0} \leq cn }\leq (t_1 - t_0)\cdot n^{-9} \leq n^{-5}. \qedhere
  \]
  \end{proof}

\subsection{Completing the Proof of Theorem~\ref{thm:main_technical}}

We are now ready to combine the recovery and stabilization phases to complete the proof the main theorem.%

{\renewcommand{\thethm}{\ref{thm:main_technical} }
\begin{thm}[Restated, page~\pageref{thm:main_technical}]
\maintechnical
	\end{thm}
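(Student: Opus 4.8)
The plan is to assemble the recovery and stabilization phases, which have already been carried out in Lemmas~\ref{lem:recovery}, \ref{lem:stabilization} and \ref{lem:good_gap_after_good_lambda}, and glue them together over the interval $[m-n^3\log^4 n, m]$ using a union bound. Fix an arbitrary round $m\geq 0$; we may assume $m\geq n^3\log^4 n$, since otherwise we simply start the argument at round $0$ (the initial load vector is balanced, so $\Lambda^0 = n \leq \C{lambda_bound} n$ holds deterministically and we can invoke \cref{lem:good_gap_after_good_lambda} directly). Set $t_0 := m - n^3\log^4 n$ and $\kappa := \frac{2}{\alpha} + w_- \cdot \C{stab_time}$ as in \cref{lem:good_gap_after_good_lambda}, where $\alpha$ is the smoothing parameter fixed in \eqref{eq:alpha_def}.

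First I would apply \cref{lem:recovery}: with probability at least $1 - n^{-5}$ there is a round $s \in [t_0, m]$ with $\Lambda^s \leq \C{lambda_bound} n$. Let $s^\star$ be the first such round. Conditioned on $\{\Lambda^{s^\star} \leq \C{lambda_bound} n\}$, I would then apply \cref{lem:good_gap_after_good_lambda} with starting round $s^\star$ and target round $m$: since $0 \leq m - s^\star \leq n^3\log^4 n \leq n^4$, the lemma gives
\[
\Pro{\left. \max_{i \in [n]} |y_i^m| \leq \kappa \log n \,\right|\, \mathfrak{F}^{s^\star}, \Lambda^{s^\star} \leq \C{lambda_bound} n} \geq 1 - n^{-5}.
\]
Taking the union bound over the failure events of \cref{lem:recovery} and \cref{lem:good_gap_after_good_lambda} (and summing over the at most $n^3\log^4 n \leq n^4$ possible values of $s^\star$, which only costs another polynomial factor absorbed into the exponent, or alternatively phrasing \cref{lem:good_gap_after_good_lambda} to hold simultaneously for all valid start rounds) yields $\Pro{\max_{i\in[n]}|y_i^m|\leq \kappa\log n} \geq 1 - n^{-4}$ for $n$ sufficiently large, after renaming $\kappa$ by the constant of the statement. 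Finally, $\Gap(m) = \max_i y_i^m \leq \max_i |y_i^m|$, so the bound on the gap follows immediately.

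The only mild subtlety — and the step I would be most careful about — is that \cref{lem:good_gap_after_good_lambda} is conditioned on a \emph{fixed} deterministic start round $t_0$ with $\Lambda^{t_0}\leq \C{lambda_bound}n$, whereas $s^\star$ is a stopping time. This is handled exactly as in the proof of \cref{lem:good_gap_after_good_lambda} itself: the strong Markov property lets one condition on $\mathfrak{F}^{s^\star}$ and apply the lemma on the event $\{s^\star = s\}$ for each fixed $s$, then sum. Since $s^\star$ ranges over at most $n^3\log^4 n$ values each contributing failure probability $n^{-5}$, the total failure probability from this part is at most $n^3\log^4 n \cdot n^{-5} \leq n^{-1}$, and combined with the $n^{-5}$ from \cref{lem:recovery} and the $n^{-12}$-type error in \cref{lem:initial_gap_nlogn} this is comfortably below $n^{-4}$. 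No new ideas are needed beyond what is already in place; the work here is purely bookkeeping of error probabilities and the union bound.
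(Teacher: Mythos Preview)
Your approach is essentially the paper's: handle $m<n^3\log^4 n$ by starting at $t_0=0$, otherwise invoke \cref{lem:recovery} to find a stopping time $\sigma\in[m-n^3\log^4 n,m]$ with $\Lambda^\sigma\le cn$, and then apply \cref{lem:good_gap_after_good_lambda} from $\sigma$ to $m$.

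However, your error bookkeeping in the last paragraph is wrong. You bound the failure contribution from \cref{lem:good_gap_after_good_lambda} by $n^3\log^4 n\cdot n^{-5}\le n^{-1}$ and then claim this is ``comfortably below $n^{-4}$''. It is not: $n^{-1}\gg n^{-4}$, so your argument as written only yields probability $1-O(n^{-1})$, not $1-n^{-4}$. The crude union bound over all possible values of the stopping time is too lossy here.

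The fix is exactly what the paper does: instead of a union bound, use the law of total probability. Since the events $\{\sigma=s\}$ for $s\in[t_0,m]$ are disjoint and each is $\mathfrak F^s$-measurable with $\Lambda^s\le cn$, one has
\[
\Pro{\max_i|y_i^m|\le \kappa\log n}\ \ge\ \sum_{s=t_0}^{m}\Pro{\max_i|y_i^m|\le\kappa\log n\,\big|\,\mathfrak F^s,\Lambda^s\le cn}\cdot\Pro{\sigma=s}\ \ge\ (1-n^{-5})\cdot\Pro{\sigma\le m},
\]
and combining with $\Pro{\sigma\le m}\ge 1-n^{-5}$ from \cref{lem:recovery} gives $(1-n^{-5})^2\ge 1-n^{-4}$. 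No factor of $n^3\log^4 n$ ever appears.
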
}
	\addtocounter{thm}{-1}

\begin{proof}
Consider an arbitrary round $m \geq 0$ and the potential $\Lambda := \Lambda(\alpha)$ with smoothing parameter $\alpha > 0$ as defined in \eqref{eq:alpha_def} and let $\C{lambda_bound} \geq 1$ be as defined in \cref{cor:change_for_large_lambda}. If $m < n^3 \log^4 n$, then the claim follows by \cref{lem:good_gap_after_good_lambda} with $t_0 := 0$ and $t_1 := m$, since $\Lambda^{t_0} = n \leq cn$.

Otherwise, let $t_0 := m - n^3 \log^4 n$. %
Firstly, by \cref{lem:recovery}, we get
\[
\Pro{\bigcup_{s \in [t_0, m] } \left\{ \Lambda^{s} \leq cn \right\} } \geq 1 - n^{-5}.
\]
Hence for the stopping time $\sigma:=\inf\{ s \geq t_0 \colon \Lambda^s \leq cn \}$ we have $\Pro{ \sigma \leq m } \geq 1-n^{-5}$.

Secondly, by \cref{lem:good_gap_after_good_lambda}, there exists a constant $\kappa := \kappa(\alpha)$ such that for any round $s \in [t_0, m]$
\[
\Pro{\left. \max_{i \in [n]} \left| y_i^m \right| \leq \kappa \cdot \log n ~\right|~ \mathfrak{F}^s , \Lambda^s \leq \C{lambda_bound} n} \geq 1 - n^{-5}.
\]
Combining the two inequalities from above, 
\begin{align*}
    \Pro { \max_{i \in [n]} \left| y_i^m \right| \leq \kappa \log n} &\geq \sum_{s=t_0}^{m} \Pro{ \left. \max_{i \in [n]} \left| y_i^m \right| \leq \kappa \log n  ~\right|~ \sigma = s } \cdot \Pro{ \sigma = s} \\
    &\geq \sum_{s=t_0}^{m} \Pro{ \left. \max_{i \in [n]} \left| y_i^m \right| \leq \kappa \log n  ~\right|~ \mathfrak{F}^{s}, \Lambda^{s} \leq \C{lambda_bound}n } \cdot \Pro{ \sigma = s} \\
    &\geq \left(1 - n^{-5} \right) \cdot \Pro{ \sigma \leq m} \\
    &\geq \left(1 - n^{-5} \right) \cdot \left(1 - n^{-5} \right) \geq 1 - n^{-4}.\qedhere
\end{align*}

\end{proof}

\section{Lower Bounds}\label{sec:lower_bounds}

In this section, we shall prove lower bounds for \MeanBiased processes satisfying condition \PThree. We now recall the definition of condition \PThree:
 
\begin{itemize}
  \item \textbf{Condition \PThree}: for any $\eps>0$ there exists a $\C{p4k} := \C{p4k}(\eps) \in (0, 1]$ such that for each round $t \geq 0$ with $\delta^t \in [\eps, 1 - \eps]$, the probability allocation vector $p^t$ satisfies, 
    \[ \min_{i \in [n]} p_i^t \geq \frac{\C{p4k}}{n}.\] 
\end{itemize}
This condition is satisfied for many natural processes, including \Twinning, \MeanThinning, and the $(1+\beta)$-process with constant $\beta \in (0,1)$. See Table \ref{tab:overview} for a concise overview of old and new lower bounds.
One interesting aspect is that our lower bound for \MeanBiased processes makes use of the quantile stabilization result from the previous section (\cref{lem:many_good_quantiles_whp}).

Essentially this condition \PThree implies that in any round $t$ where the mean quantile $\delta^t$ is in $[\eps, 1 - \eps]$, there is at least an $\Omega(1/n)$-probability of allocating to each bin.

We shall now observe that 
\begin{itemize}
	\item The \MeanThinning process has $p_i^t \geq \frac{\delta^t}{n}$, satisfying \PThree with $\C{p4k} := \eps$. 
	\item The \Twinning process has $p_i^t = \frac{1}{n}$, satisfying \PThree with $\C{p4k} := 1$. 
	\item The $(1+\beta)$-process has $ p_i^t = (1-\beta) \cdot \frac{1}{n}+ \beta \cdot \frac{2i-1}{n^2}> \frac{1-\beta}{n}$, satisfying \PThree with $\C{p4k} := 1-\beta$. 
\end{itemize}

Recall that $G_0^m:=G_0^m(\eps)$ is the number of rounds $t \in [0, m]$ with $\delta^t \in [\eps, 1 - \eps]$. The next lemma shows that \Whp~a constant fraction of the first $m = \poly(n)$ rounds are good. 
	\begin{lem}
		\label{lem:many_good_quantiles_in_lightly_loaded}
		Consider any $\PTwo \cap \WOne$-process or $\POne \cap \WTwo$-process. Let $\eps \in \big( 0, \frac{1}{2} \big)$ be given by \eqref{eq:c_def},   and $m \in [\C{stab_time} n \log n, n^3 \log^4 n]$ for $\C{stab_time}:= \C{stab_time}(\eps)> 0$ as defined in \cref{lem:stabilization}. Then,  
		\[
		\Pro{G_0^m(\eps) > \eps^2 \cdot m} \geq 1 - n^{-1/2}.
		\]
	\end{lem}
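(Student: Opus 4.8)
The plan is to bootstrap the two machinery pieces already developed: the initial polynomial bound on the exponential potential (\cref{lem:initial_gap_nlogn}) and the quantile-counting lemma (\cref{lem:many_good_quantiles_whp}). First I would split the window $[0,m]$ into consecutive blocks of length $T := \C{stab_time} n \log n$; there are $\lfloor m/T \rfloor \geq 1$ of them since $m \geq \C{stab_time} n \log n$, and since $m \leq n^3 \log^4 n$ the number of blocks is $\poly(n)$. The idea is that within each block a constant fraction of rounds have $\delta^t \in [\eps, 1-\eps]$, so summing over the blocks gives $G_0^m(\eps) \geq \eps^2 \cdot (\text{length covered by blocks}) \geq \eps^2 m \cdot (1-o(1))$, and by adjusting constants (or by noting that the last partial block only helps) we get the stated bound $G_0^m(\eps) > \eps^2 m$. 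Actually, to avoid the $(1-o(1))$ nuisance it is cleaner to apply \cref{lem:many_good_quantiles_whp} directly once with $t_0 := 0$, $t_1 := m$, and $T' := \Theta(n \log n)$ chosen so that $m \in [T', T' \log^2 n]$ — which is possible precisely because $m \in [\C{stab_time} n \log n, n^3 \log^4 n]$ (taking $T' \asymp m/\log^2 n$ when $m$ is near the top of the range, or $T' \asymp \C{stab_time} n\log n$ near the bottom; in all cases $T' \geq n/\eps$ for large $n$).

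The second ingredient needed to invoke \cref{lem:many_good_quantiles_whp} is the hypothesis $\Upsilon^{t_0} \leq T'$ at $t_0 = 0$. But $x^0$ is the empty load vector, so $\Upsilon^0 = \sum_i (y_i^0)^2 = 0 \leq T'$ holds deterministically. Thus \cref{lem:many_good_quantiles_whp} applies verbatim with $\mathfrak{F}^{0}$ trivial, yielding
\[
\Pro{ \left\{ G_0^{m}(\eps) \geq \eps^2 \cdot m \right\} \cup \{ \tau \leq m \} } \geq 1 - m \cdot e^{-\eps n} - n^{-\omega(1)},
\]
where $\tau := \inf\{ t \geq 0 : \max_{i\in[n]} |y_i^t| > \sqrt{T'} \log^{-2} n \}$. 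Since $m \leq n^3 \log^4 n$, the term $m\cdot e^{-\eps n} + n^{-\omega(1)}$ is $n^{-\omega(1)}$, in particular at most $\tfrac12 n^{-1/2}$ for large $n$. It therefore remains to rule out the event $\{\tau \leq m\}$ with probability at least $1 - \tfrac12 n^{-1/2}$.

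The main obstacle is exactly this last step: showing $\tau > m$ w.h.p., i.e.\ that the gap stays below $\sqrt{T'}\log^{-2} n$ throughout $[0,m]$. To handle this I would use the crude bound from \cref{lem:initial_gap_nlogn}: by part $(i)$, $\Pro{\max_i |y_i^t| \leq \C{poly_n_gap} n \log n} \geq 1 - n^{-12}$ for each fixed $t$, and taking a union bound over all $t \in [0,m]$ (with $m \leq n^3\log^4 n \leq n^4$) gives $\Pro{\max_{t\in[0,m]}\max_i |y_i^t| \leq \C{poly_n_gap} n\log n} \geq 1 - n^{-8}$. Now when $m$ is near the bottom of the range, $T' \asymp \C{stab_time} n \log n$, so $\sqrt{T'}\log^{-2} n \asymp \sqrt{n}\,\log^{-3/2}n$, which is \emph{smaller} than $\C{poly_n_gap} n\log n$ — so the crude bound from \cref{lem:initial_gap_nlogn} is not directly enough, and one must instead use \cref{lem:small_change_for_linear_lambda}: starting from $\Lambda^0 = n \leq n^2$, that lemma gives $\Pro{\max_{t\in[0,t']}\max_i |y_i^t| \leq \log^2 n \mid \Lambda^0 \leq n^2} \geq 1 - n^{-12}$ over any window of length $t' \leq \kappa n \log n$; chaining $\Oh(\log n)$ such applications (restarting from $\Lambda^{t}\leq n \cdot e^{\alpha \log^2 n}\leq n^2$ at the end of each, which holds since $|y_i^t|\leq \log^2 n$) covers all of $[0, \C{stab_time} n\log n]$, and $\log^2 n < \sqrt{T'}\log^{-2} n$ for large $n$. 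When $m$ is large, $\sqrt{T'}\log^{-2}n \gtrsim \sqrt{m}\,\log^{-3} n$, which for $m = \omega(n\log^{8}n)$ dominates $\C{poly_n_gap} n \log n$, so the union bound over \cref{lem:initial_gap_nlogn} suffices; and for the intermediate range of $m$ one can again fall back on \cref{lem:small_change_for_linear_lambda} applied over the (polynomially many) length-$\kappa n\log n$ sub-windows. In all cases $\Pro{\tau \leq m} \leq n^{-5}$, say. Putting the pieces together by a final union bound gives $\Pro{G_0^m(\eps) > \eps^2 m} \geq 1 - n^{-\omega(1)} - n^{-5} \geq 1 - n^{-1/2}$, completing the proof.
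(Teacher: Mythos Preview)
Your overall structure matches the paper's: apply \cref{lem:many_good_quantiles_whp} once with $t_0=0$ and use that $\Upsilon^0=0$ holds deterministically. The paper takes the simpler choice $T:=m$ (the constraint $t_0+T\le t_1\le t_0+T\log^2 n$ becomes $m\le m\le m\log^2 n$, trivially satisfied), which avoids your case analysis on~$T'$.

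The substantive difference is in how you rule out $\{\tau\le m\}$. The paper simply invokes \cref{thm:main_technical} (already proved in \cref{sec:mean_biased_gap_completion}, so there is no circularity): it gives $\max_i|y_i^t|\le\kappa\log n$ with probability $\ge 1-n^{-4}$ at each fixed $t$, and a union bound over $m\le n^3\log^4 n$ rounds yields $\Pro{\tau>m}\ge 1-n^{-4}\cdot m\ge 1-n^{-4/5}$, since $\kappa\log n\ll\sqrt{m}\,\log^{-2}n$ for $m\ge \C{stab_time} n\log n$.

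Your attempt to avoid the main theorem has a genuine gap. The chaining of \cref{lem:small_change_for_linear_lambda} fails: after one window you know $\max_i|y_i^t|\le\log^2 n$, which gives $\Lambda^t\le n\cdot e^{\alpha\log^2 n}$, but for constant $\alpha$ this is $\gg n^2$ once $n$ is large, so the hypothesis $\Lambda^{t_0}\le n^2$ is not recoverable and the lemma cannot be re-applied. Moreover, for intermediate $m$ (say $m\asymp n^2$) neither tool suffices: \cref{lem:initial_gap_nlogn} only gives $|y_i^t|\le \C{poly_n_gap} n\log n$, which can exceed $\sqrt{m}\,\log^{-2}n$, while \cref{lem:small_change_for_linear_lambda} is stated for constant~$\kappa$ and so does not cover a window of superlinear length. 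The clean fix is exactly what the paper does: use \cref{thm:main_technical}.
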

	\begin{proof}
		The proof is analogous to the first steps in the proof of the stabilization lemma (\cref{lem:stabilization}). 
		At round $0$, we have that\[
		\Lambda^{0} = n, \qquad \text{and} \qquad \Upsilon^{0} = 0.
		\] Let $t_0 := 0$, $t_1 := m$, $T := m$, and $\tau:=\inf\{ t \geq 0 \colon \max_{i \in [n]} |y_i^{t}| > \sqrt{T} \cdot \log^{-2} n \}$. Then,  
		\begin{align} \label{eq:lb_union_bound_1}
			\Pro{ \left\{ G_{0}^{m}> \eps^2 \cdot m \right\} \cup \{ \tau \leq m \} } \geq 1 - T  \cdot e^{-\eps n} - n^{-\omega(1)} \geq 1 - n^{-12},
		\end{align}by \cref{lem:many_good_quantiles_whp}. By \cref{thm:main_technical} and the union bound we have 
	\begin{equation}\label{eq:lb_union_bound_2}	\Pro{\tau > m} 
		\geq \Pro{ \max_{t \in [0,m]}\max_{i \in [n]} \left| y_i^{t} \right| \leq \log^2 n  } \geq 1 - n^{-4} \cdot m \geq  1 -   n^{-4/5}.\end{equation}
		Hence, taking the union bound over \eqref{eq:lb_union_bound_1} and \eqref{eq:lb_union_bound_2}, we get that\[
		\Pro{ G_{0}^{m}> \eps^2 \cdot m } \geq 1 - n^{-12} -   n^{-4/5} \geq 1 - n^{-1/2}. \qedhere
		\] 
	\end{proof}
\noindent We will use the following lower bound on the \OneChoice process (see also~\cite[Section 4]{PTW15}). 

\begin{lem}[{\cite[Lemma A.2]{LS23RBB}}]
\label{lem:one_choice_cnlogn}
Consider the \OneChoice process for $m = \lambda n \log n$ balls where $\lambda \geq 1/\log n$.
Then,
\[
\Pro{\max_{i \in [n]} x_i^m \geq \left(\lambda + \frac{\sqrt{\lambda}}{10} \right)  \cdot \log n} \geq 1 - n^{-2}.
\]
\end{lem}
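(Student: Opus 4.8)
The statement to prove is the following: for the \OneChoice process with $m = \lambda n \log n$ balls where $\lambda \geq 1/\log n$, with probability at least $1 - n^{-2}$ the maximum load exceeds $\bigl(\lambda + \tfrac{\sqrt{\lambda}}{10}\bigr)\log n$. Let me sketch how I would argue this.

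The plan is to use a \emph{second-moment / Poisson-approximation} argument on a single fixed bin, then boost to the maximum over all $n$ bins via near-independence. First I would fix a bin, say bin $1$, and let $L := x_1^m$ be its final load, which is exactly $\mathrm{Bin}(m, 1/n)$ with mean $\mu := m/n = \lambda \log n$. The target threshold is $k := \bigl(\lambda + \tfrac{\sqrt\lambda}{10}\bigr)\log n = \mu + \tfrac{\sqrt\lambda}{10}\log n = \mu\bigl(1 + \tfrac{1}{10\sqrt\lambda}\bigr)$; note the deviation $k - \mu = \tfrac{\sqrt\lambda}{10}\log n = \Theta(\sqrt{\mu \log n}\,)$ when $\lambda = \Theta(1)$, i.e. we are asking for a deviation of order $\sqrt{\text{variance} \cdot \log n}$, which is exactly the scale at which a binomial still has polynomially-large upper-tail probability. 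The key quantitative step is to show $\Pro{L \geq k} \geq n^{-1 + c}$ for some constant $c > 0$ (when $\lambda$ is bounded; the regime $\lambda = \omega(1)$ and the regime $\lambda \to 0$ need slightly different bookkeeping but are easier, since the relative deviation is larger). This lower bound on the binomial upper tail I would get either from the standard estimate $\Pro{\mathrm{Bin}(m,1/n) \geq k} \geq \binom{m}{k}n^{-k}(1-1/n)^{m-k}$ combined with $\binom{m}{k}\geq (m/k)^k$ and $\ln(m/k) = \ln(\mu/k) \cdot$ (sign flip), carefully tracking that $k\ln(k/\mu) - (k-\mu) = \tfrac{(k-\mu)^2}{2\mu}(1+o(1)) = \tfrac{\lambda}{200}\log n \cdot (1+o(1)) < \tfrac{1}{2}\log n$ for the constant $\tfrac{1}{10}$ chosen (this is where the specific constant $\tfrac{1}{10}$ earns its keep: $\tfrac{1}{200} < 1$ with room to spare), or alternatively by citing a standard reverse-Chernoff / anti-concentration bound for binomials. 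Either way the upshot is $p := \Pro{L \geq k} = n^{-1+\Omega(1)}$.

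Next I would boost to the maximum. The loads $x_1^m, \dots, x_n^m$ are negatively associated (balls-into-bins occupancy counts are NA), so $\Pro{\max_i x_i^m < k} = \Pro{\bigcap_i \{x_i^m < k\}} \leq \prod_i \Pro{x_i^m < k} = (1-p)^n \leq e^{-pn} = e^{-n^{\Omega(1)}} \ll n^{-2}$. Hence $\Pro{\max_i x_i^m \geq k} \geq 1 - e^{-n^{\Omega(1)}} \geq 1 - n^{-2}$ for $n$ large, which is the claim. (If one prefers not to invoke negative association, the same conclusion follows from a second-moment argument on $N := |\{i : x_i^m \geq k\}|$: $\Ex{N} = np = n^{\Omega(1)} \to \infty$, and $\mathrm{Var}(N)$ is controlled because the pairwise correlations $\Cov(\mathbf{1}_{x_i^m \geq k}, \mathbf{1}_{x_j^m \geq k})$ for $i \neq j$ are non-positive (NA) or at worst lower-order, so $\Pro{N = 0} \leq \mathrm{Var}(N)/\Ex{N}^2 \to 0$; but the NA route gives the stronger $n^{-2}$ bound directly and is cleaner.)

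The main obstacle is the anti-concentration estimate $\Pro{\mathrm{Bin}(m,1/n) \geq k} \geq n^{-1+\Omega(1)}$ uniformly over the whole range $\lambda \in [1/\log n, \infty)$ — one has to handle $\lambda$ small (where $\mu = \lambda\log n$ may be as small as $1$, so Poisson/binomial tail estimates must be done by hand rather than via a normal approximation), $\lambda = \Theta(1)$ (the genuinely tight case driving the constant $\tfrac{1}{10}$), and $\lambda$ large (easy, since then $k/\mu = 1 + \tfrac{1}{10\sqrt\lambda} \to 1$ and the deviation is a vanishing fraction of the mean, so even a crude binomial lower tail suffices). Since the lemma is quoted from \cite[Lemma A.2]{LS23RBB}, in the paper I would simply cite it; the sketch above is the argument one would reconstruct. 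Given that it is an external citation, no further detail is required here.
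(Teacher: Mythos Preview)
Your assessment is correct: the paper does not prove this lemma at all but simply cites it from \cite[Lemma~A.2]{LS23RBB}, exactly as you note in your final paragraph. Your reconstruction of the argument (anti-concentration for a single binomial via $k\ln(k/\mu)-(k-\mu)\approx \tfrac{(k-\mu)^2}{2\mu}=\tfrac{\log n}{200}$, then boosting via negative association) is the standard route and is sound across the stated range of $\lambda$; in particular your observation that the deviation is always $\tfrac{1}{10}\sqrt{\log n}$ standard deviations, independent of $\lambda$, is the reason the constant $\tfrac{1}{10}$ works uniformly. Nothing further is needed here.
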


We are now ready to restate and prove the main result of this section.
{\renewcommand{\thethm}{\ref{thm:main_lower} }
	\begin{thm}[Restated, page~\pageref{thm:main_lower}]
		\mainlower
\end{thm}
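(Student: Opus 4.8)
The plan is to exploit condition \PThree to plant an embedded \OneChoice instance inside the process during the ``good'' rounds (those with $\delta^t\in[\eps,1-\eps]$), to show there are $\Theta(n\log n)$ such rounds among the first $m:=\kappa n\log n$ rounds, and then to invoke the classical \OneChoice lower bound. Since \OneChoice with $\Theta(n\log n)$ balls has maximum load exceeding \emph{its own} mean by $\Theta(\sqrt{\kappa}\,\log n)$, while the true mean $W^m/n$ together with the weight overhead contributes only $O(\kappa\log n)$, a sufficiently small constant $\kappa$ makes the $\sqrt{\kappa}$-scale fluctuation win.

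Concretely, I would fix $\eps\in(0,\tfrac12)$ as in \eqref{eq:c_def}, let $\C{p4k}:=\C{p4k}(\eps)\in(0,1]$ be the constant from \PThree, and set $m:=\kappa n\log n$ with $\kappa:=\frac{\eps^2\C{p4k}}{200(w_-+1)^2}$. The first step is to show $G_0^m(\eps)\geq\eps^2 m$ with probability $\geq 1-n^{-1}$. This runs exactly as the proof of \cref{lem:many_good_quantiles_in_lightly_loaded}: since $\Upsilon^0=0\leq m$ and $m\geq n/\eps$ for large $n$, applying \cref{lem:many_good_quantiles_whp} with $t_0=0$ and $T=t_1=m$ gives $\{G_0^m(\eps)\geq\eps^2 m\}\cup\{\tau\leq m\}$ except with probability $n^{-\omega(1)}$, where $\tau$ is the first round with $\max_i|y_i^t|>\sqrt m\,\log^{-2}n$; and since $\sqrt m\,\log^{-2}n=\omega(\log n)$, a union bound over the $m$ rounds using the $O(\log n)$ bound of \cref{thm:main_technical} rules out $\{\tau\leq m\}$.

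The second step is the embedding. In a good round, \PThree gives $\min_i p_i^t\geq\C{p4k}/n$, so we may write $p^t=\frac{\C{p4k}}{n}\mathbf{1}+(1-\C{p4k})q^t$ for a probability vector $q^t$ and realise the allocation via an independent coin $C^t\sim\mathrm{Ber}(\C{p4k})$: if the round is good and $C^t=1$, allocate to the bin chosen by the next step of a standalone \OneChoice process $(\hat x^s)_{s\geq0}$ (advancing it by one step); otherwise allocate from the residual/original distribution. Let $T$ be the (random) set of good rounds with $C^t=1$ and $N_i$ the number of these allocations landing in bin $i$; then $x_i^m\geq N_i$ (loads are non-decreasing and every allocated ball has weight $\geq1$), and by construction $N_i$ coincides with $\hat x_i^{|T|}$. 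A martingale/Azuma bound applied to $\sum_{t<m}\mathbf{1}[t\text{ good}](\C{p4k}-C^t)$ shows that on $\{G_0^m\geq\eps^2 m\}$ one has $|T|\geq\frac{\C{p4k}\eps^2}{2}m=:m'$ w.h.p.; since \OneChoice loads are pointwise monotone in the number of balls, \cref{lem:one_choice_cnlogn} with $\lambda:=m'/(n\log n)=\frac{\C{p4k}\eps^2\kappa}{2}$ then yields $\max_i x_i^m\geq\max_i N_i\geq\bigl(\lambda+\tfrac{\sqrt\lambda}{10}\bigr)\log n$ w.h.p.

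Finally, using the deterministic bound $W^m\leq w_- m$ (at most $w_-$ balls per round),
\[
\Gap(m)=\max_{i\in[n]}x_i^m-\frac{W^m}{n}\geq\Bigl(\lambda+\frac{\sqrt\lambda}{10}-w_-\kappa\Bigr)\log n,
\]
and the choice of $\kappa$ makes $\sqrt\lambda/10=(w_-+1)\kappa$, so that $\lambda+\tfrac{\sqrt\lambda}{10}-w_-\kappa=\lambda+\kappa\geq\kappa$, i.e.\ $\Gap(\kappa n\log n)\geq\kappa\log n$; a union bound over the finitely many failure events (Step 1, the Azuma bound, and \cref{lem:one_choice_cnlogn}) keeps the total failure probability below $n^{-1/2}$. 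I expect the main obstacle to be the embedding step: one must argue carefully that the uniform samples used in the good rounds with $C^t=1$ genuinely realise a \OneChoice process even though those samples themselves influence which subsequent rounds are good, and that $|T|$ is large despite this feedback — both are standard sequential-revealing and concentration arguments, but they require care. Choosing the scale $\kappa=\Theta\!\big((w_-+1)^{-2}\big)$ so the $\Theta(\sqrt\kappa)$ \OneChoice fluctuation dominates the $\Theta(\kappa)$ mean-and-weight error is the other point to get right, but it is only arithmetic.
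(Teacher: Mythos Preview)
Your proposal is correct and follows essentially the same route as the paper: embed a \OneChoice process in the good rounds via \PThree, show there are $\Theta(n\log n)$ such rounds using \cref{lem:many_good_quantiles_whp} together with the $O(\log n)$ gap bound of \cref{thm:main_technical}, and invoke \cref{lem:one_choice_cnlogn} with $\kappa$ chosen small enough that the $\Theta(\sqrt\kappa)$ \OneChoice fluctuation dominates the $O(w_-\kappa)$ mean. The only cosmetic difference is that the paper applies a Chernoff bound to pre-sampled i.i.d.\ Bernoulli coins $(Y^i)_{i\geq1}$ consumed one per good round, rather than your Azuma bound on $\sum_t\mathbf{1}[t\text{ good}](\C{p4k}-C^t)$; this pre-sampling viewpoint is precisely what resolves your stated worry about feedback, since the first $\eps^2 m$ coins are i.i.d.\ regardless of which rounds turn out to be good.
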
}
\addtocounter{thm}{-1}

	\begin{proof} We start by defining the constant $\C{p4k} := \C{p4k}(\eps)  \in (0,1]$  for $\eps \in \big( 0, \frac{1}{2} \big)$ given by \eqref{eq:c_def}. Further, set $\kappa := \frac{\C{p4k}\eps^2}{800w_-^2} >0 $, and let $m := \kappa n\log n$. Define the event  $\mathcal{E}_1 := \{ |\{t\in [0,m] :  \delta^t\in[\eps, 1 - \eps] \}|\geq \eps^2 m\}$ and note that   $\Pro{\neg\mathcal{E}_1} <  n^{-2/3}  $ by \cref{lem:many_good_quantiles_in_lightly_loaded}. 
 
		We will now define a coupling between the allocations of the process and a \OneChoice process for each round $t\in [0,m]$:
	 \begin{quote} If $\delta^t\in[\eps, 1 - \eps]$:  Sample an independent Bernoulli random variable $X^t \sim \mathsf{Ber}(\C{p4k})$. %
		\begin{itemize}
			\item If $X^t = 1$, then we allocate the ball to a bin sampled uniformly at random.
			\item Otherwise, allocate to the $i$-th heaviest bin with probability $\displaystyle{\frac{p_i^t - \frac{\C{p4k}}{n}}{1 - \C{p4k}}}$.
		\end{itemize} 
	 \noindent If $\delta^t\notin[\eps, 1 - \eps]$: allocate to bin $i\in [n]$ with probability $p_i^t$.
		\end{quote} 
    Observe that this has the correct marginal distribution, since for any step $t$ with $\delta^t \in [\eps,1 - \eps]$,  \[
    k_4 \cdot \frac{1}{n} + 
    (1-k_4) \cdot \frac{p_i^t - \frac{k_4}{n}}{1-k_4} = p_i^t.
  \]
  
	Let  $(Y^i)_{i\geq 0}$, where $Y^i\sim \mathsf{Ber}(\C{p4k})$, be a sequence of i.i.d.\  Bernoulli random variables used in the coupling. That is, we pre-sample $(Y^i)_{i\geq 0}$ and then at round $t$ of the coupling if we have $\delta^t \in [\eps, 1 - \eps]$ for the $i$-th time then we take $X^t=Y^i$.  Define the event $\mathcal{E}_2:=\{\sum_{i=1}^{\eps^2 m} Y^i > \C{p4k}\eps^2m/2\}$. Then conditional on $\mathcal{E}_1\cap \mathcal{E}_2$, at least $(\C{p4k}\eps^2\kappa/2) \cdot n \log n$ balls are allocated according to the \OneChoice process during the $m$ steps of the coupling. By a Chernoff bound, 
	\begin{equation*} \Pro{\neg\mathcal{E}_2} = \Pro{\mathsf{Bin}(\eps^2m,\C{p4k})  \leq \frac{1}{2}\cdot \C{p4k}\eps^2m }\leq  \exp\left( -\frac{\C{p4k}\eps^2m }{8}\right) < n^{-2}.  \end{equation*}

We then sample a \OneChoice process, with load vector denoted by $\tilde{x}^t$, on $h:=\lambda n\log n $ rounds where $\lambda:=\frac{\C{p4k}\eps^2\kappa}{2}$, independently from all other random bits used in the coupling. Define the event $\mathcal{E}_3 = \{\max_{i \in [n]} \tilde{x}_i^{h} \geq \left(c + \frac{\sqrt{c}}{10} \right)  \cdot \log n   \} $ and observe that $\Pro{\neg\mathcal{E}_3} < n^{-2}$ by \cref{lem:one_choice_cnlogn}.  

Now, as $\kappa := \C{p4k}\eps^2/(800w_-^2)$, and each ball has weight in $[1,w_-]$, conditional on $\mathcal{E}_1\cap \mathcal{E}_2\cap \mathcal{E}_3$,   
	 
		\[ \frac{\Gap(m)}{\log n}  \geq \frac{\C{p4k}\eps^2\kappa}{2} +  \frac{1}{10}\sqrt{\frac{\C{p4k}\eps^2\kappa}{2}} - w_- \kappa \geq   \frac{1}{10}\sqrt{\frac{\C{p4k}^2\eps^4}{1600 w_-^2}} - w_- \kappa = w_- \kappa \geq \kappa.\]
	The result then follows since $\Pro{\neg\mathcal{E}_1\cup \neg\mathcal{E}_2\cup \neg\mathcal{E}_3} \leq n^{-2/3}  + n^{-2}+ n^{-2} <n^{-1/2} $.\end{proof}

 Hence, we can deduce from the lemma above by recalling that $\MeanThinning$, $\Twinning$ and the $(1+\beta)$-process all satisfy \PThree and the conditions \POne and \WTwo, or, \WOne and \PTwo:
\begin{cor}
Consider any of the \MeanThinning, \Twinning or the $(1+\beta)$-process with any constant $\beta \in (0, 1)$. Then, there exists a  constant $\kappa>0$ (different for each process) such that 
\[
\Pro{\Gap(\kappa n\log n ) \geq \kappa \log n} \geq 1 - n^{-1}.
\]
\end{cor}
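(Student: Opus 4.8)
The plan is to obtain this corollary as an essentially immediate consequence of \cref{thm:main_lower}: it suffices to check that each of \MeanThinning, \Twinning and the $(1+\beta)$-process (with constant $\beta\in(0,1)$) is \MeanBiased and additionally satisfies condition \PThree. Membership in the \MeanBiased framework has already been recorded earlier: \MeanThinning satisfies $\PTwo\cap\WOne$ and \Twinning satisfies $\POne\cap\WTwo$ (the verification lemmas in \cref{sec:probbias} and \cref{sec:framework}), while the $(1+\beta)$-process with constant $\beta\in(0,1)$ satisfies $\PTwo\cap\WOne$ by \cref{lem:opb_satisfies_p2_w1}. Hence the only genuinely new point is \PThree.

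To verify \PThree, fix $\eps\in\big(0,\tfrac12\big)$ and a round $t$ with $\delta^t\in[\eps,1-\eps]$. For \MeanThinning every coordinate of $p^t$ is at least $\delta^t/n\ge\eps/n$, so \PThree holds with $\C{p4k}:=\eps$. For \Twinning the probability allocation vector is uniform, $p_i^t=1/n$, so \PThree holds with $\C{p4k}:=1$. For the $(1+\beta)$-process $p_i^t=(1-\beta)\cdot\tfrac1n+\beta\cdot\tfrac{2i-1}{n^2}>\tfrac{1-\beta}{n}$, so \PThree holds with $\C{p4k}:=1-\beta$, a positive constant since $\beta$ is constant. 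In each case the constant is independent of $t$ and $n$, as \PThree requires, so all hypotheses of \cref{thm:main_lower} are met for each of the three processes, giving a constant $\kappa>0$ (depending on the process only through $w_-$, $\eps$ and $\C{p4k}$, hence different for each process) with $\Pro{\Gap(\kappa n\log n)\ge\kappa\log n}\ge 1-n^{-1/2}$.

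The one thing that needs a little extra care is that the corollary claims failure probability $n^{-1}$, which is stronger than the $n^{-1/2}$ delivered by \cref{thm:main_lower}. To bridge this I would revisit the error budget in the proof of \cref{thm:main_lower}: its $n^{-1/2}$ is inherited from \cref{lem:many_good_quantiles_in_lightly_loaded}, whose dominant error term comes from union-bounding the $n^{-4}$ failure probability of \cref{thm:main_technical} over as many as $n^3\log^4 n$ rounds; but in the corollary $m=\kappa n\log n$, so that union bound contributes only $\Oh(n^{-3}\log n)$, while the Chernoff control on the number of coupled \OneChoice steps and the \OneChoice lower bound term are each $\Oh(n^{-2})$, for a total error of $o(n^{-1})$. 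Thus there is no substantive obstacle here; the proof is a routine application of \cref{thm:main_lower}, and the only real work is this last bit of probability bookkeeping needed to land at $1-n^{-1}$ rather than the coarser bound stated for the full range of $m$ in \cref{thm:main_lower}.
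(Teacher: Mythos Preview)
Your proof is correct and follows the same route as the paper: verify \PThree for each of the three processes (with exactly the constants you give) and then invoke \cref{thm:main_lower}. The paper's own proof of the corollary is a one-line ``deduce from the lemma above'' together with the three bullet-point verifications of \PThree that you reproduce.

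Your additional paragraph on the probability bookkeeping is actually more careful than the paper. The paper's deduction does not comment on the discrepancy between the $1-n^{-1/2}$ in \cref{thm:main_lower} and the $1-n^{-1}$ stated in the corollary; it simply asserts the corollary. Your observation that the $n^{-1/2}$ loss in \cref{lem:many_good_quantiles_in_lightly_loaded} stems from union-bounding the $n^{-4}$ per-round bound of \cref{thm:main_technical} over up to $n^3\log^4 n$ rounds, and that for $m=\kappa n\log n$ this contributes only $\Oh(n^{-3}\log n)$, is exactly the right diagnosis; together with the $\Oh(n^{-2})$ errors from the Chernoff step and the \OneChoice lower bound this indeed gives $o(n^{-1})$. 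So you have filled a small gap the paper leaves implicit.
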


Finally, we prove a tight lower bound for any $\RelativeThreshold(f(n))$ process with $f(n) \geq \log n$.  
 
{\renewcommand{\thethm}{\ref{lem:rellower} }
	\begin{lem}[Restated, page~\pageref{lem:rellower}]
		\rellower
\end{lem}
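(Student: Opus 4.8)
The plan is to lower bound the gap of the $\RelativeThreshold(f(n))$ process by coupling it \emph{from below} with a \OneChoice process that runs for a suitable number of rounds. The key structural observation is the following: in the $\RelativeThreshold(f(n))$ process, as long as every bin has normalized load strictly below $f(n)$, \emph{every} sampled first bin $i_1$ is below the threshold $\tfrac{t}{n}+f(n)$, so the ball is always placed on the first sample. Hence, while no bin has yet reached normalized load $f(n)$, the process behaves \emph{exactly} like \OneChoice. So first I would fix $m := \tfrac{1}{24}\cdot \tfrac{n\cdot (f(n))^2}{\log n}$, run a genuine \OneChoice process for $m$ rounds on the same random first-sample sequence, and let $\sigma$ be the first round at which some bin reaches normalized load $f(n)/2$ under \OneChoice. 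The two processes agree (in load vector, pointwise) on $[0,\sigma]$, so it suffices to show that \emph{under \OneChoice} the gap reaches $f(n)/50$ by round $m$ with the required probability, and that this happens ``before'' the coupling could possibly break — but in fact we don't even need the coupling to survive all the way: once the \OneChoice max load hits $f(n)/50 \le f(n)/2$, the event $\{\Gap_{f(n)}(m)\ge f(n)/50\}$ is already witnessed on the agreement interval (being careful that ``gap'' for \RelativeThreshold is max minus \emph{mean}, which equals the \OneChoice gap on that interval since both allocate exactly one ball per round).

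The second step is the quantitative \OneChoice estimate. Here I would invoke (a mild variant of) \cref{lem:one_choice_cnlogn}. Writing $m = \lambda n\log n$ with $\lambda = \tfrac{1}{24}\cdot\tfrac{(f(n))^2}{(\log n)^2}$, note $\lambda \ge \tfrac{1}{24}$ since $f(n)\ge\log n$, so the hypothesis $\lambda\ge 1/\log n$ is satisfied. Then \cref{lem:one_choice_cnlogn} gives, with probability at least $1-n^{-2}$,
\[
\max_{i\in[n]} \tilde x_i^{m} \;\ge\; \Bigl(\lambda + \tfrac{\sqrt\lambda}{10}\Bigr)\cdot\log n \;\ge\; \frac{\sqrt\lambda}{10}\cdot\log n \;=\; \frac{1}{10}\cdot\frac{f(n)}{\sqrt{24}\,\log n}\cdot\log n \;=\; \frac{f(n)}{10\sqrt{24}} \;\ge\; \frac{f(n)}{50}.
\]
Since the \OneChoice mean load at round $m$ is exactly $m/n \le$ (something), and the ``gap'' subtracts the mean, I'd actually want the stronger form stating $\Gap_{\mathrm{OneChoice}}(m) \ge \tfrac{\sqrt\lambda}{10}\log n$ w.h.p. — this is what the cited lemma really gives once one recalls that $\tilde x_i^m - m/n$ is what is being controlled (the $\lambda\log n$ term is the mean and the $\tfrac{\sqrt\lambda}{10}\log n$ is the genuine surplus); alternatively I can just use the bare max-load bound since $m/n$ is small relative to $f(n)$ when... actually $m/n = \lambda\log n = \Theta((f(n))^2/\log n)$ which is \emph{not} small, so I do need the surplus form. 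This is the one place to be careful.

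The third step is to verify the coupling does not break before the surplus is realized, i.e.\ that $f(n)/50 \le f(n)/2$ (trivial) so that the witnessing bin has normalized load well below $f(n)$ throughout, keeping the two processes coupled on the relevant interval; more precisely, I let $\sigma$ be the first round the \OneChoice max normalized load exceeds $f(n)/2$, so on $[0,\sigma]$ the processes agree, and on the event of the \OneChoice estimate we have a round $m' \le \min\{m,\sigma\}$ (indeed $m' = $ the first time max reaches $f(n)/50$) with $\Gap_{\mathrm{OneChoice}}(m')\ge f(n)/50$, hence $\Gap_{f(n)}(m') \ge f(n)/50$; since the gap of \RelativeThreshold is not monotone this needs the tiny extra remark that the \emph{maximum load} is non-decreasing and the mean is $t/n$, so in fact I should phrase the conclusion at the exact round $m' \le m$ and note the lemma statement asks for $\Gap(m)$ — so I instead directly keep running: the \OneChoice estimate at round $m$ gives surplus $\ge f(n)/50$ at round $m$ itself, and I only need the coupling on $[0,m]$ which survives as long as max normalized load stays $<f(n)$; since the \OneChoice max normalized load at round $m$ is at most $O(\lambda\log n + \sqrt{\lambda\log n\cdot\log n}) = O(f(n)^2/\log n)$... hmm, that exceeds $f(n)$.

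So the honest version is: work at the \emph{stopping time} $m'$, the first round the \OneChoice surplus hits $f(n)/50$. I'd show via \cref{lem:one_choice_cnlogn} (applied at round $m$) that $m' \le m$ w.h.p., that $m' < \sigma$ (since $f(n)/50 < f(n)/2$ and the surplus is a lower bound on max normalized load, but I need an \emph{upper} bound too to ensure no bin blew past $f(n)/2$ earlier — this holds because before $m'$ the max surplus is $<f(n)/50$, and the mean surplus is $0$, so max normalized load $<f(n)/50 + 0 \ll f(n)$), and therefore on $[0,m']$ the \RelativeThreshold and \OneChoice processes agree, giving $\Gap_{f(n)}(m')\ge f(n)/50$. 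Finally, since the \RelativeThreshold max load never decreases while the mean increases by exactly $1/n$ per round, and over $[m',m]$ there are at most $m$ further rounds, I can either (i) quote the lemma for the value $m'$ rather than $m$ after checking $m'$ is the intended order of magnitude, or (ii) more cleanly, just note that the lemma as stated picks a \emph{specific} $m$ and that the argument above shows $\Gap_{f(n)}(m') \ge f(n)/50$ for some $m'\le m$, then argue $\Gap$ cannot have dropped much: actually the cleanest fix is to run \OneChoice for exactly $m$ rounds, and observe that the coupling with \RelativeThreshold holds up to the first time \emph{either} process has a bin at normalized load $f(n)$; but on the high-probability event, by time $m'\le m$ the \RelativeThreshold process has a bin at normalized load $\ge f(n)/50$, and from that point its max load only grows — so at round $m$, $\max_i x^m_i \ge \max_i x^{m'}_i \ge m'/n + f(n)/50$, while the mean is $m/n \le m'/n + (m-m')/n$; this does not immediately give the bound unless $m - m'$ is controlled. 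The main obstacle, then, is exactly this non-monotonicity bookkeeping — reconciling ``surplus at $m'$'' with ``$\Gap$ at $m$'' — and I expect to resolve it by invoking \cref{lem:one_choice_cnlogn} directly at round $m$ (so the surplus $\ge f(n)/50$ holds \emph{at} round $m$ under \OneChoice) together with a short argument that the coupling survives on all of $[0,m]$ because, conditioned on the complementary-to-failure event, the \OneChoice max normalized load at round $m$ — hence at every round $\le m$ — is at most $\bigl(\lambda + c\sqrt{\lambda\log n}/\log n\bigr)\log n$ which, with the chosen tiny $\lambda$ and a corrected constant in the definition of $m$, stays below $f(n)$. I would recompute the constant $1/24$ if needed to make ``$\lambda\log n + $ surplus $< f(n)$'' hold, trading it against the factor $1/50$; the structure is robust, only the constants need a careful final pass.
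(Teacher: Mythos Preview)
Your high-level architecture matches the paper's exactly: couple $\RelativeThreshold(f(n))$ with \OneChoice for as long as $\max_i y_i^t < f(n)$, then invoke the \OneChoice lower bound (\cref{lem:one_choice_cnlogn}) at round $m$ with $\lambda = \tfrac{1}{24}(f(n)/\log n)^2$ to extract the surplus $\tfrac{\sqrt{\lambda}}{10}\log n \ge f(n)/50$. You also correctly diagnose the one real obstacle: one must show the coupling survives on \emph{all} of $[0,m]$, not merely up to some random $m' \le m$, because the statement is about $\Gap(m)$ at the fixed round $m$.

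The gap is precisely in how you propose to close that obstacle. Your final suggestion --- that an upper bound on the \OneChoice max normalized load ``at round $m$ --- hence at every round $\le m$'' --- is false: each $y_i^t$ is a mean-zero random walk with increments in $\{-1/n,\,1-1/n\}$, so neither $y_i^t$ nor $\max_i y_i^t$ is monotone in $t$, and a bound at the terminal time says nothing about the running maximum on the same sample path. Since the coupling requires the \emph{pathwise} event $\{\max_{t\le m}\max_i y_i^t < f(n)\}$, this step needs a genuine maximal/stopping-time argument, not a constant tweak. The paper supplies exactly this: for each bin $i$ it applies a Bernstein-type martingale inequality (\cref{lem:cl06_thm_6_1}) to the stopped process $Z^{t\wedge\tau}$ with $\tau=\inf\{t: y_i^t \ge f(n)\}$, parameters $\sigma^2=1/n$, $M=1$, $\lambda=f(n)$, obtaining $\Pro{|Z^{m\wedge\tau}|\ge f(n)} \le 2n^{-4/3}$; a union bound over $n$ bins then gives coupling survival on $[0,m]$ with probability $\ge 1-2n^{-1/3}$, after which \cref{lem:one_choice_cnlogn} is applied directly at $m$. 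The constant $1/24$ is tuned for this Bernstein step (so that $m/n + f(n)/3 \le \tfrac{3}{4}\cdot\tfrac{f(n)}{\log n}\cdot f(n)$ when $f(n)\ge\log n$), not for the \OneChoice lower bound. Your alternative $m'$-route correctly yields $\Gap(m')\ge f(n)/50$ at a random $m'\le m$, but as you observed cannot be upgraded to round $m$ without controlling $m-m'$; the stopped-martingale bound is really what is needed.
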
}
\addtocounter{thm}{-1}
\begin{proof} Note that in any round $t \geq 0$ where $\max_{i \in [n]} y_i^t < f(n)$, the $\RelativeThreshold(f(n))$ process will always accept the first sample, and hence the allocation of this process can be coupled with the allocation of \OneChoice. 

In the following, fix an arbitrary bin $i \in [n]$. Let $(Y^t)_{1 \leq t \leq m}$ be independent $\mathsf{Ber}(1/n)$ random variables, and for any $1 \leq t \leq m$, define $Z^t := \sum_{s=1}^t \left( Y^s - \frac{1}{n} \right)$,  Then $\Ex{Z^t}=0$, and
\[
 \Var  {Z^{t+1} \, \mid \, Z^t } = \frac{1}{n} \cdot \left(1 - \frac{1}{n} \right) \leq \frac{1}{n} =: \sigma^2.
\]
Also $|Z^{t+1}-Z^{t}| \leq 1 =: M$ holds deterministically. Further, define $\tau:=\min \{ t \in \N \colon y_i^t \geq f(n) \}$. Further, since $Z^{t \wedge \tau}$ is a martingale, it follows by \cref{lem:cl06_thm_6_1} %
with $\lambda := f(n)$, 
\begin{align*}
 \Pro{ |Z^{m \wedge \tau} | \geq f(n) } &\leq 2\exp\left(- \frac{ f(n)^2}{2 (m \cdot \frac{1}{n} + f(n)/3) } \right) \\
 &= 2\exp\left(- \frac{f(n)}{2 \cdot  ( f(n) / (24 \log(n)) +1/3) }\right) \\
 &\leq 2n^{-4/3},
\end{align*}
using that $m:=\frac{1}{24} \cdot \frac{n \cdot (f(n))^2}{\log n}$ and $f(n) \geq \log n$. If the event $|Z^{m \wedge \tau}| < f(n)$ holds, then this implies that $\max_{1 \leq t \leq m} Z^t < f(n)$, and thanks to the coupling, $\max_{1 \leq t \leq m} y_i^t < f(n)$. Taking the union bound over all $n$ bins $i \in [n]$, \[
\Pro{ \max_{i \in [n]} \max_{1 \leq t \leq m}  y_i^t < f(n)} \geq 1-2n^{-1/3}.
\]
Conditional on this, the allocations of $\RelativeThreshold(f(n))$ and \OneChoice can be coupled until round $m$. Now, for \OneChoice, \cref{lem:one_choice_cnlogn}  with $\lambda :=( 1/24 )\cdot f(n)^2 / (\log n)^2$ implies,
\[
 \Pro{ \max_{i \in [n]} y_i^m \geq \frac{\sqrt{\lambda}}{10} \cdot \log n } \geq \Pro{ \max_{i \in [n]} y_i^m \geq \frac{f(n)}{50}} \geq 1-n^{-2}.
\]
Hence by the union bound, with probability at least $1-2n^{-1/3}-n^{-2} \geq 1-n^{-1/4}$ the gap of $\RelativeThreshold(f(n))$ at round $m$ is at least $f(n)/50$.
\end{proof}

\section{Sample Efficiency} \label{sec:sample_efficiency}

Recall that the \textit{sample efficiency} $\eta^m = W^m/S^m$ of a process, as the ratio of the weight $W^m$ of the balls allocated to the number of bin samples $S^m$ taken in the first $m$ rounds. The \OneChoice process has $\eta^m = 1$ and the \TwoChoice process has $\eta^m = \frac{1}{2}$.

We begin by showing that \MeanThinning is more sample-efficient than \TwoChoice.

{\renewcommand{\thelem}{\ref{lem:mean_thinning_efficiency}}
	\begin{lem}[Restated, page~\pageref{lem:mean_thinning_efficiency}]
\MeanThinningEfficiency
	\end{lem}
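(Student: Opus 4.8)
The plan is to show that the \MeanThinning process allocates balls to the first sample a constant fraction of the time, which means it uses strictly fewer than two samples per ball in expectation over a long interval. Recall that \MeanThinning uses one sample when the first sampled bin is underloaded (load below the mean $t/n$), and two samples otherwise. So $S^m = m + |\{ t \in [0,m) : \text{first sample at round } t \text{ is overloaded} \}|$, while $W^m = m$. Thus $\eta^m = W^m/S^m = m/(m + N^m)$ where $N^m$ is the number of rounds using a second sample. To get $\eta^m \geq 1/2 + \rho$ it suffices to show $N^m \leq (1-\rho') m$ for a suitable constant $\rho' \in (0,1)$, i.e.\ that a constant fraction of the first samples land on underloaded bins.

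**First I would** use the mean quantile stabilization machinery already developed. The key observation is that the probability the first sample at round $t$ is \emph{underloaded} equals $1 - \delta^t$ (the fraction of underloaded bins), and we have $\delta^t \le 1$ always, but more usefully, by \cref{lem:many_good_quantiles_in_lightly_loaded} (or, for larger $m$, by combining \cref{lem:recovery}, \cref{lem:stabilization} and \cref{lem:many_good_quantiles_whp}), for $m \geq \rho^{-1} n \log n$ a constant fraction $\eps^2$ of rounds $t \in [0,m]$ satisfy $\delta^t \in [\eps, 1-\eps]$, \Whp. In each such ``good'' round, the probability that the first sample is underloaded is $1 - \delta^t \geq \eps$. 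Hence, letting $I_t$ be the indicator that round $t$ uses only one sample, on the good rounds we have $\Ex{I_t \mid \mathfrak{F}^t} \geq \eps$. Summing over the (at least $\eps^2 m$) good rounds and applying a concentration bound (Azuma/Chernoff for the sum of these conditionally-biased indicators, since each $I_t$ given $\mathfrak F^t$ is Bernoulli), we get that \Whp~at least, say, $\frac{\eps^3}{2} m$ of the first $m$ rounds use only one sample. This gives $N^m \leq m - \frac{\eps^3}{2} m$, so $S^m = m + N^m \leq 2m - \frac{\eps^3}{2}m$, hence $\eta^m = m/S^m \geq \frac{1}{2 - \eps^3/2} \geq \frac12 + \rho$ for $\rho := \rho(\eps) > 0$ a suitable constant. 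The expectation bound $\Ex{\eta^m} \geq \frac12 + \rho$ follows similarly, or by noting $\eta^m \in [\frac12, 1]$ always and taking the high-probability event into account with a slightly smaller $\rho$.

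**The main obstacle** I anticipate is the concentration step: the indicators $I_t$ are not independent (they depend on the evolving load vector, hence on $\mathfrak{F}^t$), and the set of good rounds is itself random. I would handle this by working with the martingale $\sum_{s \le t}(I_s \mathbf 1_{\delta^s \in [\eps,1-\eps]} - \Ex{I_s \mathbf 1_{\delta^s\in[\eps,1-\eps]}\mid \mathfrak F^s})$, which has bounded increments, and applying Azuma's inequality (\cref{lem:azuma}) to deduce that the number of one-sample rounds among good rounds is at least $\frac{\eps}{2}$ times the number of good rounds, \Whp; then intersect with the high-probability event from \cref{lem:many_good_quantiles_in_lightly_loaded} / the stabilization lemmas that there are $\ge \eps^2 m$ good rounds. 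A minor technical point is that $m$ may be much larger than $n^3\log^4 n$, in which case \cref{lem:many_good_quantiles_in_lightly_loaded} does not directly apply; there one instead chops $[0,m]$ into blocks and uses the recovery+stabilization guarantee (that $\Lambda^t \le cn$ infinitely often, every $\Theta(n\log n)$ rounds \Whp) together with \cref{lem:many_good_quantiles_whp} applied blockwise to obtain the constant fraction of good rounds over the whole interval. Finally a union bound over the (polynomially or, if needed, sub-exponentially in $n$ many after blocking) failure events keeps the total error probability below $n^{-1/4}$.
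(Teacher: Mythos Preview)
Your overall strategy matches the paper's: use quantile stabilization to show that a constant fraction of rounds satisfy $\delta^t \in [\eps,1-\eps]$, note that in each such round the first sample hits an underloaded bin with probability at least $\eps$, and then concentrate to deduce that at least an $\Theta(\eps^3)$ fraction of rounds use only one sample. The paper replaces your Azuma-on-the-indicator-martingale step by an explicit coupling of the allocation at each good round with an i.i.d.\ $\mathsf{Ber}(\eps)$ variable and then applies Chernoff; the two concentration devices are interchangeable here.

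The gap is in your aggregation over blocks for large $m$. The statement must hold for \emph{any} $m \ge \rho^{-1} n\log n$, so the number of blocks $k$ (whether of length $\Theta(n\log n)$ or $n^3\log^4 n$) can be arbitrarily large --- in particular far larger than any polynomial or sub-exponential function of $n$. Your parenthetical ``polynomially or, if needed, sub-exponentially in $n$ many'' is therefore false, and a union bound over the $k$ block-failure events (each of probability at most $n^{-c}$) gives only $kn^{-c}$, which is useless once $k > n^{c}$. The paper resolves this by not insisting that every block succeed: it partitions $[0,m]$ into blocks of length $n^3\log^4 n$, lets $L$ count the failed blocks, bounds $\Ex{L} = O(n^{-1/2})\cdot k$, and applies Markov's inequality to get $L \le 2n^{-1/4}k$ with probability at least $1-n^{-1/4}$. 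Since only an $o(1)$ fraction of blocks fail, the aggregated sample count still satisfies $S^m \le (2-\Theta(\eps^3))m$, yielding $\eta^m \ge \tfrac12 + \rho$. Replacing your union bound by this Markov step (or any argument that tolerates a vanishing fraction of bad blocks) closes the gap.
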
 }
	\addtocounter{lem}{-1}

\begin{proof}
Consider any $m \geq c_s n \log n$ and any interval $[t_0, t_1]$ with $t_1 := t_0 + n^3 \log^4 n$. By \eqref{eq:recovery_many_good_quantiles_whp} in the recovery lemma (\cref{lem:recovery}), there exists a constant $\eps \in \big(0, \frac{1}{2}\big)$ such that,
\[
\Pro{ G_{t_0}^{t_1} > \eps^2 \cdot (t_1 - t_0) \;\left|\; \mathfrak{F}^{t_0}, \max_{i \in [n]} \left| y_i^{t_0} \right| \leq \C{poly_n_gap} n \log n \right.} \geq 1 - 2n^{-12}.
\]
Hence, using \cref{lem:initial_gap_nlogn},
 \[
\Pro{ G_{t_0}^{t_1} > \eps^2 \cdot (t_1 - t_0)} \geq \left(1 - 2n^{-12} \right) \cdot \left(1 - n^{-12}\right) \geq  1 - 3n^{-12}.
\]
Let $S_{t_0}^{t_1}$ be the number of samples in the interval $(t_0, t_1]$. Note that \MeanThinning takes only one sample in round $t$ if the first sample is an underloaded bin, i.e., with probability $1 - \delta^t$, and when $\delta^t \in [\eps, 1 - \eps]$, it follows that this probability is at least $\geq \eps$.

Consider the following coupling between the \MeanThinning process and the random variables $Z_1, \ldots, Z_{N} \sim \mathsf{Ber}(\eps)$ for $N := \eps^2 \cdot (t_1 - t_0)$. For any $t \in [t_0, t_1)$, having initially $j^{t_0} = 1$:
\begin{quote}
If $\delta^t \in [\eps, 1 - \eps]$ and $j^t \leq N$:
\begin{itemize}
  \item If $Z^{j^t} = 1$, then allocate to an underloaded bin $i \in B_-^t$ with probability $\frac{1}{n \cdot (1 - \delta^t)}$, and set $j^{t+1} := j^t + 1$.
  \item Otherwise: Allocate to bin $i \in [n]$ with probability 
  \[
    \begin{cases}
        \frac{1}{1 - \eps} \cdot \frac{\delta^t}{n} & \text{if }i \in B_+^t, \\
        \frac{1}{1 - \eps} \cdot \left( \frac{1 + \delta^t}{n} - \frac{\eps}{n \cdot (1 - \delta^t)} \right) & \text{if }i \in B_-^t,
    \end{cases}
  \]
  and set $j^{t+1} := j^t$.
\end{itemize}

Otherwise: Allocate to bin $i \in [n]$ with probability $p_i^t$.
\end{quote}
The marginal distribution of this coupling is correct, since any bin $i \in B_{+}^t$ is allocated to with probability
$
  (1 - \eps) \cdot \frac{1}{1 - \eps} \cdot \frac{\delta^t}{n} = \frac{\delta^t}{n},
$
whereas any bin $i \in B_{-}^t$ is allocated to with probability
\[
 \eps \cdot \frac{1}{n \cdot (1-\delta^t)} + (1 - \eps) \cdot \frac{1}{1 - \eps} \cdot \left( \frac{1+\delta^t}{n} - \frac{\eps}{n \cdot (1-\delta^t)} \right)
 = \frac{1 + \delta^t}{n}.
\]
Next observe that, under the above coupling, $Z^t = 1$ 
implies that the first sample of \MeanThinning is an underloaded bin, and thus only one sample is used. This implies,
\[
S_{t_0}^{t_1} \leq \sum_{t = 1}^{j^{t_1} - 1} Z^t + 2 \cdot \left(t_1 - t_0 - \sum_{t = 1}^{j^{t_1} - 1} Z^t \right).
\]
By a Chernoff bound, and since $N=\Omega(n \log n)$, we have that \[
  \Pro{\sum_{t = 1}^{N} Z^t \geq \frac{1}{2} N \cdot \eps } \geq 1 - n^{-12}.
\]
By taking the union bound we have that,
\[
\Pro{\left\{ G_{t_0}^{t_1} > \eps^2 \cdot (t_1 - t_0) \right\} \cap \left\{ \sum_{t = 1}^{N} Z^t \geq \frac{1}{2} N \cdot \eps \right\}} \geq 1 - 4n^{-12}.
\]
Note that $\left\{ G_{t_0}^{t_1} > \eps^2 \cdot (t_1 - t_0) \right\}$ implies that $\{ j^{t_1} = N + 1 \}$.  Therefore, with probability at least $1 - 4n^{-12}$ we have that
\begin{align*}
S_{t_0}^{t_1} 
  \leq 1 \cdot \left( \sum_{t = 1}^{N} Z^t \right) + 2 \cdot \left( t_1 - t_0 - \sum_{t = 1}^{N} Z^t \right) \leq (t_1 - t_0) \cdot \left(2 - \frac{\eps^3}{2} \right).
\end{align*}
For the first $t_0 := \mathrm{rem}(m, n^3 \log^4 n)$ rounds, we consider two cases. If $t_0 \leq c_s n \log n$, then set $\xi := 1$ and we shall use the trivial estimate $S_0^{t_0} \leq 2t_0$. Otherwise, for  $t_0 \in [c_s n \log n, n^3 \log^4 n]$, set $\xi := 0$ and by \cref{lem:many_good_quantiles_in_lightly_loaded}, we obtain,
\[
\Pro{G_{0}^{t_0} > \eps^2 \cdot t_0} \geq 1 - n^{-1/2},
\]
and so using the coupling above
\[
 \Pro{S_{0}^{t_0} 
  \leq t_0 \cdot \left(2 - \frac{\eps^3}{2} \right)} \geq 1 - 2n^{-1/2}.
\]
Let $k := \lfloor \frac{m}{n^3 \log^4 n } \rfloor$ and define
\[
L^0 := \mathbf{1}_{S_{0}^{t_0} > t_0 \cdot \left(2 - \frac{\eps^3}{2} \right)},
\quad 
\text{ and }
\quad 
L^j := \mathbf{1}_{S_{t_0 + j \cdot n^3 \log^4 n }^{t_0 + (j+1) \cdot n^3 \log^4 n} > n^3 \log^4 n \cdot \left(2 - \frac{\eps^3}{2} \right)},
\]
where $0 < j < k$. Then for $L := \sum_{j = \xi}^{k-1} L^j$, we have that $\Ex{L} \leq 2n^{-1/2} \cdot k$, and hence by Markov's inequality,
\[
  \Pro{L \leq 2n^{-1/4} \cdot k} \geq 1 - n^{-1/4}.
\]
When $\{ L \leq 2n^{-1/4} \cdot k \}$ holds, we have that
\[
S^m = S_{0}^{m} 
 \leq 2 \cdot ( 2n^{-1/4} + \xi \cdot n^{-2}) \cdot m + \left( 2 - \frac{\eps^3}{2} \right) \cdot (1 - 2n^{-1/4} - \xi \cdot n^{-2}) \cdot m
 \leq \left( 2 - \frac{\eps^3}{4} \right) \cdot m,
\]
using that $t_0/m \leq n^{-2}$ when $\xi := 1$. Further, we obtain \[
  \eta^m = \frac{W^m}{S^m} \geq  \frac{m}{\left( 2 - \frac{\eps^3}{4} \right) \cdot m} \geq \frac{1}{2} + \frac{\eps^3}{8},
\]
using that $\frac{1}{1-z} \geq 1 + z$ for any $z \in (0, 1)$. Finally,
\begin{align*}
 \Ex{\eta^m} 
   & \geq \Ex{ \eta^m \, \left| \, S_0^{m} \leq \left( 2 - \frac{\eps^3}{4} \right) \cdot m \right.} \cdot \Pro{S_0^{m} \leq \left( 2 - \frac{\eps^3}{4} \right) \cdot m} \\
   & \qquad + \frac{1}{2} \cdot \Pro{S_0^{m} > \left( 2 - \frac{\eps^3}{4} \right) \cdot m} \geq \frac{1}{2} + \frac{\eps^3}{16}.
\end{align*}
The statement follows by choosing $\rho := \min\big\{ \frac{\eps^3}{16}, \frac{1}{c_s}\big\}$.
\end{proof}

Next, we show that \Twinning is more sample-efficient than \OneChoice.

{\renewcommand{\thelem}{\ref{lem:twinning_sample_efficient}}
	\begin{lem}[Restated, page~\pageref{lem:twinning_sample_efficient}]
\TwinningSampleEfficiency
	\end{lem}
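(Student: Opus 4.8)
The plan is to mirror the proof of \cref{lem:mean_thinning_efficiency}, taking advantage of the fact that \Twinning uses \emph{exactly one} sample per round, so $S^m = m$ holds deterministically, while $W^m = m + U^m$, where $U^m := \big|\{ t \in [0,m) : x_i^{t} < W^t/n \text{ for the bin } i \text{ sampled at round } t \}\big|$ counts the rounds on which two balls (rather than one) are allocated. Consequently $\eta^m = W^m/S^m = 1 + U^m/m$ exactly, and it suffices to prove $U^m \geq \rho' \cdot m$ for a constant $\rho' > 0$, both in expectation and \Whp. The key observation is that at round $t$ the sampled bin is underloaded (hence two balls are allocated) with probability precisely $1-\delta^t$, and this is at least $\eps$ whenever $\delta^t \in [\eps, 1-\eps]$; so a linear lower bound on $U^m$ follows from mean quantile stabilization, which guarantees that a constant fraction of rounds have $\delta^t \in [\eps, 1-\eps]$.

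Concretely, I would partition $[0,m]$ into an initial block of $t_0 := \mathrm{rem}(m, n^3 \log^4 n)$ rounds and $k := \lfloor m / (n^3 \log^4 n) \rfloor$ consecutive blocks of length $T := n^3 \log^4 n$. On each full block $[a, a+T]$, combining \cref{lem:initial_gap_nlogn} with \eqref{eq:recovery_many_good_quantiles_whp} from the recovery lemma yields that \Whp~$G_a^{a+T}(\eps) > \eps^2 T$, i.e.\ at least $\eps^2 T$ rounds of the block have $\delta^t \in [\eps, 1-\eps]$. I would then introduce a coupling, exactly as in the \MeanThinning sample-efficiency proof, with i.i.d.\ $Z^1, Z^2, \ldots \sim \mathsf{Ber}(\eps)$ indexed by the good rounds, so that on the $j$-th good round: if $Z^j = 1$ the allocation goes to a uniformly random underloaded bin (thus allocating two balls), and otherwise it follows the residual distribution, which reproduces the correct marginal $p_i^t = 1/n$ since $\eps \cdot \frac{1}{n(1-\delta^t)} + (1-\eps)\cdot\frac{1}{1-\eps}\big(\frac{1}{n} - \frac{\eps}{n(1-\delta^t)}\big) = \frac{1}{n}$ for underloaded $i$ (and symmetrically for overloaded $i$). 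A Chernoff bound gives $\sum_{j=1}^{\eps^2 T} Z^j \geq \tfrac12 \eps^3 T$ \Whp, because $\eps^2 T = \Omega(n \log n)$; hence on a ``good'' block $U$ increases by at least $\tfrac12 \eps^3 T$, so $W_a^{a+T} \geq T \cdot (1 + \tfrac12 \eps^3)$. For the initial block I would split into cases: if $t_0 \leq \C{stab_time} n \log n$ use only the trivial bound $W_0^{t_0} \geq t_0$, and otherwise apply \cref{lem:many_good_quantiles_in_lightly_loaded} (valid since $t_0 \in [\C{stab_time} n \log n, n^3 \log^4 n]$) together with the same coupling to get $W_0^{t_0} \geq t_0 \cdot (1 + \tfrac12 \eps^3)$ \Whp.

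Finally, letting $L$ denote the number of blocks on which the desired per-block bound fails, a union bound gives $\Ex{L} = O(n^{-12}) \cdot k$ (plus the $O(n^{-1/2})$ failure for the initial block), so by Markov's inequality $L \leq O(n^{-1/4}) \cdot k$ \Whp; summing the per-block estimates then yields $W^m \geq (1 + \tfrac14 \eps^3) m$ \Whp, whence $\eta^m \geq 1 + \tfrac14 \eps^3$, and for the expectation one uses the trivial bound $\eta^m \geq 1$ on the complementary event to obtain $\Ex{\eta^m} \geq 1 + \tfrac18 \eps^3$. Setting $\rho := \min\big\{ \tfrac{\eps^3}{16}, \tfrac{1}{\C{stab_time}} \big\}$ (the second term covering the range where $m$ is only a moderately large multiple of $n \log n$) then gives the claim. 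I do not expect any genuine conceptual obstacle here: the single-sample-per-round structure of \Twinning makes the identity $\eta^m = 1 + U^m/m$ exact, so the only thing requiring care is the coupling and union-bound bookkeeping across the blocks and the initial segment, which is entirely analogous to \cref{lem:mean_thinning_efficiency}.
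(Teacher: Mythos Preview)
Your proposal is correct and follows essentially the same approach as the paper: the paper's proof is terse, stating that it proceeds as in \cref{lem:mean_thinning_efficiency} but lower-bounds $W^m$ rather than upper-bounds $S^m$ (with the simplification that the inequality $\frac{1}{1-z}\geq 1+z$ is not needed), and uses exactly the coupling you describe. Your observation that $S^m=m$ deterministically, so $\eta^m=1+U^m/m$, together with the block decomposition, the $\mathsf{Ber}(\eps)$ coupling on good rounds, and the Markov-based union bound over blocks, matches the paper's argument step for step.
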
 }
	\addtocounter{lem}{-1}
 
\begin{proof}
The proof proceeds similarly to the proof of \cref{lem:mean_thinning_efficiency}, but here we lower bound $W^m$ instead of upper bounding $S^m$. In round $t$, \Twinning allocates two balls with probability $1 - \delta^t$, which is $\geq \eps$ when $\delta^t \in [\eps, 1 - \eps]$.

Similarly to \cref{lem:mean_thinning_efficiency}, we define the following coupling between \Twinning and random variables $Z_1, \ldots, Z_{N} \sim \mathsf{Ber}(\eps)$ for $N := \eps^2 \cdot (t_1 - t_0)$. For any $t \in [t_0, t_1)$, having initially $j^{t_0} = 1$:
\begin{quote}
If $\delta^t \in [\eps, 1 - \eps]$ and $j^t \leq N$:
\begin{itemize}
  \item If $Z^{j^t} = 1$, then allocate to an underloaded bin $i \in B_-^t$ with probability $\frac{1}{n \cdot (1 - \delta^t)}$, and set $j^{t+1} := j^t + 1$.
  \item Otherwise: Allocate to bin $i \in [n]$ with probability 
  \[
    \begin{cases}
        \frac{1}{1 - \eps} \cdot \frac{1}{n} & \text{if }i \in B_+^t, \\
        \frac{1}{1 - \eps} \cdot \left( \frac{1}{n} - \frac{\eps}{n \cdot (1 - \delta^t)} \right) & \text{if }i \in B_-^t,
    \end{cases}
  \]
  and set $j^{t+1} := j^t$.
\end{itemize}

Otherwise: Allocate to bin $i \in [n]$ with probability $\frac{1}{n}$.
\end{quote}
The marginal distribution of this coupling is correct, since any bin $i \in B_{+}^t$ is allocated to with probability
$
  (1 - \eps) \cdot \frac{1}{1 - \eps} \cdot \frac{1}{n} = \frac{1}{n},
$
whereas any bin $i \in B_{-}^t$ is allocated to with probability
\[
 \eps \cdot \frac{1}{n} + (1 - \eps) \cdot \frac{1}{1 - \eps} \cdot \left( \frac{1}{n} - \frac{\eps}{n} \right)
 = \frac{1}{n}.
\]
Next observe that, under the above coupling, $Z^t = 1$ implies that the first sample of \MeanThinning is an underloaded bin, and thus only one sample is used. This implies,
\[
W_{t_0}^{t_1} \geq 2 \cdot \sum_{t = 1}^{j^{t_1} - 1} Z^t + 1 \cdot \left(t_1 - t_0 - \sum_{t = 1}^{j^{t_1} - 1} Z^t \right).
\]
Proceeding as in \cref{lem:mean_thinning_efficiency} (with the main difference being that we do not need to apply the inequality $\frac{1}{1 - z} \geq 1 + z$), we obtain
\[
 \Pro{\eta^m \geq 1 + \frac{\eps^3}{4}} \geq 1 - n^{-1/4},
\]
and thus also $  \Ex{\eta^m} \geq 1 +  \eps^3/8. $
\end{proof}

\section{Experimental Results} \label{sec:experiemental_results}

In this section, we present some experimental results for the \MeanThinning and \Twinning processes (\cref{tab:gap_distribution} and \cref{fig:gap_vs_bins_alt}) and compare their gap with that of a $(1 + \beta)$-process with $\beta = 1/2$, a $\Quantile(1/2)$ process, and that of the \TwoChoice process. 

\colorlet{GA}{black!40!white}
\colorlet{GB}{black!70!white}
\colorlet{GC}{black}
\newcommand{\CA}[1]{\textcolor{GA}{#1}} %
\newcommand{\CB}[1]{\textcolor{GB}{#1}} %
\newcommand{\CC}[1]{\textcolor{GC}{#1}} %
\newcommand{\CI}[2]{\FPeval{\result}{min(30 + 3 * #1, 100)} \colorlet{tmpC}{black!\result!white} {\textcolor{tmpC}{#2}}}

\begin{figure}[H]
    \centering
    \includegraphics[height=4.7cm]{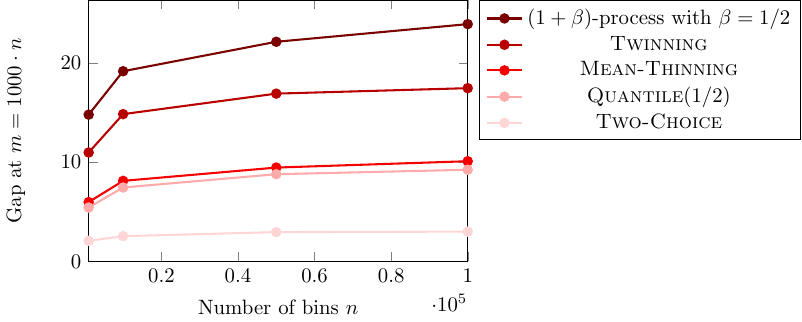}
    \caption{Average Gap vs. number of bins~$n \in \{ 10^3, 10^4, 5 \cdot 10^4, 10^5\}$ for the experimental setup of \cref{tab:gap_distribution}.}
    \label{fig:gap_vs_bins_alt}
\end{figure}

\begin{table}[H]
	\centering
	\scriptsize{
		\begin{tabular}{|c|c|c|c|c|c|}
			\hline
			$n$ & \makecell{$(1+\beta)$-process\\ with $\beta = 1/2$} & \Twinning & \MeanThinning & $\Quantile(1/2)$ & \TwoChoice \\ \hline
			$10^3$ &
            \makecell{
                \CI{5}{\textbf{12} : \ 5\% } \\
                \CI{15}{\textbf{13} : 15\% } \\
                \CI{31}{\textbf{14} : 31\% } \\
                \CI{21}{\textbf{15} : 21\% } \\
                \CI{15}{\textbf{16} : 15\% } \\
                \CI{5}{\textbf{17} : \ 5\% } \\
                \CI{4}{\textbf{18} : \ 4\% } \\
                \CI{2}{\textbf{19} : \ 2\% } \\
                \CI{1}{\textbf{20} : \ 1\% } \\
                \CI{1}{\textbf{21} : \ 1\% } } &
			\makecell{
				\CI{3}{\textbf{\ 8} : \ 3\%} \\
				\CI{21}{\textbf{\ 9} : 21\%} \\
				\CI{25}{\textbf{10} : 25\%} \\
				\CI{18}{\textbf{11} : 18\%} \\
				\CI{13}{\textbf{12} : 13\%} \\
				\CI{8}{\textbf{13} : \ 8\%} \\
				\CI{9}{\textbf{14} : \ 9\%} \\
				\CI{3}{\textbf{17} : \ 3\%}} &
			\makecell{
				\CI{2}{\textbf{\ 4} : \ 2\%} \\
				\CI{38}{\textbf{\ 5} : 38\%} \\ 
				\CI{35}{\textbf{\ 6} : 35\%} \\
				\CI{15}{\textbf{\ 7} : 15\%} \\
				\CI{8}{\textbf{\ 8} : \ 8\%} \\
				\CI{1}{\textbf{\ 9} : \ 1\%} \\
				\CI{1}{\textbf{12} : \ 1\%}} &
            \makecell{
                \CI{1}{\textbf{\ 3} : \ 1\% } \\ 
                \CI{11}{\textbf{\ 4} : 11\% } \\ 
                \CI{46}{\textbf{\ 5} : 46\% } \\
                \CI{33}{\textbf{\ 6} : 33\% } \\
                \CI{6}{\textbf{\ 7} : \ 6\% } \\
                \CI{2}{\textbf{\ 8} : \ 2\% } \\
                \CI{1}{\textbf{10} : \ 1\% } } &
            \makecell{
                \CI{93}{\textbf{2} : 93\% } \\
                \CI{7}{\textbf{3} : \ 7\% }} \\ \hline
			$10^4$ &
            \makecell{
                \CI{3}{\textbf{16} : \ 3\% } \\
                \CI{21}{\textbf{17} : 21\% } \\
                \CI{19}{\textbf{18} : 19\% } \\
                \CI{10}{\textbf{19} : 10\% } \\
                \CI{23}{\textbf{20} : 23\% } \\
                \CI{11}{\textbf{21} : 11\% } \\
                \CI{10}{\textbf{22} : 10\% } \\
                \CI{2}{\textbf{23} : \ 2\% } \\
                \CI{1}{\textbf{24} : \ 1\% }} &
			\makecell{
				\CI{1}{\textbf{11} : \ 1\%} \\
				\CI{9}{\textbf{12} : \ 9\%} \\
				\CI{24}{\textbf{13} : 24\%} \\
				\CI{22}{\textbf{14} : 22\%} \\
				\CI{13}{\textbf{15} : 13\%} \\
				\CI{9}{\textbf{16} : \ 9\%} \\
				\CI{8}{\textbf{17} : \ 8\%} \\
				\CI{5}{\textbf{18} : \ 5\%} \\
				\CI{6}{\textbf{19} : \ 6\%} \\
				\CI{1}{\textbf{20} : \ 1\%} \\
				\CI{1}{\textbf{21} : \ 1\%} \\
				\CI{1}{\textbf{26} : \ 1\%} } &
			\makecell{
				\CI{2}{\textbf{\ 6} : \ 2\%} \\ 
				\CI{30}{\textbf{\ 7} : 30\%} \\
				\CI{38}{\textbf{\ 8} : 38\%} \\
				\CI{19}{\textbf{\ 9} : 19\%} \\
				\CI{9}{\textbf{10} : \ 9\%} \\
				\CI{1}{\textbf{11} : \ 1\%} \\
				\CI{1}{\textbf{14} : \ 1\%}} &
            \makecell{
                \CI{14}{\textbf{\ 6} : 14\% } \\
                \CI{42}{\textbf{\ 7} : 42\% } \\
                \CI{25}{\textbf{\ 8} : 25\% } \\
                \CI{15}{\textbf{\ 9} : 15\% } \\
                \CI{2}{\textbf{10} : \ 2\% } \\
                \CI{1}{\textbf{11} : \ 1\% } \\
                \CI{1}{\textbf{12} : \ 1\% } } &
            \makecell{
                \CI{46}{\textbf{2} : 46\% } \\
                \CI{54}{\textbf{3} : 54\% }} \\ \hline
			$10^5$ & 
            \makecell{
                \CI{2}{\textbf{20} : \ 2\% } \\ 
                \CI{7}{\textbf{21} : \ 7\% } \\
                \CI{9}{\textbf{22} : \ 9\% } \\
                \CI{26}{\textbf{23} : 26\% } \\
                \CI{27}{\textbf{24} : 27\% } \\
                \CI{14}{\textbf{25} : 14\% } \\
                \CI{6}{\textbf{26} : \ 6\% } \\
                \CI{3}{\textbf{27} : \ 3\% } \\
                \CI{4}{\textbf{28} : \ 4\% } \\
                \CI{1}{\textbf{29} : \ 1\% } \\
                \CI{1}{\textbf{34} : \ 1\% }} &
			\makecell{
				\CI{2}{\textbf{14} : \ 2\%} \\
				\CI{5}{\textbf{15} : \ 5\%} \\
				\CI{25}{\textbf{16} : 25\%} \\
				\CI{28}{\textbf{17} : 28\%} \\
				\CI{17}{\textbf{18} : 17\%} \\
				\CI{10}{\textbf{19} : 10\%} \\
				\CI{8}{\textbf{20} : \ 8\%} \\
				\CI{1}{\textbf{21} : \ 1\%} \\
				\CI{1}{\textbf{22} : \ 1\%} \\
				\CI{3}{\textbf{23} : \ 3\%} } &
			\makecell{
				\CI{3}{\textbf{\ 8} : \ 3\%} \\ 
				\CI{32}{\textbf{\ 9} : 32\%} \\
				\CI{38}{\textbf{10} : 38\%} \\
				\CI{15}{\textbf{11} : 15\%} \\
				\CI{6}{\textbf{12} : \ 6\%} \\
				\CI{3}{\textbf{13} : \ 3\%} \\
				\CI{3}{\textbf{14} : \ 3\%} } &
            \makecell{
                \CI{28}{\textbf{\ 8} : 28\% } \\
                \CI{42}{\textbf{\ 9} : 42\% } \\
                \CI{18}{\textbf{10} : 18\% } \\
                \CI{7}{\textbf{11} : \ 7\% } \\
                \CI{3}{\textbf{12} : \ 3\% } \\
                \CI{1}{\textbf{14} : \ 1\% } \\
                \CI{1}{\textbf{15} : \ 1\% }} &
            \makecell{
                \CI{100}{\textbf{3} : 100\% }} \\ \hline
	\end{tabular}}
	\caption{Empirical gap distribution  for $n \in \{ 10^3, 10^4, 10^5\}$ bins and $m=1000 \cdot n$ balls, for $100$ repetitions. The observed gap values are in bold and next to that is the $\%$ of runs where a particular gap value was observed.}
	\label{tab:gap_distribution}
\end{table}

\section{Conclusions} \label{sec:conclusions}

In this work  we introduced a framework for analyzing \MeanBiased processes in the heavily loaded case. These include processes that bias allocations (of single balls) towards underloaded bins as opposed to overloaded bins, the prototypical process being \MeanThinning. These also include processes that weight-bias allocations to underloaded bins as opposed to overloaded, the prototypical process being \Twinning. Our analysis which uses a complex interplay between three different types of potential functions implies an $\Oh(\log n)$ upper bound on the gap for any process in the framework. For \MeanThinning and \Twinning we proved a matching lower bound on the gap and show that the processes are more sample-efficient than \TwoChoice and \OneChoice respectively. By a coupling argument, we also obtain tight bounds for $\RelativeThreshold(f(n))$ processes with any $f(n) \geq \log n$.

There are several directions for future work. One avenue is to derive sub-logarithmic gap bounds. 
More specifically, one could investigate \KRelativeThreshold processes either with $k \geq 2$, or with $k = 1$ and $f(n) \in (0, \log n)$; we conjecture that for a suitable choice of $f(n)$ (being poly-logarithmic in $n$), a gap bound of $\Oh(\frac{\log n}{\log \log n})$ holds. A second direction is to extend the \MeanBiased framework to noisy settings, where load information is outdated~\cite{LS22Queries}, or dynamic settings, where in addition  balls could also be removed~\cite{CFMMRSU98,BK22}.

\section*{Acknowledgments} 
We thank David Croydon and Martin Krejca for some helpful discussions.

\setlength{\bibsep}{0pt plus 0.3ex}

\addcontentsline{toc}{section}{Bibliography}
\renewcommand{\bibsection}{\section*{Bibliography}}
\bibliographystyle{ACM-Reference-Format-CAM}
\bibliography{bibliography}

 \clearpage 
\appendix

\section{Auxiliary Inequalities}
In this section we collect some useful inequalities used throughout the paper.
\paragraph{Deterministic Inequalities} The following lemma is similar to~\cite[Lemma A.1]{FGS12}.

\begin{lem}\label{lem:quasilem}Let the sequences $(a_k)_{k=1}^n , (b_k)_{k=1}^n $ be non-negative and the sequence $(c_k)_{k=1}^n$ be non-negative and non-increasing. If $\sum_{k=1}^i a_k \leq \sum_{k=1}^i b_k$ holds for all $i \in [n]$ then, \begin{equation} \label{eq:toprove}\sum_{k=1}^n a_k\cdot c_k \leq \sum_{k=1}^n b_k\cdot c_k.\end{equation}
\end{lem}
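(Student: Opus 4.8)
The statement to prove is \cref{lem:quasilem}: given non-negative sequences $(a_k)$, $(b_k)$ with $(c_k)$ non-negative and non-increasing, if $\sum_{k=1}^i a_k \le \sum_{k=1}^i b_k$ for all $i \in [n]$, then $\sum_{k=1}^n a_k c_k \le \sum_{k=1}^n b_k c_k$. This is the classical Abel summation / summation-by-parts argument, and the plan is to exploit the telescoping structure of partial sums against a monotone weight sequence.

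\medskip

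\noindent\textbf{Plan of proof.} First I would introduce the partial sums $A_i := \sum_{k=1}^i a_k$ and $B_i := \sum_{k=1}^i b_k$ (with $A_0 = B_0 = 0$), so that the hypothesis reads $A_i \le B_i$ for all $i \in [n]$. Then I would apply Abel summation: writing $a_k = A_k - A_{k-1}$,
\[
\sum_{k=1}^n a_k c_k = \sum_{k=1}^n (A_k - A_{k-1}) c_k = A_n c_n + \sum_{k=1}^{n-1} A_k (c_k - c_{k+1}),
\]
and similarly for the $b$-sequence with partial sums $B_k$. Subtracting the two identities gives
\[
\sum_{k=1}^n b_k c_k - \sum_{k=1}^n a_k c_k = (B_n - A_n) c_n + \sum_{k=1}^{n-1} (B_k - A_k)(c_k - c_{k+1}).
\]
Now every term on the right-hand side is non-negative: $B_n - A_n \ge 0$ and $c_n \ge 0$ handle the first term; for each $k$ in the sum, $B_k - A_k \ge 0$ by hypothesis and $c_k - c_{k+1} \ge 0$ since $(c_k)$ is non-increasing. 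Hence the difference is non-negative, which is exactly \eqref{eq:toprove}.

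\medskip

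\noindent\textbf{Main obstacle.} There is essentially no deep obstacle here — the only thing to be careful about is the bookkeeping in the Abel summation (getting the index shift and the boundary term $A_n c_n$ right, and correctly pairing $c_k - c_{k+1}$ with $A_k$ rather than $A_{k+1}$). One should also note that no upper bound on $c_1$ or lower bound beyond non-negativity of $c_n$ is needed, and that the sequences $(a_k), (b_k)$ being non-negative is actually not used in this argument at all (it is only the partial-sum dominance and the monotonicity of $(c_k)$ that matter); I would keep the hypotheses as stated for consistency with how the lemma is invoked elsewhere. I expect the entire proof to be three or four lines once the partial-sum notation is set up.
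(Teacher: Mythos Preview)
Your proposal is correct. The Abel summation identity
\[
\sum_{k=1}^n a_k c_k = A_n c_n + \sum_{k=1}^{n-1} A_k (c_k - c_{k+1})
\]
is verified by a straightforward index shift, and once you subtract the corresponding identity for $(b_k)$ every term in the resulting expression is a product of two non-negative quantities, exactly as you say. Your observation that the non-negativity of $(a_k)$ and $(b_k)$ is never invoked is also accurate.

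The paper takes a genuinely different route: it argues by induction on $n$, reducing a length-$n$ instance to a length-$(n-1)$ instance by merging the first two entries via $a_1' := \frac{c_1}{c_2} a_1 + a_2$ (and similarly for $b$), then checking that the prefix-sum hypothesis is preserved for the shortened sequences against the shifted weights $(c_2,\dots,c_n)$. Your Abel-summation argument is more direct and transparent: it exposes in one line why the two hypotheses (prefix-sum dominance and monotonicity of the weights) combine, and it avoids the case split on $c_2=0$ and the verification that the merged sequences still satisfy the premises. The inductive proof, on the other hand, stays closer to the sequences themselves and does not require introducing the partial-sum notation; but in terms of length and clarity your approach is at least as good, arguably better.
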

\begin{proof}
	We shall prove \eqref{eq:toprove} holds by induction on $n\geq 1$. The base case $n=1$ follows immediately from the fact that $a_1\leq b_1$ and $c_1\geq 0$. Thus we assume $\sum_{k=1}^{n-1} a_k\cdot c_k \leq \sum_{k=1}^{n-1} b_k\cdot c_k$ holds for all sequences $(a_k)_{k=1}^{n-1}, (b_k)_{k=1}^{n-1}$ and $ (c_k)_{k=1}^{n-1}$ satisfying the conditions of the lemma.
	
	For the inductive step, suppose we are given sequences $(a_k)_{k=1}^{n}$,  $(b_k)_{k=1}^{n}$ and $(c_k)_{k=1}^{n}$ satisfying the conditions of the lemma. If $c_2=0$ then, since $(c_k)_{k=1}^n$ is non-negative and non-increasing, $c_k=0$ for all $k\geq 2$. Thus as $a_1\leq b_1$ and $c_1\geq 0$ by the precondition of the lemma, we conclude 
\[ \sum_{k=1}^n a_k\cdot c_k = a_1 \cdot c_1 \leq b_1\cdot c_1 =\sum_{k=1}^n b_k\cdot c_k. \] We now treat the case $c_2>0 $. Define the non-negative sequences $(a_k')_{k=1}^{n-1}$ and $(b_k')_{k=1}^{n-1}$ 
	as follows:
	\begin{itemize}
		\item $a'_{1} = \frac{c_1}{c_{2}} \cdot a_{1} + a_2$ and $a'_{k} = a_{k+1}$ for $2\leq k \leq n-1$ ,
		 \item $b'_{1} = \frac{c_1}{c_{2}} \cdot b_{1} + b_2$ and $b'_{k} = b_{k+1}$ for $2\leq k \leq n-1$,
	\end{itemize} Then as the inequalities $c_1\geq c_2 $, $a_1 \leq b_1$ and $\sum_{i=1}^n a_k \leq \sum_{i=1}^n b_k$ hold by assumption, we have  
\[ \sum_{k=1}^{n-1}a_k' = \left(\frac{c_{1}}{c_{2}} - 1  \right)a_1 + \sum_{k=1}^{n}a_k  \leq \left(\frac{c_{1}}{c_{2}} - 1  \right)b_1 +  \sum_{k=1}^{n}b_k   = \sum_{k=1}^{n-1}b_k'. \] Thus if we also let $(c_k')_{k=1}^{n-1} = (c_{k+1})_{k=1}^{n-1} $, which is positive and non-increasing, then 
\[\sum_{k=1}^{n-1} a_k'\cdot c_k'  \leq \sum_{k=1}^{n-1} b_k'\cdot c_k',\]by the inductive hypothesis. However \[\sum_{k=1}^{n-1} a_k'\cdot c_k' = \left(\frac{c_1}{c_{2}} \cdot a_{1} + a_2 \right)c_2 +  \sum_{k=2}^{n-1} a_{k+1}\cdot c_{k+1} = \sum_{k=1}^{n} a_k\cdot c_k, \] and likewise $\sum_{k=1}^{n-1} b_k'\cdot c_k' = \sum_{k=1}^{n} b_k\cdot c_k$. The result follows.\end{proof}

\begin{clm} \label{clm:eps_ineq}
For any $\eps \in \big(0,\frac{1}{2}\big)$, we have $(1 - \frac{\eps}{2}) /(1 -\eps) \geq 1 + \frac{\eps}{2}$.
\end{clm}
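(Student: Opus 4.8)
This is a trivial inequality about real numbers. Let me think about how to prove $(1 - \frac{\eps}{2})/(1-\eps) \geq 1 + \frac{\eps}{2}$ for $\eps \in (0, 1/2)$.

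Cross-multiplying by $(1-\eps) > 0$: we need $1 - \frac{\eps}{2} \geq (1 + \frac{\eps}{2})(1 - \eps) = 1 - \eps + \frac{\eps}{2} - \frac{\eps^2}{2} = 1 - \frac{\eps}{2} - \frac{\eps^2}{2}$. So we need $1 - \frac{\eps}{2} \geq 1 - \frac{\eps}{2} - \frac{\eps^2}{2}$, i.e., $0 \geq -\frac{\eps^2}{2}$, which is true.

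So the proof plan is straightforward.
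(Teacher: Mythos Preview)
Your proof is correct and takes essentially the same approach as the paper: cross-multiply by the positive quantity $1-\eps$ and verify the resulting inequality reduces to $\eps^2/2 \geq 0$. (In fact, your expansion $(1+\tfrac{\eps}{2})(1-\eps)=1-\tfrac{\eps}{2}-\tfrac{\eps^2}{2}$ is the right one; the paper's printed expansion has a sign typo.)
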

\begin{proof}
For $\eps \in \big(0,\frac{1}{2}\big)$ the following chain of double implications holds
\[
\frac{1 - \frac{\eps}{2}}{1 -\eps} \geq 1 + \frac{\eps}{2} \quad \Leftrightarrow \quad  1 - \frac{\eps}{2} \geq (1 -\eps) \cdot \Big(1 + \frac{\eps}{2}\Big) = 1 - \frac{\eps}{2} + \frac{\eps^2}{2} \quad \Leftrightarrow \quad \frac{\eps^2}{2} \geq 0.
\qedhere \]
\end{proof}

\paragraph{Probabilistic Inequalities} We begin by stating two well-known concentration inequalities. 

 \begin{lem}[Method of Bounded Independent Differences {\cite[Corollary 5.2]{DubPan}}]\label{mobd} Let $f$ be a function of $N$ independent random variables $X^1 ,\dots , X^N$, where each $X^i$ takes values in a set $\Omega^i$. Assume that for each $i \in [N]$ there exists a $c_i \geq 0 $ such that
	\[ \left|f(x_1 ,\dots,x_{i-1}, x_{i},x_{i+1},\dots, x_N) - f(x_1 ,\dots,x_{i-1}, x_{i}',x_{i+1},\dots, x_N) \right| \leq  c_i,\] for any $x_1\in \Omega_1,\dots,x_{i-1}\in \Omega_{i-1}, x_{i},x_{i}'\in \Omega_{i} ,x_{i+1}\in \Omega_{i+1},\dots, x_N \in \Omega_N$. Then, for any $\lambda>0$,
	\[\Pro{f < \Ex{f} - \lambda }  \leq \exp\left(- \frac{\lambda^2}{2 \cdot \sum_{i=1}^N c_i^2} \right).\]\end{lem}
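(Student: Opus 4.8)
The plan is to give the standard martingale proof of McDiarmid's bounded-differences inequality, via the Doob (Lévy) martingale together with Azuma's inequality, which is already available in the paper as \Cref{lem:azuma}; alternatively one simply invokes \cite[Corollary~5.2]{DubPan} verbatim. First I would set $\mathfrak{G}^i := \sigma(X^1, \dots, X^i)$ for $0 \le i \le N$ and define $Y^i := \Ex{ f(X^1, \dots, X^N) \mid \mathfrak{G}^i }$, so that $Y^0 = \Ex{f}$, $Y^N = f$, and $(Y^i)_{i=0}^N$ is a martingale with respect to the filtration $(\mathfrak{G}^i)_{i=0}^N$. Note $f$ is bounded (its range is at most $\sum_i c_i$), so all expectations are finite.

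The key step is to show that $|Y^i - Y^{i-1}| \le c_i$ almost surely for each $i \in [N]$. Fix $i$ and condition on $X^1 = x_1, \dots, X^{i-1} = x_{i-1}$. Since $X^i$ is independent of $(X^1, \dots, X^{i-1})$ and of $(X^{i+1}, \dots, X^N)$, on this event we have $Y^i = g(X^i)$ and $Y^{i-1} = \Ex{ g(X^i) }$, where $g(x) := \Ex{ f(x_1, \dots, x_{i-1}, x, X^{i+1}, \dots, X^N) }$. The bounded-differences hypothesis, applied pointwise to the argument $X^i$ and then averaged over $(X^{i+1}, \dots, X^N)$, gives $|g(x) - g(x')| \le c_i$ for all $x, x' \in \Omega^i$; hence $g$ takes values in an interval of length at most $c_i$, and as both $g(X^i)$ and $\Ex{g(X^i)}$ lie in that interval, $|Y^i - Y^{i-1}| = |g(X^i) - \Ex{g(X^i)}| \le c_i$. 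Since this holds on every conditioning event of positive measure, it holds almost surely.

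With this bound in hand, applying the one-sided Azuma inequality (\Cref{lem:azuma}) to the martingale $(Y^i)_{i=0}^N$ yields, for any $\lambda > 0$,
\[
\Pro{ f < \Ex{f} - \lambda } = \Pro{ Y^N - Y^0 < -\lambda } \le \exp\left( - \frac{\lambda^2}{2 \sum_{i=1}^N c_i^2} \right),
\]
which is precisely the claimed inequality.

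The only delicate point — and hence the main obstacle — is the second step: carefully using the mutual independence of the coordinates to rewrite the conditional expectations $Y^i$ and $Y^{i-1}$ as $g(X^i)$ and $\Ex{g(X^i)}$, and to transfer the pointwise bounded-differences estimate to the function $g$ after integrating out the future coordinates $X^{i+1}, \dots, X^N$. Once this measure-theoretic bookkeeping is done, the remainder is a direct invocation of Azuma's inequality. Since the statement is quoted essentially verbatim from \cite{DubPan}, one could also legitimately omit the argument and cite the reference.
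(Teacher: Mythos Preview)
Your proof is correct and is the standard Doob-martingale-plus-Azuma argument for McDiarmid's inequality. The paper itself gives no proof of this lemma: it is stated in the appendix with a direct citation to \cite[Corollary~5.2]{DubPan} and used as a black box, so there is nothing to compare against beyond noting that your last sentence (``one could also legitimately omit the argument and cite the reference'') is exactly what the paper does.
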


\begin{lem}[Azuma's Inequality for Super-Martingales {\cite[Problem 6.5]{DubPan}}] \label{lem:azuma}
Let $X^0, \ldots, X^N$ be a super-martingale with filtration $\mathfrak{F}^0, \ldots, \mathfrak{F}^N$. Assume that for each $i \in [N]$ there exists $c_i \geq 0$ such that 
\[
\left( \left|X^{i} - X^{i-1}\right| ~\big|~ \mathfrak{F}^{i-1} \right)  \leq c_i.
\]
Then, for any $\lambda > 0$,
\[
\Pro{X^N \geq X^0 + \lambda} \leq \exp\left(- \frac{\lambda^2}{2 \cdot \sum_{i=1}^N c_i^2} \right).
\]
\end{lem}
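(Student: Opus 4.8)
The plan is to prove this by the standard exponential-moment (Chernoff) method adapted to the one-sided increments of a super-martingale --- the Azuma--Hoeffding argument. Write $D^i := X^i - X^{i-1}$ and $S^k := X^k - X^0 = \sum_{i=1}^k D^i$; since $(X^i)$ is a super-martingale we have $\mathbb{E}[D^i \mid \mathfrak{F}^{i-1}] \le 0$, and by hypothesis $|D^i| \le c_i$ almost surely. For any $\theta > 0$, Markov's inequality applied to the nonnegative random variable $e^{\theta S^N}$ gives $\mathbb{P}(X^N - X^0 \ge \lambda) \le e^{-\theta\lambda}\,\mathbb{E}[e^{\theta S^N}]$, so the whole problem reduces to upper-bounding the moment generating function $\mathbb{E}[e^{\theta S^N}]$ and then optimising over $\theta$.

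The core estimate is a conditional Hoeffding bound: if $Z$ is a random variable with $\mathbb{E}[Z] \le 0$ and $|Z| \le c$ almost surely, then $\mathbb{E}[e^{\theta Z}] \le e^{\theta^2 c^2/2}$ for every $\theta > 0$. I would prove this by convexity: on $[-c,c]$ the function $t \mapsto e^{\theta t}$ lies below its chord through $(-c, e^{-\theta c})$ and $(c, e^{\theta c})$, so pointwise $e^{\theta Z} \le \tfrac{c-Z}{2c}e^{-\theta c} + \tfrac{c+Z}{2c}e^{\theta c}$; taking expectations and using $\mathbb{E}[Z]\le 0$ together with $e^{\theta c} \ge e^{-\theta c}$ gives $\mathbb{E}[e^{\theta Z}] \le \cosh(\theta c)$, and finally $\cosh(\theta c) \le e^{\theta^2 c^2/2}$ by comparing Taylor coefficients (using $(2k)! \ge 2^k k!$). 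Applying this conditionally with $Z = D^i$ and the $\sigma$-algebra $\mathfrak{F}^{i-1}$ --- which is legitimate precisely because $\mathbb{E}[D^i\mid\mathfrak{F}^{i-1}]\le 0$ and $|D^i|\le c_i$ --- yields $\mathbb{E}[e^{\theta D^i}\mid \mathfrak{F}^{i-1}] \le e^{\theta^2 c_i^2/2}$.

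Next I would peel the increments off one at a time using the tower property. Conditioning on $\mathfrak{F}^{N-1}$,
\[
\mathbb{E}\big[e^{\theta S^N}\big] = \mathbb{E}\big[e^{\theta S^{N-1}}\cdot\mathbb{E}[e^{\theta D^N}\mid \mathfrak{F}^{N-1}]\big] \le e^{\theta^2 c_N^2/2}\cdot\mathbb{E}\big[e^{\theta S^{N-1}}\big],
\]
and iterating this down to $i = 1$ (with $S^0 = 0$) gives $\mathbb{E}[e^{\theta S^N}] \le \exp\!\big(\tfrac{\theta^2}{2}\sum_{i=1}^N c_i^2\big)$. Combining with the first paragraph, $\mathbb{P}(X^N - X^0 \ge \lambda) \le \exp\!\big(-\theta\lambda + \tfrac{\theta^2}{2}\sum_i c_i^2\big)$; taking the optimal $\theta = \lambda/\sum_i c_i^2$ yields the claimed bound $\exp\!\big(-\lambda^2/(2\sum_i c_i^2)\big)$. (The degenerate case $\sum_i c_i^2 = 0$ forces all increments to vanish, so the statement is trivial there.)

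The only genuinely delicate point is the conditional Hoeffding estimate, and in particular checking that the \emph{one-sided} super-martingale condition $\mathbb{E}[D^i\mid\mathfrak{F}^{i-1}]\le 0$ (rather than an equality) really suffices: it does, because $\theta>0$ makes $e^{\theta c}-e^{-\theta c}\ge 0$, so a nonpositive conditional mean can only decrease the MGF. Everything else --- Markov's inequality, the telescoping via the tower property, and the one-variable optimisation in $\theta$ --- is entirely routine.
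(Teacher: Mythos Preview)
Your proof is correct and is the standard Azuma--Hoeffding argument via the exponential-moment method. The paper does not actually prove this lemma; it is stated in the appendix as an auxiliary inequality with a citation to \cite[Problem~6.5]{DubPan}, so there is no ``paper's own proof'' to compare against.
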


\begin{lem}[{\cite[Theorems~6.1 \& 6.5]{CL06}}] \label{lem:cl06_thm_6_1}
Let $X^0, \ldots, X^N$ be a martingale with filtration $\mathfrak{F}^0, \ldots, \mathfrak{F}^N$. Assume that for each $i \in [n]$,
\[
\left( \left|X^{i} - X^{i-1}\right| ~\big|~ \mathfrak{F}^{i-1} \right) \leq M,
\]
and additionally,
\[
\Var{X^i \mid \mathfrak{F}^{i-1}} \leq \sigma_i^2.
\]
Then, for any $\lambda > 0$,
\[
\Pro{\left| X^N - \Ex{X^N}\right| \geq \lambda} \leq 2 \cdot \exp\left(- \frac{\lambda^2}{2 \cdot (\sum_{i=1}^N \sigma_i^2 + M\lambda/3 )} \right).
\]
\end{lem}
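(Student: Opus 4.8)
The plan is to prove this as a standard Bernstein--Freedman type martingale inequality via the exponential moment method, deriving a one-sided bound and then applying it to both $X^N$ and $-X^N$ (the union bound producing the factor $2$). Without loss of generality I would take $X^0$ to be deterministic, so that $\Ex{X^N}=X^0$ and it suffices to bound $\Pro{X^N-X^0\geq \lambda}$ (the general case follows by first conditioning on $\mathfrak{F}^0$). Write $D^i:=X^i-X^{i-1}$ for the martingale differences, so that $\Ex{D^i\mid\mathfrak{F}^{i-1}}=0$, $|D^i|\leq M$ conditionally, and $\Ex{(D^i)^2\mid\mathfrak{F}^{i-1}}=\Var{X^i\mid\mathfrak{F}^{i-1}}\leq \sigma_i^2$.

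The technical heart is the conditional moment generating function estimate: for every $0<\theta<3/M$ and every $i\in[N]$,
\[
\Ex{e^{\theta D^i}\mid\mathfrak{F}^{i-1}} \leq \exp\!\left(\frac{\theta^2\sigma_i^2/2}{1-\theta M/3}\right).
\]
I would prove this by expanding $e^{\theta D^i}=1+\theta D^i+\sum_{k\geq 2}\frac{(\theta D^i)^k}{k!}$, using the factorial bound $k!\geq 2\cdot 3^{k-2}$ (valid for $k\geq 2$) together with $|D^i|\leq M$ to get $\sum_{k\geq 2}\frac{(\theta D^i)^k}{k!}\leq \frac{(\theta D^i)^2}{2}\cdot\frac{1}{1-\theta M/3}$ as a geometric series, then taking the conditional expectation (the linear term vanishes, the quadratic term is at most $\sigma_i^2$) and finishing with $1+x\leq e^x$.

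With this in hand I would telescope using the tower property: conditioning on $\mathfrak{F}^{N-1}$ and pulling out the $\mathfrak{F}^{N-1}$-measurable factor $e^{\theta(X^{N-1}-X^0)}$ gives $\Ex{e^{\theta(X^N-X^0)}}\leq \Ex{e^{\theta(X^{N-1}-X^0)}}\cdot\exp\!\big(\tfrac{\theta^2\sigma_N^2/2}{1-\theta M/3}\big)$, and iterating down to $\mathfrak{F}^0$ yields $\Ex{e^{\theta(X^N-X^0)}}\leq \exp\!\big(\tfrac{\theta^2 V/2}{1-\theta M/3}\big)$ with $V:=\sum_{i=1}^N\sigma_i^2$. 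A Markov/Chernoff step then gives, for all $\theta\in(0,3/M)$,
\[
\Pro{X^N-X^0\geq \lambda}\leq \exp\!\left(-\theta\lambda+\frac{\theta^2 V/2}{1-\theta M/3}\right).
\]
Choosing $\theta:=\lambda/(V+M\lambda/3)$, which indeed lies in $(0,3/M)$, makes $1-\theta M/3=V/(V+M\lambda/3)$ so that the second term collapses to $\theta\lambda/2$, and the exponent becomes $-\theta\lambda/2=-\lambda^2/\big(2(V+M\lambda/3)\big)$. The symmetric estimate for $-(X^N-X^0)$ together with a union bound then gives the claimed two-sided inequality.

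The main obstacle is essentially just the moment-generating-function lemma: obtaining the sharp constant $3$ in the denominator requires the careful bound $k!\geq 2\cdot 3^{k-2}$ and the geometric truncation of the Taylor tail, and one must keep track of the constraint $\theta M<3$ throughout so that the series converges; everything afterwards (the tower-property telescoping and the optimization over $\theta$) is routine. It is also worth double-checking that the chosen $\theta$ behaves correctly in both regimes, i.e.\ when $V$ dominates $M\lambda/3$ (the "Gaussian" regime, recovering essentially Azuma's bound from \cref{lem:azuma}) and when $M\lambda/3$ dominates $V$ (the "Poisson" regime); the single choice $\theta=\lambda/(V+M\lambda/3)$ interpolates between them uniformly.
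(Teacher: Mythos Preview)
Your proposal is correct and is essentially the standard proof of the Bernstein--Freedman martingale inequality. However, note that the paper does not actually prove this lemma: it is simply quoted from \cite{CL06} as a known concentration result, with no argument given. So there is nothing to compare against; your write-up would serve as a self-contained derivation where the paper just cites the literature.

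One minor remark on your sketch: the step bounding $\sum_{k\ge 2}(\theta D^i)^k/k!$ by $\tfrac{(\theta D^i)^2}{2}\cdot\tfrac{1}{1-\theta M/3}$ needs a word about signs, since $D^i$ may be negative and the tail terms alternate. The clean way is to write $e^x-1-x=x^2\,g(x)$ with $g(x)=\sum_{k\ge 2}x^{k-2}/k!$, observe that $g$ is increasing so $g(\theta D^i)\le g(\theta M)$, and then bound $g(\theta M)$ via $k!\ge 2\cdot 3^{k-2}$ and the geometric series. This is presumably what you intended, and it yields exactly the inequality you state.
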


Next, we state the following well-known inequality for a sequence of random variables, whose expectations are related through a recurrence inequality.

\begin{lem} \label{lem:geometric_arithmetic}
Consider any sequence of random variables $(X^i)_{i \in \mathbb{N}}$ for which there exist $a > 0$ and $b > 0$, such that every $i \geq 1$,
\[
\Ex{X^i \mid X^{i-1}} \leq X^{i-1} \cdot a + b.
\]
Then, $(i)$~for every $i \geq 0$, 
\[
\Ex{X^i \mid X^0} \leq X^0 \cdot a^i + b \cdot \sum_{j = 0}^{i-1} a^j.
\]
Further, $(ii)$~for $a \in (0, 1)$ and for every $i \geq 0$,
\[
\Ex{X^i \mid X^0}
\leq X^0 \cdot a^i + \frac{b}{1 - a}.
\]
Finally, $(iii)$~for $a \in (0, 1)$ and if $X^0 \leq \frac{b}{1-a}$ holds, then for every $i \geq 0$,
\[
\Ex{X^i} \leq \frac{b}{1 - a}.
\]
\end{lem}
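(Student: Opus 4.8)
The plan is to prove the three parts in order, with part $(i)$ carrying essentially all the work and parts $(ii)$ and $(iii)$ following by elementary manipulations of the geometric sum. For part $(i)$ I would argue by induction on $i \geq 0$. The base case $i=0$ is immediate, since $\sum_{j=0}^{-1} a^j = 0$ and the claimed bound reads $\Ex{X^0 \mid X^0} \leq X^0$. For the inductive step the key tool is the tower property of conditional expectation: reading $\Ex{X^{i+1} \mid X^{i}}$ as shorthand for the conditional expectation given the natural filtration $\mathfrak{F}^{i}$ up to round $i$, conditioning the hypothesis $\Ex{X^{i+1} \mid \mathfrak{F}^{i}} \leq X^{i} \cdot a + b$ further on $X^0$ and using $a>0$ gives
\[
\Ex{X^{i+1} \mid X^0} \leq a \cdot \Ex{X^{i} \mid X^0} + b.
\]
Writing $g_i := \Ex{X^i \mid X^0}$, so that $g_0 = X^0$, the inductive hypothesis $g_i \leq X^0 a^i + b \sum_{j=0}^{i-1} a^j$ then yields
\[
g_{i+1} \leq a\Big( X^0 a^i + b \sum_{j=0}^{i-1} a^j \Big) + b = X^0 a^{i+1} + b \sum_{j=1}^{i} a^j + b = X^0 a^{i+1} + b \sum_{j=0}^{i} a^j,
\]
which completes the induction and establishes $(i)$.

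For part $(ii)$, since $a \in (0,1)$ the partial geometric sum is dominated by the full series, $\sum_{j=0}^{i-1} a^j \leq \sum_{j=0}^{\infty} a^j = \frac{1}{1-a}$, so substituting this into the bound from $(i)$ immediately gives $\Ex{X^i \mid X^0} \leq X^0 \cdot a^i + \frac{b}{1-a}$. For part $(iii)$ I would instead use the exact value $\sum_{j=0}^{i-1} a^j = \frac{1-a^i}{1-a}$ in the bound from $(i)$, so that $\Ex{X^i \mid X^0} \leq X^0 a^i + b \cdot \frac{1-a^i}{1-a}$; then, under the hypothesis $X^0 \leq \frac{b}{1-a}$ and using $a^i \geq 0$, one bounds $X^0 a^i \leq \frac{b}{1-a} \cdot a^i$, whence
\[
\Ex{X^i \mid X^0} \leq \frac{b}{1-a} \cdot a^i + \frac{b}{1-a}\cdot(1-a^i) = \frac{b}{1-a}.
\]
Taking expectations over $X^0$ (the right-hand side being constant) yields $\Ex{X^i} \leq \frac{b}{1-a}$.

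There is no genuine obstacle here; the only point requiring a little care is the filtration bookkeeping in the tower-property step of $(i)$ — one must be sure that $\Ex{X^{i+1}\mid X^0} = \Ex{\Ex{X^{i+1}\mid \mathfrak{F}^{i}}\mid X^0}$, which holds since $X^0$ is $\mathfrak{F}^{i}$-measurable — after which everything is routine arithmetic with geometric series.
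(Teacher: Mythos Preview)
Your proof is correct and matches the paper's approach essentially verbatim for parts $(i)$ and $(ii)$: induction via the tower property for $(i)$, and bounding the partial geometric sum by the full series for $(ii)$. For part $(iii)$ there is a small, harmless variation: the paper runs a fresh one-line induction on $\Ex{X^i}$ directly (using $\Ex{X^{i+1}} \leq a\,\Ex{X^i} + b$ and the hypothesis $\Ex{X^0} \leq \frac{b}{1-a}$), whereas you instead substitute the exact value $\sum_{j=0}^{i-1} a^j = \frac{1-a^i}{1-a}$ into the bound from $(i)$ and then take expectations --- both routes are equally short and valid.
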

\begin{proof}
\textit{First statement.} We will prove this claim by induction.
For $i = 0$, $\Ex{X^0 \mid X^0} \leq X^0$. Assuming the induction hypothesis holds for some $i \geq 0$, then since $a > 0$,
\begin{align*}
\Ex{X^{i+1} \mid X^0} & = \Ex{\Ex{X^{i+1} \mid X^i}\mid X^0} \leq \Ex{X^{i}\mid X^0} \cdot a + b \\
 & \leq \left(X^0 \cdot a^i + b \cdot \sum_{j = 0}^{i-1} a^j \right) \cdot a + b \\
 & = X^0 \cdot a^{i+1} +b \cdot \sum_{j = 0}^i a^j.
\end{align*}
\textit{Second statement.} The claims follows using that $\sum_{j = 0}^i a^j \leq \sum_{j=0}^{\infty} a^j = \frac{1}{1-a}$, for any $a \in (0,1)$.

\noindent \textit{Third statement.} We will prove this claim by induction. For $i = 0$, it follows by the assumption. Then, assuming that $\Ex{X^i} \leq \frac{b}{1-a}$ holds for $i \geq 0$, then for $i+1$
\begin{align*}
\Ex{X^{i+1}}
  & = \Ex{\Ex{X^{i+1} \mid X^{i}}}  \leq \Ex{X^{i} } \cdot a + b \leq \frac{b}{1-a} \cdot a + b = \frac{b}{1-a}. \qedhere 
\end{align*}
\end{proof}

\section{Counterexample for the Exponential Potential Function}

In this section, we present a load vector for which the potential function $\Lambda := \Lambda(\alpha)$ for any constant $\alpha \in (0, 1)$ may increase in expectation over one round, if we do not condition on any ``good event''.

In contrast to processes with constant probability bias in every round, such as those studied in~\cite{PTW15}, for the \MeanThinning process, there exists configurations where the potential $\Lambda$ for constant $\alpha$ increases in expectation over a single round, even when it is $\omega(n)$. This is because in the worst-case, the \MeanThinning process may have a very small $\Theta(1/n^2)$ probability bias to allocate away from overloaded bins, corresponding to the \textsc{Towards-Min} process in~\cite{PTW15}.

\begin{clm} \label{clm:bad_configuration_lambda}
For any constant $\alpha > 0$ and for sufficiently large $n := n(\alpha) > 0 $ consider the (normalized) load configuration at some round $t \geq 0$,
\[
y^t = \left(n^2, n, n, \ldots, n , - \frac{n\cdot (2n-3)}{2}, - \frac{n \cdot (2n-3)}{2} \right).
\]
Then for the \MeanThinning process, $(i)$~the exponential potential $\Lambda := \Lambda(\alpha)$ increases in expectation over the next round, i.e.,
\[
\Ex{\left. \Lambda^{t+1} \,\right|\, \mathfrak{F}^t} \geq \Lambda^t \cdot \left( 1 + 0.2 \cdot \frac{\alpha^2}{n}\right),
\]
and $(ii)$~the mean quantile satisfies $\delta^s \geq 1 - 2/n$ for all rounds $s \in [t, t + n^2)$.
\end{clm}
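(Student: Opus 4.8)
\section*{Proof proposal for Claim~\ref{clm:bad_configuration_lambda}}

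The plan is to treat the two parts separately: part $(ii)$ is a short deterministic observation, while part $(i)$ rests on the fact that for this particular configuration the dominant term of $\Lambda$ evolves essentially as it would under \OneChoice, so the potential drifts upward.

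For $(ii)$ I would first use that \MeanThinning allocates exactly one ball per round, so $W^s = W^t + (s-t)$ and the mean load increases by exactly $\tfrac1n$ per round; hence for any bin $i$ never allocated to during $[t,s]$ we have $y_i^s = y_i^t - \tfrac{s-t}{n}$, and for any bin that \emph{is} allocated to its normalized load is even larger. The configuration $x^t$ has one bin with $y_1^t = n^2$ and exactly $n-3$ bins with normalized load $n$; for each of these $n-2$ bins and any $s$ with $t \le s < t+n^2$,
\[
y_i^s \ge y_i^t - \frac{s-t}{n} \ge n - \frac{n^2-1}{n} > 0,
\]
so $i \in B_+^s$. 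Thus $|B_+^s| \ge n-2$ and $\delta^s \ge 1 - \tfrac2n$ throughout $[t, t+n^2)$, which is $(ii)$.

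For $(i)$ I would note that $\delta^t = \tfrac{n-2}{n}$, so by the probability allocation vector of \MeanThinning the heaviest bin is chosen with probability $p_1^t = \tfrac{\delta^t}{n} = \tfrac1n - \tfrac{2}{n^2}$, i.e.\ within $\Theta(1/n^2)$ of the \OneChoice probability. Since all summands of $\Lambda$ are non-negative, $\Ex{\Lambda^{t+1}\mid\mathfrak{F}^t} \ge \Ex{\Lambda_1^{t+1}\mid\mathfrak{F}^t}$, and since the mean increases by $\tfrac1n$,
\[
\Ex{\Lambda_1^{t+1}\mid\mathfrak{F}^t} = e^{\alpha n^2}\Big(p_1^t\, e^{\alpha(1-1/n)} + (1-p_1^t)\, e^{-\alpha/n}\Big) = e^{\alpha n^2}\, e^{-\alpha/n}\big(1 + p_1^t(e^\alpha - 1)\big).
\]
Using $e^{-\alpha/n} \ge 1 - \tfrac\alpha n$ and $p_1^t = \tfrac1n - \tfrac2{n^2}$, the last factor is at least $1 + \tfrac{e^\alpha - 1 - \alpha}{n} - \tfrac{(2+\alpha)(e^\alpha-1)}{n^2}$; since $e^\alpha - 1 - \alpha \ge \tfrac{\alpha^2}{2}$, this exceeds $1 + \tfrac{0.3\,\alpha^2}{n}$ once $n$ is large depending on $\alpha$, so $\Ex{\Lambda^{t+1}\mid\mathfrak{F}^t} \ge e^{\alpha n^2}(1 + 0.3\,\alpha^2/n)$. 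On the other hand, $y_1^t = n^2$ strictly dominates all other normalized loads in absolute value (the next being $\tfrac{n(2n-3)}{2} = n^2 - \tfrac{3n}{2}$), so $\Lambda^t = e^{\alpha n^2}\big(1 + (n-3)e^{-\alpha n(n-1)} + 2e^{-3\alpha n/2}\big) \le e^{\alpha n^2}(1 + \alpha^2/(20n))$ for $n$ large. Dividing,
\[
\frac{\Ex{\Lambda^{t+1}\mid\mathfrak{F}^t}}{\Lambda^t} \ge \frac{1 + 0.3\,\alpha^2/n}{1 + \alpha^2/(20n)} \ge \Big(1 + \frac{0.3\,\alpha^2}{n}\Big)\Big(1 - \frac{\alpha^2}{20n}\Big) \ge 1 + \frac{0.2\,\alpha^2}{n},
\]
for $n$ sufficiently large, which is $(i)$.

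The main (though still routine) obstacle is the bookkeeping of lower-order terms: one fixes $\alpha$ first and then chooses $n$ large enough that the $O(1/n^2)$ correction in the bracket and the exponentially small tail terms in $\Lambda^t$ are absorbed into the slack between the constants $\tfrac12$, $0.3$, and $0.2$. The conceptual content is simply that with $\delta^t$ as close to $1$ as here, \MeanThinning provides only a $\Theta(1/n^2)$ bias away from the heaviest bin, which is too weak to prevent the exponential potential from increasing in expectation even though it is $\omega(n)$ (indeed $e^{\Theta(n^2)}$).
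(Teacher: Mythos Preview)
Your proposal is correct and follows essentially the same approach as the paper. Both arguments lower-bound $\Ex{\Lambda^{t+1}\mid\mathfrak{F}^t}$ by $\Ex{\Lambda_1^{t+1}\mid\mathfrak{F}^t}$, compute the latter explicitly to obtain $e^{\alpha n^2}(1+c\alpha^2/n)$, and then use that the remaining terms of $\Lambda^t$ are exponentially smaller than $e^{\alpha n^2}$; the only cosmetic difference is that the paper absorbs the tail terms additively (splitting $0.4 = 0.2+0.2$) whereas you divide through and bound the ratio, and the paper uses the Taylor estimate $e^z \ge 1+z+0.3z^2$ in place of your pair $e^{-\alpha/n}\ge 1-\alpha/n$ and $e^\alpha-1-\alpha\ge \alpha^2/2$. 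Part~$(ii)$ is identical in both.
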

\begin{proof}
\textit{First statement.} Consider a labeling of the bins so that the normalized loads are sorted in non-increasing order. We will show that just the expected increase of bin $i = 1$ with maximum load $y_1^t = n^2$ implies an expected increase for the entire potential.  The probability of allocating to that bin is $\frac{1 - \frac{2}{n}}{n}$, so using the Taylor estimate $e^z \geq 1 + z + 0.3 z^2$ for $z \geq -1.5$,
\begin{align*}
\Ex{\left. \Lambda_1^{t+1} \,\right|\, \mathfrak{F}^t} 
 & = e^{\alpha n^2} \cdot e^{-\alpha / n} \cdot \left(1 + (e^{\alpha} -1) \cdot \frac{1 - \frac{2}{n}}{n} \right) \\
 & \geq e^{\alpha n^2} \cdot \left(1 -\frac{\alpha}{n} + 0.3 \cdot  \frac{\alpha^2}{n^2}\right) \cdot \left(1 + (\alpha + 0.3 \alpha^2) \cdot \frac{1 - \frac{2}{n}}{n} \right) \\
 & = e^{\alpha n^2} \cdot \Big(1 -\frac{\alpha}{n} + (\alpha + 0.3\alpha^2) \cdot \frac{1}{n} + o(n^{-1}) \Big) \\
 & = e^{\alpha n^2} \cdot \left(1 +0.4 \cdot \frac{\alpha^2}{n} \right).
\end{align*}
To relate this change to the expected change of the rest of the bins, we note that for sufficiently large $n$ and since $\alpha$ is constant,
\begin{align}
0.2 \cdot \frac{\alpha^2}{n} \cdot e^{\alpha n^2} 
 & \geq \left(1 + 0.2 \cdot \frac{\alpha^2}{n} \right) \cdot n \cdot e^{\alpha n^2} \cdot e^{-\alpha \cdot 3n/2} 
= \left(1 + 0.2 \cdot \frac{\alpha^2}{n} \right) \cdot n \cdot e^{\alpha n(2n-3)/2} \notag \\
 & \geq \left(1 + 0.2 \cdot \frac{\alpha^2}{n} \right) \cdot \left( \sum_{i > 1} \Lambda_i^t \right),  \label{eq:first_term_dominates}
\end{align}
since $\Lambda_i^t \leq e^{\alpha n(2n-3)/2}$ for $i > 1$. Hence,
\begin{align*}
\Ex{\left. \Lambda^{t+1} \,\right|\, \mathfrak{F}^t} 
 & \geq \Ex{\left. \Lambda_1^{t+1} \,\right|\, \mathfrak{F}^t} \geq e^{\alpha n^2} \cdot \left(1 + 0.4 \cdot \frac{\alpha^2}{n} \right)
 \\ &= e^{\alpha n^2} \cdot \left(1 + 0.2 \cdot \frac{\alpha^2}{n} \right) + 0.2 \cdot \frac{\alpha^2}{n} \cdot e^{\alpha n^2} \\
 & \!\!\stackrel{(\ref{eq:first_term_dominates})}{\geq} \Lambda_1^t \cdot \left(1 + 0.2 \cdot \frac{\alpha^2}{n} \right)+ \left( \sum_{i > 1} \Lambda_i^t \right) \cdot \left(1 + 0.2 \cdot \frac{\alpha^2}{n} \right)\\
 &= \Lambda^t \cdot \left(1 + 0.2 \cdot \frac{\alpha^2}{n} \right).
\end{align*}
\textit{Second statement.} Since there are $n-2$ overloaded bins with overload at least $n$ and these can decrease by at most $1/n$ in each round, we have that $\delta^s \geq 1 - 2/n$ for any $s \in [t, t + n^2)$.
\end{proof}

\end{document}